\documentclass{amsart}

\usepackage{amsthm}
\usepackage{amscd} 
\usepackage{amssymb}  
\usepackage{tikz}
\usepackage{lipsum}

\newcommand{\bcc}{{\mathbb C}}

\newcommand{\N}{{\mathbb N}}

\newcommand{\R}{{\mathbb R}}
\newcommand{\bbz}{{\mathbb Z}}
\newcommand{\C}{{\mathcal C}}
\newcommand{\Tau}{{\mathcal T}}
\newcommand{\bit}{\begin{itemize}}
\newcommand{\eit}{\end{itemize}}
\newcommand{\ben}{\begin{enumerate}}
\newcommand{\een}{\end{enumerate}}
\newcommand{\ep}{\varepsilon}
\newcommand{\CP}{\mathbb{CP}}
\newcommand{\Aff}{{\rm Aff}}
\newcommand{\ms}{\mathcal{S}}
\newcommand{\mt}{\mathcal{T}}
\newcommand{\mT}{\mathcal{T}}

\allowdisplaybreaks

\numberwithin{equation}{section} 

\newtheorem{Thm}{Theorem}[section] 
\newtheorem{Prop}[Thm]{Proposition} 
\newtheorem{Lem}[Thm]{Lemma} 
\newtheorem{Cor}[Thm]{Corollary} 
 
\theoremstyle{remark} 
\newtheorem{Rem}[Thm]{Remark}

\theoremstyle{definition}

\newenvironment{Def}
% {\renewcommand{\qedsymbol}{$\triangle$}
  { \pushQED{\qed}   \dif}
  {\popQED\enddif}
% \newenvironment{definition}
 % \pushQED{\qed}\renewcommand{\qedsymbol}{$\triangle$}
 % {\popQED\end{definition/}}

\newenvironment{Exa}
% {\renewcommand{\qedsymbol}{$\triangle$}
  { \pushQED{\qed}   \exi}
  {\popQED\endexi}

\newcommand{\de}{{\partial}}

\begin{document}
\author{Paolo Salvatore}
\address{Dipartimento di Matematica,  
Universit\`{a} di Roma Tor Vergata, 
Via della Ricerca Scientifica, 
00133 Roma, Italy}
\email{salvator@mat.uniroma2.it}
\thanks{ The author would like to thank the Isaac Newton Institute for Mathematical Sciences for support and hospitality during the programme Homotopy Harnessing Higher Structures. This work was partially supported by the EPSRC grant number EP/R014604/1 and by the Excellence Department Project MATH@TOV number E83C18000100006}

\title{A cell decomposition of the Fulton MacPherson operad}
\begin{abstract}
We construct a small regular cellular decomposition of the Fulton MacPherson operad $FM_2$ 
that is compatible with the operad composition.  The cells are indexed by trees with edges of two colors and vertices labelled by cells of the cacti operad. 
We compute the generating functions counting the cells, that are algebraic. 
\end{abstract}
\maketitle

\section{Introduction} 
The combinatorial approach to the topology of moduli spaces of Riemann surfaces has led to great achievements 
in the past decades, yielding in particular explicit cellular decompositions of the moduli spaces.  The most known 
approach for marked surfaces, due to Harer, Penner and Thurston, makes use of quadratic differentials, or equivalently of hyperbolic metrics.
We refer the reader to the detailed survey by Mondello \cite{Mondello}. There is another less known approach by Giddings and Wolpert
\cite{GW}, that makes use of meromorphic differentials, and leads to a different cellular structure on the moduli space. 
This construction has some analogies with the approach for the case of surfaces with boundary by B\"odigheimer \cite{Boe}. 
The combinatorics of the Giddings-Wolpert cells has been studied by Nakamura \cite{Nakamura} and Garner-Ramgoolam  \cite{Garner-Ramgoolam}.
% Our cells are also based on meromorphic differentials, but are new and distinct from the Giddings-Wolpert cells. 
We concentrate on the genus zero case in this paper. The moduli space of $k+1$ distinct marked points on the Riemann sphere $\mathcal{M}_{0,k+1}$ coincides with the quotient of the configuration space $F_k(\bcc)$ of $k$ distinct points in 
$\bcc$ 
by the affine group $\bcc \rtimes \bcc^*$ generated by translations, dilations, and rotations. In genus zero the Giddings-Wolpert cells can be described explicitly in terms of a flow associated to the multivalued function $$h(z)=\prod_{i=1}^k (z-z_i)^{a_i}$$  as we will explain in section \ref{sec:conf}. 
From a physical point of view, the flow is the integral flow of the vector field that is the sum of $k$ radial fields generated by charges $a_i$ at the respective configuration points $z_i$,  of strength inversely proportional to the distance, and proportional to the charges.
The flow lines in $\bcc$ along which 
the argument $$\arg(h(z))=\sum_{i=1}^k a_i \arg(z-z_i)$$ is constant connect the critical points of $h$ and the zeros of $h$.  
The resulting graph is a tree, and its shape corresponds to a cell of the moduli space. The parameters of the cell are obtained by evaluating $|h(z)|$ on the critical points of $h$, and measuring the angles between flow lines hitting a zero of $h$.  
If we do not mod out by rotations, i.e. consider the moduli space $$FM'_k:= F_k(\bcc)/(\bcc \rtimes \R_+) \simeq F_k(\bcc),$$  then
we obtain a similar cell  decomposition by adding to the previous data the choice of a distinguished flow line of $h$ 
going out of $\infty$. The symmetric group $\Sigma_k$ acts freely on the set of cells.
This description appeared first in the thesis of my master student Egger in 2009 \cite{Egger}. He proved that 
the union of cells of $FM'_k$ where the value $|h(z)|$ agrees on all critical points of $h$
forms a deformation retract $\C_k \subset FM'_k$ that is well known to topologists, the {\em cacti} complex.  
This complex was introduced by McClure-Smith \cite{MS} in a combinatorial form, as it was needed 
for their proof of the Deligne conjecture, and independently by Voronov \cite{Voronov} and Kaufmann \cite{Kaufmann}
in a more geometric form. An element of the cacti complex is a union of circles, called lobes, intersecting in a finite number of points, along a planar tree, together with a (global) base point. 
There is also a local base point of each lobe of a cactus that is the first point of the lobe met moving from the (global) base point of the whole cactus in anti-clockwise direction. 
Egger shows that the lobes of a cactus correspond to the sinks of the flow, i.e. the configuration points, and that some lobes intersect exactly when there is a critical point from which originate flow lines hitting the corresponding points. The base point is the flow line with positive horizontal tangent at infinity. 
The cactus associated to a configuration with equal critical value
$M=|h(\zeta)|$ for any 
critical point $\zeta$ of $h$  can be realized explicitly inside $\mathbb{C}$ as $\{z |\; |h(z)|=M \},$  
%figure 

\begin{center}
\begin{tikzpicture} [scale=.8] %[every node/.style={inner sep=1pt,circle,fill}]
\node (4) at (6.5,.8) [circle,draw,inner sep=2pt,label=left:$z_1$] {};
\node (5) at (9.5,1.2) [circle,draw,inner sep=2pt,label=above:$z_2$] {};
\node (w) at (8,1) [circle,fill,inner sep=1pt,label=above left:$\zeta$] {};

\path [->] (w) edge (5) (w) edge (4) ; 
\draw (4) [dashed]  circle (1.5cm);
\draw (5) [dashed] circle (1.5cm);
\node (infi) at (12,.8) {$\infty$};
\draw (5)  .. controls (10,.8) and (11,.8) ..(infi); 

\end{tikzpicture}
\end{center}

The complexes $\C_k$ are regular CW-complexes equipped with the action of the symmetric 
group permuting freely the cells. In fact they are realization of semi-simplicial complexes, as we explain in section
\ref{sec:cacti}. The collection of the cellular chain complexes $C_*(\C_k)$ forms an $E_2$-operad that was used by several authors 
in the proof of the Deligne conjecture, notably McClure-Smith \cite{MS} and Kontsevich-Soibelman \cite{KS}. In a sense
$C_*(\C_k)$ is the smallest algebraic $E_2$-operad.
However the spaces $\C_k$ themselves do not form a topological operad. They only 
form an operad up to higher homotopies \cite{Bas}.
 The composition of cacti 
$c \circ_i d$, roughly speaking, glues  $d$ into the $i$-th lobe of $c$, by identifying the global base point of $d$ and the local base point of the $i$-th lobe of $c$. 

The associativity issue can be fixed at the price of adding 
extra parameters as we explain in our paper \cite{Deligne}. 

\medskip

Let us go back to the full moduli space $FM'_k=F_k(\bcc )/(\bcc \rtimes \R^+)$. 
This admits a compactification $FM(k)$, the Fulton MacPherson space,
defined essentially by Axelrod and Singer \cite{AxSing}, that is a real manifold with corners. For details we refer to section \ref{sec:fulton}.  Roughly speaking the boundary of $FM$ consists of configurations where points are allowed 
to come together recursively.
The collection of the spaces $FM(k)$ forms a topological $E_2$ operad, the Fulton MacPherson operad, that was first 
defined by Getzler-Jones in \cite{GJ}. In a sense this is the smallest topological $E_2$ operad. 
The operad composition $x \circ_i y$ in $FM$ replaces the $i$-th point of $x$ by an infinitesimal rescaled version of $y$.  
Together with its 
higher $E_n$ analogues, $FM$ it is a fundamental tool in the embedding calculus, see for example \cite{Turchin}. 
What is the connection between the cacti complexes $\C_k$ and the Fulton Mac Pherson operad $FM$ ?
Our main result is the following theorem:
\begin{Thm} \label{princ} 

\ben
\item The space $FM(k)$ admits a regular CW decomposition into cells indexed by trees with $k$ leaves, edges of two colours,  and vertices labelled by cells of $\C$, equivariant with respect to the action of the symmetric group $\Sigma_k$.
\item The operad composition along $$FM(k) \times FM(n_1) \times \dots \times FM(n_k) \to FM(n_1+\dots+n_k)$$
of a product of cells is a cell. 
\item The $E_2$ operad of cellular chains $C_*(FM)$ is isomorphic to the standard operad resolution $\Omega B(C_*(\C))$.
\een
\end{Thm}
This result is partially motivated by a conjecture of Kontsevich-Soibelman  \cite{KS} stating that a similar cellular decomposition exists, if the operad $C_*(\C)$ is replaced by the so called minimal operad $\mathcal{M}$.  
Actually $\C_*(C)$ has much less generators than $\mathcal{M}$, and so the number of our cells is much smaller than that conjectured by Kontsevich-Soibelman. We expect our decomposition to be the smallest 
combinatorial  decomposition of the Fulton MacPherson operad.   
We call trees in Theorem \ref{princ} {\em metatrees}, following the terminology  by Kontsevich-Soibelman, because they correspond to trees
with vertices labelled by trees, that in turn have vertices labelled by cacti cells. The functor $B$ is the algebraic bar construction of an
operad, that is a cooperad, and $\Omega$ is the dual cobar construction of a cooperad, that is an operad \cite{Benoit}. Both construction can be described by labelled trees, and so the standard resolution 
$\Omega B O$ of an algebraic operad $O$ can be described in terms of metatrees with labels in $O$.
Therefore condition 3) of the Theorem is about differentials, see section \ref{sec:fulton} for details.  
In particular we are able to list the cells contained in the boundary of a given cell.

\medskip

Recall that in the category of topological operads 
a standard resolution is provided by the Boardman-Vogt construction $W$ \cite{BV}. This has been generalized to other categories by 
Berger and Moerdijk \cite{BM}, so that it coincides with $\Omega B$ for algebraic operads. The resolutions are compatible with 
the chain functor, in the sense that if  $P$ is a cellular topological operad (i.e. the structure maps are cellular), then 
$WP$ is also a cellular topological operad, and $C_*(WP) \cong \Omega B(C_*(P))$. 

Therefore in a sense the Fulton MacPherson operad $FM$ is trying to be the $W$ construction of $\C$, except that the latter is not  really a strict operad.

\medskip

In the simpler $E_1$ scenario instead the $W$ construction does appear. Let $Ass$ be the topological 
associative operad, and let $K$ be the Stasheff operad.  Then $W(Ass) \cong K$. 
This implies that the algebraic $A_\infty$ operad $C_*(K)$ is isomorphic to the resolution $\Omega B(C_*(Ass))$
of the algebraic associative operad $C_*(Ass)$.
Notice that $K \cong FM_1$, where $FM_1$ denotes  
the Fulton MacPherson $E_1$ operad. For a detailed proof of this fact see \cite{Barber}. 

\medskip

We remark that there are higher $E_n$-operad  analogues of the cacti complexes for any $n>2$, related to the surjection operad 
by McClure and Smith \cite{MS-sur}, but there is no hope of extending Theorem \ref{princ} to $n>2$  because these complexes have a larger dimension 
than the corresponding spaces of the $E_n$-operad $FM_n$, and so they cannot generate them in any reasonable sense. 
The complex structure plays a key role in the case $n=2$.

\medskip

Let us go back again to the cell decomposition of the moduli space $FM'_k$. In section \ref{sec:cellular} we construct explicitly the family 
of cell decompositions depending on positive weights $a_1,\dots,a_k$ attached to the configuration points
$z_1,\dots,z_k$, by using the flow of the multivalued function $h(z)=\prod_{i=1}^k(z-z_i)^{a_i}$. 
%Similar families of cell decompositions appear in the Harer-Penner-Thurston approach to moduli spaces \cite{Mondello}.
We stress that our cells are distinct from the Giddings-Wolpert cells and especially suitable from the point of view of the operadic structure. Roughly speaking, the cells correspond to families of nested cacti cells, where each lobe has a child cactus associated to it. This is proved in
Theorem \ref{pocofa}.
Each cactus of the family can be visualized in $\mathbb{C}$ as the connected component of 
$\{z \in \mathbb{C} \; | \; |h(z)|=|h(\zeta)| \}$ containing a critical point $\zeta$.  
%figure
\begin{center}
\begin{tikzpicture}
\node (3) at (6.5,0) [circle,draw,inner sep=2pt] {};
\node at (5.85,0) {$z_1$};
\node at (6,.4) {$\zeta$};
\node at (5.85,.8) {$z_2$};
\node at ( 7.7,.5   ) {$\bar{\zeta}$};

\node (4) at (6.5,.8) [circle,draw,inner sep=2pt] {};
\node (34) at (6.5,.4) [circle,fill,inner sep=1pt]{};
\path [->] (34) edge (3)  (34) edge (4);
\node (5) at (9.5,1.2) [circle,draw,inner sep=2pt,label=above:$z_3$] {};
\node (w) at (8,.85) [circle,fill,inner sep=1pt] {};
\draw (5) .. controls (8,.8) .. (4);
%\path [->] (w) edge (5) (w) edge (4) ; 
\draw (34) [dashed]  ellipse (1.6cm and 1.4cm);
\draw (5) [dashed] ellipse (1.5cm and 1.3cm);
\node (infi) at (12,.8) {$\infty$};
\draw (5)  .. controls (10,.8) and (11,.8) ..(infi); 
\draw (3) [dashed] circle (.4cm);
\draw (4) [dashed] circle (.4cm);

\end{tikzpicture}
\end{center}

%The cells are obtained from the Giddings-Wolpert tree of flow lines, by removing recursively critical points with the highest critical values, 
%that gives a forest (union of trees), and repeating the algorithm for each connected component of the forest. 

 %At each step of the recursion this gives a cactus with one lobe for each connected component of the forest. %The global base point of a cactus is defined via the local base point of the corresponding lobe of the parent cactus. 

%In section \ref{sec:cellular} we analyze the combinatorics of the cells.
%Theorem \ref{pocofa} shows that the cells of $FM'$  correspond indeed to nested trees with labels in the cells of $\C$. 

Corollary 
\ref{bc} shows that the cellular complex $C_*(FM')$ is isomorphic to the operadic cobar construction $BC_*(\C)$ up to a shift.

\medskip

In section \ref{sec:fulton} we consider the extension of the cell structure to the boundary of $FM(k)$, 
that yields a regular CW-structure on $FM(k)$, also depending on the positive weights $a=(a_1,\dots,a_k) \in \stackrel{\circ}{\Delta}_k$.
This is not trivial, since for example the Fox-Neuwirth decomposition of configuration spaces does not extend to the boundary as
observed by Tamarkin \cite{Voronov2}.

In Theorem \ref{condo} we prove that the cells of $FM(k)$ correspond to metatrees labelled by cells of $\C$, and in Corollary 
\ref{omb} we derive that the chain complex  $C_*(FM)$ is isomorphic to $\Omega B C_*(\C)$.  This shows conditions 1) and 3) of 
Theorem \ref{princ}.

\medskip

If we wonder about the compatibility of the operad composition in $FM$ with the cell structure, there are two issues that need to be addressed. First of all, the cell decompositions of $FM(k)$ depend on weights $a=(a_1,\dots,a_k)$, that live in the open $k$-simplexes. Recall that physically this corresponds to having charges $a_i$ at the configuration points.
If we compose two configurations $x, y \in FM$ with respective weights $a=(a_1,\dots,a_k)$ and $b=(b_1,\dots,b_l)$,
then the natural weight associated to the composition $x \circ_i y$  is

%we can glue the flow of $x$ with weight $a=(a_1,\dots,a_k)$ and the infinitesimal flow of $y$ with weight $b=(b_1,\dots,b_l)$ getting a global "flow" if we use the weight 
$$a \circ_i b= (a_1,\dots,a_{i-1},a_i b_1,\dots, a_i b_l,a_{i+1},\dots, a_k)$$ 
using the operad structure on the collection of open simplexes given by multiplication of weights \cite{AK}.

Namely, thinking about the physical interpretation of the flow, for the configuration $x \circ_i y$ the charges in the infinitesimal version $y$ look like a single point $x_i$ with charge $\sum_{j=1}^l a_i b_j = a_i$, whereas  
near the cluster, the contributions to the field coming from the points $x_j, j \neq i$ are constant and do not influence the flow. Of course this discussion makes sense only asymptotically, because the flow is not defined in the limit. 
However the cells can be defined and make sense in the limit.

This discussion suggests that the cells should form a fiberwise operad over the operad of open simplexes, rather 
than an ordinary operad.
%There is no easy way out, like choosing the barycenters as special weights, because they are not stable under %composition, since for example $(1/2,1/2) \circ_1 (1/2,1/2) = (1/4,1/4,1/2)$.

\medskip

However there is a second issue about base points of cacti and rotations:
if we consider the recursive construction of the cells,  it turns out that that in a composition $x \circ_i y$ the base point of the cactus for the cluster coming from $y$ has a base point determined by the first flow line hitting $x_i$, and so the base point is rotated by 
the normalized tangent vector $\theta^i_a(x) \in S^1$ of the flow line at $x_i$, where $a$ is the weight of $x$. %, counting in the anti-clockwise direction from the distinguished flow line. 

%inserire figura 
The following picture shows the vectors $\theta^i_a(z_1,z_2,z_3)$.

\begin{center}
\begin{tikzpicture}
\node (3) at (6.5,0) [circle,draw,inner sep=1pt] {$z_1$};
\node (4) at (6.5,.8) [circle,draw,inner sep=1pt] {$z_2$};
\node (34) at (6.5,.4) [circle,fill,inner sep=1pt]{};
\path (34) edge (3)  (34) edge (4);

\node (5) at (9.5,1.2) [circle,draw,inner sep=1pt] {$z_3$};
\node (w) at   (7.8,.6)   [circle,fill,inner sep=1pt] {};
\node (infi) at (12,.5) {$+\infty$};
\draw (w) .. controls (8.5,.6) .. (5) ;
\draw (w) .. controls (7.3,.6) .. (4) ;
%\path (w) edge (5) (w) edge (4) ; 
\draw (infi) .. controls (10.5,.5) .. (5) ;
%\draw (w) .. controls (7,2) .. (5,3);

\draw [<-,thick] (3) -- (6.5,-.5);
\draw [<-,thick] (4) -- (6,1);
\draw [<-,thick] (5) -- (9.2,1.6);

\node at (6.2,-.5) {$\theta^1$};
\node at (6.2,1.3) {$\theta^2$};
\node at (9.6,1.8) {$\theta^3$};

\end{tikzpicture}
\end{center}

So the operadic composition in $FM$ of cells is not a cell, 
  because the distinguished flow lines (base points) are rotated along
the composition.
Suppose that $\sigma$ and $\tau$ are metatrees indexing cells $\sigma_a \subset FM(k)$ and 
$\tau_b \subset FM(l)$, for respective weights $a=(a_1,\dots,a_k)$ and $b=(b_1,\dots,b_l)$.  Consider the metatree $\sigma \circ_i 
\tau$ obtained by grafting $\tau$ to the $i$-th leaf of $\sigma$.
Then the cell $(\sigma \circ_i \tau)_{a \circ_i b}$ is not
 the operadic composition  $\sigma_a \circ_i \tau_b \subset FM(k+l-1)$, but it is equal to
$$\{(x \circ_i \theta^i_a(x)y ) \,  | \,  x \in \sigma_a, y \in \sigma_b\}$$

The map $x \mapsto \theta^i_a(x)$ is null-homotopic, since $\theta^i_a(x)$ tends to $1 \in S^1$ as $a_i$ tends to $1$ and $a_j$ to $0$ for $j \neq i$.
 We fix the problem of the rotations by constructing in Lemma \ref{remrot} a family of self-homeomorphisms 
 $$\Lambda_a:FM(k) \to FM(k)$$
 of $FM$ satisfying 
$$\Lambda_{a \circ_i b}(x  \circ_i y) =  \Lambda_a(x) \circ_i  \Lambda_b( \theta^i_a  (x)^{-1} y )$$ 
We define a new cellular structure, called $a$-structure, on $FM(k)$, taking the image of the former cells via $\Lambda_a$.
Theorem \ref{forst} states that with respect to this structure the $\circ_i$ -composition of an $a$-cell and a $b$-cell
is a $(a \circ_i b)$-cell .

\medskip

The fundamental tool used to prove Lemma \ref{remrot} is the existence of a $SO(2)$-equivariant isomorphism of topological operads.
\begin{equation}
\beta: FM \cong W(FM) \label{fund}
\end{equation}
We announced this result in the non equivariant setting in \cite{Barcelona}, and added an equivariant detailed
proof in \cite{new}, including the higher $E_n$ case. In the case $n=2$ we are interested in, $\beta$ can be constructed explicitly as
a piecewise algebraic map (see Remark 9 in \cite{new}).

In subsection \ref{sub:rot} we construct an explicit family of self-homeomorphisms $g_a: WFM(k) \cong WFM(k)$
and then set $\Lambda_a = \beta^{-1}g_a \beta$. The maps $g_a$ are constructed piecewise, using the (higher) null-homotopies of the $\theta^i$.

\medskip

We still have dependence on the weights along the operadic composition. 
We use once again  the isomorphism (\ref{fund}) in subsection \ref{sub:weights}
to get rid of the weights. 
Namely we build a cell structure on $WFM$ compatible with the operad composition, with cells in bijection with those of $FM \subset WFM$. 
We use the fact that each space $WFM(n)$ is obtained by adding a collar to 
the manifold with corners $FM(n)$, i.e. 
$$WFM(n)= \cup_S ( S \times [0,1]^{codim(S)}),$$ where 
$S$ takes values in the strata of $FM(n)$. 
 This property is at the heart of the proof of the isomorphism (\ref{fund}).
Each metatree $\sigma$ of $FM(n)$ defines a cell of $WFM(n)$ of the form
$$\sigma_{WFM} = \cup_S (\sigma \cap S) \times [0,1]^{codim(S)}$$ In this formula  
the weights of the cells of $FM(n)$ are not indicated and have to be chosen appropriately: for the full stratum $S=FM(n)$ we choose for $\sigma$ the weight $b_n=(1/n,\dots,1/n)$  ($b_n$ is the barycenter of the $(n-1)$-simplex).  More generally
$(\sigma \cap S) \times [0,1]^{codim(S)}$ is a family of cells parametrised  by $t \in [0,1]^{codim(S)}$ where the fiber
$(\sigma \cap S)$ over $t$ has a weight depending on $t$ and $S$.
For example, recall that $\de FM(3)= \coprod_{i=1}^3  \de_i FM(3)$ where $\de_i FM(3)  \cong FM(2) \times FM(2)$
is the component where the two points not indexed by $i$ come together,
and that $$WFM(3) = FM(3) \cup_{\de FM(3)} (\de FM(3) \times [0,1]) .$$
If $\sigma$ is a (family of) cell(s) of $FM(3)$ intersecting the component $\de_i FM(3)$ in 
$\de_i \sigma$, then the corresponding cell of $WFM(3)$ is the union
$$(\sigma_{b_3}) \cup \bigcup_{i=1}^3 ( (\de_i \sigma)_{(1-t)b_3+tP_i}  \times \{t\})$$
 where  $P_i=(a_1,a_2,a_3)$  has coordinates $a_i=1/2$ and
$a_j=1/4$ for $j \neq i$. This cell decomposition of  $WFM$ is operadic, as we prove in Theorem \ref{ultimo}.
 
The operadic cell structure on $W(FM)$ is transferred to an operadic cell structure on $FM$ via the isomorphism (\ref{fund}),
and so condition 2) of Theorem \ref{princ} holds.   This completes the proof of  Theorem \ref{princ}.

\medskip

Along the way we obtain new results about the generating functions counting the number of cells of our spaces.
The generating function counting the cells of the cacti complexes $\C_k$ are known,  as they count the so called Davenport-Schinzel sequences \cite{Gardy}, see Theorem \ref{count-ds}. 
We find an explicit algebraic expression in Theorem \ref{count-open} 
for the generating function counting the open cells of the open moduli spaces $FM'_k$, and we
indicate in Theorem  \ref{count-closed} an algebraic expression for the generating function counting the cells of 
the compactified moduli spaces $FM(k)$. 

\medskip

We comment on the regularity of the cells: they are piecewise algebraic in an infinitesimal sense,
since they are obtained via integral curves of meromorphic differentials with simple poles.
If the weights are rational numbers, then the 
open cells of $FM'_k$  are piecewise algebraic in the strict sense 
and so are the associated cells of the compactification $FM(k)$ in section \ref{sec:fulton}.
The homotopies used to eliminate rotations in subsection \ref{sub:rot}, and those used to eliminate the weights
in subsection \ref{sub:weights} are infinitesimally piecewise algebraic. 
The situation reminds of the regularity of Kontsevich graph PA-forms \cite{Kont} \cite{LV}, that are obtained by integrating piecewise algebraic 
 forms.
 We expect the integral of the Kontsevich forms on our cells to be related to multizeta values, in view of the recent 
 progress on the subject by Banks-Panzer-Pym \cite{BPP}.

\medskip

Which applications and developments can one hope for? 
As we said the Fulton-MacPherson operad is crucial in the embedding calculus. 
Our explicit operadic cellular decomposition could be useful in the study of spaces of embeddings 
from surfaces, that can be described in embedding calculus by configuration spaces that are right modules over $FM$. 
Nate Bottman suggests that our decomposition could be useful in symplectic topology, where a chain level action
of the framed little discs has not been constructed yet (compare conjecture 2.6.1 in \cite{Abou}). 
\medskip

It is natural to ask for a generalization of Theorem \ref{princ} to
the higher genus version of $FM(k)$, i.e. the moduli space of genus $g$ Riemann surfaces
with $k$ marked points, one distinguished puncture, and a tangent ray at such puncture. These moduli spaces 
assemble to form an operad. It would be nice if this operad could be endowed with a cellular structure, 
and described in terms of the cacti complexes.

\medskip
Here is a plan of the paper:
in section \ref{sec:partrees} we define combinatorially the space of labelled trees $Tr_k$ . 
In section \ref{sec:conf} we construct a homeomorphism between $Tr_k$ and the moduli space $\mathcal{M}_{0,k+1}$ depending 
on positive weights.
In section \ref{sec:cacti} we define the cacti complexes $\C_k$, their homotopy operad structure maps, and compute 
their generating series $P$.  In section \ref{sec:cellular} we define combinatorially the space $N_k(\C)$ of nested trees, we show that 
it is homeomorphic to the configuration space $FM'_k$, and this gives an open cell decomposition of the latter. We then compute the generating
series $o$ counting the cells.  In section \ref{sec:fulton} we recall the definition and properties of the Fulton MacPherson spaces $FM(k)$, and prove that they have regular CW decompositions indexed by metatrees. We compute the associated generating series $F$. 
In section \ref{sec:last} we modify the cell decomposition of the spaces $FM(k)$ so that it becomes compatible with the operad composition,
by getting rid of rotations (in subsection \ref{sub:rot}) and weights (in subsection \ref{sub:weights}).

\section{Space of parametrized trees} \label{sec:partrees}

In this section we construct a combinatorial model $Tr_k$ for the moduli space $$F_k(\bcc)/(\bcc \rtimes \bcc^*)$$
of $k$ ordered distinct points in the complex line $\bcc$ modulo affine transformations of the complex line $\bcc$, 
i.e. modulo translations, rotations and dilations. This space is diffeomorphic (and biholomorphic)
to the moduli space $\mathcal{M}_{0,k+1}$ 
of genus 0 Riemann surfaces with $k+1$ marked points, and 
to the configuration space 
$$F_{k-2}(\bcc-\{0,1\})$$ of $k-2$ points in $\bcc$ with 2 points removed. 
Our model will consist of certain planar trees with black and white vertices, with parameters labelling the black vertices and the angles between edges into the white vertices. The model mimicks the combinatorics of flow lines between critical points and configuration points, as we will see in the next section.
The model is equivariant with respect to the action of the symmetric group on $k$ letters $\Sigma_k$.

\medskip

We start by defining certain directed trees that will index the strata of our model $Tr_k$ .
\begin{Def} \label{uno.uno}
An admissible tree $T$ with $k$ leaves consists of:
\begin{itemize}
\item a set $W=\{v_1,\dots,v_k\}$, whose elements are the {\em white vertices}. 
\item  a finite set $B$, whose elements are the {\em black vertices}.  
\item A finite set $E \subseteq V \times V$, whose elements are the {\em directed edges}, where $V = B \coprod W$ is the set of all vertices.
If $e=(x,y) \in E$, then we say that $e$ has {\em source} $x$ and {\em target} $y$.
We say that a vertex $v \in B \coprod W$  is incident to an edge $e$ if it is either the source or the target of $e$.
\item The set $E_v$ of edges incident to a fixed vertex $v \in V $ is equipped with a cyclic ordering. 
\end{itemize}

The following conditions must be satisfied:
\begin{enumerate}
\item A white vertex is not allowed to be a source
\item If $e=(x,y) \in E$, then $(y,x) \notin E$, i.e an edge cannot be inverted.
\item There are no loops, i.e. there is no set of vertices $\{v_1,\dots,v_m\}$ such that for each
$i\in \{1,\dots, m\}$ either $(v_i,v_{i+1}) \in E$ or $(v_{i+1},v_i) \in E$, where $v_{m+1}=v_1$.
\item Any black vertex $v \in B$ is the source of at least 2 distinct edges. It cannot be the target of two edges that are next to each other in the cyclic ordering of $E_v$.   \label{atleast}
\item The tree is connected, i.e. for any two vertices $v,v' \in V$ there is a sequence $(v=v_1,\dots,v_l=v')$ of vertices such that
for each $i=\{1,\dots,l-1\}$ either $(v_i,v_{i+1}) \in E$ or $(v_{i+1},v_i) \in E$.
\end{enumerate}
\end{Def}

Let $\Tau_k$ be the set of isomorphism classes of admissible trees with $k$ leaves. 

\begin{Def} \label{def:realization}
We associate to each $T \in \Tau_k$ in the standard way its {\em realization}, a 1-dimensional simplicial complex $||T||$ with
a $0$-simplex for each vertex and a $1$-simplex for each edge, oriented from the $0$-cell of the source to the $0$-cell of the target. 
\end{Def}

The simplicial complex $||T||$ is equipped with an embedding into $\bcc$ that is uniquely defined up to isotopy, by the condition that the cyclic ordering of the edges incident to a vertex $v$  correponds to the counterclockwise ordering of the associated $1$-simplexes. Hence $||T||$ is a {\em planar tree}.

\begin{Lem} \label{lem:1.2}
An admissible tree $T$ with $k$ leaves has at most $k-1$ black vertices.
\end{Lem}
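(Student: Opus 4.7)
My plan is to exploit two structural facts: conditions (3) and (5) together force the underlying undirected graph of $T$ to be a tree in the graph-theoretic sense, while condition (1) says every edge is sourced at a black vertex and part of condition (4) says every black vertex sources at least two edges.

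First I would note that by connectedness (condition 5) and the absence of cycles (condition 3), the realization $\|T\|$ is a tree, so
\[
|E| = |V| - 1 = |B| + k - 1.
\]

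Next, since white vertices cannot be sources by condition (1), each edge is sourced at some black vertex, and hence
\[
|E| = \sum_{v \in B} |\{e \in E : e \text{ has source } v\}|.
\]
By the first part of condition (4), every black vertex sources at least two edges, so this sum is bounded below by $2|B|$, giving $|E| \geq 2|B|$.

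Combining the two, we get $|B| + k - 1 \geq 2|B|$, i.e.\ $|B| \leq k-1$. There is no real obstacle here; the only thing to be careful about is that I am only using the ``source of at least $2$ edges'' half of condition (4), not the cyclic-order restriction, and that condition (2) is implicit (it guarantees the edge set is well-defined as a subset of $V \times V$ with consistent orientations, so that the counts above are unambiguous).
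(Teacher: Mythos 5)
Your proof is correct and follows essentially the same route as the paper: both establish $|E| = |B| + k - 1$ from the tree structure (you via $|E| = |V| - 1$, the paper via $\chi(\|T\|) = 1$, which is the same fact) and then combine it with the bound $|E| \geq 2|B|$ coming from condition (4).
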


\begin{proof}
Let $B,W,E$ respectively the set of black vertices, white vertices, and edges of $T$. The Euler characteristic of $||T||$ is 
$$1=\chi(||T||)=|B|+|W|-|E|,$$ 
so
$|E|=|B|+k-1$ \label{be}
 By condition \ref{atleast}, $|E| \geq 2|B|$.   The substitution of the previous formula for $|E|$ in the inequality yields the result. 
\end{proof}

For example $\Tau_2$ contains a single tree with one black vertex $b_1$, 2 white vertices $v_1, v_2$, and two edges. 
\begin{tikzpicture}  [every node/.style={inner sep=2pt}]
\node (v1) at (0,0) [circle,draw,label=above:$v_1$,inner sep=2pt]  {}  ;
\node (b) at (1,0) [circle,fill,label=above:$b_1$] {} ;
\node (v2) at (2,0) [circle,draw,label=above:$v_2$]  {}  ;
\path [->,thick] (b) edge (v1) edge (v2) ;
\end{tikzpicture}

The set $\Tau_3$ consists of 
\begin{enumerate} 
\item a tree with 1 black vertex $b$ and 3 outgoing edges $(b,v_1), (b,v_2),  (b,v_3)$.
\begin{center}
\begin{tikzpicture}  [every node/.style={inner sep=2pt}]
\node (v1) at (0:1) [circle,draw,label=below:$v_1$]  {}  ;
\node (v2) at (120:1) [circle,draw,label=below:$v_2$]  {}   ;
\node (v3) at (240:1) [circle,draw,label=below:$v_3$]  {}  ;
\node (b) [circle,fill,label=below:$b$]{} ;
\path [->,thick] (b)  edge (v1) edge (v2) edge (v3); 
\end{tikzpicture} 
\end{center}

\item a tree with 2 black vertices $b_1,b_2$, 4 edges $(b_1,v_1), (b_1,v_2),  (b_2,v_2), (b_3,v_3)$.
\begin{center}
\begin{tikzpicture}  [every node/.style={inner sep=2pt}]
\node (v1) at (0,0) [circle,draw,label=above:$v_1$]  {}  ;
\node (b1) at (1,0) [circle,fill,label=above:$b_1$] {} ;
\node (v2) at (2,0) [circle,draw,label=above:$v_2$]  {}  ;
\node (b2) at (3,0) [circle,fill,label=above:$b_2$] {} ;
\node (v3) at (4,0) [circle,draw,label=above:$v_3$]  {}  ;
\path [->,thick] (b1) edge (v1) edge (v2) 
(b2) edge (v2) edge (v3) ;
\end{tikzpicture} 
\end{center}

\item Two trees obtained from the tree of case 2)
by permuting cyclically the indices $i$ of $v_i$.
\item A tree with 2 black vertices $b,b'$ and 4 edges $(b,v_1), (b,b'), (b',v_2),(b',v_3)$.
\begin{center}
\begin{tikzpicture}  [every node/.style={inner sep=2pt}]
\node (v1) at (0,0) [circle,draw,label=above:$v_1$]  {}  ;
\node (b) at (1,0) [circle,fill,label=above:$b$]  {}  ;
\node (b') at (2,0) [circle,fill,label=above:$b'$] {} ;
\node (v2) at (2.5,0.8) [circle,draw,label=above:$v_2$]  {}  ;
\node (v3) at (2.5,-0.8) [circle,draw,label=above:$v_3$]  {}  ;
\path [->,thick] (b') edge (v2) edge (v3) 
(b) edge (b') edge (v1) ;
\end{tikzpicture}
\end{center}
\item Two trees obtained from the tree of case 4)
by permuting cyclically the indices $i$ of $v_i$.
\end{enumerate}

We consider next some labellings of these trees.

\begin{Def}
For each tree $T \in 
\Tau_k$, with set of black vertices $B$ , let $\sigma_B$ the topological space of functions
$f: B  \to (0,1]$ such that 
\begin{itemize}
\item for any edge $e=(b_1,b_2) \in E$ of $T$, $f(b_1) \geq f(b_2)$
\item $max(f)=1$.
\end{itemize}
\end{Def}
We observe that any function $f \in \sigma_B$ defines by convex extension
a piecewise linear function $\overline{f}:||T|| \to \R$ such that $\overline{f}(v)=f(v)$ for $v \in B$ and $\overline{f}(v_i)=0$ for $v_i \in W$.
Thus we think of elements of $\sigma_B$ as piecewise linear functions on the realization $||T||$, compatible with the orientations of the edges, with maximum value 1 attained on some black vertex, and minimum value 0 attained exactly on the white vertices.

\begin{Def}
For any white vertex $w \in W$ consider the space $\Delta_w$ of functions  

$g:E_w \to [0,1]$ such that $\sum_{e \in E_w}g(e)=1$.
\end{Def}
This space is a simplex of dimension $|E_w|-1$.

We think of $2\pi g(e)$ as the angle between the direction of the edge $e$ and the next edge in the cyclic ordering.

\begin{Def}
The {\em stratum} associated to a tree $T \in \Tau_k$.
is the space
$$St_T= \sigma_B \times \prod_{w \in W} \Delta_w$$
\end{Def}

\begin{Def}
Let us consider an edge $e=(b,b')$ between black vertices of a tree $T$.
The {\em collapse} of the edge $e$ gives a tree $T/e$ with the same white vertices, a set of black vertices $B/\{b,b'\}$,
and a set of edges $E-\{e\}$. 
\end{Def}

We are finally ready to define our combinatorial model for the moduli space.
\begin{Def} \label{ourmodel}
The space of {\em labelled trees with $k$ leaves} is the quotient
$$Tr_k = (\coprod_{T \in \Tau_k}   St_T ) / \sim $$
with respect to the equivalence relation $\sim$  such that 
\begin{enumerate}
\item For  $e=(b,b')$ and $f(b)=f(b')$
 $$(f:B \to I,  (g_i)_i )_T \sim (f': B/\{b,b'\} \to I, (g_i)_i )_{T/e}$$  
 with $f'([x])=f(x)$. Intuitively this means that we can collapse edges equipped with a constant function.
 %edge collapse
\item If $g_i(e)=0$ for $e=(b,v_i)$, and the next edge in the cyclic ordering is $e'=(b',v_i)$, then 

\medskip

\begin{itemize}
\item For $f(b')=f(b)$ consider the tree $T'$ with black vertex set $B'=B/\{b,b'\}$, and edge set $E/\{e,e'\}$. Then
$$(f:B \to I,(g_i)_i)_T \sim  (f':B' \to I, (g'_i)_i)_{T'}$$
with $g'_i([x])=g_i(x)$ for $x \in E- \{e\}$ and $f'$ induced by $f$.
\item for $f(b') < f(b)$ consider $T^+$ with the same vertices but the edge $e=(b,v_i)$, replaced by $(b,b')$. Then
$$(f:B \to I, (g_i)_i)_T \sim  (f:B \to I, (g_i)_i)_{T^+}$$  
\item for $f(b')>f(b)$ consider $T^-$ with the same vertices but $e'=(b',v_i)$ replaced by $(b',b)$. Then
$$(f:B \to I, (g_i)_i)_T \sim  (f:B \to I, (g'_i)_i)_{T^-}$$
where $g'_i(e)=g_i(e')$ and $g'_i$ coincides with $g_i$ on other edges incident to $v_i$.
\end{itemize}
\end{enumerate}
Intuitively this means that if the angle between two consecutive edges into a white vertex is zero,
then either the values of the piecewise function on the sources are equal, and  we can merge the two edges into a single one,
or else we can replace the target of the edge from the black vertex with larger value of the piecewise function by the black vertex with smaller value. 
\begin{center}
\begin{tikzpicture}  [every node/.style={inner sep=2pt}]
\node (b1) at (330:1) [circle,fill,label=below:$b$]  {}  ;
\node (b2) at (20:2) [circle,fill,label=below:$b'$]  {}   ;
\node at (.7,0) {0};
\node (w) [circle,draw,label=below:$v_i$]{} ;
\path [<-,thick] (w) edge (b1) edge (b2) ; 
\draw  (0.5,0)  arc (0:15:6mm)  ;
\draw  (0.5,0)  arc (0:-25:6mm);
%\draw (0,0) -- (2,0);
\huge
\node at (4,0) {$\sim$};
\normalsize
\node (w3)  at (6,0)  [circle,draw,label=below:$v_i$]{} ; 
\node (b4) at (7,0)    [circle,fill,label=below:$b$]  {}  ;
\node (b5) at (8,0) [circle,fill,label=below:$b'$]  {}  ;
\path [<-,thick] (w3) edge (b4) (b4) edge (b5) ; 
\end{tikzpicture} 
\end{center} 
\end{Def} %figura 

We will construct in section \ref{sec:conf} a family of homeomorphisms 

$$\Psi_{a_1,\dots,a_k}:Tr_k \to \mathcal{M}_{0,k+1}$$

depending  on a choice of positive weights $(a_1,\dots,a_k)$ adding up to $1$.
A similar dependence on parameters appears in the combinatorial model for the moduli space by  Harer-Penner-Thurston \cite{Mondello}.

We need first a combinatorial construction.
For an admissible tree $T \in \Tau_k$, we construct a {\em ribbon graph} $\overline{T}$ by adding a new vertex $\infty$ to $T$ and some new edges with source $\infty$.  We will add (possibly multiple) edges with source $\infty$ and target a black vertex $b$, for any black vertex $b$. 
In this new context an edge is not determined by its source and target, and we need the following definition.
\begin{Def}
A ribbon graph $G$ consists of 
\begin{itemize}
\item A finite set $E$ of edges
\item  A finite set $V$ of vertices 
\item Function $s:E \to V$ (source) and  $t:E \to V$ (target) such that $s(e) \neq t(e)$ for any $e$ {\rm (we do not allow loops with a vertex)}
We write $e:s(e) \to t(e)$. 
\end{itemize}
For any $v \in V$ the set of edges incident to $v$ 
$$E_v= \{ e  \in E  \,|\,  v=s(e) \;{\rm  or }\; 	v=t(e)  \}.$$ 
is equipped with a cyclic ordering $\sigma_v$.
\end{Def} 
\begin{Rem}
If $G$ is a tree, then the map $(s,t):E \to V \times V$ is injective and we can identify $E$ to a subset of $V \times V$ as we did in 
definition \ref{uno.uno}
\end{Rem}

The following definition extends definition \ref{def:realization}.
\begin{Def}
The {\em realization} $||\overline{T}||$ of a ribbon graph $\overline{T}$ is a 1-dimensional  CW complex (that in general is not
 a simplicial complex since the vertices do not determine an edge) 
with a 0-cell for each vertex, and a 1-cell for each edge, so that   
$||\overline{T}||= V \coprod (E \times [0,1]) /  \sim$ with 
$s(e) \sim (e,0)$ and  $t(e) \sim (e,1)$.
\end{Def}

Given an admissible tree $T$, 
we define a ribbon graph $\overline{T}$ by adding edges from a new vertex $\infty$ so that ingoing and outgoing edges  at any black vertex alternate in the cyclic ordering.
\begin{Def}
Let $T$ be an admissibile tree with $k$ leaves, with set of vertices $V= B \coprod W$. We define a ribbon graph $\overline{T}$ as follows:
\bit
\item The set of vertices of $\overline{T}$ is $\overline{V}= V \coprod \{\infty\}$
\item For a black vertex $b \in B$ let $o(b)$ the number of edges with source $b$ and $i(b)$ the number of  edges with target $b$.
We know that $o(b) \geq i(b)$ by condition (4) of Definition \ref{uno.uno}. For $o(b)>i(b)$ we add $o(b)-i(b)$ new edges with source $\infty$ and target $b$. 
Let $$E^{''}_b= \{e_1,\dots, e_{o(b)-i(b)} \} \subset E_b$$ 
 be the set of outgoing edges such that  the successor $\sigma_b(e_i)$ in the cyclic ordering 
is also outgoing. For each $e_i$ we add a new edge $e'_i$ with $s(e'_i)=\infty$ and $t(e'_i)=b$.
Let us write $$E'_b= \{e'_1,\dots,e'_{o(b)-i(b)}\}$$
 and $E'=\coprod_{b \in B} E'_b$.
Then the set of edges of $\overline{T}$ is $\overline{E}= E \coprod E'$. 
\item  We define the cyclic ordering $\overline{\sigma}_b$ on $\overline{E}_b = E_b \coprod E'_b$ so that $\overline{\sigma}_b(e_i)=e'_i$,
$\overline{\sigma}_b(e'_i)=\sigma_b(e_i)$ and $\overline{\sigma}_b(e)=\sigma_b(e)$ if $e \in E_b \setminus E^{''}_b$.
\eit
We must specify the cyclic ordering of the set of edges $E'=\coprod_b E'_b$ coming out of $\infty$.

The set of half-edges of $T$ is
$$HE=\{(e,s(e)) | e \in E\} \coprod \{(e,t(e)) | e \in E\}$$ 
This set is equipped with an involution $I: HE \to HE$ swapping source and target of the edges such that 
$I(e,s(e))=(e,t(e))$ and $I(e,t(e))=(e,s(e))$.
It is equipped with a cyclic ordering $\sigma_\infty$ where 
$\sigma_\infty(e,v)= I(\sigma_{v}(e),v)$. %going along the tree
This induces by restriction 
a cyclic ordering on the subset of edges $\{(e,s(e)) | e \in E^{''}_{s(e)} \}$, hence a cyclic ordering on 
$\coprod_{b \in B} E^{''}_b \cong  \coprod_{b \in B} E'_b =E'$. We choose the
opposite
cyclic ordering  
$\overline{\sigma}$ on $E'=\overline{E}_\infty$.
\end{Def}
%figure with cyclic ordering and new edges
\begin{figure}
\begin{tikzpicture}  [every node/.style={inner sep=2pt}]
\node (v1) at (0,0) [circle,draw,label=above:$v_1$]  {}  ;
\node (b) at (1,0) [circle,fill,label=above left:$b$]  {}  ;
\node (b') at (2,0) [circle,fill,label=above:$b'$] {} ;
\node (v2) at (2.5,0.8) [circle,draw,label=below right:$v_2$]  {}  ;
\node (v3) at (2.5,-0.8) [circle,draw,label=above right:$v_3$]  {}  ;
\path [->,thick] (b') edge (v2) edge (v3) 
(b) edge (b') edge (v1) ;
\node (inf) at (5,0) [circle,draw,label=right:$\infty$]{};
\path [->] (inf) edge (b') (inf) edge [bend right=90] (b) (inf) edge[bend left=90] (b) ;
\end{tikzpicture}
\caption{Example of a ribbon graph $\bar{T}$}
\end{figure}
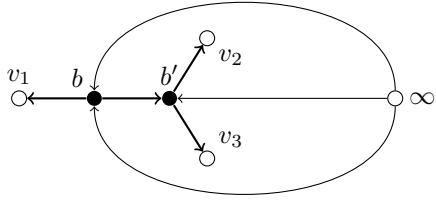
\section{Configurations and labelled trees} \label{sec:conf}

\subsection{From labelled trees to configurations}
We will define a $\Sigma_k$-equivariant function
$$\Psi_{a_1,\dots,a_k} : Tr_k  \to F_k(\bcc)/(\bcc \rtimes \bcc^*) \cong \mathcal{M}_{0,k+1}$$

depending on positive numbers $a_1,\dots,a_k$  called {\em weights}, such that $\sum_{i=1}^k a_i=1$, i.e.
  $(a_1,\dots,a_k)$ is an element of the standard open $(k-1)$-simplex $\stackrel{\circ}{\Delta}_{k-1}$.

In the following definition we fatten each edge $e$ of a labelled tree $tr$, with underlying admissible tree $T$, into a ribbon $F_e$ with pinched ends, and then we glue together these ribbons and the realization of the ribbon graph $||\overline{T}||$, according to the data of the labelled tree,  getting a space $F_{tr}$.

\begin{Def}
\label{directed}

Let $(a_1,\dots,a_k) \in \stackrel{\circ}{\Delta}_{k-1}$ be fixed weights.
Let $ (f,(g_1,\dots,g_k))_T$ represent a labelled tree $tr \in Tr_k$, where $T \in \Tau_k$ is an admissible tree with $k$ leaves.
We can assume that $g_i$ is a positive function for any vertex $v_i$. 

The  realization $||\overline{T}||$ is equipped with a continuous map $\overline{f}:||\overline{T}|| \to [0,+\infty]$ 
extending the map denoted similarly in the previous section.
We define it on the new vertex by $\overline{f}(\infty)=\infty$, and extend it by an increasing arbitrary homeomorphism $\{e\} \times [0,1] \cong [f(b), +\infty]$
on any new edge $e$ coming out of $\infty$ and into $b$.

We associate to each edge $e \in E_{v_i}$ with target a white vertex $v_i$ the quotient 
$$F_e =   ([-\infty,+\infty] \times [0,a_i g_i(e )])/\sim $$
where in the quotient the intervals $$ \{+\infty\} \times [0,a_i g_i(e)] \quad{\rm and }\quad 
 \{-\infty\} \times [0,a_i g_i(e)] 	$$ are collapsed to distinct points, that we call respectively $+\infty$ and $-\infty$.
We define a quotient space
$$F_{tr} :=\coprod_{i=1}^k ( \coprod_{e \in E_{v_i}} F_e ) \coprod ||\overline{T}|| \; / \sim $$
with respect to the following equivalence relation.

Given an edge $e:b \to b'$ of $\overline{T}$  consider the uniquely defined directed path starting with $b$ and ending in a white vertex consisting of edges  $e_j:b_{j-1} \to b_{j}$  for $j=1, \dots, m$ and suitable $m$ , such that $e=e_0, \, b_m=v_i$ is a white vertex, and 
$e_{j+1}$ precedes $e_j$ in the cyclic ordering at $\overline{E}_{b_j}$ for each $j=1,\dots,m-1$. The last edge $e_-=(b_{m-1},v_i) \in E_{v_i}$ labels
the space 
 $F_{e_-}$. 
 We call this path the {\em left path}, because intuitively if we move along the path, then at an intersection point we always choose the left exit.
For $x \in \{e\} \times [0,1] \subset  ||\overline{T}||$  we identify $x \sim (\log(\overline{f}(x)),a_ig_i(e_-))_{F_{e_-}}.$ 
The convention is that $\log(0)=-\infty$ and $\log(+\infty)=+\infty$.

 Viceversa consider a similar directed path, called the {\em right path}, 
 where instead each edge $e'_{j+1}$ follows $e'_{j}$ in the cyclic ordering for $j=1,\dots,n$, 
 the first edge is $e'_0=e$ and  
$e_+:=e'_n$ goes into a white vertex $v_{i'}$.
Then for $x \in \{e_+\} \times [0,1]$ we identify  $x \sim (\log(\overline{f}(x)),0) \in F_{e_+}$. 
\end{Def}
\begin{center}
\begin{tikzpicture}
\node (b) at (0,1)  [circle,fill,inner sep=1pt,label=above:$b$]{};
\node (b') at (1,1)  [circle,fill,inner sep=1pt,label=above:$b'$]{};
\node (b1) at (3,2)  [circle,fill,inner sep=1pt]{};
\node (v1) at (4,2)   [circle,draw,inner sep=2pt,label=above:$v_i$] {};
\node (b2) at (2.5,0)  [circle,fill,inner sep=1pt]{};
\node at (3.5,1.6) {$e_-$};
\node at (3.5,0.4) {$e_+$};
\node (vj) at (4,0)   [circle,draw,inner sep=2pt,label=below:$v_{i'}$] {};
\node at (2.3,1.3) {$F_{e_-}$};
\node at (1.6,0) {$F_{e_+}$};
\path [->] (b) edge (b'); 
\path [->] (b') edge [dotted] (b1); 
\path [->] (b') edge [dotted] (b2); 
\path[->] (b1) edge (v1);
\path[->] (b2) edge (vj);
\node at (.5,1.2) {$e$};
\end{tikzpicture}
%\caption{Faces incident to an edge}
\end{center}

%figure of left / right paths..and faces F_e..
\begin{Lem} \label{count}
The space $F_{tr}$ is a 2-dimensional finite CW complex with

 $\chi(F_{tr})=2$.
\end{Lem}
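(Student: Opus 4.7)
The plan is to exhibit an explicit finite CW decomposition of $F_{tr}$ whose cells are in bijection with the combinatorial data of $\overline{T}$, and then evaluate $\chi$ by direct counting.

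First I would analyze the building blocks. Each face $F_e$ is the rectangle $[-\infty,+\infty]\times[0,a_ig_i(e)]$ with its two vertical sides each collapsed to a distinct point, hence a closed topological $2$-disk. It admits a natural CW decomposition with two $0$-cells ($\pm\infty$), two $1$-cells (the top arc at height $a_ig_i(e)$ and the bottom arc at height $0$), and one open $2$-cell (its interior). The realization $\|\overline{T}\|$ contributes a $1$-complex with one $0$-cell per vertex of $\overline{T}$ and one $1$-cell per edge of $\overline{T}$.

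Next I would show that the identifications of Definition \ref{directed} yield a CW decomposition of $F_{tr}$ in which the $0$-cells are the vertices of $\overline{T}$, namely the elements of $B\cup W\cup\{\infty\}$ (so $|B|+k+1$ in total); the $1$-cells are the edges of $\overline{T}$ (so $|E|+|E'|$ in total); and the $2$-cells are the open interiors of the faces $F_e$, one per edge $e\in\bigsqcup_i E_{v_i}$ (so $\sum_i d(v_i)$ in total). The key combinatorial observation is that, for each face $F_e$ with $e\in E_{v_i}$, the top arc is tiled by the sequence of edges of $\overline{T}$ obtained by iterating the cyclic permutation $\overline{\sigma}$ backward from $e$, and this chain terminates at an edge with source $\infty$ because the alternation of incoming and outgoing edges at each black vertex (guaranteed by condition (4) of Definition \ref{uno.uno} together with the construction of $\overline{T}$) forces the backward walk to exit to $\infty$ in finitely many steps. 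An entirely analogous statement holds for the bottom arc via the right path. Consequently, every edge of $\overline{T}$ contributes exactly one $1$-cell of $F_{tr}$, with endpoints landing on $0$-cells.

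Finally I would count. Lemma \ref{lem:1.2} gives $|E|=|B|+k-1$. Since $\sum_b o(b)=|E|$ (every edge has a black source) and $\sum_b i(b)+\sum_i d(v_i)=|E|$ (every edge has a target, black or white), we find
\[
|E'|=\sum_{b\in B}\bigl(o(b)-i(b)\bigr)=\sum_i d(v_i),
\]
so $\overline{T}$ has $(|B|+k-1)+\sum_i d(v_i)$ edges. Substituting into the Euler formula,
\[
\chi(F_{tr})=(|B|+k+1)-\bigl((|B|+k-1)+\sum_i d(v_i)\bigr)+\sum_i d(v_i)=2.
\]
Finiteness and two-dimensionality are immediate from the cell count.

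The hard part will be the middle step: verifying that the identifications of Definition \ref{directed} tile the tops and bottoms of the faces by edges of $\overline{T}$ exactly once, with endpoints landing on vertices of $\overline{T}$. This reduces to checking that the alternating cyclic structure at each black vertex in $\overline{T}$ makes the backward iteration $e\mapsto\overline{\sigma}_b(e)$ (respectively $e\mapsto\overline{\sigma}_b^{-1}(e)$) a well-defined process that terminates at $\infty$, a consequence of the acyclicity of $T$.
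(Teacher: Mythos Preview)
Your proposal is correct and follows essentially the same approach as the paper: identify the CW structure on $F_{tr}$ with $0$-cells the vertices of $\overline{T}$, $1$-cells its edges, and $2$-cells the faces $F_e$ indexed by $\bigcup_i E_{v_i}$, then count using $|E|=|B|+k-1$ and the equality $|E'|=\sum_i d(v_i)$ (which the paper writes as $|E'|=|F|$). Your discussion of why the boundary arcs of each $F_e$ are tiled by edges of $\overline{T}$ is more explicit than the paper's, which simply asserts that the $1$-skeleton is $\|\overline{T}\|$ and that each $F_e$ is attached via a quotient map from the square.
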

\begin{proof}
The 1-skeleton of $F_{tr}$ is $||\overline{T}||$. The 2-cells are the subspaces $F_e$, with the quotient map from a square as characteristic map.
 They are indexed by the set $F=\cup_{i=1}^k E_{v_i} \, $.
Let us compute the Euler characteristic. 
The number of edges going into white vertices  is equal to the number $|F|$ of 2-cells. Hence the number of edges into black vertices is 
$|E|-|F|$. The number of edges going out of black vertices in $T$ is of course $|E|$. In the ribbon graph $\overline{T}$ we added edges 
from $\infty$ into black vertices to make the number of ingoing and outgoing edges at black vertices equal. The number of new edges is then 
$|E|-(|E|-|F|)=|F|$. Counting also $k$ white vertices and $\infty$,  the Euler characteristic of $F_{tr}$ is then 
$$\chi(F_{tr})= |B|+k+1-|E|-|F|+|F|=2$$ by  the proof of lemma \ref{lem:1.2}.
\end{proof}

\begin{Prop}
There are explicit charts making 
$F_{tr}$ into a genus 0 Riemann surface.
\end{Prop}

\begin{proof}
We cover $F_{tr}$ by a finite number of charts.  
\begin{enumerate}
\item The interior of each 2-cell $F_e$, with $e \in F$, is identified to the open set
 $$\{z \in \bcc \, |  \, 0<Im(z)<a_i g_i(e) \}$$
  by the chart $\phi^{(2)}_{e}:(x,y)_{F_e} \mapsto (x+iy)$.

\item We specify the complex structure near the open 1-cells. Given an edge $e:v_1 \to v_0$ of $\overline{T}$,
the open 1-cell 
$$\{e\} \times (0,1) \subset ||\overline{T}|| \subset F_{tr}$$
 intersects two 2-cells $F_l$ and $F_r$ (that might coincide),
where $l$ and $r$ are the edges obtained following respectively the left and the right path of definition \ref{directed}.
Suppose that $r$ goes into the white vertex
$v_h$.

 Let us consider the open set 
 $$V=\{z \in \bcc \, | \, log(\overline{f}(v_0))< Re(z)<log (\overline{f}(v_1)) \, ,  \,  -\varepsilon < Im(z) <+\varepsilon \} $$
For small $\ep>0$,
an open neighbourhood $U \supset \{e\} \times (0,1)$  is given by the union $U=U_l \cup U_r$, with 
$$U_r=   \{(x,y)_{F_r} \, | \, \phi_r^{(2)}(x,y) \in V\}$$ 
and $$U_l=  \{(x,y)_{F_l} \, | \,  \phi_l^{(2)}(x,y)-a_h g_h(l){\bf i} \in V\}.$$
The chart  $\phi^{(1)}_e:U \to V$ coincides with $\phi^{(2)}_r$ on $U_r$ and with $\phi^{(2)}_l - a_h g_h(l){\bf i}$ on $U_l$.

\item Let us consider the neighbourhood of $\infty \in F_{tr}$ 
$$Z=\bigcup_{F_e}\{ (x,y)_{F_e} \, | \, x>M \}$$ for $M$ sufficiently large.  
 By the proof of lemma \ref{count} 
the set $E_\infty$ 
of  edges coming out of $\infty$  and the set of edges incident to white vertices $F$, indexing the 2-cells, have the same cardinality.
A bijection $\kappa: E_\infty \to F$ is constructed by following the left directed path from $e \in E_\infty$ 
to a white vertex, terminating with an edge $\kappa(e)=e_-$.  
The map is surjective since the path construction can be reversed and therefore any 
edge of $F$ comes from an edge of $E_\infty$. Hence $\kappa$ is bijective.  Let us denote $g: F \to \R^+$ the function
such that $g(e)=g_i(e)$ if $e$ is adjacent to  the white vertex $v_i$. Observe that 
\begin{equation} \label{sum}
\sum_{e \in F}g(e)= \sum_{i=1}^k \sum_{e \in E_{v_i}} g_i(e) = \sum_{i=1}^k a_i = 1 \, .
\end{equation}
We associate to any $e \in E_\infty$ the positive number $g(e_-)$. 
We choose a chart from $Z$ to $\{ z \in \bcc \, | \, Re(z)<e^{-M} \}$. 
Let us choose arbitrarily some $e_1 \in E_\infty$. The opposite cyclic ordering of $E_\infty$ induces a linear ordering on its elements
$e_1< \dots< e_m$. On $Z \cap F_{{e_i}_-}$ the chart sends 
$$\phi^{(0)}_\infty: (x,y)_{F_{{e_i}_-}} \mapsto  exp(-2\pi(x+{\bf i} (y+ \sum_{j=1}^{i-1} g({e_i}_-)  ) ))$$
By equation \ref{sum} the chart is continuous. 
\item Near a white vertex $v_i$ consider a neighbourhood $Z_i$ consisting of all elements $(x,y)_{F_e}$, with $e$ incident to $v_i$, 
such that $x<-N$, with $N$ large enough. As before, the cyclic ordering of the vertices in $E_{v_i}$, after choosing some $e'_1 \in E_{v_i}$, induces a linear ordering $e'_1< \dots <e'_n$ on its elements. 
A chart to $W= \{ z \in \bcc \, | \, Re(z)<e^{-N} \}$ is given by 
$$\phi^{(0)}_{v_i}:(x,y)_{F_{e_j}} \mapsto exp(\frac{2\pi}{a_i}(x+{\bf i}(y+ \sum_{l=1}^{j-1} g_i(e'_l))))   $$
The chart is continuous since $\sum_{l=1}^n g_i(e_l)=a_i$.  

\item We are left with the black vertices. 
Let $b$ be a black vertex. It has $m$ edges coming in and $m$ going out, with $m \geq 2$, that alternate in the cyclic ordering at $b$.
Each incoming $e$ as 1-cell intersects a {\em left} 2-cell $F_{e_-} \, $, labelled by the terminal edge of its left path $e_- \;$, and 
a {\em right} 2-cell $F_{e_+}$ labelled by the terminal edge of its right path $e_+ \;$. 
Let $e'$ be the outgoing edge following $e$ in the cyclic ordering at $b$. Then $e_+= e'_+$.
If $e''$ is the outgoing edge preceding $e$ in the cyclic ordering $b$, then $e_- = e^{''}_-$.
Suppose that the cyclic ordering at $b$ induces the linear ordering $e=e_1,\,e'=e'_1 \, ,\,\dots \, , \, e_m, \, e''=e'_m$ . 
\begin{figure}
\begin{tikzpicture}
\node (b) at (1,1)  [circle,fill,inner sep=1pt,label=above left:$b$]{};
\node (1) at (1,2) {$e$};
\node (2) at (2,1) {$e''$};
\node (3) at (0,1) {$e'$};
\node (4) at (1,0) {$e_3$};
\path [->] (1) edge (b) (b) edge (3)  (4) edge  (b) (b) edge (2);
\node at (0,1.8) {$F_{e_-}$};
\node at (2,1.8) {$F_{e_+}$};
\end{tikzpicture}
\caption{Edges incident to $b$}
\end{figure}
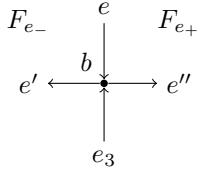
Consider the open neighbourhood $X=X_+ \cup X_-$ of $b$  with
\begin{align*}
X_+= &\bigcup_j\{(x,y)_{F_{(e_j)_+}} \, | \, (x-log(f(b)))^2 +y^2 < \ep^2 \}  \\
X_-= & \bigcup_j\{(x,y)_{F_{(e_j)_-}} \,  | \, (x-log(f(b)))^2 +(y-a_{i(j)} g_{i(j)}((e_j)_-))^2 < \ep^2 \} 
 \end{align*}
where $i(j)$ is the index of the white vertex target of $(e_j)_- \;$ . 
 
Consider the open disk $ D= \{ z \in \bcc \, \, | \,  \, |z| <\ep \}$ for $\ep>0$ small enough.

Let $\sqrt[m]\;: \bcc \to \bcc$ be the branch of the $m$-th complex root given by $\arg(\sqrt[m]{z})=\frac{\arg(z)}{m}$, 
that is continuous for $Im(z) \geq 0$.

A chart $\phi^{(0)}_b: X \to D$ is given 
by 
\begin{align*}
 (x,y)_{F_{(e_j)_+}} &\mapsto \sqrt[m]{ x-log(f(b))+{\bf i} y}\cdot  exp(\frac{2\pi {\bf i}(j-1)}{m}) , \\
 (x,y)_{F_{(e_j)_-}} &\mapsto \sqrt[m]{-x+log(f(b))+{\bf i} (-y+a_{i(j)} g_{i(j)}({e_j}_-))}\cdot  exp(\frac{2\pi {\bf i}(2j-3)}{2m})    
 %I changed sign so it falls with continuity in the right domain..and then divided by the m-th root of -1  
 \end{align*}
Namely the map $\phi^{(0)}_b$ is well defined on the intersections of 2-cells and is a homeomorphism.  
\end{enumerate}
It is easy to see that all changes of coordinates are holomorphic functions. This defines a Riemann surface structure on $F_{tr}\;$.
By Lemma \ref{count} the genus of $F_{tr}$ is zero.

\end{proof}

We are now ready to define the function
$$\Psi_{a_1,\dots,a_k} : Tr_k  \to F_k(\bcc)/(\bcc \rtimes \bcc^*)$$

\begin{Def} \label{def:biholo}
For $tr \in Tr_k$, by the uniformization theorem 
 $F_{tr}$ is biholomorphic to  $\CP^1=\bcc \cup \infty$.  
Choose a biholomorphism $\zeta:F_{tr} \to \CP^1$ satisfying $\zeta(+\infty)=\infty$. 
Then $z_i=\zeta(v_i)$ for $i=1,\dots,k$ are distinct points in $\bcc$ and define an element of the moduli space $F_k(\bcc)/ (\bcc \rtimes \bcc^*)$.
We set $\Psi_{a_1,\dots,a_k} (tr):=[z_1,\dots,z_k]$. 
\end{Def}
 
 \begin{Exa}
 If the underlying tree of $tr$ is %has one black vertex $b$,  two white vertices $v_1,v_2$, and two edges $e:b \to v_1, \, e':b \to v_2$, 
\begin{center}
  \begin{tikzpicture}
%\begin{tikzpicture}  [every node/.style={inner sep=2pt}]
\node (v1) at (0,0) [circle,draw,label=above:$v_1$,inner sep=2pt]  {}  ;
\node (b) at (1,0) [circle,fill,label=above:$b$, inner sep=1pt] {} ;
\node (v2) at (2,0) [circle,draw,label=above:$v_2$,inner sep=2pt]  {}  ;
\path [->,thick] (b) edge (v1) edge (v2) ;
\node at (.5,.2){$e$} ;
\node at (1.5,.2){$e'$};
\end{tikzpicture}
\end{center}

 then $F_{tr}=F_{e} \cup F_{e'}$, where $F(e)$ is the blue region and $F(e')$ the red region; the outer circle represents $\infty$ and  
 $\Psi(tr)=[-1,1]$. 

\begin{center}
\begin{tikzpicture}
  \node (start) at (-1,0) [circle,draw,inner sep=2pt]{};
  \foreach \angle in {0, 10, ..., 360}
    \draw [blue](node cs:name=start,angle=\angle)
   .. controls +(\angle:.9cm)
     ..  (\angle/2+90:2cm);
  \node (stop) at (1,0) [circle,draw,inner sep=2pt]{};
  \foreach \angle in {180, 190, ..., 540}
    \draw [red] (node cs:name=stop,angle=\angle)
   .. controls +(\angle:.9cm)
     ..  (\angle/2+180:2cm);
\node (ori) [circle,fill,inner sep=1pt]{};
\draw [very thick] (start) -- (ori);
\draw [very thick] (stop) -- (ori);
\node (nord) at (0,2) {};
\node (sud) at (0,-2) {};
\draw (ori) -- (nord);
\draw (ori) -- (sud);
\draw circle (2cm);

  \end{tikzpicture}
\end{center}
\end{Exa}

\subsection{From configuration spaces to labelled trees}

We construct the inverse function $\Phi_{a_1,\dots,a_k}: F_k(\bcc)/(\bcc \rtimes \bcc^*) \to Tr_k$ to 
$\Psi_{a_1,\dots,a_k}$. See the figure for an example of a value of $\Phi$.

\begin{tikzpicture}  %[every node/.style={inner sep=1pt,circle,fill}]
\node (1) at (0,0) [circle,draw,inner sep=2pt,label=left:$z_1$] {};
\node (2) at (0,.8) [circle,draw,inner sep=2pt,label=left:$z_2$] {};
%\node (12) at (.2,.4) [circle,fill,inner sep=1pt,label=left:$w_1^\varepsilon$]{};
%\path [->] (12) edge (1)  (12) edge (2);
\node (z2) at (2.5,1.2) [circle,draw,inner sep=2pt,label=below:$z_3$] {};
%\node (w) at (2,.6) [circle,fill,inner sep=1pt,label=above:$w_2^\varepsilon$] {};
%\path [->] (w) edge (z2) (w) edge (2) ; 
\draw (1,0.8) circle (2.2cm);
\huge
\node at (4.5,.5) {$\stackrel{\Phi}{\mapsto}$};
\normalsize
\node (3) at (6.5,0) [circle,draw,inner sep=2pt,label=left:$v_1$] {};
\node (4) at (6.5,.8) [circle,draw,inner sep=2pt,label=left:$v_2$] {};
\node (34) at (6.5,.4) [circle,fill,inner sep=1pt,label=left:$b$]{};
\path [->] (34) edge (3)  (34) edge (4);
\node (5) at (9.5,1.2) [circle,draw,inner sep=2pt,label=above:$v_3$] {};
\node (w) at (8,1) [circle,fill,inner sep=1pt,label=above:$b'$] {};
\path [->] (w) edge (5) (w) edge (4) ; 
\end{tikzpicture}

Given pairwise distinct points $z_1,\dots,z_k \in \bcc$ and positive numbers $a_1,\dots,a_k$ with $\sum_j a_j = 1$ 
consider the multivalued holomorphic function 
$$h(z)=\prod_{i=1}^k (z-z_i)^{a_i} = exp(\sum_{i=1}^k  a_i log(z-z_i) )$$ on $\bcc-\{z_1,\dots,z_k\}$. 
Its norm $|h(z)|=\prod_{i=1}^k |z-z_i|^{a_i}$ is univocally defined. 

The derivative $h'$ vanishes in $z \in \bcc-\{z_1,\dots,z_k\}$ if and only if 
$$h'(z)/h(z)=d(\log(h(z)))/dz= \sum_{i=1}^k  \frac{a_i}{z-z_i} =0 $$
 if and only $$p(z):= \sum_{i=1}^k {a_i}\prod_{j \neq i}(z-z_j) =0 \;.$$

 The monic polynomial $p$ has at most $k-1$ distinct roots. Let $Crit$ be the set of its roots, that are the critical points of $h$.
 
 We study curves $\gamma: (a,b) \to  \bcc -\{z_1,\dots,z_k\}-Crit$ whose image $\Gamma={\rm Im}(\gamma)$ is defined locally by the equation 
 $$\Gamma = \{z \in \bcc \, | \, \arg(h(z))=\theta \} $$ with $\theta \in \R$ constant.
 Such a curve $\gamma$ is non-singular and well defined since 
 $\arg(h(z))= \sum a_i \arg(z-z_i)$ has a continuous branch
 on any simply connected domain $D \subset \bcc - \{z_1,\dots,z_k\}$,
 defined up to addition by a constant $\sum_{i=1}^k{n_i a_i}$, with $n_i \in \bbz$. 
We claim that the curve $\gamma$ is an integral flow line of the vector field 
 $$E(z)= \sum_{i=1}^k \frac{a_i (z-z_i)}{|z-z_i|^2}\, ,$$ that is non-singular on $\bcc - \{z_1,\dots,z_k\}-Crit(f)$.
 The field is the superposition of $k$ radial fields $E_i(z)=\frac{a_i (z-z_i)}{|z-z_i|^2}$
 respectively centered at $z_i$, of strength inversely proportional to the distance from the points $z_i$, and directly proportional to the parameters $a_i$. 
This idea is at the origin of our combinatorial model for the configuration space. 
Namely the field $E$ admits a potential $$Re(\log(h(z)) =\log|h(z)|=\sum_{i=1}^k a_i \log|z-z_i|\, ,$$
and so by a standard argument of complex analysis \cite{Ahlfors} the flow lines of $E$ are exactly the level curves of 
 $$Im(\log(h(z))=\arg(h(z)).$$
 
 Furthermore, using the fact that the multivalued analytic function $\log(h(z))$ is defined outisde $\{z_1,\dots,z_k\}$ and has non-zero derivative outside $Crit$, we have that the family of the curves $\gamma$, as $\theta$ varies, define a foliation with non-compact leaves of $\bcc-\{z_1,\dots,z_k\}-Crit$, that have a regular parametrization by $t=|h(z)|$. Let us study their behavior near a critical point $z_0 \in Crit$. 

For us a flow line of $E$ is a a maximal connected curve $$\gamma:(a,b) \to \bcc-\{z_1,\dots,z_k\}-Crit$$
satisying $\gamma'(t)=E(\gamma(t))$ for any $t \in (a,b)$. 

\begin{Def}
We say that a flow line $\gamma:(|h(z_0)|,a) \to \bcc-\{z_1,\dots,z_k\}-Crit$
 of $E$ is incoming at $z_0 \in Crit$  if 
$$\lim_{t \to |h(z_0)|^+} \gamma(t)=  z_0,$$
and a flow line line $\gamma:(b,|h(z_0)|) \to \bcc-\{z_1,\dots,z_k\}-Crit$ is outgoing at $z_0$ if 
$$\lim_{t \to |h(z_0)|^-} \gamma(t)= z_0$$
\end{Def}

\begin{Lem}\label{lem:crit}
Given a critical point  in $z_0 \in Crit$ that is a zero of order $m-1$ of $p$, there are exactly $m$ incoming and $m$ outgoing flow lines at $z_0$.
\end{Lem}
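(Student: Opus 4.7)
My plan is to reduce the problem to a local holomorphic normal form around $z_0$ and then read off the flow-line structure in the normalizing coordinate.

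First, I would set up the local picture. Since $z_0\notin\{z_1,\dots,z_k\}$, on a small simply connected neighborhood $U$ of $z_0$ the multivalued function $h$ has a holomorphic branch, so $\log h$ is a well-defined holomorphic function on $U$. The identity $h'(z)/h(z) = p(z)/\prod_j(z-z_j)$ together with the hypothesis that $p$ vanishes to order $m-1$ at $z_0$ shows that $(d/dz)\log h$ has a zero of order exactly $m-1$ at $z_0$. Integrating, I get
\[
\log h(z) - \log h(z_0) \;=\; (z-z_0)^m\,\phi(z),
\]
with $\phi$ holomorphic and $\phi(z_0)\ne 0$.

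Next, since $\phi(z_0)\ne 0$, I can choose a holomorphic $m$-th root $\phi(z)^{1/m}$ on a neighborhood of $z_0$ and define the local biholomorphism $w=(z-z_0)\,\phi(z)^{1/m}$, so that in the $w$-coordinate
\[
\log h - \log h(z_0) \;=\; w^m.
\]
Because flow lines of $E$ were characterized as the non-singular level curves of $\arg h$, and since this characterization is coordinate-free (away from critical points), the flow lines near $z_0$ correspond in the $w$-plane to the level curves of $\arg(w^m) = m\arg w$. A flow line that accumulates at $z_0$ must have its constant value of $\arg h$ equal to $\arg h(z_0)$, so locally it must lie in the real locus $\{w^m\in\mathbb{R}\}$, which consists of the $2m$ rays $\arg w = k\pi/m$, $k=0,\dots,2m-1$.

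On the rays with $k$ even we have $w^m=|w|^m>0$, hence $\mathrm{Re}(\log h - \log h(z_0))>0$, so $|h|>|h(z_0)|$ along these rays and the flow (which increases $|h|$) moves away from $z_0$: these are the $m$ outgoing flow lines. On the rays with $k$ odd we have $w^m<0$, so $|h|<|h(z_0)|$ and the flow moves toward $z_0$: these are the $m$ incoming flow lines. Conversely, each such ray, parametrized by $t=|h|$, yields a genuine flow line limiting to $z_0$, proving the count is exactly $m$ on each side.

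The main delicate point I anticipate is justifying that any flow line accumulating at $z_0$ must sit inside the local separatrix set (rather than, say, spiraling in): this is handled by the remark that $\arg h$ is locally constant along flow lines and continuous up to $z_0$ along any converging trajectory, so the entire accumulating trajectory lies in the single level set $\{\arg h=\arg h(z_0)\}$, whose connected components near $z_0$ are precisely the $2m$ rays of the normal form.
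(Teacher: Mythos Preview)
Your proof is correct and follows essentially the same route as the paper: both pass to a local holomorphic coordinate $w$ with $\log h-\log h(z_0)=w^m$ and read off the $2m$ separatrices as the rays where $w^m\in\mathbb{R}$, splitting them into $m$ outgoing and $m$ incoming according to the sign of $w^m=\mathrm{Re}(w^m)$. One small slip: the flow lines are the level sets of $\mathrm{Im}(w^m)$, not of $\arg(w^m)$ (these differ for generic levels), but since you only use the level through $z_0$, namely $\{\mathrm{Im}(w^m)=0\}=\{w^m\in\mathbb{R}\}$, your argument is unaffected.
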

\begin{proof} The multivalued function $\log(h(z))-\log(h(z_0))$ has in $z=z_0$ a zero of order $m$.  
Therefore near $z_0$ there is a holomorphic coordinate $w$ with $w(z_0)=0$ and $\log(h(z)/h(z_0))=w^m$.  
The flow lines are described in this coordinate system by $Im(w^m)=c$ with fixed $c \in \R$.  The incoming and outgoing flow lines are locally
the components of $$\{w \in \bcc^* \, | \, Im(w^m)=0\}\, .$$ The outgoing flow lines 
correspond to $$\arg(w)=e^{\frac{2 \pi {\bf i}j}{m}},$$ for $j=0,\dots,m-1$, since $\log|h(z)|-\log|h(z_0)|=Re(w^m)=w^m>0$ 
on such components. Instead the incoming flow lines correspond
to $$\arg(w)=e^{\frac{2 \pi {\bf i}(2j+1)}{2m}}$$ for $j=0,\dots,m-1$, since $\log|h(z)|-\log|h(z_0)|= Re(w^m)=w^m<0$  
on such components. 
\end{proof}

This shows that the index of the vector field $E$ at the singular point $z_0$ is $-m$.

It is easy to see that near a zero $z_i$ of the function $h$ 

By performing a change of variables
$$w^{a_i} = h(z)=(z-z_i)^{a_i} \prod_{j \neq i} (z-z_j)^{a_j}$$ near $z_i$ we see that
the level curves, or flow lines, near $z_i$  in an appropriate coordinate system 
have the form $\gamma(t)=z_i + t w$, with $|w|=1$. The flow lines are all incoming into $z_i$ since $\lim_{t \to 0^+}\gamma(t)=z_i$. 
So the index of the vector field $E$ at the singular point $z_i$ is $+1$.

Near $\infty$ we perform a change of variables $$w^{a_1+\dots+a_k} = h(z)$$
and see that the flow lines have the form $\gamma(t)=t\cdot \zeta$, for $t$ large enough. The flow lines are all outgoing from  $\infty$ since 
$\lim_{t \to +\infty}\gamma(t)=\infty$. The index of $E$ at the singular point $\infty$ is also $+1$.

Of course this computation respects the Poincare-Hopf index theorem since the index contributions are $-(k-1)$ from the black vertices, $k$ from the zeros of $h$
and $+1$ from $\infty$, so that $-(k-1)+k+1= 2 = \chi(F_{tr})$.

By the characterization of the flow lines, and the compactness of $\CP^1$, any flow line $\gamma:(a,b) \to \bcc-\{z_1,\dots,z_k\}-Crit$ extends to the embedding 
of a closed interval $\bar{\gamma}:[a,b] \to \CP^1$, with $$\bar{\gamma}(a),\bar{\gamma}(b) \in Crit \cup \{z_1,\dots,z_k\} \cup \infty \, ,$$
and possibly $a=-\infty$ or $b=+\infty$. We call the extension a {\em closed flow line}.

\begin{Def}\label{deftree}
Let us start from $(z_1,\dots,z_k) \in F_k(\bcc)$.
 We associate a tree with black and white vertices $T=T(z_1,\dots,z_k) \in \Tau_k$ to the configuration, defined as follows.

\bit
\item  There are $k$ white vertices $\{v_1,\dots,v_k\}$ associated respectively to the points $\{z_1,\dots,z_k\}$.
\item The set of black vertices is the set $Crit$ of critical points of $h$.
\item An edge $e:b \to v$ of $T$ with source $b$ and target $v$  is a flow line of $E$ going out of the critical point $b$ 
and into either another critical point $v$, or into a zero $v=v_i$ of $h$.  
\item The anti-clockwise ordering of flow lines induced by the standard orientation of $\bcc$ 
defines a cyclic ordering on the set of edges  incident to a given vertex. 
\eit
\end{Def}

\begin{Prop}
The data in Definition \ref{deftree} define an admissible tree  $T$ with $k$ leaves.
\end{Prop}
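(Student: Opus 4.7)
The plan is to verify the five axioms of Definition \ref{uno.uno} in order, using Lemma \ref{lem:crit} as the main ingredient. The fact that there are exactly $k$ white vertices is built into the construction, so only admissibility needs argument. Conditions (1) and (2) are immediate: each zero $z_i$ is a local minimum of $|h|$, so no flow line of $E=\nabla \log|h|$ can have a zero as its lower endpoint, hence white vertices cannot be sources; and each flow line comes with an intrinsic orientation (that of increasing $|h|$), so no edge can appear in both orientations.

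Condition (4) is where Lemma \ref{lem:crit} does the work. At a critical point $b\in Crit$ of multiplicity $m-1\ge 1$, the lemma produces $m$ \emph{ascending} flow lines (along which $|h|$ grows away from $b$) and $m$ \emph{descending} flow lines (along which $|h|$ decreases away from $b$), alternating in the cyclic order at $b$. A descending flow line cannot limit to $\infty$, where $|h|=+\infty$, so it must terminate at a zero of $h$ or at another critical point of strictly smaller $|h|$-value; in either case it yields an edge of $T$ with source $b$. Hence $b$ has at least $m\ge 2$ outgoing edges. The alternation of ascending and descending directions around $b$ then forces any two incoming edges at $b$ (which come from ascending directions to critical points of higher $|h|$) to be cyclically separated by an outgoing one, completing Condition (4).

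Conditions (3) and (5) I would handle together by counting. Each edge of $T$ is uniquely the descending flow line at its source (the endpoint of larger $|h|$), so $|E|=\sum_j m_j$ where the sum runs over distinct critical points with multiplicities $m_j-1$. Since $p$ has degree $k-1$, $\sum_j(m_j-1)=k-1$, hence $|E|=(k-1)+|Crit|$ and $|V|=k+|Crit|$, giving $|V|-|E|=1$. Together with connectedness this equality implies $T$ has no cycles, so Condition (3) comes for free once Condition (5) is established. To prove connectedness I would argue that the enlarged graph $\bar T$, obtained by adjoining $\infty$ together with the ascending separatrices from critical points to $\infty$, is the $1$-skeleton of a cellular decomposition of $\CP^1$ into open strip domains on which $\arg h$ is single-valued with image a proper subinterval of $\bbr$. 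This is a standard feature of meromorphic differentials with simple poles such as $dh/h$, whose residues at $z_i$ and at $\infty$ are $a_i$ and $-1$ respectively. Since $\CP^1$ is connected, so is $\bar T$; the two sides of each ascending separatrix to $\infty$ are already joined through the rest of the picture, so deleting those separatrices leaves $T$ connected.

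The main obstacle is making this connectedness step airtight. The strip decomposition of $\CP^1$ associated with $dh/h$ is classical but is not proved in the paper, so a self-contained treatment would replace it either with a Poincar\'e--Hopf accounting (indices $+1$ at each $z_i$ and at $\infty$, and $-(m_j-1)$ at $b_j$, summing to $k+1-(k-1)=2=\chi(\CP^1)$) combined with direct inspection of the harmonic conjugate $\arg h$, or with an induction on $|Crit|$ that peels off one leaf of $T$ at a time by collapsing a pair of a critical point and the zero connected to it by a shortest descending separatrix. Everything else in the proof is bookkeeping around Lemma \ref{lem:crit}.
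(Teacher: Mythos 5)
Conditions (1), (2), (4) are handled exactly as in the paper, and there is nothing more to say there. For conditions (3) and (5) your route is correct but genuinely different from the paper's. You prove the Euler-characteristic identity $|V|-|E|=1$ by counting descending separatrices at critical points against the degree of $p$, then note that this reduces acyclicity to connectedness (or vice versa), and finally propose to get connectedness either from the classical strip decomposition of $\CP^1$ attached to the meromorphic differential $dh/h$ or from a Poincar\'e--Hopf argument. The paper bypasses the count and the separate connectedness step entirely: it embeds $||T||$ into $\CP^1$ via the flow lines, observes that the complement $\CP^1\setminus||T||$ deformation retracts onto $\infty$ along the flow, and then invokes Alexander duality to conclude that $||T||$ has trivial reduced homology, hence is a contractible $1$-complex --- giving connectedness and acyclicity in one stroke. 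Your approach is more explicit and elementary in the counting step, but it leaves the connectedness of $T$ as a nontrivial residual claim, which you yourself flag as the main obstacle; the Alexander-duality argument is shorter precisely because it never has to isolate connectedness from acyclicity. (One small caution if you pursue the strip route: the inference ``$\CP^1$ connected $\Rightarrow\bar T$ connected'' should be spelled out --- it holds because each open strip is a $2$-cell whose boundary circle lies in a single component of the $1$-skeleton, so a disconnection of $\bar T$ would propagate to a disconnection of $\CP^1$.) Both proofs are valid; the paper's is tighter, yours is more hands-on and also makes visible the index computation that the paper records only as a remark afterwards.
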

\begin{proof}
We check the conditions in defintion \ref{uno.uno}.
Condition (1) is satisfied because zero is not a source of a flow line. 
Condition (2) is satisfied because a flow line from $b$ to $v$ always points towards a vertex with $|h(v)| < |h(b)|$. 
Condition (4) is satisfied because  the incoming and outcoming flow lines at a critical point alternate, we removed the incoming flow lines coming from $\infty$, and there are $m>1$ outgoing flow lines, where $m$ is as in Lemma \ref{lem:crit}.
The realization $||T||$ can be embedded as a subcomplex of $\CP^1$ by sending each vertex to the corresponding point, and each edge to the corresponding closed flow line. 
In order to verify conditions (3) and (5) it is sufficient to check that $||T||$ is contractible. The complement $\CP^1-||T||$ contracts into $\infty$ by following the flow lines,
so by Alexander duality $||T||$ has trivial homology, and is a contractible 1-complex.  
\end{proof}

 \begin{Def} 
We define next a labelled tree $\overline{\Phi}_{a_1,\dots,a_k}(z_1,\dots,z_k) = tr \in Tr_k$ with $k$ leaves
consisting of the following data, according to definition \ref{ourmodel}:

\bit
\item  The tree $T=T(z_1,\dots,z_k)$  from Definition \ref{deftree}. 
\item The function $f : Crit \to (0,1]$ defined
for each black vertex $b \in Crit$ 
by $f(b)=|h(b)|/M \in (0,1]$,
where $M=\max\{|h(x)| \, , \, x \in Crit  \}$. 
\item For each $i=1,\dots,k$ the following function $g_i:E_{v_i} \to (0,1]$. 
For any incoming edge (flow line) $e \in E_{v_i}$ at a white vertex $v_i$, 
let $e' \in E_{v_i}$ be the next  edge (flow line) in the cyclic ordering. We choose a continuous 
branch of $\arg(h)$ defined for the flow lines between $e$ and $e'$, as one moves in the anti-clockwise direction around $z_i$.
The function $\arg(h)$ is constant along the flow lines $e$ and $e'$. The difference of the two values $\arg(e)<\arg(e')$ is unambigously defined. Then
 $$g_i(e):= \frac{\arg(e')-\arg(e)}{2\pi a_i} \in (0,1].$$
We include the special case when $e=e'$ declaring $g_i(e)=1$.
\eit
\end{Def}

\begin{Lem}
The construction $\overline{\Phi}_{a_1,\dots,a_k}(z_1,\dots,z_k)$ is invariant under the action of the affine group  $\bcc \rtimes \bcc^*$ 
on $F_k(\bcc)$, and thus defines a function 
$$\Phi_{a_1,\dots,a_k}: F_k(\bcc) / (\bcc \rtimes \bcc^*) \rightarrow Tr_k$$ 
\end{Lem}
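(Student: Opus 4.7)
The plan is to fix an element $\phi \in \bcc \rtimes \bcc^*$, write $\phi(z) = \alpha z + \beta$ with $\alpha \in \bcc^*$ and $\beta \in \bcc$, and track how each ingredient in the construction of $\overline{\Phi}_{a_1,\dots,a_k}$ transforms when $(z_1,\dots,z_k)$ is replaced by $(\phi(z_1),\dots,\phi(z_k))$. First I would make the key observation that, since $\sum_i a_i = 1$, the multivalued function $\tilde h$ associated to the new configuration satisfies
\[
\tilde h(\phi(z)) \;=\; \prod_{i=1}^k \bigl(\alpha (z-z_i)\bigr)^{a_i} \;=\; \alpha \cdot h(z),
\]
for compatible branches. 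Consequently $|\tilde h(\phi(z))| = |\alpha|\,|h(z)|$ and $\arg(\tilde h(\phi(z))) = \arg(\alpha) + \arg(h(z))$ mod the usual indeterminacy.

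From this transformation law I would read off all the geometric data. The set of critical points of $\tilde h$ is exactly $\phi(Crit)$, since $\tilde h'/\tilde h \circ \phi = \alpha^{-1} h'/h$. The flow lines of the new field $\tilde E$ are the $\phi$-images of the flow lines of $E$: indeed $\phi$ is orientation-preserving (as $|\alpha|>0$), and the level sets $\{\arg(h) = \theta\}$ map to level sets $\{\arg(\tilde h) = \theta + \arg(\alpha)\}$. Therefore $\phi$ induces a bijection between the vertices, edges, and the cyclic orderings defining $T(z_1,\dots,z_k)$ and those of $T(\phi(z_1),\dots,\phi(z_k))$: the abstract admissible tree with its cyclic orderings is literally unchanged.

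Next I would check invariance of the two labellings. For the height function, $|\tilde h(\phi(b))| = |\alpha|\,|h(b)|$ for every $b \in Crit$, so the maximum $\tilde M = |\alpha|\, M$, and the normalized values $\tilde f(\phi(b)) = |\tilde h(\phi(b))|/\tilde M = |h(b)|/M = f(b)$ agree. For the angle function, the definition of $g_i(e)$ uses only the difference $\arg(e') - \arg(e)$ of the constant values of $\arg(h)$ on consecutive flow lines incident to $v_i$; this difference is preserved since $\arg(\tilde h) = \arg(h) + \arg(\alpha)$ modulo the same locally constant ambiguity. Hence the triple $(T,f,g_i)$ producing $\overline{\Phi}_{a_1,\dots,a_k}(z_1,\dots,z_k) \in Tr_k$ coincides with the triple produced from $(\phi(z_1),\dots,\phi(z_k))$, so $\overline{\Phi}_{a_1,\dots,a_k}$ descends to the quotient.

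The only mildly delicate point—and the place I would be most careful—is the handling of the multivalued $\arg(h)$: one must check that the identifications $\arg(\tilde h) = \arg(h) + \arg(\alpha)$ can be made globally consistent on every simply connected region used in the construction (in particular on the sectors at each $v_i$ bounded by consecutive incoming flow lines), so that the unambiguous differences $\arg(e')-\arg(e)$ truly agree. This is routine since the affine map $\phi$ preserves simply connected regions and carries flow lines to flow lines, so any branch on a sector around $z_i$ pulls back to a branch on the corresponding sector around $\phi(z_i)$ differing by the constant $\arg(\alpha)$.
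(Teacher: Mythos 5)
Your proof is correct and follows essentially the same route as the paper's: both arguments track how $h$, its critical points, its flow lines, the ratios $|h|/M$, and the argument differences transform under an affine map, using $\sum_i a_i = 1$ to get the clean factor of $|\alpha|$ (the paper writes $|\lambda|^{\sum a_i} = |\lambda|$). Your version is slightly tidier in stating the global identity $\tilde h \circ \phi = \alpha h$ up front and in flagging the branch-consistency issue explicitly, but the substance is the same.
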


\begin{proof}
The function $h$ depends on the $n$-tuple of points $Z=(z_1,\dots,z_k)$, hence we write $h=h_Z$, and so do $p=p_Z$ and 
$Crit=Crit_Z$. Consider an element of the affine group $\alpha:z \mapsto \lambda z + \mu$ with $\lambda \in \bcc^*$ and $\mu \in \bcc$,
and set $\alpha(Z)=(\alpha(z_1),\dots,\alpha(z_k))$. 
The roots of the rational function 
$$p_{\alpha(Z)} = \sum_{i=1}^k \frac{a_i}{z-\alpha(z_i)}$$
 form the set of critical points $Crit_{\alpha(Z)}=\alpha(Crit_Z)$.
 By a slight abuse let us identify a flow line and its image.
The flow line $\gamma$
 associated to $Z$ has locally the form $\{ z \, | \, \sum_{i=1}^k a_i \arg(z-z_i)=c \}$.
The corresponding flow line associated to $\alpha(Z)$ has locally the form $\{z \, | \, \sum_{i=1}^k a_i \arg(z-\alpha(z_i))=  c + \arg(\lambda)\}$, 
so it is just $\alpha(\gamma)$.
 The differences of the arguments of flow lines into the zeros of $h_{\alpha(Z)}$ are the same as for $h_Z$. 
The value of $|h_{\alpha(Z)}|$ at the point $\alpha(b)$ is
$$|h_{\alpha(Z)}(\alpha(b))| = |\lambda|^{\sum_{i=1}^k a_i  }   \cdot |h_Z(b)|=|\lambda| \cdot |h_Z(b)|$$ 
So all values get multiplied by the constant $|\lambda|$, and their ratio remains the same. 
Therefore $\Phi_{a_1,\dots,a_k}(Z)= \Phi_{a_1,\dots,a_k}(\alpha(Z))$.

\end{proof}
\begin{Rem}
Let us write 
$$\Phi(a_1,\dots,a_k;z_1,\dots,z_k)= \Phi_{a_1,\dots,a_k}(z_1,\dots,z_k),$$  
$$\Psi(a_1,\dots,a_k;tr)= \Psi_{a_1,\dots,a_k}(tr).$$

We notice that the function $$\Phi: \, \stackrel{\circ}{\Delta}_k \times F_k(\bcc) /(\bcc \rtimes \bcc^* )\rightarrow Tr_k$$
 is $\Sigma_k$-equivariant under the action of the symmetric group 
both on the $a_i$'s and the $z_i$'s.  The same holds for 
$$\Psi: \, \stackrel{\circ}{\Delta}_k \times Tr_k \rightarrow F_k(\bcc) /(\bcc \rtimes \bcc^* )$$
\end{Rem}
\begin{Thm} \label{thm:bij}
The functions $\Phi_{a_1,\dots,a_k}$ and $\Psi_{a_1,\dots,a_k}$ are inverse to each other. 
\end{Thm}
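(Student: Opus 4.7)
My plan is to establish the bijectivity by introducing a canonical meromorphic differential that sits on both sides of the correspondence and pins down the labelling data. On $F_{tr}$ the cell coordinate $w=x+iy$ on each $F_e$ defines a $1$-form $\omega:=dw$. The first step is to check that the transition data of the atlas built in Section \ref{sec:conf} extends $\omega$ to a globally defined meromorphic $1$-form on $F_{tr}$: since $\phi^{(1)}_e$ is a pure translation, $\omega$ is well defined on the open $1$-cells; near $v_i$ the chart $\phi^{(0)}_{v_i}$ gives $w=(a_i/2\pi)\log u-c_i$, so $\omega$ has a simple pole of residue $a_i/(2\pi)$; near $\infty$ the chart gives residue $-1/(2\pi)$; and near a black vertex $b$ of valence $2m_b$ the chart yields $w=u^{m_b}+\log f(b)$, so $\omega$ has a zero of order $m_b-1$. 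The residues sum to zero, as required for a global $1$-form on the sphere.

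For $\Psi\circ\Phi=\mathrm{id}$, I start with a configuration $Z=(z_1,\dots,z_k)$ and build an explicit biholomorphism $\chi:F_{\Phi(Z)}\to \CP^1$ realizing the uniformization $\zeta$ of Definition \ref{def:biholo} with $\chi(v_i)=z_i$. The union of closed flow lines of $E$, together with $\{z_1,\dots,z_k\}\cup Crit\cup\{\infty\}$, forms a finite CW $1$-complex in $\CP^1$ whose combinatorics is $||\overline{T(Z)}||$ by Definition \ref{deftree}. On each open complementary region $R$ adjacent to a white vertex $v_i$ along an edge $e$, a single-valued branch of $(\log h_Z)/(2\pi)$ is a biholomorphism onto the strip $(-\infty,+\infty)\times(0,a_ig_i(e))$: the width is exactly $a_ig_i(e)$ by definition of $g_i$, and the real-part heights of the critical points in the closure of $R$ are $(\log|h_Z(b)|)/(2\pi)$, which after normalisation by $M$ reproduce $\log f(b)$. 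The gluing of these strips across the boundary flow lines (matching the left/right path rule of Definition \ref{directed}) and at the vertex charts is exactly the gluing of Definition \ref{ourmodel}, so the inverse of this strip parametrization is $\chi$.

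For $\Phi\circ\Psi=\mathrm{id}$, I fix $tr\in Tr_k$, pick $\zeta$ as in Definition \ref{def:biholo} with $z_i=\zeta(v_i)$, and push $\omega$ forward. Because $H^0(\CP^1,\Omega^1)=0$, a meromorphic $1$-form on $\CP^1$ is determined by its poles and residues, so
\[
\zeta_*\omega \;=\; \frac{1}{2\pi}\sum_{i=1}^k \frac{a_i\,dz}{z-z_i} \;=\; \frac{1}{2\pi}\,d\log h_Z .
\]
From this identity I read off all the data. The zeros of $\zeta_*\omega$ are the critical points of $h_Z$ and coincide with $\zeta(B)$; the horizontal lines $\mathrm{Im}(w)=\mathrm{const}$ are the level sets of $\arg h_Z$, i.e.\ the flow lines of $E$; hence the tree $T(z_1,\dots,z_k)$ of Definition \ref{deftree} coincides with the combinatorial tree underlying $tr$. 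The cyclic orderings match because both are the anti-clockwise ordering induced by the standard complex orientation; the widths of the strips between consecutive flow lines at $v_i$ are $\arg h_Z$ increments divided by $2\pi a_i$, reproducing $g_i(e)$; and $\mathrm{Re}(w)$ at the black vertices recovers $f$ after normalising by the maximum.

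The main obstacle is the global identification in the $\Psi\circ\Phi$ direction: one must verify that the closed flow lines of $E$ really carve $\CP^1$ into a regular CW complex with precisely one $2$-cell for each element of $\bigcup_i E_{v_i}$ and nothing extraneous. This rests on three ingredients: the local picture of Lemma \ref{lem:crit}; the strict monotonicity of the harmonic potential $\log|h_Z|$ along flow lines, which forbids recurrence and forces every non-singular flow line to terminate at a critical point, a zero of $h_Z$, or $\infty$; and a Poincar\'e--Hopf count of singular indices (already sketched in the excerpt) that matches the Euler characteristic computation of Lemma \ref{count}, forcing the combinatorics to close up. Once the CW structure on $\CP^1$ is in place, the strip-by-strip comparison of the two cell decompositions is a routine check, and the label identification follows from the explicit formula for $\zeta_*\omega$ above.
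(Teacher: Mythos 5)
Your proof follows essentially the same approach as the paper: both directions rest on identifying the flat meromorphic $1$-form on $F_{tr}$ (your $\omega=dw$) with a multiple of $d\log h_Z$ on $\CP^1$ via the uniqueness of a genus-zero meromorphic $1$-form with prescribed poles and residues, and then reading off the tree, the $f$-labels from the critical values, and the $g_i$-labels from the strip widths. One small remark: your bookkeeping of the residues as $a_i/(2\pi)$ and $-1/(2\pi)$, giving $\zeta_*\omega=\tfrac{1}{2\pi}\,d\log h_Z$, is the correct normalization (the paper states residues $a_i$ and $-1$, which drops a factor of $2\pi$ coming from the charts $\phi^{(0)}_{v_i}$ and $\phi^{(0)}_\infty$), so your version is actually the more careful one.
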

\begin{proof}
We show first that $\Phi \circ \Psi$ is the identity of $Tr_k$.
Let us choose a labelled tree $tr=(T,f,(g_i)) \in Tr_k$. The Riemann surface $F_{tr}$ is equipped with a meromorphic differential $dw$ defined 
on the interior of each 2-cell by  $dw=dx + {\bf i}dy$,
that has for each $i=1,\dots,k$ a simple pole at the white vertex $v_i$ with residue $a_i$, and a simple pole
at $\infty$ with residue $-1$. The differential $dw$ vanishes exactly
at the black vertices $b \in B$. 
The biholomorphism $\zeta: F_{tr} \to \CP^1$ from definition \ref{def:biholo}
satisfies   $\zeta^*(\frac{h'(z)}{h(z)} dz)=dw$,
since $\frac{h'(z)}{h(z)}dz$ is the unique meromorphic differential having simple poles at $z_i$ with residue $a_i$, and a simple pole at $\infty$ with residue $-1$.
So $\zeta$ induces a bijection between the set of black vertices $B$ and the set $Crit \subset \bcc$ of critical points of $\log(h)$, that are the zeros of $\frac{h'(z)}{h(z)}dz$. 
By integrating we have that on a 2-cell 
$$w:=x+{\bf i}y =\log(h(\zeta(x,y)))+c $$ for a constant $c \in \bcc$.
Recall that the realization $||\bar{T}||$ is embedded into $F_{tr}$. 
A closed flow line $\Gamma \subset \CP^1$ has the form $\Gamma= \zeta(\{e\} \times [0,1])$, for an edge $e$ of the ribbon graph $\bar{T}$,
since $\arg(h)=Im(log(h))=y-\Im(c)$ is constant along $\Gamma$. 
Thus $\zeta$ induces a correspondence between edges of $T$ and flow lines not coming out of $\infty$.
This shows that $T$ is the admissible tree that gives the "shape"  of the labelled tree $(\Phi \circ \Psi)(tr)$.
Let $M=|h(\zeta(b_0))|$ be the maximum value of $|h|$ at some critical point $\zeta(b_0)$.
For any black vertex $b \in B$ we have $$|h(\zeta(b))/h(\zeta(b_0))|=exp(x(b)-x(b_0))=exp(\log(f(b)))/1=f(b)$$
Finally the difference of the arguments $\arg(\log(h))$ for adjacent flow lines $\zeta(e \times [0,1])$ and  $\zeta(e' \times [0,1])$ 
going into $z_i$  is $y(e')-y(e)=a_i g_i(e)$. 
This shows that $$\Phi(\Psi(tr))= (T,f,(g_i))=tr$$
We show next that $\Psi \circ \Phi$ is the identity of $F_k(\bcc) /(\bcc \rtimes \bcc^* )$.
Let us choose $(z_1,\dots,z_k) \in F_k(\bcc)$. There is a cell decomposition of $\CP^1$ that has:
\bit
\item the points $z_1, \dots, z_k$ and $\infty$ as 0-cells ;
\item  the closed flow lines of the field from or to critical points $E$ as 1-cells ;
\item a 2-cell $\sigma_\gamma$ for each flow line $\gamma$ from $\infty$ into a critical point. Its interior consists of all flow lines $\bar{\gamma}$ 
from $\infty$ into a zero $z_i$ with 
$$\arg(h(\gamma))<  \arg(h(\bar{\gamma}))< \arg(h(\gamma'))$$
where $\gamma'$ is the flow line into a critical point preceding $\gamma$ in the cyclic ordering at $\infty$.
\eit

Let us write $tr:=\Phi(z_1,\dots,z_k)$.
There is a cellular homeomorphism $\eta: \CP^1 \to F_{tr}$  
sending the 2-cell $\sigma_\gamma \subset \CP^1$ to $F_e \subset F_{tr}$ 
via $z \mapsto  (\log|h|, \arg(h))_{F_e}$, where $e=\gamma_+$ in the ribbon graph $\bar{T}$
and $T$ is the admissible tree underlying the labelled tree $tr$.  
Actually $\eta$ is a biholomorphism .  
This shows that $\Psi(\Phi(z_1,\dots,z_k))=\Psi(tr)=[z_1,\dots,z_k]$.

\end{proof}

We show the continuity of $\Phi$ both with respect the parameters $(a_1,\dots,a_k) \in \stackrel{\circ}{\Delta_k}$ and 
the configuration points $(z_1,\dots,z_k) \in F_k(\bcc)$.

\begin{Thm} \label{thm:cont}
The function $$\Phi: \, \stackrel{\circ}{\Delta}_k \times F_k(\bcc) /(\bcc \rtimes \bcc^* )\rightarrow Tr_k$$ is continuous.
\end{Thm}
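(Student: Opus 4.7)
The plan is to prove continuity locally, stratifying the domain $\stackrel{\circ}{\Delta}_k \times F_k(\bcc)/(\bcc \rtimes \bcc^*)$ according to the combinatorial tree type $T \in \Tau_k$ produced by $\Phi$. First I would verify continuity on each open stratum $U_T$ where the tree type is constant. The critical points of $h$ are the roots of the polynomial $p(z) = \sum_i a_i \prod_{j \neq i}(z - z_j)$, and a standard continuity-of-roots argument shows that, so long as the roots remain distinct and do not collide with any $z_j$, the set $Crit$ depends continuously on $(a_1, \dots, a_k, z_1, \dots, z_k)$. Given this, the function $f(b) = |h(b)|/M$ and the angles $g_i(e)$ are each continuous in $(a,Z)$: for $f$ this is immediate from continuity of $|h|$ together with the non-degeneracy of the maximum $M$ on $U_T$, while for $g_i(e)$ one uses that the argument of $h$ is constant along each flow line and that the incidence pattern of flow lines at each white vertex is locally constant on $U_T$.

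The interesting work is the analysis of walls between strata, where the tree type degenerates. There are essentially two codimension-one degenerations to consider: either (i) an edge of $T$ between black vertices $b, b'$ collapses because two simple critical points of $h$ merge into a single critical point of higher order, forcing $f(b) - f(b') \to 0$, or (ii) two flow lines arriving at a white vertex $v_i$ from black vertices $b$ and $b'$, adjacent in the cyclic ordering at $v_i$, coalesce in argument so that $g_i(e) \to 0$. Case (i) matches exactly equivalence (1) in Definition \ref{ourmodel}, which glues $U_T$ to $U_{T/e}$ along the wall $f(b) = f(b')$. Case (ii) matches equivalence (2): the three sub-cases $f(b') = f(b)$, $f(b') < f(b)$, $f(b') > f(b)$ exhaust the possible ways the two flow lines can reconnect into a new combinatorial pattern when the wall is crossed, and in each sub-case I would check directly that, after passing to the quotient $Tr_k$, the pair $(f, g_i)$ on the two sides of the wall represents the same equivalence class.

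The main obstacle lies in the flow line reconnection of case (ii) when $f(b) \neq f(b')$: the angle between two flow lines going into $v_i$ can tend to zero only if, in the limit, one of them is being absorbed into a new flow line that connects $b$ directly to $b'$. Making this precise requires a local analysis near the degenerating flow line, combining the normal form $\log h = w^m + \mathrm{const}$ from the proof of Lemma \ref{lem:crit} at each black vertex with continuous dependence of the meromorphic differential $dh/h$ on $(a, Z)$, and exploiting the compactness of $\CP^1$ to conclude that the global flow picture must deform in the prescribed way. Once this local matching is carried out at each wall, the universal property of the quotient topology on $Tr_k$ gives continuity of $\Phi$ at every point.
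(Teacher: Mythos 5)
Your stratified, wall-by-wall analysis is essentially the same argument as the paper's proof: both hinge on continuity of the roots of $p(z)$ as an unordered multiset, continuity of $|h|$ and $\arg(h)$ at critical points, and a case analysis in which the two types of codimension-one degeneration (merging critical points, coalescing flow lines into a white vertex) are matched against the identifications (1) and (2) in Definition \ref{ourmodel}. The only difference is organizational — you approach each wall from the open strata, whereas the paper perturbs away from a given (possibly degenerate) configuration and sorts the resulting behaviors into the same three cases, using the device of ``broken flow lines'' for what you call the flow-line reconnection.
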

\begin{proof}

The collection of roots of the monic polynomial $p(z)$, counted with multiplicity, form an element of the symmetric product
 $r \in SP^{k-1}(\bcc)$ that depends continuously on the parameters $a_i$'s and the $z_i$'s  since the coefficients of $p(z)$ also depend continuously on them. 
The underlying set of $r$ is the set $Crit$ of critical points of $h$. 
For a small variation of the parameters a critical point $b$ of order $m$,
i.e. a zero of the polynomial $p$ of order $m-1$, either moves to a critical point $b'$ of the same order,
or it splits into a collection of $j>1$ critical points $b_1,\dots,b_j$  of respective lower orders
$m_1,\dots,m_j$ satisfying $\sum_{l=1}^j(m_l-1)=m-1 \, .$   We deal with both cases at once allowing $j \geq 1$.
The values of $|h|$ and $\arg(h)$ at the critical point $b$ vary continuously, in the sense
that their value on $b_1,\dots,b_j$ are arbitrarily close to their respective value on $b$.
In particular the maximum $M= \max_{Crit} |h|$ varies continuously.  
Consider a small deformation of the parameters $a_i, z_i$.
The following cases can occurr:
\bit
\item  The number of critical points does not change, and $\arg(h)$ remains equal for critical points connected by a flow line. Then the admissible tree $T \in \Tau_k$ labelling $tr=\Phi(a_1,\dots,a_k,z_1,\dots,z_k) \in Tr_k$ is constant;
the flow lines %(including those from infinity to critical points)  
vary by a small deformation, %as parametrized curves
and so do the "angles" between them at the zeros $z_1,\dots,z_k$. 
By "angle" between successive flow lines $\gamma_1, \gamma_2$ in the cyclic ordering at $z_i$ we mean the difference $\arg(h(\gamma_2))-\arg(h(\gamma_1))$. 
This ensures that $f:Crit \to (0,1]$ and $g_i:E_{v_i} \to (0,1]$ vary by a small deformation and so does the labelled tree $tr$. 
\item  The number of critical points is constant, but $\arg(h)$, along the deformation, has distinct values on critical points originally connected by a flow line.  
 In this and the next case the labelled tree $tr$ changes "shape", i.e. the underlying admissible tree $T$ varies.
\begin{center}
\begin{tikzpicture}  [every node/.style={inner sep=2pt}]
\node (b1) at (357:1) [circle,fill]  {}  ;
\node (b2) at (7:2) [circle,fill]  {}   ;
%\node at (.7,0) {0};
\node (w) [circle,draw]{} ;
\path [<-,thick] (w) edge (b1) edge (b2) ; 
%\draw (0,0) -- (2,0);
\huge
\node at (4,0) {$\dashleftarrow$};
\normalsize
\node (w3)  at (6,0)  [circle,draw]{} ; 
\node (b4) at (7,0)    [circle,fill]  {}  ;
\node (b5) at (8,0) [circle,fill]  {}  ;
\path [<-,thick] (w3) edge (b4) (b4) edge (b5) ; 
\end{tikzpicture} 
\end{center} 

There is still a small deformation if we consider {\em broken flow lines},
 the concatenation of flow lines from a critical point towards a white vertex, obtained either by following the left path or the right path. 
The identifications 
defining the topology of $Tr_k$ ensure that $tr$ varies by a small deformation. 

\item The number of critical points of $Crit$ increases by $n$ along the deformation. 
 Then the number of flow lines increases by $2n$.
 Some of the original (broken) flow lines split into multiple very close flow lines, or some new very short flow lines connect critical points
originated by splitting a single critical point along the deformation.
\begin{center}
\begin{tikzpicture}  [every node/.style={inner sep=2pt}]
\node (v1) at (0:1) [circle,draw]  {}  ;
\node (v2) at (120:1) [circle,draw]  {}   ;
\node (v3) at (240:1) [circle,draw]  {}  ;
\node (b) [circle,fill]{} ;
\path [->,thick] (b)  edge (v1) edge (v2) edge (v3); 
\huge
\node at (2.4,0) {$\dashrightarrow$};
\normalsize
\node (v4) at (5,0) [circle,draw]  {}  ;
\node (v5) at (3.5,1) [circle,draw]  {}   ;
\node (v6) at (3.5,-1) [circle,draw]  {}  ;
\node (bb) at (4,.1) [circle,fill]{} ;
\node (bb2) at (4,-.1)  [circle,fill]{} ;
\path [->,thick] (bb)  edge (v4) edge (v5) ;
\path [->,thick](bb2) edge (v4) edge (v6);

\end{tikzpicture} 
\end{center}
Also in this case $tr$ varies by a small deformation. 
\eit
 \end{proof}
\begin{Thm} \label{moduli-homeo}
The map $\Phi_{a_1,\dots,a_k}: \mathcal{M}_{0,k+1} \to Tr_k$ is a homeomorphism. 
\end{Thm}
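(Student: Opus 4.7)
The plan is to combine Theorem \ref{thm:bij} (bijectivity) and Theorem \ref{thm:cont} (continuity of $\Phi$) with a proof of continuity of the inverse $\Psi_{a_1,\dots,a_k}$; together these three ingredients yield the homeomorphism. The continuity of $\Phi$ being already in hand, the substantive content lies in establishing continuity of $\Psi$.

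For continuity of $\Psi$, I work stratum by stratum on $Tr_k = (\coprod_{T \in \Tau_k} St_T)/\!\sim$. Given a continuously varying family of labels $(f_t,(g_{i,t}))_T \in St_T$, the rectangular 2-cells $F_e = [-\infty,+\infty] \times [0, a_i g_{i,t}(e)]$ and the gluing identifications along $||\overline{T}||$ at heights $\log \overline{f}_t$ depend continuously on $t$. Consequently the complex structure on the underlying topological sphere with marked points $(v_1,\dots,v_k,+\infty)$, encoded by the atlas constructed in the proof that $F_{tr}$ is a Riemann surface, varies continuously in $t$. Continuity of uniformization, normalized by $\zeta(+\infty) = \infty$, then produces a continuously varying biholomorphism (well defined modulo the residual affine action fixing $\infty$), so that $[\zeta_t(v_1), \dots, \zeta_t(v_k)] \in \mathcal{M}_{0,k+1}$ depends continuously on $t$. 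By the definition of the quotient topology on $Tr_k$, this gives continuity of $\Psi$ when restricted to each stratum.

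To descend to the quotient one must verify that $\Psi$ respects the identifications of Definition \ref{ourmodel} and behaves continuously across strata. Both reduce to a direct comparison of the chart constructions: collapsing an edge $e=(b,b')$ with $f(b)=f(b')$ merges two simple zeros of the meromorphic differential $dw$ into one zero of higher order, and the chart around the merged vertex matches the limit of the charts at $b$ and $b'$; a degenerating angle $g_i(e) \to 0$ collapses the rectangle $F_e$ into a neighboring cell compatibly with the chart transitions specified in Definition \ref{ourmodel}. The main obstacle is precisely this chart comparison across combinatorially distinct representatives, but the bookkeeping is symmetric to the analysis already performed for $\Phi$ in Theorem \ref{thm:cont}: the three cases distinguished there --- constant tree shape, change of shape with fixed number of critical points, and bifurcation of critical points --- correspond respectively to motion within a stratum, the stratum-crossing identifications of Definition \ref{ourmodel}, and the degeneration at a stratum boundary. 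Reading that case analysis in reverse delivers the required continuity of $\Psi$ and completes the proof.
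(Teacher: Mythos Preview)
Your approach is viable but differs substantially from the paper's. You argue directly for the continuity of the inverse $\Psi_{a_1,\dots,a_k}$ via continuous dependence of uniformization on the gluing data, then deal with stratum-crossing by hand. The paper sidesteps all of this: since $\Phi_{a_1,\dots,a_k}$ is already known to be a continuous bijection, it suffices to show it is a closed (equivalently proper) map. The paper does so by compactifying both sides --- $\mathcal{M}_{0,k+1}$ sits inside $\CP^{k-2}\cong(\bcc^k\setminus\{0\})/(\bcc\rtimes\bcc^*)$, and $Tr_k$ inside its one-point compactification $Tr_k^+$ --- and extending $\Phi$ to a map $\bar\Phi$ sending the added diagonal locus to $\infty$. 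The only new work is checking continuity of $\bar\Phi$ at diagonal points, which follows from an elementary estimate: when some of the $z_i$ collide, one critical value $|h(\rho')|$ tends to $0$ while another $|h(\alpha')|$ stays bounded away from $0$, so some black-vertex label of the image labelled tree tends to $0$, i.e.\ the tree escapes to $\infty$ in $Tr_k^+$. A continuous bijection from a compact space to a Hausdorff space is a homeomorphism, and restricting to the open parts gives the result.

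The paper's route is shorter and avoids two nontrivial ingredients you invoke: the continuous dependence of the uniformizing biholomorphism on the glued complex structure (true for genus~$0$, but not entirely free in this cut-and-paste setting), and the stratum-crossing verification, which you defer to ``reading the case analysis in reverse.'' That reversal is not automatic --- continuity of $\Phi$ across a stratum change does not by itself yield continuity of $\Psi$ there, since the identifications in Definition~\ref{ourmodel} only tell you which labelled trees are declared equal, not how charts on $F_{tr}$ converge as one crosses. Your argument can be completed with more work, but the compactification trick is cleaner and uses only the continuity of $\Phi$ already established.
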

\begin{proof}
We know that the function $\Phi_{a_1,\dots,a_k}$ is continuous and bijective, by Theorem \ref{thm:bij} and Theorem \ref{thm:cont}
for fixed parameters $a_1,\dots,a_k$.
We extend $\Phi$ it to a continuos function $\bar{\Phi}$ between compactifications. 
For $k>2$ consider the compactification $(\bcc^k-\{0\})/(\bcc \rtimes \bcc^*) \cong \CP^{k-2}$ 
 of the moduli space
$F_k(\bcc)/(\bcc \rtimes \bcc^*)$. We define the extension 
$$\bar{\Phi}:  (\bcc^k-\{0\})/(\bcc \rtimes \bcc^*) \rightarrow Tr_k^+$$
 to the one point
compactification $Tr_k^+$ sending the complement of the moduli space $K=(\bcc^k-\{0\} - F_k(\bcc))/(\bcc \rtimes \bcc^*)$ to the point at infinity. 
We want to show that $\bar{\Phi}$ is continuous at any point of $K$. 
The roots of the polynomial $$p(z) =  \sum_{i=1}^k {a_i}\prod_{j \neq i}(z-z_j)  $$ 
depend continuously on the zeros $z_i's $ of $h=\prod_{i=1}^k (z-z_i)^{a_i}$ for  $(z_1,\dots,z_k) \in (\bcc^k-\{0\})$. 
If $[z_1, \dots, z_k] \in K$ and  the value $\rho=z_{i_1}=\dots=z_{i_m}$ is repeated exactly $m$-times, then $\rho$
is a root of $p(z)$ of order $m-1$, and $h(\rho)=0$. Since not all zeros coincide, there will be at least one root  $\alpha$ of $p(z)$ with $h(\alpha) \neq 0$.
For a sufficiently small neighbourhood $U$ of $(z_1,\dots,z_k)$, any configuration $(y_1,\dots,y_k) \in U \cap F_k(\bcc)$ defines 
a multivalued function $h$ with at least one critical point $\rho'$ arbitrarily close to $\rho$,
and a critical point $\alpha'$ arbitrarily close to $\alpha$.
The ratio $|h(\rho')|/|h(\alpha')|$ is well defined, arbitrarily small, and labels a black vertex of the labelled tree 
$\Phi([y_1,\dots,y_k])$. This tree is arbitrarily close to the point at infinity in $Tr_k^+$, since a black vertex of a labelled tree in $Tr_k$ is never labelled by 0.
This shows that the induced bijective map $\Phi^+: (F_k(\bcc)/(\bcc \rtimes \bcc^*))^+ \to Tr_k^+$ between one-point compactifications is continuous, and therefore a homeomorphism, by the closed map lemma.
So the restriction $\Phi$ is also a homeomorphism.

\end{proof}
We will consider a finer compactification of the moduli space due to Axelrod-Singer, called the Fulton-MacPherson compactification, in section \ref{sec:fulton}.

\section{Cacti and open cells of the moduli space} \label{sec:cacti}

In this section we recall the definition and properties of the cacti complexes and the cacti operad.  Then we describe the cells of the moduli space in terms of cacti.
A combinatorial construction of the cacti was introduced by McClure and Smith \cite{MS}, and a geometric construction 
was introduced by Voronov \cite{Voronov} and Kaufmann \cite{Kaufmann}.
We compared the two approaches in \cite{Deligne}. Here we introduce the cacti via 
$n$-fold semi-simplicial sets.
We recall the definition of $n$-fold semi-(co)simplicial objects.
\begin{Def} 
Consider the category $\Delta_+$ with objects the totally ordered sets $[k]=\{0,\dots,k\}$, for $k \in \N$, and morphisms the injective monotone maps. 
A $n$-fold semi-simplicial (resp. semi-cosimplicial) object in a category $C$ is a contravariant (resp. covariant)  functor $X: (\Delta_+)^n \to C$.
We set $X_{a_1,\dots,a_n} = X([a_1],\dots,[a_n])$, and define the
face operator $$\partial_i^{(j)}=X(id_{[a_1]}, \dots, in_i, \dots,id_{[a_n]}): X_{a_1,\dots,a_j,\dots,a_n} \to X_{a_1,\dots,a_j-1,\dots,a_n}$$ for $j=1,\dots,n$ and $i=0,\dots,a_j$ 
where $in_i:[a_j-1] \to [a_j]$ is the monotone map missing $\{i\}$. 

There is a natural $n$-fold semi-cosimplicial space  $(\Delta^*)^n$ sending $([a_1],\dots,[a_n])$ to the product of simplices  $\prod_{j=1}^n \Delta^{a_j}$. 
Given a $n$-fold semi-simplicial set $X$, 
its geometric realization  $|X|$ is the topological space  
$$|X| = \coprod_{a_1,\dots,a_n} (X_{a_1,\dots,a_n} \times \prod_{j=1}^n \Delta^{a_j}) /\sim \; , $$  
the quotient by the relations $(f^*(x), y) \sim (x,f_*(y))$, where
$$f:[a_1,\dots,a_n] \to [b_1, \dots, b_n]$$ is a morphism of $(\Delta_+)^n, \; x \in X_{b_1,\dots,b_n}$ and $y \in \prod_{j=1}^n \Delta^{a_j}\, .$
\end{Def}
In particular $|X|$ is a regular CW-complex with a cell $|x|$ of dimension $\sum_{j=1}^n{a_j}$ 
for each $x \in X_{a_1,\dots,a_n}$.

We introduce the cacti complex $\C_k$ as the geometric realization of a $k$-fold semi-simplicial set. This combinatorial definition follows McClure and Smith \cite{MS}, although they did not mention $k$-fold semi-simplicial sets explicitly.

\begin{Def} \label{defcacti}
The cacti complex $\C_k$ is the geometric realization of a $k$-fold semi-simplicial set $X^k_{\bullet,\dots,\bullet}$
that has an element in multidegree $(m_1,\dots,m_k)$ for each surjective map 
$f:\{1,\dots,m+k\} \to \{1,\dots,k \}$ with $m=\sum_{j=1}^k m_j$ 
such that 
\ben
\item $f^{-1}(j)$ has cardinality $m_j+1$ for each $j=1,\dots,k$.
\item $f(a) \neq f(a+1)$ for $a=1,\dots,m+k-1$ 
\item there are no values $1 \leq a<b<c<d \leq m+k$ for which 
$f(a)=f(c) \neq f(b)=f(d)$  (complexity condition)   \label{compl}
\een
The corresponding cell (product of simplices) in the realization is labelled
either by $f$ or by the finite sequence 
$f(1) \dots f(m+k)$.

Let us write $f^{-1}(j)=\{x_0,\dots,x_{m_j}\}$ with 
$x_0 < \dots < x_{m_j}$. 
For $m_j >0$ and $0 \leq i \leq m_j$ 
the $i$-th face in the $j$-th coordinate 
$$\partial_i^{(j)}(f):\{1,\dots,m+k-1\} \to \{1,\dots,k\}$$ of $f$ is 
$f(1) \dots \widehat{f(x_i)}   \dots f(m+k)$. The notation means that the $x_i$-th value $f(x_i)=j$ is removed from the sequence.
\end{Def}

Some remarks are in order.
\bit
\item The face operator is well defined: by condition \ref{compl}, since $m_j>0$, there is at least another element $x_u \neq x_i$ such that 
$f(x_u)=f(x_i)=j$. Then $\partial_i^{(j)}$ is suriective. Moreover
 $f(x_i-1) \neq f(x_i + 1)$ by condition \ref{compl} and so also $\partial_i^{j}$ satisfies condition \ref{compl}.
\item The geometric realization is the quotient 
$$\C_k= \coprod_{m_1,\dots,m_k} ( \prod_{j=1}^k \Delta_{m_j} \times  X^k_{m_1,\dots,m_k}  ) / 	\sim $$
where $(s_1,\dots,d_i(s_j), \dots,  s_k , f)  \sim   (s_1,\dots,s_k ,   \partial_i^{(j)}f  ) $,
and $d_i: \Delta_{m_j-1} \to \Delta_{m}$ is the standard inclusion of the $i$-th face. 
\item The symmetric group $\Sigma_k$ acts freely on $X^k_{\bullet,\dots,\bullet}$ by post-composition with the maps into $\{1,\dots,k\}$.
\eit

\begin{Prop}
The highest dimension of a cell of $\C_k$ is $k-1$.
\end{Prop}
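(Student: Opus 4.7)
The dimension of the cell $|x|$ associated to $x \in X^k_{m_1,\dots,m_k}$ equals $\sum_{j=1}^k m_j = m$, so the claim is equivalent to bounding $m \leq k-1$, or equivalently, bounding the length $N = m+k$ of the word $f(1)\cdots f(m+k)$ by $2k-1$. The plan is to recognize condition (\ref{compl}) as forbidding the pattern $abab$ among ordered subsequences with $a \neq b$, so that the admissible $f$ are precisely the Davenport--Schinzel sequences of order $2$ on the alphabet $\{1,\dots,k\}$ in which every letter appears, and then to reprove (by induction on $k$) the classical bound $\lambda_2(k) = 2k-1$.

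The base $k=1$ is trivial. For the inductive step, I would first observe that condition (\ref{compl}) implies that the intervals $I_j := [\mathrm{first}(j), \mathrm{last}(j)] \subset \{1,\dots,N\}$ form a laminar family: if two intervals $I_a, I_b$ with $a \neq b$ interleaved as $\mathrm{first}(a) < \mathrm{first}(b) < \mathrm{last}(a) < \mathrm{last}(b)$, then evaluating $f$ at these four positions would yield exactly the forbidden pattern $abab$. Next I would pick an \emph{innermost} symbol $c$, i.e.\ one whose interval $I_c$ contains no other $I_b$, and show that $c$ occurs exactly once. Indeed, if $c$ occurred at least twice, then between two consecutive occurrences of $c$ some other letter $b$ must appear by condition (2); laminarity and innermost-ness force $I_c \subsetneq I_b$, so $b$ also appears before $\mathrm{first}(c)$, producing a $bcbc$ subsequence, a contradiction.

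Finally, removing this single occurrence of $c$ from the word produces a valid sequence on $k-1$ letters, since $abab$-freeness passes to subwords; if the deletion creates adjacent equal letters, I would merge them into one, which preserves both conditions and shortens the word by a further unit. In either case the induction hypothesis bounds the new length by $2(k-1)-1$, giving $N \leq 2k - 1$. The main technical point, which I expect to be the crux, is the innermost-symbol argument: one has to check carefully that laminarity together with condition (\ref{compl}) forces the singleton-occurrence property, and one has to verify that the merging operation in the induction step does not spoil condition (\ref{compl}) nor the surjectivity onto the remaining $k-1$ labels.
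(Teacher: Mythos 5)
Your proof is correct, and it is substantially more careful than the one-line sketch the paper offers. The paper's proof reads ``By induction on $k$, removing all the occurrences of $k$ from the sequence of a cell.'' Taken literally this does not close the induction: deleting \emph{every} occurrence of a fixed letter can lower the dimension by more than one. For instance $1424341$ is a valid cell of $\C_4$ of dimension $3$, yet removing all the $4$'s yields $1231$ of dimension $1$, so the naive estimate $\dim \le 1 + \dim'$ fails. One must either choose the deleted symbol cleverly --- as you do, selecting an innermost symbol $c$, proving via the laminarity of the intervals $I_j$ that $c$ occurs exactly once, and deleting that single occurrence so the word shortens by at most $2$ --- or else carry out a more elaborate block-by-block bookkeeping over the gaps between consecutive occurrences of the removed letter. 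Your route is the standard proof of the Davenport--Schinzel bound $\lambda_2(k)=2k-1$, and the points you single out as the crux (innermost-symbol occurs once; merging preserves conditions (2), (3) and surjectivity onto the remaining letters) are exactly where the content lies. One small addition for completeness: if no merge occurs the dimension is unchanged and induction even gives the sharper $m \le k-2$, whereas if a merge occurs the multiplicity of the merged letter drops by one, so the dimension drops by exactly one and $m \le k-1$; in either case the bound holds.
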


\begin{proof}
By induction on $k$, removing all the occurrences of $k$ from the sequence of a cell. 
\end{proof}

\begin{Exa}
The complex $\C_2$ has 
\bit
\item  Two $0$-cells $12$ and $21$;
\item  Two $1$-cells $121$ and $212$
\eit
And is homeomorphic to a circle.

\medskip

If we indicate the action of $\sigma \in \Sigma_3$, then the complex $\C_3$ has 
\bit
\item six $0$-cells of the form $\sigma(123)$ 
\item eighteen  $1$-cells $\sigma (1231)$, $\sigma (1232)$, $\sigma (1213)$
\item  twelve  $2$-cells $\sigma ( 12131)$  $\sigma(12321)$.
\eit
\end{Exa}

\begin{Thm} (McClure-Smith) \cite{MS-cos}
The complex $\C_k$ is $\Sigma_k$-equivariantly homotopy equivalent to the configuration space $F_k(\bcc)$. 
\end{Thm}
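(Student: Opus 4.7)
The plan is to exhibit $\C_k$ as a $\Sigma_k$-equivariant deformation retract of the open moduli space $FM'_k := F_k(\bcc)/(\bcc \rtimes \R_+)$ and then invoke that the contractible group $\bcc \rtimes \R_+$ acts freely on $F_k(\bcc)$ (for $k \geq 2$), so that the quotient map $F_k(\bcc) \to FM'_k$ is a principal bundle with contractible fibre and hence a $\Sigma_k$-equivariant homotopy equivalence.

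First, lift the homeomorphism $\Phi_{a_1,\dots,a_k}: \mathcal{M}_{0,k+1} \to Tr_k$ of Theorem \ref{moduli-homeo} to a $\Sigma_k$-equivariant homeomorphism between $FM'_k$ and $Tr_k \times_{\mathcal{M}_{0,k+1}} S^1$, the additional $S^1$ coordinate recording the direction of a chosen outgoing flow line at $\infty$. Under this identification, the subspace of configurations for which $|h(z)|$ is constant on all critical points corresponds to labelled trees with $f \equiv 1$. By the first equivalence relation of Definition \ref{ourmodel}, all black-to-black edges then collapse, and the underlying tree reduces to a single black vertex attached to the $k$ white leaves by (possibly multiple) edges. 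Reading the white-vertex labels cyclically along the flow lines at $\infty$ in the ribbon graph $\overline{T}$, starting from the distinguished flow line and moving counterclockwise, produces exactly the Davenport--Schinzel sequences $f(1)\cdots f(m+k)$ indexing the cells of $X^k_{m_1,\dots,m_k}$: planarity of $||\overline{T}||$ corresponds to the complexity condition (\ref{compl}), and the remaining simplex factor $\prod_i \Delta_{v_i}$ has total dimension $\sum_i(|E_{v_i}|-1) = \sum_i m_i$, matching the dimension of the corresponding cactus cell. This bijection of cells and continuous parameters identifies $\C_k$ with the described subspace of $FM'_k$.

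Second, construct the deformation retraction $H: FM'_k \times [0,1] \to FM'_k$ by, in the tree description, $H((T,f,(g_i)),t) = (T,\,(1-t)f+t,\,(g_i))$, leaving the $S^1$-coordinate fixed. Affine interpolation preserves the monotonicity $f(b_1)\geq f(b_2)$ along directed edges, the normalisation $\max f = 1$, and the values lying in $(0,1]$, and it commutes with all the comparisons involved in the equivalence relations of Definition \ref{ourmodel}. Hence $H$ is well defined and continuous; it is manifestly $\Sigma_k$-equivariant, equal to the identity at $t=0$, and lands in $\C_k$ at $t=1$.

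The main obstacle is the combinatorial identification in the first step: one must verify that the cyclic sequences arising from walks around $\infty$ in $\overline{T}$ are exactly those satisfying the complexity condition (\ref{compl}), and that the semi-simplicial face operators $\partial_i^{(j)}$ of $X^k_{\bullet,\dots,\bullet}$ correspond, on the nose, to the codimension-one boundary strata of the labelled-tree cells under the constraint $f\equiv 1$. Unwinding the ribbon-graph construction to confirm this planarity/complexity correspondence requires careful bookkeeping, but once set up, the deformation retraction and the $\bcc \rtimes \R_+$-bundle argument give the result.
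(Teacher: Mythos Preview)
Your approach is essentially the paper's own argument, given not at the cited theorem (which is merely quoted from McClure--Smith) but in Proposition~\ref{4.3} and the supporting Propositions~\ref{unbcact} and~\ref{ctr}: identify $FM'_k$ with $Tr'_k \cong Tr_k \times S^1$ via $\Phi'_{a_1,\dots,a_k}$, retract onto the subspace where all black labels equal $1$ by affinely interpolating $f$ towards the constant function $1$, and identify that subspace with $\C_k$.

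One small slip: when $f\equiv 1$ the tree does \emph{not} in general reduce to a single black vertex. Collapsing all black-to-black edges yields an admissible tree with no black-to-black edges, but it may still have several black vertices --- e.g.\ the tree in case~(2) of the list of elements of $\Tau_3$, with black vertices $b_1,b_2$ both adjacent to $v_2$. The correct identification (Proposition~\ref{ctr}) is obtained by collapsing \emph{all} edges to produce a planar unbased cactus whose lobes are the white vertices and whose intersection points are the black vertices; the base point then comes from the $S^1$ coordinate. This does not affect the validity of your retraction argument, only the description of its endpoint.
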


The complex $\C_k$ is denoted by $\mathcal{F}(k)$ in \cite{Deligne}.

We associate to each element $x \in \C_k$ a piecewise linear map
$c_x:[0,k] \to [0,1]^k$, the {\em cactus} map.

\medskip

Consider $x=(t^{(1)},\dots,t^{(k)}; f) \in \C_k$
with $$t^{(j)}=(t^{(j)}_0,\dots,t^{(j)}_{m_j}) \in \Delta_{m_j}\, ,$$ 
$f:\{1,\dots,m+k\} \to \{1,\dots,k\}$ and $\sum_{j=1}^k m_j =m$. 

Let $g: \{1,\dots,m+k\} \to \{0,\dots,\max_j{m_j}\}$ be the function defined 
by $$g(r)= | \{ s \, | \,  s<r ,   f(r)=f(s) \}|$$  Of course $g(r) \leq m_{f(r)}=|f^{-1}(f(r))|-1$.
This induces an ordering of the (multi)simplicial coordinates
$(t^{(f(1))}_{g(1)}, \dots , t^{(f(m+k))}_{g(m+k)} )$ of $x$. 
Let us write $t_i :=t^{(f(i))}_{g(i)}$.
If we set $y_{r}  = \sum_{s=1}^{r} t_s$  (with the convention $y_0=0$),
then we obtain a decomposition of the interval $[0,k]=\cup_{r=1}^{m+k}[y_{r-1},y_r]$ into $m+k$ closed intervals.
\begin{Def}
The cactus map 
$c_x=(c_x^{(1)},\dots, c_x^{(k)} ):[0,k] \to [0,1]^k$   
is characterized by 
\bit
\item $ c_x(0)=(0,\dots,0) $
\item if $y \in [y_{r-1}, y_r]$ and   $j \neq f(r)$ then  $ c_x^{(j)}(y)=c_x^{(j)}(y_{r-1})$    
\item if $y \in [y_{r-1}, y_r]$ then $c_x^{(f(r))}(y)= c_x^{(f(r))}(y_0)+(y-y_{r-1})$ 
\eit
\end{Def}

Thus the curve $c_x$ describes the motion of a point that moves along a single coordinate at a time, with speed 1.  
In particular $c_x(k)=(1,\dots,1)$.

\begin{Def}
The cactus $C_x$ of $x \in \C_k$ is the image of the composition  
$$[0,1]^k \stackrel{c_x}{\longrightarrow} [0,1]^k \twoheadrightarrow [0,1]^k/(\{0,1\})^k \cong (S^1)^k$$
\end{Def}
The cactus $C_x$ is a connected union of $k$ oriented and ordered circles, called the {\em lobes}, that intersect at a finite number of points.
The $j$-th lobe is the image of the union of intervals $\bigcup_{f(r)=j} [y_{r-1},y_r]$.
Two lobes intersect at most at one point, by the complexity condition 3) in Defintion \ref{defcacti}. The base point $P \in C_x$ is 
$P=[c_x(0)]=[c_x(k)]$.
The multi-simplicial coordinates correspond to the lengths of the arcs between intersection points, or between the base point and an intersection point.
The cactus $C_x$ admits a planar embedding compatible with the orientations. If the base point is also an intersection point, then 
the convention is to draw it slightly shifted into the positive direction of the first lobe crossed by the curve, namely that indexed by $f(1)$.
In this way the drawing describes univocally $x$.

We draw the cacti representing all cells in $\mathcal{C}_2$ and $\mathcal{C}_3$ up to permutations. 
The boundary of a cell contains the cells obtained by pushing the base point into an intersection point, or by collapsing a chord between adjacent intersection points into a single point. 
\medskip

\begin{tikzpicture}  [scale=.8] %cacti 12
\draw  circle (.5cm)  ;
\draw node {2};
\draw   (1,0) circle (.5 cm) ;
\draw (1,0) node{1};
\fill (0.5,-0.2) circle (.05 cm);
\draw (-1,0) node{12};
\end{tikzpicture} \quad \quad
\begin{tikzpicture}  [scale=.8]  %cacti 121
\draw  circle (.5cm)  ;
\draw node {2};
\draw   (1,0) circle (.5 cm) ;
\draw (1,0) node{1};
\fill (1,-0.5) circle (.05 cm);
\draw (-1,0) node{121};
\end{tikzpicture}

\medskip

\begin{tikzpicture} [scale=1]%cactus  123
\draw  (0,0) to [out=45,in=0] (0,1);
\draw  (0,0) to [out=135,in=180] (0,1);
\draw (0,0) to [out=45,in=90] (1, 0);
\draw  (0,0) to [out=-45,in=-90] (1,0);
\draw  (0,0) to [out=135,in=90] (-1,0);
\draw  (0,0) to [out=225,in=270] (-1,0);
\node at (0,0.5) {2};
\node at (0.5,0) {1};
\node at (-0.5,0) {3};
\fill  (0,-0.05) circle  (.05cm);
\draw (-1.5,0) node{123};
\end{tikzpicture}
\quad \quad
\begin{tikzpicture} [scale=1]%cactus  1231
\draw  (0,0) to [out=45,in=0] (0,1);
\draw  (0,0) to [out=135,in=180] (0,1);
\draw (0,0) to [out=45,in=90] (1, 0);
\draw  (0,0) to [out=-45,in=-90] (1,0);
\draw  (0,0) to [out=135,in=90] (-1,0);
\draw  (0,0) to [out=225,in=270] (-1,0);
\node at (0,0.5) {2};
\node at (0.5,0) {1};
\node at (-0.5,0) {3};
\fill  (0.6,-0.25) circle  (.05cm);
\draw (-1.5,0) node{1231};
\end{tikzpicture}
\quad \quad
\begin{tikzpicture}  [scale=.8] %cacti 1232
\draw  circle (.5cm)  ;
\draw node {1};
\draw   (1,0) circle (.5 cm) ;
\draw (1,0) node{2};
\draw   (2,0) circle (.5 cm) ;
\draw (2,0) node{3};
\fill (0.5,0.2) circle (.05 cm);
\draw (-1.2,0) node{1232};
\end{tikzpicture}

\medskip

\begin{tikzpicture}[scale=.8]   %cacti 1213
\draw  circle (.5cm)  ;
\draw node {3};
\draw   (1,0) circle (.5 cm) ;
\draw (1,0) node{1};
\draw   (2,0) circle (.5 cm) ;
\draw (2,0) node{2};
\fill (0.5,-0.2) circle (.05 cm);
\draw (-1.2,0) node{1213};
\end{tikzpicture}
\quad \quad
\begin{tikzpicture}  [scale=.8] %cacti 12131
\draw  circle (.5cm)  ;
\draw node {3};
\draw   (1,0) circle (.5 cm) ;
\draw (1,0) node{1};
\draw   (2,0) circle (.5 cm) ;
\draw (2,0) node{2};
\fill (1,-0.5) circle (.05 cm);
\draw (-1.2,0) node{12131};
\end{tikzpicture}
\quad \quad
\begin{tikzpicture} [scale=.8]  %cacti 12321
\draw  circle (.5cm)  ;
\draw node {1};
\draw   (1,0) circle (.5 cm) ;
\draw (1,0) node{2};
\draw   (2,0) circle (.5 cm) ;
\draw (2,0) node{3};
\fill (0,-0.5) circle (.05 cm);
\draw (-1.2,0) node{12321};
\end{tikzpicture}

\subsection{Operations on the cacti spaces}

The cacti spaces have the structure of an operad only up to homotopy, in the sense that 
%We allow in this context also $\C_1$ that is a point. 
there are composition maps
$$\theta_{n_1,\dots,n_k}: \C_k \times \C_{n_1} \times \dots \times \C_{n_k} \rightarrow \C_{n_1 + \dots + n_k}$$ 
satisfying the associativity axioms of operad structure maps  \cite{Benoit} only up to (higher) homotopy. See Remark 2.3.19 in \cite{Kaufmann} for an example where strict associativity fails.
The only point $id \in \C_1$ is a unit up to homotopy.  The composition maps satisfy strictly the equivariance axioms.

In addition there are multiplication maps  $$\mu_{k,h}:\C_k \times \C_h \to \C_{k+h}$$ that satisfy strict associativity. The composition maps and the multiplication maps define the structure of an operad with multiplication up to (higher) homotopy.
We define first the multiplication maps $\mu_{k,h}$.
\begin{Prop} (McClure-Smith) \cite{MS-cos}
For each $k,h \in \N_+$ there is a binary operator, the star product
$$\star \, : X^k_{m_1 ,\dots, m_k} \times X^h_{n_1,\dots, n_h} \to X^{h+k}_{m_1,\dots,m_k,n_1,\dots,n_h}$$
defined by 
$$(f \star g)(i) = \begin{cases}
f(i) \;{\rm for }\; i \leq m+k \\
g(i-m-k)+k \;{\text for }\; i > m+k  
\end{cases}
$$ with $m= \sum_{j=1}^{k} m_j$.
This is a levelwise injective map of $(k+h)$-fold semi-simplicial sets, and
so it induces an embedding of the realizations $\mu_{k,h}:\C_k \times \C_h \to \C_{k+h}$.
The star product satisfies strict associativity, and so do the maps $\mu_{k,h}$.
\end{Prop}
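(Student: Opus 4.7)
The plan is to verify the proposition in four steps: first that $f \star g$ is a valid element of $X^{h+k}_{m_1,\dots,m_k,n_1,\dots,n_h}$, second that $\star$ commutes with the face operators, third that levelwise injectivity yields an embedding on realizations, and fourth that $\star$ is strictly associative.

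For the first step, I would check the three defining conditions. Surjectivity with the correct preimage cardinalities is immediate: the values $j \in \{1,\dots,k\}$ are attained exactly on the $m_j+1$ indices of $f^{-1}(j) \subset \{1,\dots,m+k\}$, and the values $j \in \{k+1,\dots,k+h\}$ are attained exactly on the shifted preimages $g^{-1}(j-k) + (m+k)$, each of size $n_{j-k}+1$. The no-consecutive-equal-values condition holds within each block by hypothesis on $f$ and $g$, and at the junction $i=m+k,\, i=m+k+1$ because $(f\star g)(m+k)\le k$ while $(f\star g)(m+k+1)>k$. For the complexity condition, suppose $a<b<c<d$ with $(f\star g)(a)=(f\star g)(c)\neq (f\star g)(b)=(f\star g)(d)$. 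The key observation is that $(f\star g)$ takes values in $\{1,\dots,k\}$ on indices $\le m+k$ and in $\{k+1,\dots,k+h\}$ on indices $>m+k$. If $(f\star g)(a)\le k$, then $a,c\le m+k$ and hence $b\le m+k$; then $(f\star g)(b)\le k$ forces $d\le m+k$, and the forbidden pattern lies entirely in $f$. The symmetric case places all four indices in the $g$-block. Thus the condition is inherited from $f$ and $g$.

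For the second step, I would check directly from the definition that for $1\le j\le k$ and $0\le i\le m_j$ one has $\partial_i^{(j)}(f\star g)=\partial_i^{(j)}(f)\star g$, and for $k<j\le k+h$ and $0\le i\le n_{j-k}$ one has $\partial_i^{(j)}(f\star g)=f\star \partial_i^{(j-k)}(g)$. This reduces to identifying which position of the sequence is being deleted: the $i$-th occurrence of the value $j$ in $f\star g$ sits in the $f$-block if $j\le k$ (since $g$ contributes no such value) and is precisely the $i$-th occurrence of $j$ in $f$; the dual statement holds for $j>k$. Hence $\star$ is a morphism of $(k+h)$-fold semi-simplicial sets.

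For the third step, injectivity is clear since $f$ is recovered from the first $m+k$ entries of $f\star g$ and $g$ from the remaining entries (subtracting $k$). A levelwise injective morphism of multi-semi-simplicial sets realizes as a subcomplex inclusion into a regular CW complex, and the realization of a product of multi-semi-simplicial sets equipped with product simplicial coordinates is naturally the product of realizations; combining these facts, the induced map $\mu_{k,h}\colon \C_k\times\C_h\to\C_{k+h}$ is a closed embedding onto the union of closed cells indexed by sequences of the form $f\star g$. Finally, strict associativity follows by unwinding: both $(f\star g)\star h$ and $f\star (g\star h)$ yield the concatenated sequence whose initial segment is $f$, whose middle segment is $g$ with values shifted by $k$, and whose final segment is $h$ with values shifted by $k+k'$. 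Consequently $\mu_{k+k',h}\circ(\mu_{k,k'}\times\mathrm{id})=\mu_{k,k'+h}\circ(\mathrm{id}\times\mu_{k',h})$.

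The main obstacle is the complexity condition in step one; the case analysis above is the essential point, and once the ``block separation'' of values is observed, it collapses to the complexity of $f$ and of $g$ separately. Everything else is bookkeeping: the face-compatibility of step two, the formal properties of realizations of multi-semi-simplicial sets in step three, and the symbolic associativity in step four.
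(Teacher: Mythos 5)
The paper states this proposition with a citation to McClure--Smith and does not reproduce a proof, so there is no in-paper argument to compare against; your blind proof is correct and fills the gap. The key point, the block separation of values (indices $\le m+k$ hit $\{1,\dots,k\}$ while indices $>m+k$ hit $\{k+1,\dots,k+h\}$), is exactly what makes the complexity condition collapse to that of $f$ and $g$, and your face-operator and associativity checks are routine and right. One small point of precision in step three: what realizes to $\C_k\times\C_h$ is not a categorical product of $(k+h)$-fold semi-simplicial sets but the external (box) product $X^k\boxtimes X^h$, the $(k+h)$-fold semi-simplicial set with $(X^k\boxtimes X^h)_{a_1,\dots,a_{k+h}}=X^k_{a_1,\dots,a_k}\times X^h_{a_{k+1},\dots,a_{k+h}}$; since the realization coends against $\prod_j\Delta^{a_j}$, one has $|X^k\boxtimes X^h|\cong|X^k|\times|X^h|$ on the nose, the star product is a levelwise injective morphism $X^k\boxtimes X^h\to X^{k+h}$, and by finiteness/compactness its realization is a closed embedding. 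With that rewording your argument is complete.
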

Then we consider the homotopy operad composition maps, also due to McClure and Smith. 

\begin{Def} \label{bigdef}
For $(n_1,\dots,n_k) \in (\N_+)^k$, and $n=n_1+ \dots + n_k$,  there are 
embeddings $$\theta_{n_1,\dots,n_k}: \C_k \times \C_{n_1} \times \dots \times \C_{n_k} \rightarrow \C_{n} $$
defined as follows. 
For $x \in \C_k$ 
the cactus map $c_x:[0,k] \to [0,1]^k$ is a piecewise oriented isometry onto its image, and so is
$c_{x_j}: [0,n_j] \to [0,1]^{n_j}$,  for $x_j \in \C_{n_j}$ and $j=1,\dots,k$.
Consider the product of dilations $D:[0,1]^k \cong \prod_{j=1}^k [0,n_j]$ such that $D(t_1,\dots,t_k)=(n_1t_1,\dots,n_k t_k)$.
There is a unique piecewise oriented isometry onto its image $c': [0,n] \to \prod_{j=1}^k[0,n_j]$ and a unique piecewise linear homeomorphism
$\alpha:[0,k] \cong [0,n]$ such that $c' \circ \alpha = D \circ c_x$.
Then $\theta(x,x_1,\dots,x_k) \in \C_n$ is uniquely defined by 
 $$c_{\theta(x,x_1,\dots,x_k)}=(\prod_{j=1}^k c_{x_j})   \circ c' : [0,n] \to [0,1]^n$$
 \end{Def}
 
 There are partial composition operations $\C_m \times \C_n \to \C_{m+n-1}$ sending $(x,y) \mapsto x {\circ_i}y$  for each $m,n \geq 1$ and  $i=1,\dots,m$, defined as usual by 
$$x \circ_i y = \theta(id, \dots, y, \dots, id),$$ with $y$ appearing at the $i$-th entry, and $id \in \C_1\, $.
 The partial composition operations do not satisfy associativity, as shown by Kaufmann  \cite{Kaufmann}, 
 but they do satisfy commutativity in the sense that
 $$(x \circ_i y) \circ_{j+n-1} z = (x \circ_j  z) \circ_i y$$ for $i<j \, , \, x \in \C_m, \, y \in \C_n, \, z \in \C_r \;$. 
 In addition they are strictly compatible with the action of the symmetric groups .
One can recover the structure maps $\theta$ as iterated partial composition operations similarly as in \cite{Benoit},vol. I, 2.1.
 \
  
We describe explicitly the structure map $\theta$ in terms of cells and multisimplicial coordinates.
Given a cell $\sigma_f \subset \C_n$ labelled by $f: \{1, \dots, m+n\} \to \{1,\dots,n\}$, consider the projection $\pi: \{1,\dots,n\} \to \{1,\dots,k\}$ 
such that $\pi(i)=j$ if $$\sum_{l=1}^{j-1}n_l   < i  \leq \sum_{l=1}^j n_l \, .$$
We have that $\pi \circ f: \{1,\dots,m+n\} \to \{1,\dots,k\}$ satisfies the complexity condition of Definition \ref{defcacti}
if and only if the cell $\sigma_f$ is contained in the image of $\theta$.

Consider the equivalence relation on $\{1,\dots,m+n\}$ generated by 
$i \sim i+1$ if $\pi(f(i))=\pi(f(i+1))$. The quotient by $\sim$ inherits a unique structure of totally ordered set such that 
$$\pi': \{1 \dots, m+n\} \to \{1 \dots, m+n\}/\sim$$
 is a monotone map
and so we identify  $$\{1 \dots, m+n\}/\sim \, \cong \,  \{1,\dots, u+k\}$$ for some $u \in \N$.
Let $f': \{1,\dots, u+k \} \to \{1,\dots,k\}$ be the map induced by $f$ on the quotient. Now $f'$ satisfies the condition %reduced sequence
(2) of Definition \ref{defcacti}.
It also satisfies the complexity condition (3), and so it labels
some cell $\sigma_{f'} \subset \C_{k}$.
With a similar procedure, for $j \in \{1,\dots,k\}$,  let $$A_{j}=(\pi \circ f)^{-1}(j) \subset \{1,\dots,m+k\}.$$
Let us impose a similar relation $\sim_j$ on the totally ordered set $A_j$ generated by $i \sim_j i'$ if $i'$ is the successor of $i$ in $A_j$ and 
$f(i)=f(i')$. Then we have monotone maps of totally ordered set 
$$A_j \stackrel{\pi_j}{\longrightarrow} A_j/\sim_j \, \cong \, \{1, \dots,  u_j+n_j\}.$$ The map  $f_{| A_j}-\sum_{l=1}^{j-1}n_l:A_j \to \{1,\dots,n_j\}$ induces a map on 
the quotient  $$f_j:\{1,\dots,u_j+n_j\} \to \{1,\dots,n_j\}$$ labelling a cell $\sigma_{f_j} \subset \C_{n_j}$.
One verifies that $u+\sum_{j=1}^k u_j = m$ and so the cells 
$\sigma_{f'} \times \prod_{j=1}^k \sigma_{f_j}$ and $\sigma_f$ have the same dimension. 
\begin{Thm}
The structure map satisfies $$\theta_{n_1,\dots,n_k}(\sigma_{f'} \times \prod_{j=1}^k \sigma_{f_j}) \supseteq \sigma_f$$ and on such cells
in multisimplicial coordinates is defined as follows:
we identify a point $P$  of a cell $\sigma_f$ labelled by $f: \{1,\dots,m+n\} \to \{1,\dots,n\}$ via 
its multisimplicial coordinates
$t_i$ with $i \in \{1,\dots,m+n\}$, and write $P=(t_i)$. We have that $t_i \geq 0$ and $\sum_{ f(i)=j}t_i = 1$ for $j \in \{1, \dots,n\}$.
Then  $(t_i) \in \sigma_f$ is equal to   
$$(t_i) =\theta_{n_1,\dots,n_k}   ( (x_{i'}), (y^{(1)}_{i_1}),\dots,(y^{(k)}_{i_k})),$$
with
$$x_{i'}  =  \frac{1}{n_{f'(i')}} \sum_{ \pi'(i)=i'} t_{i} $$ %           j' =1,..,k    (default f'(i')=j' so do not order again) 
 and   
$$y^{(j)}_{i_j}     =   \sum_{ \pi_j(i)=i_j  } t_i  $$
\end{Thm}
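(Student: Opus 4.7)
The strategy is to track how arcs of the composite cactus map $c_{\theta(x,x_1,\dots,x_k)} = (\prod_j c_{x_j}) \circ c'$ are generated by arcs of $c_x$ and $c_{x_j}$, and then read off the multisimplicial coordinates. The key geometric observation is that all of $c_x$, $c_{x_j}$, $c'$, and $c_{\theta(\cdots)}$ move at unit speed on their parameter domains, so arc lengths are preserved under the relevant identifications once the dilation $D$ is accounted for.

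First I would unwind the middle layer $c'$. Because $c'\circ\alpha=D\circ c_x$ and $D$ stretches coordinate $j$ by $n_j$, each unit-speed arc of $c_x$ in lobe $j=f'(i')$ of length $x_{i'}$ is reparametrised by $\alpha$ to a unit-speed arc of $c'$ of length $n_{f'(i')}\, x_{i'}$ moving in the $j$-th coordinate of $\prod[0,n_j]$. This gives a partition of the parameter domain $[0,n]$ of $c'$ into $u+k$ intervals indexed by $i'\in\{1,\dots,u+k\}$. Each such interval is then further subdivided by the arc structure of the cactus map $c_{x_j}$: while $c'$ moves in direction $j$, the factor $c_{x_j}$ resumes from its last position and traverses successive arcs of $x_j$ at unit speed, producing fragments whose lengths are exactly the multisimplicial coordinates $t_i$ of the composite. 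Summing these fragments over one block yields $\sum_{\pi'(i)=i'} t_i = n_{f'(i')}\, x_{i'}$, which is the first formula. Symmetrically, a single arc $i_j$ of $x_j$ appears in the composite as a disjoint union of fragments, one per visit to the appropriate piece of super-lobe $j$, and the indices of these fragments are exactly the class $\pi_j^{-1}(i_j)$; since $c_{x_j}$ is traversed at unit speed within each fragment, the total length of arc $i_j$ in $x_j$ is $y^{(j)}_{i_j}=\sum_{\pi_j(i)=i_j} t_i$, which is the second formula.

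From here the inclusion $\theta_{n_1,\dots,n_k}(\sigma_{f'} \times \prod_j \sigma_{f_j}) \supseteq \sigma_f$ follows by running the construction in reverse: given $(t_i) \in \sigma_f$, define $(x_{i'})$ and $(y^{(j)}_{i_j})$ by the displayed formulas, verify the simplex constraints (the sum $\sum_{f'(i')=j} x_{i'}$ collapses via $\pi'$ and $\pi$ to the single lobe-sum $\sum_{l\in\pi^{-1}(j)}\sum_{f(i)=l} t_i = n_j$ and is then divided by $n_j$; the sum $\sum_{f_j(i_j)=l} y^{(j)}_{i_j}$ collapses via $\pi_j$ to the lobe-sum $\sum_{f(i)=l+\sum_{l'<j}n_{l'}} t_i = 1$), and then invoke the forward computation above to check that $\theta$ sends this data back to $(t_i)$. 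The main bookkeeping obstacle is handling the two quotients $\pi'$ and $\pi_j$ correctly and confirming that the complexity condition on $f$ passes to $f'$ and the $f_j$'s; here one uses the assumption that $\pi\circ f$ satisfies condition (\ref{compl}) (which characterises membership in the image of $\theta$), so that the collapses consolidate only genuinely consecutive repetitions and do not create forbidden crossings. Once this combinatorial matching is in place, the geometric length-tracking above is purely formal.
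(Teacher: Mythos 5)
The paper states this theorem without a proof environment at all: the surrounding text only sets up the combinatorics defining $\pi'$, $f'$, $\pi_j$, $f_j$ and asserts that the dimensions match, but the displayed formulas for $x_{i'}$ and $y^{(j)}_{i_j}$ are asserted rather than derived. Your argument therefore fills a genuine gap rather than duplicating one. Your approach is the natural one: since every cactus map $c_x$, $c_{x_j}$, $c'$ is a piecewise oriented isometry, and the dilation $D$ stretches the $j$-th factor by $n_j$, the reparametrisation $\alpha$ scales time by exactly $n_{f'(i')}$ on the arc indexed by $i'$; within each such stretched interval the partial composition resumes and pauses the corresponding factor $c_{x_j}$ at unit speed, so fragment lengths are the $t_i$'s of $\sigma_f$, and summing fragments over a $\pi'$-class gives $n_{f'(i')}\,x_{i'}$ while summing over a $\pi_j$-class reconstitutes the length of arc $i_j$ of $x_j$. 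Your verification of the simplex constraints for the reverse direction is also correct: $\sum_{f'(i')=j} x_{i'}$ telescopes through $\pi'$ and $\pi$ to $\tfrac{1}{n_j}\sum_{l\in\pi^{-1}(j)}\sum_{f(i)=l}t_i = 1$, and similarly for the $y^{(j)}$'s. One small clarification worth adding for full rigor: when you say the fragments within one block are ``exactly the multisimplicial coordinates $t_i$ of the composite,'' this uses that the composite cactus $c_{\theta(\dots)}$ has its arc endpoints precisely at the points where either $c'$ changes direction (block boundaries, consolidated by $\pi'$) or a factor $c_{x_j}$ changes arc (subdivision within a block, consolidated by $\pi_j$); condition (2) of the definition of the semisimplicial set (no consecutive repeated values) guarantees these boundary points are exactly the ones recorded by $f$, so the fragment lengths are indeed the full set of multisimplicial coordinates $(t_i)$ and not a further refinement. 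With that noted, the argument is complete and correct.
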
     
\begin{Cor}
The homotopy operad structure maps $\theta$ are cellular injective maps, that define on the family of complexes of cellular chains 
$C_*(\C)=\{ C_*(\C_n)\} _{n \geq 1}$ a structure of differential graded operad (an operad in the category of chain complexes of abelian groups). 
\end{Cor}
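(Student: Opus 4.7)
The plan is to extract the corollary from the explicit cell-level formula for $\theta$ given in the preceding theorem.

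First, I would verify cellularity and injectivity. The formula
$$x_{i'} = \frac{1}{n_{f'(i')}} \sum_{\pi'(i)=i'} t_i, \qquad y^{(j)}_{i_j} = \sum_{\pi_j(i)=i_j} t_i$$
exhibits the inverse assignment $(t_i) \mapsto ((x_{i'}),(y^{(j)}_{i_j}))$ as a surjective linear map with rational coefficients. A dimension count (namely $u + \sum_j u_j = m$, already recorded in the theorem) together with the fact that this inverse is everywhere defined on the interior of $\sigma_f$ shows that $\theta$ restricts to a bijection between the interior of $\sigma_{f'} \times \prod_j \sigma_{f_j}$ and the interior of $\sigma_f$. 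Hence $\theta$ is cellular, the inclusion of the image in $\sigma_f$ in the previous theorem is in fact an equality, and $\theta$ is globally injective since the cell $\sigma_f$ is uniquely determined by $(f', f_1, \dots, f_k)$: the labels $\pi, \pi', \pi_j$ of the construction can be read off $f$.

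Next I would promote this to a chain-level operad structure. Cellularity gives an induced map
$$\theta_*: C_*(\C_k) \otimes C_*(\C_{n_1}) \otimes \dots \otimes C_*(\C_{n_k}) \longrightarrow C_*(\C_{n_1+\dots+n_k})$$
sending a tensor of cell generators $[\sigma_{f'}] \otimes \bigotimes_j [\sigma_{f_j}]$ to $[\sigma_f]$ when the combinatorial concatenation $(f', f_1, \dots, f_k) \mapsto f$ is defined (i.e.\ when the resulting $f$ satisfies the complexity condition) and to $0$ otherwise. Equivariance is inherited strictly from the spatial level, the unit axiom is immediate from $id \in \C_1$ corresponding to the length-one sequence, and compatibility with the differential reduces to checking that the semi-simplicial face operators $\partial_i^{(j)}$ on $f$ correspond, under the bijection above, to face operators applied to one of the factors $f', f_1, \dots, f_k$. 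This comes from the explicit formula: decreasing some coordinate $t_i$ to $0$ either collapses a simplex factor of $\sigma_{f'}$ (when $i$ is the last surviving preimage of its class in $\{1,\dots,u+k\}$) or of one of the $\sigma_{f_j}$'s, which is exactly the semi-simplicial compatibility needed.

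Finally I would verify the operadic associativity axioms at the chain level. This is where the subtlety lies, since the spatial composition $\theta$ is only homotopy associative. The key observation is that the chain-level composition depends only on the combinatorial data of how sequences concatenate and which complexity conditions are satisfied; equivalently, iterated applications of $\theta$ on basis cells reduce on the label side to the construction of a nested sequence $f$ out of a tree of labels, and this construction is manifestly independent of the order of substitution. More precisely, given cells $\sigma_{f}, \sigma_{g_i}, \sigma_{h_{i,j}}$, both iterated compositions produce the same nested sequence (up to the canonical identifications of ordered sets that define $\pi, \pi', \pi_j$), because the partitions $\sim$ and $\sim_j$ used in the theorem compose strictly. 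The spatial defect to associativity is invisible on chains because it amounts to reparametrizing within the interior of one and the same cell $\sigma_f$, so it contributes the identity on the cellular generator. Equivariance and the commutativity relation $(x \circ_i y) \circ_{j+n-1} z = (x \circ_j z) \circ_i y$ recorded before the theorem carry over verbatim.

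The main obstacle is the last step: checking that the failure of strict associativity of $\theta$ on spaces does not leak into the chain-level maps. The cleanest way to handle it is to describe the iterated composition purely in terms of the combinatorial operation of concatenating and restricting the defining sequences, and then verify directly that this operation is strictly associative on the label side; the explicit formulas of the preceding theorem make this translation possible.
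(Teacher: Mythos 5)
Your proposal contains a genuine error in the very first step, and it propagates through the rest of the argument. You claim that $\theta$ restricts to a bijection between the interior of $\sigma_{f'} \times \prod_j \sigma_{f_j}$ and the interior of $\sigma_f$, and hence that ``the inclusion of the image in $\sigma_f$ in the previous theorem is in fact an equality.'' This is false, and the theorem's notation $\supseteq$ (rather than $=$) is deliberate. The explicit coordinate formulas describe the \emph{preimage} of $\sigma_f$ inside the product cell, but that preimage is a proper subset of the product cell; the full image of the product cell is a union of several top-dimensional cells of $\C_n$. A concrete check: take $k=2$, $n_1=2$, $n_2=1$, and the product cell $\sigma_{121}\times\sigma_{121}\times\sigma_1$ (a square). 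Both $f=12131$ and $\tilde f=12321$ decompose, via the partitions $\pi'$, $\pi_j$, to the same triple $(f',f_1,f_2)=(121,121,1)$, so the image of the square contains both 2-cells $\sigma_{12131}$ and $\sigma_{12321}$. The paper itself signals this when it describes the \emph{dual} cochain map $C^*(\theta)$, sending each $\sigma_f$ to a single tensor $\pm\sigma_{f'}\otimes\cdots$ or to $0$: this is a well-defined single-term assignment precisely because on cochains the map goes in the direction where uniqueness holds, and it dualizes to a chain-level map that is a \emph{signed sum}
$$\theta_*\bigl([\sigma_{f'}]\otimes\textstyle\bigotimes_j [\sigma_{f_j}]\bigr) = \sum_{f} \pm [\sigma_f],$$
the sum running over all $f$ of the correct dimension decomposing to $(f',f_1,\dots,f_k)$. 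This is also what the surjection operad $\mathcal{S}_2$ composition looks like — an interleaving sum — and the paper explicitly identifies $C_*(\C)$ with $\mathcal{S}_2$.

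Your broader strategy — show cellularity and injectivity of $\theta$, then argue that the chain-level maps depend only on the combinatorics of the labels and hence compose strictly associatively even though $\theta$ itself is only homotopy-associative — is the right one, and the observation that the spatial defect to associativity ``amounts to reparametrizing within the interior of a cell and so contributes the identity on the generator'' is exactly the correct insight. But the verification of the differential compatibility and associativity must be redone with the sum formula: one must check that the signed set of $f$'s appearing in an iterated composition, and the faces of each $\sigma_f$, match up independently of the bracketing. As written, the argument proves compatibility statements for a map that is not $\theta_*$. To repair it, either work dually with $C^*(\theta)$ where the single-term description is honest and then dualize at the end, or redo the chain-level bookkeeping using the interleaving-sum description.
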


The operad $C_*(\C)$ is isomorphic to $\mathcal{S}_2$,  a natural $E_2$ suboperad of the $E_\infty$ 
surjection operad $\mathcal{S}$ by McClure and Smith
\cite{MS-sur}.
From the point of view of the pictures of the cacti, it is easy to detect whether the cell $\sigma_f$ is contained in the image of $\theta$. 
This happens if and only if for any $x \in \, \stackrel{\circ}{\sigma_f}$ 
the union of the lobes of the cactus $C_x$ labelled by $l$, with $\sum_{j=1}^{i-1}n_j  < l \leq \sum_{j=1}^i n_j$, is connected, 
for each $i=1,\dots,k$.

The structure maps of the operad $C_*(\C)$ are dual to 
the homomorphisms 
$$C^*(\theta): C^*(\C_{n_1+\dots+n_k}) \rightarrow C^*(\C_{k}) \otimes C^*(\C_{n_1}) \otimes \dots \otimes C^*(\C_{n_k}) $$
sending $$\sigma_f \mapsto \pm \sigma_{f'} \otimes \sigma_{f_1} \otimes \dots \sigma_{f_k}$$ with the notation of definition \ref{bigdef},
when the cell $\sigma_f$ is in the image of 
$\theta$, and $\sigma_f \mapsto 0$ otherwise. 
The signs are determined by the orientation of the cells.

\subsection{Generating series of the number of cells} \label{sub:count}

We show how to compute the generating function counting the number of cells, that corresponds to the so called Davenport-Schinzel 
sequences and was first computed in \cite{Gardy}.

\begin{Def}
Let $c_m(\C_k)$ be the number of the $m$-cells of $\C_k$. By the free action of the symmetric group on the set of cells
$k!$ divides  $c_m(\C_k)$. 
We define the formal series with integer coefficients 
$$P(x,t)= \sum_{m,k \in \N} \frac{c_m(\C_k)}{k!} t^m x^k$$ 
\end{Def}

\begin{Thm} \label{count-ds}
$$P(x,t)= \frac{1-x-\sqrt{(1-x)^2-4xt}}{2t} $$ where the left hand side is intended as asymptotic expansion of the right hand side for $x,t \rightarrow 0$. 
\end{Thm}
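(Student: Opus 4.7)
The plan is to derive a functional equation for $P$ by a canonical recursive decomposition of the Davenport--Schinzel sequences indexing the cells of $\C_k$, and then solve the resulting quadratic. Recall that an $m$-cell of $\C_k$ is labelled by a surjective sequence $w=f(1)\cdots f(m+k)$ onto $\{1,\dots,k\}$ satisfying the non-adjacency condition $f(a)\neq f(a{+}1)$ and the complexity condition (no subsequence $abab$ with $a\neq b$). Since $\Sigma_k$ acts freely on such sequences by relabelling the alphabet, dividing $c_m(\C_k)$ by $k!$ turns $P(x,t)$ into an exponential generating function in $x$, with $x$ tracking the alphabet size and $t$ tracking the "excess length" $m = (m+k)-k$.

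Given a non-empty such $w$, let $a=f(1)$, suppose $a$ occurs exactly $r\geq 1$ times in $w$, and write
$$w \;=\; a\,w_1\,a\,w_2\,a\,\cdots\,a\,w_r,$$
where $w_i$ is the (possibly empty) segment between the $i$-th and $(i{+}1)$-st occurrences of $a$. Non-adjacency forces $w_1,\dots,w_{r-1}$ to be non-empty, while the tail $w_r$ may be empty. The complexity condition forces the alphabets of $w_1,\dots,w_r$ to be pairwise disjoint: a letter $b$ occurring in both $w_i$ and $w_j$ with $i<j$ would exhibit the forbidden pattern $abab$. Conversely, any tuple of DS(3) sequences on pairwise disjoint alphabets (the first $r-1$ non-empty) reassembles into a valid DS(3) sequence, because any hypothetical $abab$ pattern in the reassembly must lie entirely within some single $w_i$.

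Translating this bijection to generating functions, the letter $a$ contributes a factor $x$ to the alphabet count, the $r-1$ extra occurrences of $a$ contribute $t^{r-1}$, each non-empty $w_i$ for $i<r$ contributes a factor $P$, the possibly-empty tail $w_r$ contributes $1+P$, and the EGF product automatically accounts for the partition of the remaining alphabet among the $w_i$. Summing over $r\geq 1$ yields
$$P \;=\; \sum_{r\geq 1} x\,t^{r-1}P^{\,r-1}(1+P) \;=\; \frac{x(1+P)}{1-tP},$$
which rearranges to the quadratic $tP^2-(1-x)P+x=0$. Its two roots are $\frac{(1-x)\pm\sqrt{(1-x)^2-4xt}}{2t}$; the minus branch is singled out by the initial condition that $P(x,t)\to 0$ as $x,t\to 0$ (checked by expanding the square root), which yields the stated closed form in the asymptotic sense.

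The main point to verify carefully is the equivalence in the second paragraph: that the complexity condition on $w$ is captured exactly by disjointness of the alphabets of $w_1,\dots,w_r$ together with the complexity of each $w_i$ separately. Both directions reduce to a short case analysis on the relative positions of the four letters of an alleged $abab$ pattern with respect to the occurrences of $a$, and this is the combinatorial input that makes the resulting recursion finite (quadratic in $P$) rather than infinite.
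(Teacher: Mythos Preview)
Your proof is correct. Decomposing $w$ by the occurrences of its first letter $a=f(1)$ yields directly the functional equation $P=\dfrac{x(1+P)}{1-tP}$ and hence the quadratic $tP^2-(1-x)P+x=0$; the verification that the complexity condition on $w$ is equivalent to disjointness of the block alphabets together with the complexity of each block is indeed the short case analysis you indicate (split on whether one of the two letters in a putative $abab$ pattern equals $a$).

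The paper reaches the same quadratic by a closely related but differently organised argument: it splits $P=W+B$ according to whether $f(1)=f(m+k)$ (geometrically, whether the cactus base point sits on a single lobe or at an intersection of lobes), derives $B=\dfrac{W^2}{1-W}$ by writing a ``black'' cactus uniquely as a star product of at least two ``white'' cacti, and obtains $W=\dfrac{x}{1-tP}$ by attaching arbitrary cacti along a single lobe away from the base point. This last equation is exactly your decomposition restricted to the case where the tail $w_r$ is empty; you bypass the separate treatment of $B$ by allowing a possibly non-empty tail, absorbed into the factor $(1+P)$. Your route is slightly more direct combinatorially, while the paper's version keeps the cactus geometry visible throughout.
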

\begin{proof}
We split $P=B+W$ where
\bit
\item  $B$ is the generating function for the cells labelled by  $f$ with $f(1) \neq f(m+k)$.
\item  $W$ is the generating function for the cells labelled by $f$ with $f(1)=f(m+k)$.
\eit
Geometrically the cells of $B$ correspond to cacti with the base point on the intersection of lobes, that we call black cacti,
and those of $W$ to cacti with the base point not coinciding with the intersection of lobes, that we call white cacti. 
We have the identity 
$$B=\sum_{j \geq 2} W^j  = \frac{W^2}{1-W}$$
by the following reason: given a cell $f$ in $B$ we can decompose it uniquely as $\sigma(f_1 \star \dots \star  f_j)$ for some $j \geq 2$, and 
$\sigma$ is a permutation. Geometrically this means that a black cactus is obtained uniquely by joining 2 or more white cacti at the base point,
and has degree equal to the sum of the degrees of the white cacti. Then

\begin{equation}
P=B+W=\frac{W}{1-W}\label{PW} 
\end{equation}
On the other hand we have the equation
\begin{equation}
W=x (1 + tP +t^2 P^2 + \dots) = \frac{x}{1-tP} \label{last}
\end{equation}
because any white cactus is obtained uniquely by  gluing  $r$ cacti to a single lobe away from the base point, 
and has degree equal to the sum of the degrees of the $r$ cacti raised by $r$.

Substituting the value of \ref{PW} in \ref{last} and solving a quadratic equation we obtain
\begin{equation}
W=\frac{1+x \pm \sqrt{(1-x)^2-4xt}}{2(t+1)} \label{quadr}
\end{equation}
But for $t=0$ we expect $W(x,0)=x$ since the only white cell in degree 0 is labelled by $1$, so the sign in \ref{quadr} is a minus.
The substitution of expression \ref{quadr} in \ref{PW}  concludes the proof.
 
\end{proof}

\section{Cellular structure of the open moduli space} \label{sec:cellular}

The aim of this section is to describe an open cellular decomposition of the moduli space $FM'_k := F_k(\bcc)/(\bcc \rtimes \R^+)$.

As a preliminary result we compare spaces of cacti and spaces of labelled trees.
For $k >1$ there is a free action of $S^1$ on $\C_k$ that moves the base point {\em clockwise}.  
The action commutes with the action 
of the symmetric group $\Sigma_k$.
The quotient $\C'_k=\C_k/S^1$ can be considered as the space of "unbased cacti".

\begin{Prop} \label{unbcact}
The projection $\C_k \to \C'_k$ is a trivial principal $S^1$-bundle and so $\C_k \cong \C'_k \times S^1$.
\end{Prop}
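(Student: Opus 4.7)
The plan is to show triviality of the principal $S^1$-bundle $\C_k \to \C'_k$ by producing a continuous section $s: \C'_k \to \C_k$, from which the desired homeomorphism $\C_k \cong \C'_k \times S^1$ follows via $([x],\theta) \mapsto \theta \cdot s([x])$. First I would observe that since $S^1$ is a compact Lie group acting freely on the Hausdorff space $\C_k$, the quotient map is automatically a principal $S^1$-bundle, so triviality is equivalent to the existence of a section, or equivalently to the vanishing of the Euler class in $H^2(\C'_k;\bbz)$.

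The approach I would take is topological, using the inclusion $\C_k \subseteq FM'_k = F_k(\bcc)/(\bcc \rtimes \R_+)$ described in the introduction. This inclusion is $S^1$-equivariant: by construction $\C_k$ is cut out inside $FM'_k$ by the condition that $|h(z)|$ be constant on all critical points of $h$, and the rotational $S^1$-action on $FM'_k$ preserves $|h|$; moreover the deformation retraction $FM'_k \to \C_k$ can be chosen to commute with this action, in such a way that motion of the cactus basepoint matches rotation of the distinguished flow line at infinity. The ambient bundle $FM'_k \to FM'_k/S^1 \cong \mathcal{M}_{0,k+1}$ is trivial, with a canonical section given by the normalization $z_1/|z_1| = 1$. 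Since $\C_k \to \C'_k$ is then the pullback of this trivial bundle along the homotopy equivalence $\C'_k \simeq \mathcal{M}_{0,k+1}$, its classifying map is null-homotopic, and the bundle is trivial.

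Alternatively, one could attempt a direct combinatorial construction of the section: for each unbased cactus $[x]$, place the basepoint at a canonical location determined by the combinatorial structure, for instance the entry point of the first lobe-$1$ arc in the cyclic order of the parameterization $c_{[x]}$. The hard part in this approach would be verifying continuity of $s$ across the face identifications of the multi-semi-simplicial structure on $\C_k$, where the labeling function $f$ undergoes collapse and the indexing of lobe-$1$ entries jumps. The topological argument sketched above avoids this technicality by working entirely at the level of classifying maps.
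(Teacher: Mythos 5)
Your proposal mixes two strategies, and neither one fully closes the gap as written. Your main (topological) route — identify $\C_k$ as an $S^1$-equivariant subspace of $FM'_k$ whose bundle to $\mathcal{M}_{0,k+1}$ is visibly trivial — is conceptually attractive but is circular relative to the paper's development: the claim that the deformation retraction $FM'_k \to \C_k$ is $S^1$-equivariant is precisely Proposition \ref{4.3}, whose proof in the paper invokes the triviality of $\C_k\to\C'_k$ (to form $Tr'_k\cong Tr_k\times S^1$). The equivariance of the retraction is not a free consequence of the cited McClure--Smith theorem; it requires the identification via flow lines at infinity, which the paper sets up only afterwards. There is also a small flaw in the proposed section of $FM'_k\to\mathcal{M}_{0,k+1}$: after normalizing $\sum_i z_i=0$, the coordinate $z_1$ may vanish, so $z_1/|z_1|$ is not globally defined (one would need something like $(z_2-z_1)/|z_2-z_1|$).

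The paper instead gives a one-line direct section, which is the route your ``alternative'' gestures at — but your specific rule (``the entry point of the first lobe-$1$ arc in the cyclic order'') is not well-posed, since a cyclic order has no canonical first element; for an unbased cactus, ``first'' must be anchored somehow. The paper's rule anchors it to a \emph{pair} of lobes: the basepoint is the first point of lobe $2$ encountered after lobe $1$, moving along the cactus with the orientation. The missing ingredient you would need to supply is why this is independent of where on lobe $1$ one starts: by the complexity condition (no $a<b<c<d$ with $f(a)=f(c)\neq f(b)=f(d)$), in the cyclic word $f(1)\cdots f(m+k)$ the occurrences of $1$ and the occurrences of $2$ each form a single contiguous block when restricted to positions labelled $1$ or $2$. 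Hence ``the first $2$ after any $1$'' is the same position for every choice of starting $1$, so the section is well-defined, and continuity across face maps is then routine since deleting a repeated symbol does not alter the boundary of these blocks.
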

To see this consider the continuous section   $\C'_k \to \C_k$ that assigns as base point of an unbased cactus the first point of the lobe $2$ encountered after lobe $1$ by moving 
along the cactus according to the orientation of the lobes.

Let $Tr_k^{1} \subset Tr_k$ be the subspace of the labelled trees with $k$ leaves with all black vertices labelled by the maximum possible value 1.
\begin{Prop}\label{ctr} (Egger \cite{Egger})
There is a $\Sigma_k$-equivariant homeomorphism $\C'_k \cong Tr_k^{1}$.
\end{Prop}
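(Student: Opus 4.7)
The plan is to derive the homeomorphism as a restriction of the map $\Phi_{a_1,\dots,a_k}$ from Theorem \ref{moduli-homeo}. The key observation is that both sides of the claimed homeomorphism embed naturally into $\mathcal{M}_{0,k+1} = F_k(\bcc)/(\bcc \rtimes \bcc^*)$, and one can pin down $\C'_k$ as exactly the preimage of $Tr_k^1$ under $\Phi_{a_1,\dots,a_k}$.

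First I would note that $\mathcal{M}_{0,k+1}$ is the $S^1$-quotient of $FM'_k = F_k(\bcc)/(\bcc \rtimes \R^+)$ via the residual rotation circle $S^1 = \bcc^*/\R^+$. Egger's geometric description of the cacti complex inside $FM'_k$ presents $\C_k$ as the locus of configurations $[z_1,\dots,z_k]$ for which the critical values $|h(b)|$ of $h(z)=\prod_i(z-z_i)^{a_i}$ coincide on all $b \in Crit$. This locus is manifestly $SO(2)$-invariant, and the $S^1$-action is free (by Proposition \ref{unbcact}), with quotient $\C'_k$ embedded in $\mathcal{M}_{0,k+1}$.

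Next I would inspect the construction of $\Phi_{a_1,\dots,a_k}$: the function $f$ attached to a configuration is
$$f(b) \;=\; \frac{|h(b)|}{\max\{|h(b')| : b' \in Crit\}}, \qquad b \in Crit.$$
Hence $f \equiv 1$ on the set of black vertices if and only if $|h|$ is constant on $Crit$. In other words
$$\Phi_{a_1,\dots,a_k}^{-1}(Tr_k^1) \;=\; \C'_k \;\subset\; \mathcal{M}_{0,k+1},$$
so restricting the homeomorphism $\Phi_{a_1,\dots,a_k}$ to these corresponding subspaces yields a homeomorphism $\C'_k \cong Tr_k^1$. Equivariance for $\Sigma_k$ is inherited from the equivariance of $\Phi_{a_1,\dots,a_k}$ (noted in the remark following Theorem \ref{thm:cont}), together with the fact that the symmetric group action on $F_k(\bcc)$ commutes with the rotation subgroup.

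The hard part of this argument is the implicit use of Egger's comparison: one must know that the subspace of $FM'_k$ where $|h|$ is constant on $Crit$ actually coincides, as a CW-complex, with the combinatorial cacti complex $\C_k$ of Section \ref{sec:cacti}. The cell-by-cell check goes as follows. In this regime every black vertex of the flow tree of Definition \ref{deftree} sits at the same height, so, up to the relations in Definition \ref{ourmodel}, the tree is bipartite with white vertices labeling lobes and black vertices labeling intersection points; the angular data $g_i$ at a white vertex $v_i$ records exactly the normalized arc-length multisimplicial coordinate of the $i$-th lobe of the associated cactus, and the cyclic orderings match the planar cactus structure. Once this identification of cells is in place (which is the content of \cite{Egger}), the rest is a formal restriction of a homeomorphism.
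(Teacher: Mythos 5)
Your proposal is correct in its essential combinatorial content, but the detour through $\Phi$ and $\mathcal{M}_{0,k+1}$ is both unnecessary and risks a logical ordering problem. The paper's proof is a short, purely combinatorial bijection: a labelled tree in $Tr_k^{1}$ has, after the identifications of Definition \ref{ourmodel}, no edges between black vertices (any such edge connects vertices both labelled $1$ and is therefore collapsed), so its only parameters are the angular data $g_i$ at the white vertices; contracting the remaining edges yields a planar unbased cactus whose lobes are the white vertices and whose arc-lengths are the $g_i$. No appeal to configurations, flow lines, or $\Phi$ is needed. Your ``cell-by-cell check'' at the end is essentially a restatement of this same correspondence, just phrased in terms of the geometric flow tree of Definition \ref{deftree}, so the mathematical substance agrees.

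The problem with the outer framing is circularity. You want to identify $\C'_k$ with $\Phi_{a}^{-1}(Tr_k^{1})$, and to do so you invoke ``Egger's geometric description of the cacti complex inside $FM'_k$'' as the locus where $|h|$ is constant on $Crit$. But in this paper that embedding of $\C_k$ into $FM'_k$ is \emph{a consequence} of the present Proposition: the proof of Proposition \ref{4.3} explicitly cites Propositions \ref{ctr} and \ref{unbcact} to build the retraction $Tr'_k \to \C_k$. Step 2 of your argument ($\Phi^{-1}(Tr_k^{1})$ equals the $|h|$-constant locus) is immediate from the formula $f(b)=|h(b)|/M$, so the composite of Steps 2 and 3 is exactly the statement being proved; citing Egger for Step 3 is therefore citing Egger for the whole Proposition, which the paper already does in the attribution, not a reduction to an independent result. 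A self-contained proof must supply the identification $L\cong\C'_k$ directly, and what you wrote in the last paragraph --- black vertices at a common height, a bipartite tree, $g_i$ matching arc-lengths, cyclic orderings matching --- \emph{is} that proof once decoupled from the $\Phi$ scaffolding. I would drop Steps 1--2 entirely and promote the cell-by-cell check to be the proof itself, phrased combinatorially on $Tr_k^{1}$ rather than geometrically on flow trees.
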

\begin{proof}
A labelled tree in $Tr_k^{1}$ has no edges between black vertices. Its only parameters are of the form $g_i(e)$, where $e$ is an edge going into a white vertex $v_i$.
If we collapse all edges, then we obtain a planar unbased cactus with the lenghts of the arcs between intersection points specified by the same parameters.
The lobes of the cactus correspond to the white vertices. 
The following picture describes a  cactus and the corresponding tree.
\begin{center}
\begin{tikzpicture}  [scale=.8, every node/.style={inner sep=2pt}]%cacti 1232
\draw  circle (.5cm)  ;
\draw node {1};
\draw   (1,0) circle (.5 cm) ;
\draw (1,0) node{2};
\draw   (2,0) circle (.5 cm) ;
\draw (2,0) node{3};

\huge
\node at (4,0) {$\leftrightarrow$};
\normalsize

\node (v1) at (6,0) [circle,draw,label=above:$v_1$]  {}  ;
\node (b1) at (7,0) [circle,fill] {} ;
\node (v2) at (8,0) [circle,draw,label=above:$v_2$]  {}  ;
\node (b2) at (9,0) [circle,fill] {} ;
\node (v3) at (10,0) [circle,draw,label=above:$v_3$]  {}  ;
\path [->,thick] (b1) edge (v1) edge (v2) 
(b2) edge (v2) edge (v3) ;

\end{tikzpicture}
\end{center}
The correspondence between $\C'_k$ and $Tr_k^{1}$ is bicontinuous and $\Sigma_k$-equivariant. 
\end{proof}

Consider the $S^1$-action on $F_k(\bcc)$ defined by $\lambda \cdot (z_1,\dots,z_k)=(\lambda z_1,\dots,\lambda z_k)$. This commutes 
with the action of the symmetric group $\Sigma_k$. So both $F_k(\bcc)$ and $\C_k$ are $(S^1 \times \Sigma_k)$-spaces.
\begin{Prop} \label{4.3}
The space of cacti $\C_k$ is a $(S^1 \times \Sigma_k)$-equivariant deformation retract of the configuration space $F_k(\bcc)$.
\end{Prop}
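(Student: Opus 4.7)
The plan is to transport a simple explicit homotopy on the combinatorial model $Tr_k$ back to configuration space, using the identifications from Section \ref{sec:conf} together with Propositions \ref{unbcact} and \ref{ctr}. First, fix the barycentric weights $a_i = 1/k$, so that $\Psi = \Psi_{1/k,\dots,1/k}: Tr_k \to \mathcal{M}_{0,k+1}$ is a $\Sigma_k$-equivariant homeomorphism by Theorem \ref{moduli-homeo}. Under $\Psi$, the subspace $Tr_k^1 \subset Tr_k$ corresponds to $\C_k/S^1 \cong \C'_k$, embedded in the moduli space as the locus of configurations for which $|h|$ takes the same value on every critical point of $h$.

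Next I would define $H \colon Tr_k \times [0,1] \to Tr_k$ on representatives by
$$H((T,f,(g_i)), s) = (T,\, (1-s)f + s \cdot \mathbf{1},\, (g_i)),$$
where $\mathbf{1}$ denotes the constant function with value $1$. The rescaling $x \mapsto (1-s)x + s$ is an order-preserving self-homeomorphism of $(0,1]$ fixing $1$, so it preserves the normalisation $\max f = 1$ and the monotonicity $f(b_1) \geq f(b_2)$ along directed edges, and it respects all the equalities and strict inequalities of $f$-values appearing in the quotient relations of Definition \ref{ourmodel}. Hence $H$ descends to a well-defined continuous homotopy on $Tr_k$ that fixes $Tr_k^1$ pointwise for all $s$ and lands in $Tr_k^1$ at $s=1$. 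Since $\Sigma_k$ only relabels white vertices while $H$ modifies only the function $f$ on black vertices, $H$ is $\Sigma_k$-equivariant.

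To conclude, I would lift $H$ along the two principal bundles relating the moduli space to $F_k(\bcc)$. The $S^1$-bundle $FM'_k \to \mathcal{M}_{0,k+1}$ restricted over the image of $Tr_k^1 \cong \C'_k$ is trivialised by Proposition \ref{unbcact} with total space $\C'_k \times S^1 \cong \C_k$; since $H$ acts only on the base and commutes with $\Sigma_k$, it lifts to a $(S^1 \times \Sigma_k)$-equivariant deformation retraction of $FM'_k$ onto $\C_k$. Finally the quotient $F_k(\bcc) \to FM'_k$ is a principal bundle with contractible fibre $\bcc \rtimes \R^+$, equivariant for $\Sigma_k$ and compatible with the rotation $S^1$-action. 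Using the $(S^1 \times \Sigma_k)$-equivariant section that places the barycentre at $0$ and normalises $\max_i |z_i|$ to $1$, I embed $FM'_k$ into $F_k(\bcc)$ and retract the contractible fibre to the identity, extending the previous retraction to all of $F_k(\bcc)$.

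The main obstacle will be verifying the compatibility of $H$ with the intricate quotient relations in Definition \ref{ourmodel} --- in particular the collapse of an edge $(b,b')$ between black vertices when $f(b)=f(b')$, and the three subcases ($f(b)=f(b')$, $f(b)>f(b')$, $f(b)<f(b')$) in which a vanishing angle $g_i(e) = 0$ triggers a change of combinatorial type. Because $x \mapsto (1-s)x+s$ is strictly monotone and fixes $1$, each subcase should go through on the nose, but this bookkeeping is the real content of the argument; the remaining bundle-theoretic steps are formal consequences of what is already in place.
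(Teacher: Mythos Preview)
Your proposal is correct and follows essentially the same route as the paper: rescale the black-vertex labels in $Tr_k$ toward $1$ to retract onto $Tr_k^{1}\cong \C'_k$, then handle the extra $S^1$ and $\bcc\rtimes\R^+$ factors separately. The one point where the paper is more concrete is your lifting step: rather than invoke an abstract equivariant lift through the $S^1$-bundle $FM'_k\to\mathcal{M}_{0,k+1}$, the paper builds an explicit $(S^1\times\Sigma_k)$-equivariant homeomorphism $\Phi'_{a_1,\dots,a_k}:FM'_k\to Tr'_k\cong Tr_k\times S^1$ by recording, in addition to the labelled tree, the tangent direction at the target of the distinguished flow line from $\infty$; this makes the $\Sigma_k$-equivariance of the trivialisation transparent (the section of Proposition~\ref{unbcact} is not $\Sigma_k$-equivariant, so your appeal to it does not immediately give an equivariant lift), and the map $\Phi'$ is reused later in Lemma~\ref{lemmuno}.
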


\begin{proof}

The space $Tr_k^{1}$ is a deformation retract of the space of all labelled trees $Tr_k$. The retraction $r:Tr_k \to Tr_k^{1}$ sets all labels of the black vertices to 1, and the deformation homotopy
rescales their value. 
Let $Tr'_k \cong Tr_k \times S^1$ be the pullback of the trivial $S^1$-bundle $\C_k \to \C'_k$ along the homeomorphism of Proposition \ref{ctr}. We think of this space as a labelled tree together with a base point.
The configuration space $F_k(\bcc)$ deformation retracts onto a subspace homeomorphic to
$FM'_k=F_k(\bcc)/(\bcc \rtimes \R^+) \cong S^1 \times \mathcal{M}_{0,k+1}$. 
In section \ref{sec:conf} 
we built a homeomorphism $$\Phi_{a_1,\dots,a_k}:  \mathcal{M}_{0,k+1}  \to Tr_k$$  using the flow lines of a field associated to the configuration points.
For a fixed configuration $(z_1,\dots,z_k)$, the flow lines of its field coming out of $\infty$ correspond to elements of $S^1$.
Namely a flow line $\gamma:(a,+\infty) \to \bcc$ with $\lim_{t \to +\infty}\gamma(t)=\infty$ corresponds to 
$z=\lim_{t \to +\infty} \gamma'(t)/|\gamma'(t)| \in S^1$. 
If we do not mode out by rotations, then 
we obtain a homeomorphism $$\Phi'_{a_1,\dots,a_k}: FM'_k= F_k(\bcc) / (\bcc \rtimes \R^+) \rightarrow Tr'_k$$ 
similar to the homeomorphism
$\Phi_{a_1,\dots,a_k}$
with the additional data that the distinguished flow line with horizontal tangent vector at infinity (i.e. corresponding to $1 \in S^1$)
gives the base point in $Tr'_k$, by measuring its tangent direction when it hits its configuration point target. 
Now $Tr'_k$ deformation retracts onto $\C_k$  by Propositions \ref{ctr} and \ref{unbcact} . All deformation retractions
and homeomorphisms in the proof are  $(S^1 \times \Sigma_k)$-equivariant.

\end{proof}

We introduce some combinatorial definitions that will be useful.
\begin{Def}
A nested tree on $k$ leaves is a collection $\mathcal{S}$ of subsets of $\{1,\dots,k\}$ of cardinality at least 2,  called {\em vertices},
such that 
\bit
\item $\{1,\dots,k\} \in \mathcal{S}$. This vertex is called the {\em root}.
\item if $S_1 \in \mathcal{S}$ and $S_2 \in \mathcal{S}$ then either
$S_1 \cap S_2= \emptyset$, or 
$S_1 \subseteq S_2$, or $ S_2 \subseteq S_1$.
\eit
We denote by $N_k$ the set of all nested trees on $k$ leaves, and $\mathcal{S'} =\mathcal{S}-\{1,\dots,k\}$.
\end{Def}

Given a nested tree $\mathcal{S}$ on $k$ leaves, consider the partial order on its vertices defined by 
$S_1 < S_2$ when $S_1 \subset S_2$. The root $\{1,\dots,k\} \in \mathcal{S}$ is a maximal vertex. 
The set $E(\ms) \subset \ms \times \ms$ of pairs $S_1,S_2$ such that $S_1 < S_2$, and there 
is no vertex $S_3 \in \mathcal{S}$ with $S_1 < S_3 < S_2$, is called the set of {\em internal edges} of $\ms$. 
We say that the internal edge $(S_1,S_2)$ goes out of $S_1$ and into $S_2$. For each $i=1,\dots,k$ let 
$\ms^-_{i}$ be the minimal element of $\ms$ containing $i$. We say that $i$ is an open edge going into $\ms^-{i}$.
The {\em valence} $|S|\in \N$ 
 of a vertex $S$ is the number of edges (either internal or open) going into it.
The terminology is suggested by the following construction.

\begin{Def}
The realization  $||\ms||$ is a 1-dimensional complex defined as
$$||\ms||:=  \mathcal{S}\,  \coprod \, (E(\ms) \times [0,1]) \, \coprod \, (\{1,\dots,k\} \times [0,1)\,)  \;  /\sim $$ 
where 
\bit
\item  For $(S_1,S_2) \in E(\ms)$, $((S_1,S_2),0) \sim S_2$ and $((S_1,S_2),1) \sim S_1$ 
\item For $i=1,\dots,k, \; (i,0) \sim \ms^-_{i}$
\eit
\end{Def}
The complex $||\ms||$ has a 0-cell for each vertex, a closed 1-cell for each internal edge, and a half-open 1-cell 
for each open edge $i \in \{1,\dots,k\}$. 

In the following example we enumerate the nested trees on leaves, and we draw the corresponding realizations.

\begin{Exa}
Let us indicate a nested tree by inserting a set of parenthesis for each vertex that is not the root. This notation is unique up to permutations of the indices and of the parenthesis, preserving the inclusion relations between parenthesis, and the set of indices inside each parenthesis.
There are 4 nested trees on 3 leaves:

\medskip

% 123
\begin{tikzpicture}  [every node/.style={inner sep=2pt},scale=.8]
\node  [circle,draw]{}[grow'=up]
child {node{1}}
child {node{2}}
child {node{3}}  ;
\draw (0,-1) node{123};
\end{tikzpicture}
\quad \quad
% 1(23)
\begin{tikzpicture}  [every node/.style={inner sep=2pt},scale=.8]
\node[circle,draw]{}  [grow'=up]
child {node{1}}
child {node[circle,draw]{}  
child{node{2}} 
child {node{3}}     };
\draw (0,-1) node{1(23)};
\end{tikzpicture}
\quad \quad
%(12)3
\begin{tikzpicture}[every node/.style={inner sep=2pt},scale=.8]
\node[circle,draw] {} [grow'=up]
child { node [circle,draw]  {}   child {node{1}} child{node{2}}  }
child {node{3}}    ;
\draw (0,-1) node{(12)3};
\end{tikzpicture}
\quad \quad
%(13)2
\begin{tikzpicture}[every node/.style={inner sep=2pt},scale=.8]
\node [circle,draw]{} [grow'=up]
child { node [circle,draw]{}    child {node{1}} child{node{3}}  }
child {node{2}}     ;
\draw (0,-1) node{(13)2};
\end{tikzpicture}

The first tree has a vertex of valence 3, and the others have each 2 vertices of valence 2. 
\end{Exa}

The combinatorics of nested trees is well suited to describe iterated compositions in an operad. We sketch how, and refer to the literature 
for more details \cite{Benoit}.

Let $O$ be a topological operad, and $\ms$ a nested tree on $k$ leaves, with $k>2$.  
Then there is an induced map $$\theta_\ms:\prod_{S \in \ms} O(|S|) \to O(k)$$ constructed out of the structure maps of the operad $O$.
The map $\theta_\ms$ is the composition of as many $\circ_i$ operations (multiplied by an identity)  as the number of the internal edges of the nested tree $\ms$.
In a coordinate free approach one should consider $O$ as a functor from the category of finite sets and bijections to the category of topological spaces. Then  $O(in(S)) \cong O(|S|)$, where $in(S)$ is the set of edges going into $S$, and the identification depends on 
the choice of a bijection $in(S) \cong \{1,\dots,|S|\}$.

\

We have seen that the cacti complex do not form an operad. However we can still associate to a nested tree on $k$ leaves, with $k>2$, embeddings
$$\theta_\ms: \prod_{S \in \ms} \C_{|S|} \to \C_k$$  One has to specify the composition order since the partial composition products do not
satisfy associativity. We choose to compose by collapsing first the internal edges closer to the leaves, and then proceeding towards the root. 

We use the maps $\theta_\ms$ to define a space of labelled {\em nested} trees $N_k(\C)$, that we will identify in Lemma \ref{lemmone} to $Tr_k \times S^1$.  
The definition of this space is similar to that of the $W$-construction
by Boardman and Vogt \cite{BV}: its elements are nested trees with vertices labelled by cacti, and internal edges labelled by 
a parameter  varying between 0 and 1. As opposed to the Boardman-Vogt convention, we remove an edge and compose the cacti labelling its ends when the length of the edge is 1.  Another important point is that $\C$ is not a strictly associative operad, and so care is required when composing. Finally another difference is that the length 0 is excluded (it will appear later). 
A useful way to draw a labelled nested tree is to associate each edge over a vertex to a lobe of the cactus indexing that vertex,
and drawing each internal edge so that its length is $1$ minus its label. 

%esempio nested tree 
\begin{center}
\begin{tikzpicture} [scale=.7]
\node (1) at (-1.5,4)  {1};
\node (2) at (1.5,4)  {2};
\node (3) at (3.5,4) {3};

\node (basis) at (2,0) [inner sep=13pt] {};
\node (12) at  (0,2.5) [inner sep=16pt] {};
%\path  (basis) edge (34) (basis) edge (12);

%\path (basis) edge (2,-1.6) ;
\node (ext) at (-.3,2.2) [inner sep=15pt]{};

\path (ext) edge (1);
\path  (12) edge (2) (34)  ;
\draw  (1.5,0) circle (0.5) ;
\draw (2.5,0) circle (0.5);
\node (be) at (1.5,0) [inner sep=15pt] {};
\path (be) edge (12);

\node (bi) at (2.5,0)  [inner sep=15pt] {};
\path (bi) edge (3);

\draw (-.3,2.2) circle (.42);
\draw (.3,2.8) circle (.42);
\node at (-.3,2.2) {};    %1
%\draw (-.3,2.5) -- (-.5,3) ;
\node at (.3,2.8) {};  %2 
\fill  (.72,2.8) circle (0.05);
\fill (2.95,-0.2) circle (0.05);
\end{tikzpicture}
\end{center}
Let us proceed with the formal definition.
\begin{Def} \label{defopen}
The space of nested trees on $k$ leaves with vertices labelled by cacti and internal edges labelled by numbers in $(0,1)$
is the quotient
$$N_k(\C):= \coprod_{\ms \in  N_k} (\prod_{S \in \ms} \C_{|S|} \times (0,1]^{\ms'} )\, / \, \sim $$ 
by the equivalence relation that we specify next.
Let us write
\bit
\item $\lambda_S \in (0,1]$, with $S \in \ms'$ for the label of an internal edge $(S,T)$ of $\ms$ 
\item  $x_S \in \C_{|S|}$ for the cactus labelling the vertex $S$.
\eit
Let $\ms-S_0$ be the nested tree $\ms$ with $S_0 \in \ms'$ removed.
If $\lambda_{S_0}=1$, with $S_0 \in \ms'$,  then
 
$$((x_S)_{S \in \ms}, (\lambda_S)_{S \in \ms'}) \sim  ((x'_S)_{S \in \ms-S_0}, (\lambda_S)_{S \in \ms'-S_0}    ) ,$$
where 
$$\theta_\ms((x_S)_{S \in \ms}) = \theta_{\ms-S_0} ((x'_S)_{S \in \ms-S_0}) \in \C_k  $$ 
\end{Def}

Let $(S_0,T_0)$ be the internal edge going out of $S_0$.
In general $x_S \neq x'_{S}$ for $S \subseteq T_0$, but
$x_S=x'_S$ for $S \nsubseteq T_0$.

We notice that the realization $||\ms-S_0||$ is the quotient of the complex $||\ms||$ that 
collapses the closed interval of the 1-cell of $(S_0,T_0)$  to a point.  

\begin{Lem} \label{lemmone} 
For $k>1$ there is a $\Sigma_k$-equivariant homeomorphism
$$\gamma_k: Tr'_k \cong  N_k(\C)$$ 
\end{Lem}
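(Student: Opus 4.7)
The plan is to construct $\gamma_k$ by reading a scale hierarchy off of the values of $f$ in a labelled tree, and to build its inverse by recursively grafting the labelled trees associated to each cactus via Proposition~\ref{ctr}.

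First I would take a representative $((f:B\to(0,1]),(g_i))_T$ with base point $p$ of an element $tr\in Tr'_k$, list the distinct values $1=\lambda_1>\lambda_2>\cdots>\lambda_r$ of $f$, and for each $j$ consider the subgraph $T_{\geq j}\subseteq T$ spanned by $W$ and the black vertices with $f\geq\lambda_j$. The nontrivial connected components of these subgraphs yield a nested family of subsets of $\{1,\dots,k\}$ of cardinality $\geq 2$; together with $\{1,\dots,k\}$ they form a nested tree $\ms\in N_k$. To each $S\in\ms$ I would then associate a cactus $x_S\in\C_{|S|}$ extracted from the sub-labelled-tree of $T$ formed by the component realising $S$ at level $\lambda_{j(S)}$, in which every sub-cluster $S'\subsetneq S$ of $\ms$ is treated as a single white leaf and the remaining black-vertex labels are normalised to $1$; by Proposition~\ref{ctr} this is precisely an element of $\C'_{|S|}$, and its base point is determined by the point at which the lobe representing $S$ is first entered inside the parent cactus, while for the root $S=\{1,\dots,k\}$ it is supplied by the distinguished flow-line datum $p$. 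Finally the edge label on an internal edge $(S',S)$ is $\lambda_{S'}:=f(b_{S'})/f(b_S)\in(0,1]$, where $b_S$ denotes any top black vertex of the component realising $S$.

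The inverse map would be obtained recursively: given $(\ms,(x_S),(\lambda_S))$, I would convert each $x_S$ into a labelled tree $T_S\in Tr_{|S|}^{1}$ via Proposition~\ref{ctr}, graft these along the internal edges of $\ms$ (replacing, for each internal edge $(S',S)$, the white leaf of $T_S$ representing $S'$ by the entire grafted $T_{S'}$), rescale the black-vertex labels of each $T_S$ by $\prod_{S\subseteq S''\subsetneq\{1,\dots,k\}}\lambda_{S''}$, and assemble the $g_i$ data from the angular labels of the cacti in which the corresponding white vertex appears. The base point of the root cactus supplies the extra $S^1$-factor of $Tr'_k$.

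The hard part will be verifying that the two constructions descend to the quotient relations defining $Tr_k$ (Definition~\ref{ourmodel}) and $N_k(\C)$ (Definition~\ref{defopen}): an identification in $N_k(\C)$ along an internal edge with $\lambda_{S'}=1$ should correspond to the edge-collapse identification in $Tr_k$ between black vertices of equal $f$-value, while the semi-simplicial face identifications inside each cactus $x_S$ should correspond to the remaining zero-angle identifications in $Tr_k$; this will require careful bookkeeping at the level of representatives. Once these compatibilities are established, continuity will follow from the continuous dependence of connected components and $f$-values on the data, mutual invertibility will follow by inspection level-by-level, and $\Sigma_k$-equivariance is immediate since every step of the construction is intrinsic to the labelling of leaves by $\{1,\dots,k\}$.
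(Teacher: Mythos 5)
Your strategy---reading the scale hierarchy off the $f$-values, extracting each $x_S$ via Proposition~\ref{ctr} from a sub-labelled-tree with sub-clusters collapsed, labelling internal edges by ratios of maximal $f$-values, and inverting by grafting and rescaling---is the same as the paper's. But there is a sign error that makes your construction of $\ms$ fail: you take components of the subgraph spanned by $W$ and the black vertices with $f\geq\lambda_j$, whereas the cut must go the other way. The paper removes the black vertices with $f=1$ (and, iterating, those with the current maximal normalised value) and takes components of what remains; equivalently, one takes components of the subgraph spanned by $W$ and the black vertices with $f<\lambda_j$. The point is that deeper nesting must correspond to \emph{smaller} critical values $|h(b)|$, i.e.\ smaller scale. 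Concretely, take $k=4$ and a chain $b_1\to v_1$, $b_1\to b_2$, $b_2\to v_2$, $b_2\to b_3$, $b_3\to v_3$, $b_3\to v_4$ with $f(b_1)=1$, $f(b_2)=0.8$, $f(b_3)=0.5$. The correct nested tree is $\{\{1,2,3,4\},\{2,3,4\},\{3,4\}\}$ with edge labels $0.8$ and $0.5/0.8$; your recipe instead produces $\{\{1,2,3,4\},\{1,2\}\}$, and your edge-label formula then gives $f(b_1)/f(b_1)=1$, which the identification in Definition~\ref{defopen} collapses to the corolla, so your map would forget the $f$-data entirely and cannot be a bijection. Replacing $f\geq\lambda_j$ by $f<\lambda_j$ (equivalently $f\leq\lambda_{j+1}$) fixes the construction; with that change the rest of your outline lines up with the paper's proof.

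A secondary gap: you defer the verification that both maps respect the quotient relations of Definitions~\ref{ourmodel} and~\ref{defopen}, but that is exactly where the non-obvious content lives. In particular, in the inverse direction the paper must commit to a convention for the base point of each $x_S$ when it lands on a lobe-intersection point (the incoming edge of $tr$ is attached to the black vertex over that intersection with the \emph{largest} label); without such a convention the grafting you describe is not well defined. This is a genuine piece of the argument rather than routine bookkeeping, and your proposal would need to supply it.
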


\begin{proof}
Given a labelled tree $tr \in Tr'_k$ with underlying tree $T \in \Tau_k$, 
we shall construct a nested tree $\ms$ with the vertices labelled by cacti, and the internal edges labelled by numbers in the interval $(0,1)$.
Let us remove from $T$ all black vertices labelled by $1$ (they form a non-empty set), and the edges going out of these vertices.
The resulting graph $F$ is a forest (disjoint union of trees). For any component $T'$ of $F$ containing at least a black vertex (equivalently $T'$ is not just a white vertex),
let $S$ be the set of indices of the white vertices belonging to $T'$; let $\lambda_S$ be the maximum label of a black vertex in $T'$. If we divide by $\lambda_S$
the labels of the black vertices in $T'$ 
then we  obtain a new labelled tree  $tr'$ with underlying tree $T'$
and white vertices labelled by elements of $S$,  that can be identified (not canonically) to an element of $Tr_{|S|}$.

We can iterate the procedure, where $tr$ is replaced by $tr'$ and $T$ by $T'$. The procedure stops when 
there are no black vertices left.  At each step of the procedure we need to choose a component of a forest. There is a finite number of distinct iterations.
We define $\ms$ as the nested tree on $k$ leaves containing all $S$ obtained from $tr$ by a finite iteration of this procedure, as well as 
$\{1,\dots,k\}$. 

We specify next the cacti labelling the vertices of $\ms$.
Let $\beta_k:Tr'_k \to \C_k$ be the deformation retraction from the proof of Proposition \ref{4.3}.
We claim that the cactus element 
$x=\beta_k (tr) \in \C_k$
 belongs to the image of the embedding 
$\theta_\ms: \prod_{S \in \ms} \C_{|S|} \to \C_k$.  This holds because for each $S \in \ms$ the union of the corresponding lobes is connected.
We define the labels in $x_S \in \C_{|S|}$ for $S \in \ms$
so that $\theta_\ms((x_S)_{S \in \ms})= x$.

We must specify the labels of the internal edges of the nested tree $\ms$. 
Any internal edge of $\ms$ is the unique edge going out of some vertex $S \in \ms'$ that is not maximal.
Then we associate to this edge the label $\lambda_S \in (0,1)$  defined recursively above. 
\medskip

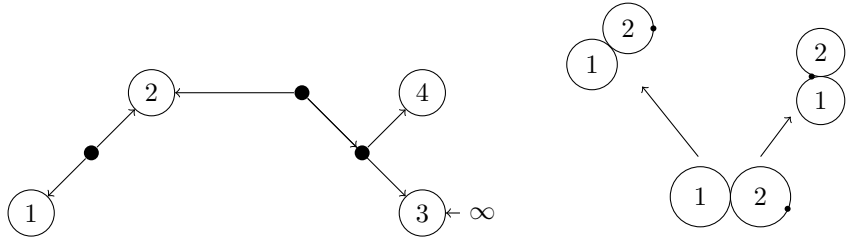
\begin{figure}
\begin{tikzpicture} [scale=.8]
\node (1) at (0,0) [circle,draw] {1};
\node (2) at (2,2) [circle,draw] {2};
 \node (12) at (1,1) [circle,fill,inner sep=2pt,label=left:$s$]{};
\path [->] (12) edge (1)  (12) edge (2);
\node (c) at (4.5,2) [circle,fill,inner sep=2pt, label=above:$1$]{};
\node (34) at (5.5,1) [circle,fill,inner sep=2pt,label=left:$t$]{};
\node (3) at (6.5,0)  [circle,draw] {3};
\node (infty) at (7.5,0) {$\infty$};
\node (4) at (6.5,2)  [circle,draw] {4};
\path [->] (c) edge (2)  (c) edge (34) edge (3) (34) edge (4) (infty) edge (3);
\end{tikzpicture} 
\begin{tikzpicture} [scale=.8]
\node (1) at (-1.5,4)  {1};
\node (2) at (1.5,4)  {2};
\node (3) at (3.5,4) {4};
\node (4) at (5,3) {3};
\node (basis) at (2,0) [inner sep=13pt] {};
\node (12) at  (0,2.5) [inner sep=16pt] {};
\node (34) at (3.5,2)[inner sep=20pt] {};
%\path  (basis) edge (34) (basis) edge (12);
%\path (basis) edge (2,-1.6);
\path (-.8,3) edge (1);
\path  (12) edge (2) (34) edge (3) ;
\path  (4,2)   edge  (4);
\draw  (1.5,0) circle (0.5) ;
\draw (2.5,0) circle (0.5);
\node (be) at (1.5,0) [inner sep=15pt] {};
\path (be)  edge (12) ; 
\node (bi) at (2.5,0)  [inner sep=15pt] {};
\draw (-.3,2.2) circle (.42);
\draw (.3,2.8) circle (.42);
\node at (-.3,2.2) {};    %1
\path (bi) edge (34);
\node at (.4,1) {$s$};
\node at (2.7,1) {$t$};
%\draw (-.3,2.5) -- (-.5,3) ;
\node at (.3,2.8) {};  %2 
\fill  (.72,2.8) circle (0.05);
\fill (2.95,-0.2) circle (0.05);
\draw (3.5,2.4) circle (0.4); 
\draw (3.5,1.6) circle (0.4);
\node at (3.5,2.4) {}; %4
\node at (3.5,1.6) {}; %3
\fill (3.90,1.6) circle (0.05);
\end{tikzpicture}
\caption{Pictures of $tr \in Tr'_4$ and $\gamma_4(tr) \in N_4(\C)$.}
\end{figure}

\medskip

The inverse homeomorphism $\gamma_k^{-1}: N_k(\C) \to Tr'_k $ 
 sends $((x_S)_{S \in \ms}, (\lambda_S)_{S \in \ms'})$ to a labelled tree with a base point  $tr \in Tr'_k$,
defining a cactus $\beta_k(tr) \in \C_k \, $.
Our labelled tree $tr$ has $k$ white vertices, and a black vertex $b$ for each intersection point of the lobes of a cactus $x_S$, with $S \in \ms$. The label of this black vertex is the 
product $f(b):=\prod_{S \subseteq T \in \ms'} \lambda_T$. The tree is constructed inductively as union of trees $tr_S$. 
% where the vertex $S$  moves from the leaves to the root.
Suppose that the trees corresponding to the edges into $S$ have been constructed. We include the possibility of an open edge into $S$, that gives a single lobe. 
A black vertex $b$ of $tr_S$ is the intersection of $m$ lobes of the cactus $x_S$ labelled $l_1,\dots,l_m$. Each lobe $l_i$ corresponds to an edge $(S_i,S)$ into $S$,  that has been associated to a
labelled tree $tr_i$ with a base point. For $i=1,\dots,m$ we draw an edge $e_i$ going out of $b$ and into the tree $tr_i$, that hits it according to the base point :
if the base point of the cactus is not an intersection point of lobes, then the edge $e_i$ hits the white vertex associated to the lobe.
If the base point is an intersection point then the edge $e_i$ hits the black vertex sitting over it with the largest label.
This procedure determines a tree $tr_S$ with a base point  that depends on the base point of the cactus $x_S$.
At the end of the inductive process the root vertex $R=\{1,\dots,k\}$ yields $tr:=tr_R$
\end{proof}

\begin{Lem} \label{lemmuno} %these are naturally equivariant (rotations on N_k(C): different speed on different lobes)
There is a family of homeomorphisms $$\Xi_{a_1,\dots,a_k}:FM'_k=F_k(\bcc)/(\bcc \rtimes \R^*)  \to N_k(\C)$$ 
depending continuously on $(a_1,\dots,a_k) \in \stackrel{\circ}{\Delta}_k$ 
\end{Lem}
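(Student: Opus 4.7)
The plan is to obtain $\Xi_{a_1,\dots,a_k}$ as a composition of two homeomorphisms already constructed in the paper: the "flow line" homeomorphism $\Phi'_{a_1,\dots,a_k}: FM'_k \to Tr'_k$ built in the proof of Proposition \ref{4.3} (the unrotated analogue of $\Phi_{a_1,\dots,a_k}$ from Section \ref{sec:conf}), followed by the combinatorial homeomorphism $\gamma_k: Tr'_k \to N_k(\C)$ of Lemma \ref{lemmone}. Concretely, I would set
$$\Xi_{a_1,\dots,a_k} \;:=\; \gamma_k \circ \Phi'_{a_1,\dots,a_k}.$$
Since both factors are homeomorphisms, $\Xi_{a_1,\dots,a_k}$ is a homeomorphism for each fixed weight vector.

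The only nontrivial issue is the joint continuity in $(a_1,\dots,a_k) \in \iDelta_k$. The factor $\gamma_k$ does not depend on weights, so the problem reduces to showing that the assignment $(a,z) \mapsto \Phi'_a(z)$ is continuous on $\iDelta_k \times FM'_k$. This is essentially the unrotated version of Theorem \ref{thm:cont}: the critical points of $h(z) = \prod_i (z-z_i)^{a_i}$ vary continuously with $(a,z)$ as an element of the symmetric product (their coefficients are polynomials in the $a_i$ and $z_i$), and with them vary continuously the values of $|h|$ and the differences of $\arg(h)$ along flow lines between critical points and white vertices. The analysis of the three cases in the proof of Theorem \ref{thm:cont} (admissible tree unchanged; tree changes shape but number of critical points constant; critical points split) applies verbatim to the unbased version, because the only new datum is the distinguished flow line out of $\infty$ with horizontal tangent direction, whose terminal tangent direction at its configuration-point target varies continuously by the same flow-stability argument.

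The hard part, such as it is, consists in checking that the extra $S^1$-datum of the base point fits continuously into the identifications defining the topology of $Tr'_k \cong Tr_k \times S^1$, across the stratum-boundary cases of Theorem \ref{thm:cont}. When the underlying tree $T$ changes shape (two critical points with originally equal $|h|$ separate, or a critical point splits), the distinguished flow line may now pass on different sides of the newly-separated critical points, but its tangent direction at the target white vertex remains close to the original one, because the relevant flow lines deform continuously in $\CP^1$ and $\arg(h)$ varies continuously. The gluing relations in Definition \ref{ourmodel}, together with the base-point assignment from the proof of Proposition \ref{4.3}, precisely identify these close configurations, so the composition $\gamma_k \circ \Phi'_a$ is continuous on the whole product $\iDelta_k \times FM'_k$. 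Finally, a continuous inverse can be written down in the same way by the construction of $\Phi'^{-1}_a = \Psi'_a$, which depends continuously on $a$ through the solution to the uniformization problem for $F_{tr}$ equipped with weight-dependent residues $a_i$ (Definition \ref{def:biholo}); the holomorphic dependence of the biholomorphism on the weight-parametrized complex structure gives continuity of $\Xi^{-1}_a$.
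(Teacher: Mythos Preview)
Your proposal is correct and takes exactly the same approach as the paper: the paper's proof consists of the single line $\Xi_{a_1,\dots,a_k}= \gamma_k \circ \Phi'_{a_1,\dots,a_k}$, citing Lemma \ref{lemmone} and the proof of Proposition \ref{4.3}. Your additional discussion of joint continuity in the weights, via Theorem \ref{thm:cont} and the stability of the distinguished flow line, is more detailed than what the paper provides and is a welcome elaboration rather than a deviation.
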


\begin{proof}
$$\Xi_{a_1,\dots,a_k}= \gamma_k \circ \Phi'_{a_1,\dots,a_k},$$ where $\gamma_k$ is the homeomorphism in Lemma \ref{lemmone}
and $\Phi'_{a_1,\dots,a_k}$ the homeomorphism in the proof of Proposition \ref{4.3}.

\end{proof}

We are now able to exhibit an explicit cellular decomposition of the configuration space modulo translations and positive dilations.
$F_k(\bcc)/(\bcc \rtimes \R^+)$.

\begin{Thm} \label{pocofa}

There is a cell decompositions of $FM'_k= F_k(\bcc)/(\bcc \rtimes \R^+)$ for $k>1$ into open cells
for each $(a_1,\dots,a_k) \in \stackrel{\circ}{\Delta}_k$ .

 A cell is determined by :
\bit
\item A nested tree $\mathcal{S}$ on $k$ leaves ;
\item For each vertex $v$ of $\mathcal{S}$, a cell $\sigma_v$ of the cactus complex $\C_{|v|}$ 
\eit
Such cell has dimension equal to $$\sum_{v \in \mathcal{S}} \dim(\sigma_v)+ |\mathcal{S}|-1$$ and is
homeomorphic to $\prod_{v \in \mathcal{S}}\sigma_v \times (0,1)^\mathcal{S'}$.  
The closure of the cell is homeomorphic to $\prod_{v \in \mathcal{S}}\sigma_v \times (0,1]^\mathcal{S'}$.
The cellular decomposition is compatible with the action of the symmetric group $\Sigma_k$.
\end{Thm}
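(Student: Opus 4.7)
The plan is to use the homeomorphism $\Xi_{a_1,\dots,a_k}: FM'_k \cong N_k(\C)$ of Lemma \ref{lemmuno} to transfer the decomposition from $N_k(\C)$ to $FM'_k$, so the problem reduces to constructing an explicit open cell structure on $N_k(\C)$ depending only on combinatorial data.

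For each nested tree $\ms \in N_k$ and each choice of a cell $\sigma_v$ of $\C_{|v|}$ at every $v \in \ms$, I declare the candidate open cell in $N_k(\C)$ to be the image of
$$\prod_{v \in \ms} \stackrel{\circ}{\sigma}_v \times (0,1)^{\ms'}$$
under the quotient map $\coprod_{\ms} \prod_{S \in \ms} \C_{|S|} \times (0,1]^{\ms'} \to N_k(\C)$. The key observation is that the equivalence relation of Definition \ref{defopen} only identifies points on the codimension-one faces $\{\lambda_{S_0}=1\}$, where it replaces $\ms$ with the contracted nested tree $\ms-S_0$. Hence on the open locus where every $\lambda_S$ lies strictly in $(0,1)$ and every cactus lies in the interior of its assigned cell, the relation $\sim$ is trivial, so the quotient map restricted there is an embedding onto an open subset homeomorphic to $\prod_v \stackrel{\circ}{\sigma}_v \times (0,1)^{\ms'}$. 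The dimension count $\sum_v \dim(\sigma_v) + |\ms'| = \sum_v \dim(\sigma_v) + |\ms|-1$ is then immediate.

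Next I would show that these open cells partition $N_k(\C)$: starting from any representative $((x_S)_S, (\lambda_S)_S)$, each index $S_0$ with $\lambda_{S_0}=1$ can be absorbed via the inverse of the relation in Definition \ref{defopen}, strictly decreasing $|\ms|$. The process terminates at a uniquely determined representative with all $\lambda_S<1$, and then the cells $\sigma_v \ni x_v$ are forced by the standard cell decomposition of each $\C_{|v|}$. This furnishes a bijection between the points of $N_k(\C)$ and pairs $(\ms, (x_v)_v, (\lambda_S)_S)$ of the required type, showing simultaneously that the candidate cells are pairwise disjoint and cover the space.

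The main technical point, and in my view the principal obstacle, is verifying that the characteristic map $\prod_v \sigma_v \times (0,1]^{\ms'} \to N_k(\C)$ extends continuously and realizes exactly the closure of the open cell, with boundary identifications matching smaller cells on the nose. Concretely, one must check two types of boundaries: when a coordinate $x_v$ moves to a face $\sigma_v'\subset\partial\sigma_v$ of the cactus cell (the nested tree $\ms$ is unchanged, and one lands in the cell indexed by $(\ms, \sigma_v')$); and when $\lambda_{S_0}\to 1$ (one lands in a cell indexed by $(\ms-S_0, \sigma_{v'})$ with appropriate composed cactus cells). Both reductions follow from the explicit formula for $\theta_\ms$ on multisimplicial coordinates recorded in Section~\ref{sec:cacti}, which is compatible both with face operators on individual factors and with the contraction $\ms\rightsquigarrow\ms-S_0$. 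Finally, $\Sigma_k$-equivariance is inherited from the equivariance of $\Xi_{a_1,\dots,a_k}$, from the natural $\Sigma_k$-action on $N_k$, and from the equivariance of the cactus cell structure established in Section \ref{sec:cacti}.
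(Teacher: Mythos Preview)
Your proposal is correct and follows essentially the same approach as the paper: transfer the problem to $N_k(\C)$ via Lemma \ref{lemmuno}, then read off the cell decomposition from the fact that the structure maps $\theta_\ms$ of $\C$ are cellular embeddings. The paper's proof is a two-line version of exactly this, and your expanded discussion of why the open cells partition $N_k(\C)$ and how the boundary identifications work is a faithful unpacking of what the paper leaves implicit.
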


\begin{proof}
$FM'_k$ is homeomorphic to $N_k(\C)$ by Lemma  \ref{lemmuno}. The cell in the statement consists of all labelled nested trees with "shape"  (underlying nested tree)  $\mathcal{S}$, such that each vertex $v$ is labelled by a point of $\sigma_v$, and the edges have arbitrary labelling. 
This endows $N_k(\C)$ with a cellular decomposition since the structure maps of $\C$ are cellular maps and embeddings.
\end{proof}

\begin{Cor} \label{bc}
The induced CW-structure on the one-point compactification
 $(FM'_k)^+$ % \cong \Sigma( \mathcal{M}_{0,k+1}^+)$$
has a reduced cellular chain complex satisfying
$$s \widetilde{C}_*((FM'_k)^+) \cong B(C_*(\C))(k),$$  where $s$ is the shift in degree by $+1$, 
$B$ is the operadic bar construction, $C_*(\C)$ is the differential graded operad of cellular chains  on the cacti spaces, and $\widetilde{C}_*((FM'_k)^+)=C_*((FM'_k)^+,\infty)$.  

\end{Cor}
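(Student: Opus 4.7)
The plan is to exhibit a bijection of bases between the two sides in each degree and then match the cellular boundary with the bar differential term by term. As a graded abelian group, the operadic bar construction $B(C_*(\C))(k)$ decomposes as $\bigoplus_T \bigotimes_{v \in V(T)} sC_*(\C_{|v|})$, where $T$ ranges over rooted trees with $k$ labelled leaves and all internal vertices of arity at least two, $|v|$ is the arity of $v$, and $s$ denotes the degree shift by $+1$. A basis element is a tree $T$ together with a cell $\sigma_v \in \C_{|v|}$ for each internal vertex, of total degree $\sum_v(\dim\sigma_v+1) = \sum_v \dim\sigma_v + |V(T)|$. Nested trees on $k$ leaves are in canonical bijection with such rooted trees, the vertices of the nested tree $\ms$ corresponding to the internal vertices of $T$ with valence of $S$ in $\ms$ equal to the arity of the matching internal vertex. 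Under this identification, Theorem \ref{pocofa} provides, for each such tree-with-labels datum, exactly one open cell of $FM'_k$; a cell of dimension $d = \sum_v \dim\sigma_v + |\ms| - 1$ yields, after the shift $s$, a generator in degree $d+1 = \sum_v \dim\sigma_v + |\ms|$, matching the degree in $B(C_*(\C))(k)$.

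To compare differentials, I would analyze the codimension-one strata of the closure of an open cell $(\ms,(\sigma_v))$, which is homeomorphic to $\prod_{v \in \ms} \bar{\sigma}_v \times (0,1]^{\ms'}$. These fall into three kinds. Type (a) replaces one $\sigma_{v_0}$ by a codimension-one face $\tau_{v_0} \subset \partial\sigma_{v_0}$, keeping $\ms$ fixed; summing over such faces produces exactly the internal differential $\sum_v \pm \sigma_{v_1}\otimes\cdots\otimes d\sigma_{v_i}\otimes\cdots$ of the bar complex. Type (b) sets some $\lambda_{S_0}=1$ for $S_0 \in \ms'$; by the relation of Definition \ref{defopen} this face is identified with an open cell indexed by the nested tree $\ms - S_0$ whose labels are obtained by composing $\sigma_{S_0}$ into the input of its parent's label corresponding to $S_0$, via the cellular composition $\theta$ of section \ref{sec:cacti}. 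This is precisely the edge-contraction term of the bar differential, using the dg operad structure of $C_*(\C)$. Type (c) is the face $\lambda_{S_0} \to 0$, corresponding to the collapse of the cluster of configuration points indexed by $S_0$; this limit lies outside $FM'_k$ and is absorbed into the basepoint $\infty$ of $(FM'_k)^+$, so it contributes zero in the reduced cellular chain complex, matching the absence of any root-edge contraction term in $B$.

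The main obstacle will be sign bookkeeping. Each open cell carries an orientation inherited from the product decomposition $\prod_v \sigma_v \times (0,1]^{\ms'}$, and the incidence coefficients in the cellular boundary must match the Koszul signs forced by the suspensions $s$ in the bar construction. I would fix orientations on the cells of each $\C_{|v|}$ compatible with the operad composition $\theta$—a compatibility already required for $C_*(\C)$ to form a dg operad—and then verify the sign matching atom by atom, reducing to the cases of a single cactus-cell face and a single edge contraction. Both reductions are governed by the same combinatorial rule, namely the cyclic ordering of the edges at the vertex in question, so once the conventions are fixed the two signs agree.
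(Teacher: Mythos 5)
Your proposal is correct and follows essentially the same route as the paper: identify the free-cooperad underlying $B(C_*(\C))(k)$ with a sum over nested trees, match cells via Theorem \ref{pocofa} and the dimension shift, and interpret the cellular boundary as the bar differential $d_{int}+d_{ext}$. What you add that the paper leaves implicit is the explicit three-way classification of codimension-one faces — in particular the observation that the $\lambda_{S_0}\to 0$ faces escape to the basepoint of $(FM'_k)^+$ and hence vanish in the reduced complex, which is exactly why no extra differential term appears — so your write-up is a useful expansion of the published argument rather than a different one.
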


\begin{proof}
The operadic bar construction $B(C_*(\C))$  (\cite{Benoit}, Part 2, Appendix C), if we forget the differentials, is the free co-operad
on the symmetric sequence of graded abelian groups $s C_*(\C_k)$ with $k >1$ 
(the unit is removed). 
 Its differential splits as $d=d_{int}+d_{ext}$ where $d_{int}$ is the differential of the free co-operad on the same symmetric sequence in the category of chain complexes,
and $d_{ext}$ is the co-operad co-derivation that is induced by the operad structure maps of $C_*(\C)$. 
In practice for $k>1$
$$B(C_*(\C))(k) = \oplus_{\ms \in N_k} \otimes_{S \in \ms} sC_*(\C_{|S|}),$$ the differential $d_{int}$ is the differential of the tensor product, 
and $d_{ext}$ is the signed sum of all partial composition operations  $C_*(\C_{|S|}) \otimes C_*(\C_{|T|}) \to C_*(\C_{|S|+|T|-1})$ (tensored with the identities of the other factors),
associated to some internal edge $(S,T)$ of some nested tree $\ms \in N_k$.
The CW decomposition of the one point compactification $(FM'_k)^+$ induced by the cell decomposition of  $FM'_k$ from Theorem \ref{pocofa} realizes this construction on the topological level, up 
to a shift by $1$ on the dimension of the cells. 
\end{proof}

We remark that the collection of the suspensions $\Sigma((FM'_k)^+)\cong \Sigma(N_k(\C)^+)$ resembles
an operadic topological bar construction  ( \cite{thesis} \cite{ching})  of $\C$, but the construction is not defined because $\C$ is not a topological operad. 

\subsection{Generating series counting cells of the open moduli spaces} \label{sub:genser}

Let $o_m(FM'_k)$ be the number of $m$-cells of the moduli space $FM'_k = F_k(\bcc)/(\bcc \rtimes \R^+) $.
The symmetric group $\Sigma_k$ for $k>1$ acts freely on the set of cells of $FM'_k$. % since symmetric groups act freely on cacti cells.
Similarly as in  subsection \ref{sub:count}
we define a generating formal series 

$$o(x,t)= \sum_{m \geq 0,\, k>1 } \frac{o_m(FM'_k)}{k!} t^m x^k$$ 
with integral coefficients.  
% k=1 not included
Let $P(x,t)$ the generating series in subsection \ref{sub:count} and 
$\tilde{P}(x,t)=P(x,t)-x$. This series counts all cacti cells except the $0$-cell $id \in \C_1$.

\begin{Prop} \label{quad1}
The formal series $o(x,t)$ and $\tilde{P}(x,t)$ satisfy the identity
\begin{equation}  \label{equad1} t \tilde{P}(t o(x,t),t) + t \tilde{P}(x,t) = t o(x,t) \end{equation} 
\end{Prop}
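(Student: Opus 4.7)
The plan is to establish the identity combinatorially, using the explicit cell structure on $FM'_k$ given by Theorem~\ref{pocofa}: open cells of $FM'_k$ correspond to pairs $(\ms,(\sigma_v)_{v\in\ms})$, where $\ms$ is a nested tree on $k$ leaves and $\sigma_v\in\C_{|v|}$ is a cactus cell at each vertex $v$, of total dimension $\sum_v\dim\sigma_v+|\ms|-1$. Accordingly, $o(x,t)$ is the exponential generating function enumerating these pairs, with $x$ tracking the number of leaves and $t$ tracking the cell dimension.

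My first step is a recursive decomposition at the root $R\in\ms$. The root has valence $n\geq 2$, carries a cactus label $\sigma_R\in\C_n$ (contributing $C_n(t)/n!$ to the EGF, where $C_n(t)=\sum_m c_m(\C_n)t^m$), and has $n$ children. Each child of $R$ is independently either (a) a leaf (open edge), contributing $x$ to the EGF, or (b) the root of a sub-nested-tree, contributing $t\,o(x,t)$, where the factor $t$ records the one-dimensional internal edge weighted by a parameter in $(0,1]$. Writing $s(x,t):=x+t\,o(x,t)$ for the EGF of one ``child slot'', and using the product rule for EGFs to distribute the $k$ leaf labels among the $n$ children --- which encodes the free $\Sigma_k$-action underlying the factor $1/k!$ in the definition of $o$ --- I obtain
\[ o(x,t)\;=\;\sum_{n\geq 2}\frac{C_n(t)}{n!}\,s(x,t)^n \;=\; \tilde{P}\bigl(x+t\,o(x,t),\,t\bigr). \]

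The second step is to transform this functional equation into the form stated in the proposition: multiplying through by $t$ and expanding $(x+to)^n$ binomially inside $\tilde{P}(x+to,t)$, I recognise the pure-$x^n$ partial sum as $t\,\tilde{P}(x,t)$ and the pure-$(to)^n$ partial sum as $t\,\tilde{P}(to,t)$; the mixed cross-terms (coming from cells whose root has some children that are leaves and some that are subtrees) then rearrange to yield the displayed equation $t\,\tilde{P}(to(x,t),t)+t\,\tilde{P}(x,t)=t\,o(x,t)$.

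The main obstacle is the EGF bookkeeping in the root-decomposition step: one must verify that $s(x,t)^n$ correctly enumerates the $n$-tuples of labelled child-slots with the dimension grading inherited from $\sum_v\dim\sigma_v+|\ms|-1$ and the $\Sigma_k$-equivariance of the cellular decomposition. The regrouping of the binomial cross-terms into the stated split form, where all children-types contribute to either the $\tilde{P}(x,t)$ or $\tilde{P}(to,t)$ summand via the recursive nature of $o$, is the delicate algebraic point at the end of the argument.
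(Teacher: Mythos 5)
Your first step — the root decomposition giving $o(x,t)=\tilde P(x+t\,o(x,t),t)$ — is combinatorially sound, and it is essentially the same idea as the paper's proof (decompose a nested tree at its root, each child slot being either a leaf or a sub-nested-tree with an internal edge contributing a factor $t$). The problem is entirely in your second step, which is not a ``delicate algebraic point'' but a genuine gap: $\tilde P(y,t)=\sum_{n\ge 2}\tilde c_n(t)\,y^n$ is not additive in $y$, so $\tilde P(x+to,t)\neq\tilde P(x,t)+\tilde P(to,t)$. The mixed binomial terms $\sum_{n\ge 2}\tilde c_n(t)\sum_{0<j<n}\binom{n}{j}x^j(to)^{n-j}$ do not ``rearrange'' into either summand; they are additional positive contributions, and there is no identity hiding behind the word ``rearrange.''

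A concrete check makes this visible already at $k=3$. From Theorem~\ref{pocofa}, the corolla nest gives $\tilde c_3(t)=1+3t+2t^2$ and each of the three two-vertex nests $(ij)l$ gives $(1+t)^2$ cells with one extra $t$ from the internal edge, so $o_3(t)=1+3t+2t^2+3\cdot\tfrac{t(1+t)^2\cdot 4}{3!}\cdot\ldots$ — carried out carefully, $o_3(t)=1+5t+6t^2+2t^3$, which is exactly the $x^3$-coefficient of $\tilde P(x+to,t)$. On the other hand, $\tilde P(to,t)$ has lowest-order term $(1+t)(to)^2=O(x^4)$, so the $x^3$-coefficient of $t\tilde P(to,t)+t\tilde P(x,t)$ is $t(1+3t+2t^2)$, not $t(1+5t+6t^2+2t^3)$. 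Thus your correctly-derived intermediate equation $o=\tilde P(x+to,t)$ cannot be coerced into the displayed form $t\tilde P(to,t)+t\tilde P(x,t)=to$, and you should treat that discrepancy as a red flag rather than try to paper over it with binomial bookkeeping: either keep and use your first-step equation as the functional relation, or explain precisely which cells each summand on the left is supposed to count (you will find the left side misses all cells whose root has a mixture of leaf children and subtree children).
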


\begin{proof}
By theorem  \ref{pocofa} the cells of $FM'_k$ are indexed by nests on $k$ leaves with the vertices labelled by cacti cells.
The series $to(x,t)$  counts all cells, with dimension raised by 1. %We consider the extra parameter as the length of an edge going out of the root. 
The summand $t \tilde{P}(x,t)$ counts the cells indexed by nested trees with a single vertex.
The summand $t \tilde{P}(t o(x,t),t)$ counts cells labelled by nested trees with more than one vertex, that are obtained
by grafting a nested tree with a single vertex together with a number of nested trees equal to the valence of the vertex. 
\end{proof}

We apply a similar procedure as in Theorem \ref{count-ds} to compute $o(x,t)$. 

\begin{Thm} \label{count-open}
The generating function counting the cells of the open moduli spaces is 

\begin{gather*}
 o(x,t) = 
-\frac{  2t + \sqrt{(2t + (2t + 3)f(x,t) + 1)^2 - (f(x,t)^2 - 1)((2t + 3)^2 - 1)}   +   (2t + 3) f(x,t) + 1            }     {t((2t + 3)^2 - 1)} 
\end{gather*}
where
$$f(x,t)= \sqrt{(x - 1)^2 - 4tx}+x+2tx-2$$ 

\end{Thm}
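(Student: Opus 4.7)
The plan is to combine Proposition \ref{quad1} with the closed form for $P(x,t)$ from Theorem \ref{count-ds} to produce an algebraic equation for $o(x,t)$, then solve it by the quadratic formula, selecting the correct branch from a specialization at $t=0$.

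First, I would recognize that the auxiliary quantity $f(x,t)$ appearing in the statement is essentially $\tilde{P} = P-x$ in disguise. A direct expansion of Theorem \ref{count-ds} gives
$$2t\,\tilde{P}(x,t) \,=\, 1-x-2tx-\sqrt{(1-x)^2-4xt} \,=\, -\bigl(f(x,t)+1\bigr).$$
Substituting this identity into Proposition \ref{quad1} (once with argument $x$ and once with argument $to(x,t)$) and multiplying through by $2$ turns the functional equation into
$$f\bigl(to(x,t),t\bigr)\,+\,f(x,t)\,+\,2\,+\,2\,to(x,t) \,=\, 0.$$

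Next I would set $u = to(x,t)$, expand $f(u,t) = \sqrt{(u-1)^2-4tu} + u + 2tu - 2$, and isolate the square root:
$$\sqrt{(u-1)^2-4tu} \,=\, -(2t+3)\,u \,-\, f(x,t).$$
Squaring both sides and collecting powers of $u$ gives a quadratic of the form $A u^{2} + 2 B u + C = 0$ where
$$A=(2t+3)^{2}-1,\quad B=(2t+3)\,f(x,t)+2t+1,\quad C=f(x,t)^{2}-1,$$
whose two roots are $u = \bigl(-B\pm\sqrt{B^{2}-AC}\bigr)/A$.

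The final step is the branch choice. Since $u = t\,o(x,t)$ is a formal power series in $t$ with no constant term, we need the branch with $u\bigr|_{t=0} = 0$. Using $f(x,0) = -1$ one computes $B\bigr|_{t=0}=-2$ and $C\bigr|_{t=0}=0$, so the two candidate roots at $t=0$ are $0$ and $1/2$; only the minus sign selects $u=0$. Thus $u = \bigl(-B-\sqrt{B^{2}-AC}\bigr)/A$, and dividing by $t$ reproduces exactly the formula in the statement. The main subtlety, as is typical when squaring, is this branch selection; the evaluation at $t=0$ pins down the unique formal power series solution in $\bbz[[x,t]]$ compatible with the combinatorial definition of $o(x,t)$.
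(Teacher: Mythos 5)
Your proof is correct and takes essentially the same route as the paper: substitute the closed form of $\tilde P$ from Theorem \ref{count-ds} into the functional equation of Proposition \ref{quad1}, isolate the remaining square root, square, and pick the branch by a specialization. Your version is actually cleaner in two small respects: you make the substitution $u=to$ explicit (the paper's proof writes the quadratic as if it were in $o$ when it is really in $to$, which is why the paper's displayed formula has an extra factor of $t$ in the denominator), and you observe the tidy identity $2t\,\tilde P(x,t)=-(f(x,t)+1)$ up front, which makes the bookkeeping transparent; the branch-selection argument (forcing $u=to$ to vanish at $t=0$, equivalently $o$ to vanish at $x=0$) is the same pin-down used in the paper.
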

\begin{proof}
If we collect $\sqrt{(1-o)^2-4to}$ in  equation \ref{equad1} and square
we obtain a functional quadratic equation in $o$
$$ ((3+2t)^2-1)o^2+(2(3+2t)f+4t+2)o + f^2-1 = 0 $$
The correct sign in the solution is deduced by imposing the 
condition $o(x,t)=0$ for $x=0$. 
\end{proof}

\section{Compactification of configuration spaces and spaces of trees} \label{sec:fulton}

We recall the definition of the Fulton MacPherson space $FM(k)$, that is a compactification of the open moduli space
$FM'_k= F_k(\bcc)/ (\bcc \rtimes \R^+)$. There is a version $FM_n(k)$ of it for any $n \geq 1$  but we are only interested 
in $FM=FM_2$ here.
The Lie group of rotations and positive dilations  $\Aff:= \bcc \rtimes \R^+$ acts smoothly and freely on  $\bcc^k - \{0\}$ for $k>1$.
The quotient   $   (\bcc^k - \{0\})/\Aff$  is diffeomorphic to the $(2k-3)$-sphere $S(\bcc^k) \cap H$
that is the intersection of the complex hyperplane  $H=\{(z_1,\dots,z_k) \, | \, z_1 + \dots +z_k=0   \}$
with the unit sphere of $\bcc^k$. Namely each class of the quotient has a unique representative in the $(2k-3)$-sphere.
 The diffeomorphism is $\Sigma_k$-equivariant with respect to the action permuting the coordinates.
Consider for $2 \leq j \leq k-1$ and $S=\{i_1,\dots,i_j\}$ the generalized diagonal 
$$\Delta_S= \{[z_1,\dots,z_k] \, | \, z_{i_1}=\dots=z_{i_j}  \, \}  \subset  (\bcc^k - \{0\})/(\bcc \rtimes \R^+)$$
that is an oriented submanifold diffeomorphic to $S^{2k-2j-1}$.
Consider then the oriented blow-up $Bl_S$ of $(\bcc^k - \{0\})/(\bcc \rtimes \R^+)$ along 
$\Delta_S$. Since $FM'_k$ does not intersect any generalized diagonal $\Delta_S$ there is a natural inclusion 
$i_S:FM'_k \to Bl_S$ for each $S$. 
\begin{Def}
The Fulton MacPherson space $FM(k)$ is the closure of the image of 
$$\prod_{S \subset \{1,\dots,k\},  1<|S|<k} i_S: FM'_k \rightarrow \prod_S Bl_S$$
\end{Def}

It is crucial for this paper that the Fulton-MacPherson space $FM(k)$ is a manifold with corners, and also a manifold with faces.
This is due to Axelrod-Singer \cite{AxSing}, Lambrechts-Volic \cite{LV} and Sinha \cite{Sinha}.
We recall first the definition of a manifold with corners and of a manifold with faces.

\begin{Def}
A $n$-dimensional oriented smooth manifold with corners $M$ is a space that looks locally like the cube $[0,1]^n$.
It comes with an atlas inducing smooth and oriented changes of coordinates. 
A codimension $m$ open stratum is a component of the space $M_{-m}$ of points that have a neighbourhood 
diffeomorphic to a neighbourhood of a point in $\{0\}^m \times (0,1)^{n-m} \subset [0,1]^n$.  A closed stratum is the closure
of an open stratum in $M$.
\end{Def}

\begin{Def}
A manifold with faces is a manifold with corners, such that each codimension $m$ stratum is the transverse intersection of
$m$ codimension 1 strata, called the faces. 
\end{Def}

We refer to chapter 4 of \cite{Verona} for more details on manifolds with faces. A cube is a standard example of manifold with faces, whereas the "eye-drop" is a manifold with corners but not a manifold with faces.

%\begin{Prop} (Axelrod-Singer) \cite{AxSing}
%The manifold $FM(k)$ is a compact  smooth oriented real $(2k-3)$-manifold with faces.
%\end{Prop}
We need a preliminary definition in order to describe the strata of $FM(k)$.% following work by Axelrod-Singer {AxSing}, Lambrechts-Volic, and Sinha.

\begin{Def} 
For each nested tree $\mathcal{S} \in N_k$ on $k$ leaves, and each element $S \in \mathcal{S}$, define $\hat{S}$ as the quotient of $S$ with respect to 
the equivalence relation such that $s \sim s'$ if there is a proper subset $T \subset S$ with $s,s' \in T$ and $T \in \mathcal{S}$. 
If we look at the vertex associated to $S$ in the realization tree $||\mathcal{S}||$, we can identify $\hat{S}$ to the set of incoming edges of the vertex. 
We define the configuration space $F_{\hat{S}}(\bcc)$ as the space of embeddings $\hat{S} \hookrightarrow 
 \bcc$,  that can be identified with the space of maps 
$x:S \to \bcc$ such that  $x(s)=x(s')$ if and only if $s,s' \in T$ with $T$ as above. We use the notation $x_{S,s}:=x(s)$ for $s \in S$,
and $x_S:=x$.  When we take quotients of these configuration spaces by the action of the affine group $Aff=\bcc \rtimes \R^+ $ we pick the unique representatives such that 
$\sum_{s \in S}x_{S,s}=0$ and $\sum_{s \in S} |x_{S,s}|^2 = 1$.
\end{Def}

The following theorem gives a characterization of the strata of $FM(k)$, and of the charts around each point.  We refer to \S 5.3-5.4 in \cite{AxSing}, and  \S 5.9.2 in \cite{LV}. 

\begin{Thm} \cite{AxSing} \cite{LV} \cite{Sinha}  \label{thm:strata}
The space $FM(k)$ is a manifold with corners of dimension $2k-3$. Its strata correspond bijectively to nested trees on $k$ leaves. 
The open stratum $\stackrel{\circ}{FM(\mathcal{S})}$  corresponding to a nested tree $\mathcal{S}$ 
has codimension equal to the number $|\mathcal{S}|-1= |\mathcal{S}'|$ of elements of
$\mathcal{S}$ decreased by one. 
There is a diffeomorphism
$$\beta_\mathcal{S}: \prod_{S \in \mathcal{S}} F_{\hat{S}}(\bcc)/\Aff \cong \stackrel{\circ}{FM(\mathcal{S})}$$
extending to a diffeomorphism onto the closed stratum
$$\overline{\beta_\mathcal{S}}:\prod_{S \in \mathcal{S}} FM(|\hat{S}|)   \cong FM(\mathcal{S})\, .$$
 \end{Thm}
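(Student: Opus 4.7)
The plan is to construct the diffeomorphism $\beta_\mathcal{S}$ explicitly via an approach sequence from $FM'_k$, then verify surjectivity onto the stratum, injectivity via recovery from blow-up charts, and the codimension claim from a dimension count; the extension to the closed stratum will then follow by iterating the construction.

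First I would define $\beta_\mathcal{S}$ as follows. Fix representatives $x_S \in F_{\hat{S}}(\bcc)$ satisfying $\sum_{s \in S} x_{S,s} = 0$ and $\sum_{s \in S} |x_{S,s}|^2 = 1$, which are unique in their $\Aff$-orbit. For each leaf $i \in \{1,\dots,k\}$ let $\{1,\dots,k\} = T_{m} \supsetneq T_{m-1} \supsetneq \dots \supsetneq T_1$ be the chain of elements of $\mathcal{S}$ containing $i$, and for small $\epsilon > 0$ set
\[ z_i(\epsilon) \;=\; \sum_{j=1}^m \epsilon^{\,m-j}\, x_{T_j,\, [i]_{T_j}} \;\in\; \bcc, \]
where $[i]_{T_j}$ is the class of $i$ in $\hat{T_j}$. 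For $\epsilon$ small the $z_i(\epsilon)$ are distinct, so $z(\epsilon) \in FM'_k$; I would define $\beta_\mathcal{S}(x_\bullet) := \lim_{\epsilon \to 0^+} z(\epsilon)$ inside the compact space $FM(k)$.

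Next I would show this limit lies in the stratum indexed by $\mathcal{S}$. For a subset $T \subset \{1,\dots,k\}$ with $1 < |T| < k$, projecting $z(\epsilon)$ to $Bl_T$ and analyzing the leading-order behavior of the relative positions $(z_i(\epsilon))_{i \in T}$ shows that the path approaches the exceptional divisor of $Bl_T$ precisely when $T$ is a union of maximal proper subsets of some $S \in \mathcal{S}$ (in particular when $T \in \mathcal{S}$ itself), and the limiting direction in the unit normal sphere is read off from $x_S$. In particular the limit depends only on the $\Aff$-orbits $[x_S]$, so $\beta_\mathcal{S}$ descends to $\prod_S F_{\hat{S}}(\bcc)/\Aff$. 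Injectivity follows by reading off each $x_S$ from the blow-up coordinates at the corresponding cluster, and surjectivity onto $\stackrel{\circ}{FM(\mathcal{S})}$ follows because any sequence in $FM'_k$ whose limit concentrates exactly along the collisions indexed by $\mathcal{S}$ can be rescaled level by level into the form above. The codimension is computed from dimensions: each vertex $S$ contributes $2|\hat{S}|-3$, and since $\sum_{S \in \mathcal{S}} |\hat{S}|$ equals the total edge count of $||\mathcal{S}||$, namely $(|\mathcal{S}|-1) + k$, the total is $2k-2-|\mathcal{S}|$, yielding codimension $|\mathcal{S}|-1$ in the $(2k-3)$-dimensional $FM(k)$. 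Distinct nested trees index disjoint strata because the collection of exceptional divisors a boundary point lies on is a well-defined invariant, and they cover $FM(k)$ because that collection is always a nested family. The extension $\overline{\beta_\mathcal{S}}$ is obtained by allowing each $x_S$ to lie on the boundary of $FM(|\hat{S}|)$, corresponding to further refinement inside the $S$-cluster; this follows by induction on $\max_S |\hat{S}|$.

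The main obstacle will be verifying that $\beta_\mathcal{S}$ is a \emph{diffeomorphism}, not merely a homeomorphism onto the stratum, and that $FM(k)$ thereby acquires the structure of a manifold with faces. This requires exhibiting explicit smooth local coordinates adapted to the iterated blow-up, with the rescaling parameters $\epsilon_S$ (one per non-root element of $\mathcal{S}$) playing the role of transverse collar coordinates near the stratum, and checking that each codimension-one face of $FM(k)$ corresponds to a two-element nested tree $\{R, T\}$ while higher-codimension strata are transverse intersections of these faces. Checking that these charts assemble coherently across different strata is the delicate point; this is the substance of the Axelrod--Singer construction, and it can be phrased as verifying that the universal family over $\prod_S FM(|\hat{S}|)$ maps smoothly into $FM(k)$ with the correct normal directions.
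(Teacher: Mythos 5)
The paper cites this theorem from Axelrod--Singer and Getzler--Jones without proof, so there is no internal argument to compare against; but your sketch does reconstruct the standard approach, and your approach sequence $z_i(\epsilon)=\sum_j\epsilon^{m-j}x_{T_j,[i]_{T_j}}$ is precisely the paper's explicit chart $\gamma_{\ms}$ (displayed immediately after the theorem) with every collar parameter set equal to $\epsilon$. Your codimension count is correct, and you correctly identify that the substance of the result is the smooth manifold-with-faces structure coming from the iterated blow-up, which you defer to the Axelrod--Singer analysis.

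There is, however, a genuine error in your stated criterion for which exceptional divisors the path meets. You claim the limit lies on the exceptional divisor of $Bl_T$ precisely when $T$ is a union of maximal proper subsets of some $S\in\mathcal{S}$. This is wrong in both directions. The normalized limit of $z(\epsilon)$ has coordinates $x_{\{1,\dots,k\},[i]}$, so it lies on $\Delta_T$ exactly when all of $T$ falls in one equivalence class of $\widehat{\{1,\dots,k\}}$, i.e., exactly when $T$ is \emph{contained in} a single maximal proper element of $\mathcal{S}$. Concretely, take $\mathcal{S}=\{\{1,2,3,4\},\{1,2,3\},\{1,2\}\}$ and $T=\{1,3\}$: then $z_1-z_3=O(\epsilon)$, so the path does approach $\Delta_T$, but $\{1,3\}$ is not a union of maximal proper subsets of any $S\in\mathcal{S}$, so your criterion misses it. Conversely, take $\mathcal{S}=\{\{1,\dots,6\},\{1,2\},\{3,4\}\}$ and $T=\{1,2,3,4\}$: this is a union of two maximal proper elements, so your criterion includes it, yet the limit has $z_1=z_2\neq z_3=z_4$ and does not lie on $\Delta_T$. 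The real point is that the stratum is not determined by the set of $T$ for which the limit sits on $\Delta_T$ (that set is always closed downward under passing to subsets of an element of $\mathcal{S}'$), but by the \emph{recursive} data of the limiting configuration together with the limiting normal directions in each blow-up; making that precise is exactly what the screens formalism of Axelrod--Singer and Fulton--MacPherson does, and your sketch would need to replace the flawed ``which $\Delta_T$'s are approached'' characterization with that recursive description before the ``this limit lies in the stratum indexed by $\mathcal{S}$'' step is actually justified.
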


In particular the codimension 0 stratum of $FM(k)$ is $FM'_k=F_k(\bcc)/\Aff$, and the former is a compactification of the latter.
There are explicit charts for $FM(k)$ constructed as follows.
Given a point $$(x_S)_{S \in \mathcal{S}}  \in  \prod_{S \in ms} F_{\hat{S}}(\bcc)/\Aff \cong \stackrel{\circ}{FM(\mathcal{S})} \, ,$$ 
for sufficiently small open neighbourhoods $U_S \subset  F_{\hat{S}}(\bcc)/\Aff $ of $x_S$ ( $S \in \mathcal{S}$) , and for sufficiently small constants $\varepsilon_{S'} >0$
($ S' \in \mathcal{S}'$)
 there is a smooth embedding  
$$\gamma_\ms: \prod_{S \in \ms}  U_S \times \prod_{S'  \in \mathcal{S'}}    [0,\varepsilon_{S'})     \hookrightarrow
 FM(k)$$ onto a neighbourhood of 
 $$\beta_\ms((x_S)_{S \in \ms})=\gamma_\ms((x_S)_{S \in \ms}, (0)_{S' \in \ms'})$$
such that  for $y_S \in U_S$ and $0<\delta_{S'} <\varepsilon_{S'}$
 $$\gamma_\ms((y_S)_S,(\delta_{S'})_{S'})=[z_1, \dots,  z_k] \in  F_k(\bcc)/\Aff$$ and the $j$-th coordinate is 
$$z_j =   \sum_{j \in S \in \mathcal{S}} (  \prod_{S \subseteq S' \in \mathcal{S}' } {\delta_{S'}}  ) y_{S,j}    $$

\begin{Exa}

The following picture represents an element in the image of $\gamma_\ms$, with $\ms= (1(23))(45)$ of the form
$$[z_1,z_2,z_3,z_4,z_5] = \gamma_\ms (x_{23},x_{123},x_{45},x_{12345},\delta_{23},\delta_{123},\delta_{45}) $$ 
with $x_{23} = [z_2,z_3],  \, x_{123}=[z_1,\zeta], \, x_{45}=[z_4,z_5], \, x_{12345} =[h,w]. $
The radii of the circles centered at $\zeta, h$ and $ w$  are respectively proportional to $\delta_{23}, \delta_{123}$ and 
$\delta_{45}$.
\bigskip

\begin{center}
\begin{tikzpicture}  [scale=1.6]
\node at (0,.8) [inner sep=1pt,draw,circle, label=above:$z_1$] {};
\node at (.5,0) [inner sep=1pt,draw,circle,label=below right:$z_2$] {};
\node at (.5,.2) [inner sep=1pt,draw,circle,label=above right:$z_3$] {};
\node (zeta) at (.5,.1)  [inner sep=1pt,fill,circle,label=right:$\zeta$] {};
\draw (zeta) circle (.25cm); 

\node at (3.5,0.5) [inner sep=1pt,draw,circle,label=left:$z_4$] {};
\node at (3,1) [inner sep=1pt,draw,circle,label=right:$z_5$] {};
\node (w) at (3.25,.75)  [inner sep=1pt,fill,circle,label=left:$w$] {};
\node (h) at (.3,.4)  [inner sep=1pt,fill,circle,label=left:$h$]{};
\draw (h) circle (.8cm); 
\draw (w) circle (.5cm);

\end{tikzpicture}
\end{center}

%\caption{An element in the image of $\gamma_{(1(23))(45)}$}

%\end{figure}
\end{Exa}

\begin{Rem}
The space $FM(k)$ is indeed a manifold with faces.  For each proper subset $S \subset \{1,\dots,k\}$ of cardinality $1<|S|<k$ consider the nested tree
$\ms(S)= \{S, \{1,\dots,k\}\}$. Then $FM(S):=FM(\mathcal{S}(S))$ is a codimension 1 stratum (a face), containing the configurations where the points with labels in $S$ come together (with respect to the other points).
For any nested tree $\ms$ on $k$ leaves the corresponding stratum is an intersection of faces $FM(\ms) = \cap_{S \in \ms'} FM(S)$, and this intersection is transverse by the formula of the explicit charts in Theorem \ref{thm:strata}. 
\end{Rem}

\subsection{Extension to the boundary}

We define a compact space of labelled nests $\bar{N}_k(\C)$ following essentially definition \ref{defopen},
but we allow the internal edges to be labelled by 0.
\begin{Def}
The space $\bar{N}_k$ is the quotient 
$$\bar{N}_k(\C):= \coprod_{\ms \in  N_k} (\prod_{S \in \ms} \C_{|S|} \times [0,1]^{\ms'} )\, / \, \sim $$ 
by the same relation as in definition \ref{defopen}.
\end{Def}
Clearly $\bar{N}_k(\C)$ is a compactification of $N_k(\C)$.

We wish to extend the homeomorphism 
$\Xi_{a_1,\dots,a_k}: FM'_k  \cong  N_k(\C)$ from Lemma \ref{lemmuno} 
to the boundary 
$$\partial FM(k)=FM(k)-FM'_k$$ for $k>2$ in order to obtain a homeomorphism 
$\overline{\Xi}_{a_1,\dots,a_k}:FM(k) \cong \bar{N}_k(\C) $.

We define first some functions associating an angle to a configuration point.
\begin{Def}
Let $$\theta^i_{a_1,\dots,a_k}: FM'_k \to S^1$$ for each  $i=1,\dots,k$ be the following continuous function.
For a given configuration $(z_1,\dots,z_k)$ with weights $(a_1,\dots,a_k)$  
the flow lines out of $\infty$ correspond to $S^1$ and have a cyclic ordering corresponding
to the standard orientation of $S^1$. Any flow line $\gamma$ out of $\infty$ can be extended to a piecewise smooth flow line,
that continues after hitting any critical point along the rightmost flow line going out. The flow line hits eventually some configuration point $z_j$.
Let $\gamma_i$ be the first flow line in the cyclic ordering, starting from $1 \in S^1$, hitting the configuration point
$z_i$. 
Let $\gamma_i:(0,b) \to \bcc$ be a parametrization with $\lim \gamma_{t \to 0^+}(t)=z_i$. Then
$$\theta^i_{a_1,\dots,a_k}(z_1,\dots,z_k)= \lim_{t \to 0^+} \gamma'_i(t) / |\gamma'_i(t)|$$.
\end{Def}
%picture
The following picture shows the vectors $\theta^i(z_1,z_2,z_3)$.

%togliere
\begin{center}
\begin{tikzpicture}
\node (3) at (6.5,0) [circle,draw,inner sep=1pt] {$z_1$};
\node (4) at (6.5,.8) [circle,draw,inner sep=1pt] {$z_2$};
\node (34) at (6.5,.4) [circle,fill,inner sep=1pt]{};
\path (34) edge (3)  (34) edge (4);

\node (5) at (9.5,1.2) [circle,draw,inner sep=1pt] {$z_3$};
\node (w) at   (7.8,.6)   [circle,fill,inner sep=1pt] {};
\node (infi) at (12,.5) {$+\infty$};
\draw (w) .. controls (8.5,.6) .. (5) ;
\draw (w) .. controls (7.3,.6) .. (4) ;
%\path (w) edge (5) (w) edge (4) ; 
\draw (infi) .. controls (10.5,.5) .. (5) ;
%\draw (w) .. controls (7,2) .. (5,3);

\draw [<-,thick] (3) -- (6.5,-.5);
\draw [<-,thick] (4) -- (6,1);
\draw [<-,thick] (5) -- (9.2,1.6);

\node at (6.2,-.5) {$\theta^1$};
\node at (6.2,1.3) {$\theta^2$};
\node at (9.6,1.8) {$\theta^3$};

\end{tikzpicture}
\end{center}

\begin{Prop}
The function $\theta^i_{a_1,\dots,a_k}$ is continuous, depends continuosly on the parameters $a_1,\dots,a_k$, and is nullhomotopic for each $i=1,\dots,k$.
\end{Prop}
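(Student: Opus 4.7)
The plan is to establish the three claimed properties in turn: continuity in $x \in FM'_k$, continuous dependence on $a \in \stackrel{\circ}{\Delta}_k$, and null-homotopy.

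For continuity, I would start from the fact that the vector field $E(z)=\sum_j a_j(z-z_j)/|z-z_j|^2$ depends smoothly on both the configuration and the weights, and so do its critical points (roots of $p$) and its flow lines away from critical points. The rightmost-continuation rule at critical points is engineered precisely so that the extended flow line agrees with the limit of nearby flow lines that just barely miss the critical point on the clockwise side; this yields continuity of the piecewise-smooth closed path one obtains by starting at angle $1\in S^1$ at $\infty$ and iterating the extension. The case analysis carried out in the proof of Theorem \ref{thm:cont} (constant tree, same tree shape with different $\arg(h)$ crossings, splittings of critical points) covers all the combinatorial transitions, and in each case $\Phi_a(x)\in Tr_k$ varies continuously, hence so does the entire flow-line datum we need. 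The tangent at $z_i$ of the first extended flow line that hits $z_i$ cyclically after angle $1$ is therefore a continuous function of $(a,x)$.

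For null-homotopy, I would use the hint that $\theta^i_a(x)\to 1\in S^1$ as $a\to e_i$, the $i$-th vertex of the simplex. Since $\stackrel{\circ}{\Delta}_k$ is connected and $\theta^i$ is continuous in $a$, the straight-line family $a(t)=(1-t)a+t e_i$, $t\in[0,1)$, is a homotopy through maps $\theta^i_{a(t)}:FM'_k\to S^1$. It then suffices to show that for $t$ close enough to $1$ the image of $\theta^i_{a(t)}$ is contained in $S^1\setminus\{-1\}$, which is contractible; this makes $\theta^i_{a(t)}$ null-homotopic and therefore, by the homotopy, so is $\theta^i_a$. In the limit $a=e_i$, the function $h(z)$ degenerates to $z-z_i$, the field $E$ is exactly radial from $z_i$, and the flow line issuing from $\infty$ at angle $1$ is the horizontal ray hitting $z_i$ with outward tangent $1$. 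Pointwise convergence $\theta^i_{a(t)}(x)\to 1$ follows by combining this radial-limit computation with continuity in $a$.

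The main obstacle will be upgrading this pointwise convergence to one that is uniform in $x$: the space $FM'_k$ is non-compact, so pointwise convergence does not by itself force the image of $\theta^i_{a(t)}$ to eventually miss a neighborhood of $-1\in S^1$. I plan to overcome this by extending $\theta^i_a$ continuously to the Fulton--MacPherson compactification $FM(k)$ of section \ref{sec:fulton}, using the stratified description of the boundary in terms of nested trees: on each stratum the flow-field analysis localizes to smaller factors $FM(|\hat S|)$, and the radial limit commutes cleanly with the stratification because the weights on each vertex are renormalizations of $(a_1,\dots,a_k)$. Compactness of the extended domain then delivers the uniform convergence, and the null-homotopy argument above goes through as stated.
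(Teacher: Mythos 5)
Your continuity argument is essentially the paper's (which says only ``continuity follows from the continuous dependence of the space of flow lines on the data''), and your null-homotopy strategy of sending $a$ to $e_i$ is exactly what the paper does (``the nullhomotopy \dots is provided by letting $a_i\to 1$ and $a_j\to 0$''). So you have reconstructed the same approach and, helpfully, you flag a real uniformity issue that the paper's two-sentence proof silently elides.

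However, the way you propose to close that gap is not quite coherent. Compactness of the (extended) domain does \emph{not} by itself upgrade pointwise convergence of $\theta^i_{a(t)}$ to uniform convergence; there is no Dini-type argument available here. What actually makes compactness bite is \emph{joint} continuity in $(a,x)$ at the boundary value $a=e_i$: if the map
$$(a,x)\ \longmapsto\ \theta^i_a(x), \qquad \theta^i_{e_i}(\cdot)\equiv 1,$$
is continuous on $\bigl(\stackrel{\circ}{\Delta}_k\cup\{e_i\}\bigr)\times K$ for a compact $K$, then the standard tube-lemma covering argument yields that $\theta^i_{a(t)}(K)\subset S^1\setminus\{-1\}$ for $t$ close to $1$. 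So the joint continuity at $(e_i,x)$ is the actual technical content you must supply, and once you have it, compactness is a one-line finish. Your plan as written puts the emphasis in the wrong place: extending $\theta^i_a$ in the $x$-variable to $FM(k)$ is neither sufficient (without the joint continuity at $e_i$) nor necessary. Indeed you can avoid the compactification entirely: by Proposition~\ref{4.3}, $\C_k$ is a compact deformation retract of $FM'_k$, and a map to $S^1$ is null-homotopic if and only if its restriction to a homotopy-equivalent compact subspace is, so it suffices to establish the joint continuity at $e_i$ over $\C_k$ (or, still more economically, over the images of the finitely many loops generating $H_1(FM'_k;\mathbb Z)$). The extension to $FM(k)$ also raises a side complication you would have to address: on boundary strata the angle is built recursively out of the $\alpha_S$ of Definition~\ref{def:xi}, with renormalized weights, and it is not immediate that those contributions all tend to $1$ as $a\to e_i$. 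None of this means the proposition is wrong -- the paper's proof is at least as terse as yours -- but you should replace ``compactness delivers the uniform convergence'' with the correct statement about joint continuity, and recognize that establishing that continuity at $(e_i,x)$ is where the real (flow-line) analysis lives.
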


\begin{proof}
The continuity follows from the continuous dependence of the space of flow lines on the data.
The nullhomotopy between $\theta_i$ and the constant function with value $1 \in S^1$ is provided by letting  $a_i \to 1$ and $a_j \to 0$ for $j \neq i$. 
\end{proof}

For notational convenience we adopt a coordinate-free approach:
for each finite set $R$ of $r$ elements we define $FM'_R$ as the space of injective 
maps $R \hookrightarrow \bcc$ modulo the action of the affine group $\bcc \rtimes \R^+$ on the target. 
Of course an ordering of $R$ induces a diffeomorphism $FM'_R \cong FM'_r$.  Similarly we define  $N_R \cong N_{r}$ as the set of 
isomorphism classes of nested tree with $R$ as set of leaves, and the space $N_R(\C) \cong N_{r}(\C)$  of labelled nested trees.
For a given function $a:R \to \R^+$, 
%modulo the action of $R^+$ by multiplication on the target  a homeomorphism
there is a homeomorphism
$\Xi_a: FM'_R \cong N_R(\C)$, that for a given ordering $y_1,\dots,y_{r}$ of the elements of $R$ corresponds
to $\Xi_{a(y_1)/A,\dots,a(y_r)/A}$, with $A=\sum_{i=1}^r a(y_i)$, under the identifications above.
We also have maps $\theta^i_a:FM'_R \to S^1$ for each $i \in R$.

%always pick the representative with $\sum_{y \in R}a(y)=1$ in the open 
%$(|R|-1)$-simplex. 
% $[a_1,\dots,a_k] \in (R^+)^k/R^+ \cong \stackrel{\circ}{\Delta_{k-1}}$.
\begin{Def} \label{def:xi}
Consider the function
$$\overline{\Xi}_{a_1,\dots,a_k}: FM(k) \rightarrow \bar{N}_k(\C)$$
defined stratum-wise as follows.
Recall that for a given open stratum $\stackrel{\circ}{FM(\ms)}$,
with 
$\ms \in N_k$ nested tree on $k$ leaves, according to our coordinate-free notation
we have a diffeomorphism
$$\beta_\ms: \prod_{S \in \ms} FM'_{\hat{S}} \cong  \stackrel{\circ}{FM(\ms)}$$  introduced in Theorem \ref{thm:strata}

Choose an element $x_S \in FM'_{\hat{S}}$ for each $S \in \ms$. 
In order to define the labelled nested tree 
$$\overline{\Xi}_{a_1,\dots,a_k}(\beta_\ms((x_S)_S)) \in \bar{N}_k(\C)$$ we need to specify a nested tree $T \in N_k$, and the labels of its vertices and internal edges. 
For each $S \in \ms$ consider the quotient projection 
$\pi_S:S \to \hat{S}$ and the function $a_{S}: \hat{S} \to \R^+$ defined by
$a_{S}(\iota)= \sum_{i \in \pi_S^{-1}(\iota)  } a_{i}$. 
Let $T_S \in N_{\hat{S}}$ be the nested tree 
that is the shape of  $\Xi_{a_S}(x_S) \in N_{\hat{S}}(\C)$. The tree $T \in N_k$ is obtained intuitively by grafting together all the nested trees $T_S$ with $S \in \ms$. 
Rigorously $T$ is the collection of all sets $\pi^{-1}_S(U) \subseteq \{1,\dots,k\}$, with $U \in T_S$ and $S \in \ms$.
The realization $||T||$ is indeed homeomorphic to the union of the complexes $||T_S||$, topologized appropriately. We specify the labels 
in $[0,1]$ of the internal edges
of $T$:
\bit
\item If an internal edge comes from a nested tree $T_S$, i.e. has the form 

$(\pi^{-1}_S(S_1),\pi^{-1}_S(S_2))$, with 
$e=(S_1,S_2) \in E(T_S)$, then its label is the same as the label of $e$ in $\Xi_{a_S}(x_S) \in N_{\hat{S}}(\C)$.   
\item If an internal edge does not come from a nested tree then its label is set to $0$.
\eit

We need to rotate the configurations $x_S$ labelling vertices $S$, that are not the root, by an appropriate angle. 
We define inductively an angle $\alpha_S \in S^1$ for  $S \in \ms$.
If $S=\{1,\dots,k\}$ is the root then $\alpha_S=1$. 
Suppose that $S \in \ms', \, (S,R)$ is the edge going out of $S$, and $\alpha_R$ has been defined. 
Given the quotient projection $\pi_{R}:R \to \hat{R}$, the set $\pi_R(S)$ consists of a single element, that we indicate by the same symbol. Then
$$\alpha_S:=  \alpha_R  \cdot  \theta^{\pi_{R}(S)}_{a_R}(\alpha_R^{-1} \cdot x_R )    $$
Here $\alpha_R^{-1} \cdot x_R$ is the rotation of the configuration $x_R \in FM'_{\hat{R}}$ by the angle $\alpha_R^{-1}$.

We are now able to define the label of a vertex of $T$, that is the cactus labelling the corresponding vertex of 
$\Xi_{a_S}(  \alpha_S^{-1} \cdot x_S)$.   
\end{Def}

It follows from the definition that
the restriction of $\overline{\Xi}=\overline{\Xi}_{a_1,\dots,a_k}: FM(k) \to \bar{N}_k(\C)$ to each open stratum is continuous.
Let us consider the compatibility between different strata. 
\begin{Lem} \label{xicont} 
The  function $\overline{\Xi}$ is continuous.
\end{Lem}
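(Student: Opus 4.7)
The plan is to verify continuity in the explicit Axelrod-Singer charts of Theorem \ref{thm:strata}. Since stratum-wise continuity is already noted, I would take a sequence $p_n \in FM(k)$ converging to a point $p_\infty \in \stackrel{\circ}{FM(\ms)}$ of positive codimension and show that $\overline{\Xi}(p_n) \to \overline{\Xi}(p_\infty)$ in $\bar{N}_k(\C)$. Writing $p_n = \gamma_\ms((y_S^{(n)})_S,(\delta_{S'}^{(n)})_{S'})$ with $y_S^{(n)} \to x_S$ and $\delta_{S'}^{(n)} \to 0^+$, the configurations $z^{(n)}$ representing $p_n$ in $FM'_k$ are given by the nested-scale formula of the chart. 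By Lemma \ref{lemmuno} it remains to analyse the labelled nested tree $\Xi_a(z^{(n)}) \in N_k(\C)$ as these scaling parameters degenerate.

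The core input is an asymptotic analysis of the multivalued function $h^{(n)}(z) = \prod_j (z-z_j^{(n)})^{a_j}$. Setting $\rho_S^{(n)} := \prod_{S \subseteq S' \in \ms'} \delta_{S'}^{(n)}$, a power-counting argument shows that the critical points of $h^{(n)}$ split into groups indexed by the vertices $S \in \ms$: the critical points attached to $S$ live at spatial scale $\rho_S^{(n)}$, and after rescaling around the corresponding cluster they converge to the critical points of the vertex function $h_S(w) = \prod_{s \in \hat S}(w - y_{S,s}^{(\infty)})^{a_S(s)}$. The same rescaling transports to the flow lines, so the admissible tree underlying $\Xi_a(z^{(n)})$ decomposes into sub-trees, one per $S \in \ms$, each converging to the tree obtained from $\Xi_{a_S}(x_S)$, joined by a connecting edge for every child-parent pair in $\ms$. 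The internal labels $\lambda$ converge to those of $\Xi_{a_S}(x_S)$, while on a connecting edge running from scale $\rho_S^{(n)}$ to the parent scale $\rho_R^{(n)}$ the ratio $|h^{(n)}(b)|/|h^{(n)}(b')|$ is comparable to a positive power of $\delta_{S'}^{(n)}$ and therefore tends to $0$. This matches the label $0$ that Definition \ref{def:xi} assigns to the edges of the limiting nested tree that do not come from any $\Xi_{a_S}(x_S)$.

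The final ingredient is the rotation correction $\alpha_S$. When one rescales the configuration $z^{(n)}$ into the cluster indexed by $S$, the distinguished horizontal direction at $\infty$ of the ambient configuration is not the horizontal direction of the rescaled sub-configuration: it has been rotated by the direction of the unique piecewise smooth flow line of $h^{(n)}$ that enters the cluster from the parent scale, continued rightmost through critical points. By the very definition of $\theta^i_a$ this rotation is, in the limit, exactly $\alpha_R \cdot \theta^{\pi_R(S)}_{a_R}(\alpha_R^{-1} \cdot x_R)$, which is the inductive formula for $\alpha_S$; hence the limiting cactus labelling $S$ agrees with the one prescribed in Definition \ref{def:xi}. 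The main obstacle will be making the asymptotic decoupling of critical points and flow lines uniform when the parameters $\delta_{S'}^{(n)}$ degenerate at arbitrarily different rates; I would handle this by induction on the depth of $\ms$, treating one edge of $\ms$ at a time so that at each step one reduces to a uniform perturbation of a single fixed configuration $x_S$.
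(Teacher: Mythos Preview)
Your proposal is correct and follows essentially the same approach as the paper: both arguments work in the Axelrod--Singer charts, show that the critical points of $h$ cluster according to the vertices $S\in\ms$ and converge after rescaling to the critical points of the vertex function $h_S$, deduce that the internal edge labels connecting distinct scales tend to $0$, and identify the rotation $\alpha_S$ with the tangent direction of the distinguished flow line as it enters the cluster. The paper carries this out in full detail only for the nested tree $(12)3$ in $FM(3)$ and then asserts that the general case is handled ``similarly''; your outline is in fact more systematic about the general case, in particular in flagging the need for uniformity when the $\delta_{S'}$ degenerate at different rates and proposing to treat this by induction on the depth of $\ms$.
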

\begin{proof}
We need to understand what happens when some points come together in the compactified configuration space $FM(k)$.
For the sake of simplicity we just describe the case when two out of three points come together.  Consider $[z_1-\ep w,z_1+\ep w ,z_2] \in FM'_3 \subset FM(3)$ for $\ep>0$. The limit 
for $\ep \to 0$ is associated to the nested tree 
$\ms = \{R,S\}$ with $R=\{1,2,3\}$ and $S=\{1,2\}$ and is exactly $\beta_\ms(x_R,x_S) \in \partial FM(3)$ with 
$x_R=[z_1,z_2]$ and $x_S=[-w,w]$.   
For fixed weights $a_1,a_2,a_3$ and $\ep>0$ there are two (possibly coinciding) critical points $w_1^\ep,w_2^\ep$
of $$h(z)=(z-z_1+\ep w)^{a_1} (z-z_1-\ep w)^{a_2} (z-z_2)^{a_3}$$
 that are the zeros of
$$p_\ep(z)= a_1 (z-z_1-\ep w)(z-z_2) + a_2(z-z_1+\ep w)(z-z_2) + a_3 (( z-z_1)^2-\ep^2 w^2) $$
\begin{figure} %flow lines
\begin{tikzpicture}  %[every node/.style={inner sep=1pt,circle,fill}]
\node (1) at (0,0) [circle,draw,inner sep=2pt,label=left:$z_1+\varepsilon w$] {};
\node (2) at (0,.8) [circle,draw,inner sep=2pt,label=left:$z_1-\varepsilon w$] {};
\node (12) at (.2,.4) [circle,fill,inner sep=1pt,label=left:$w_1^\varepsilon$]{};
\draw [->] (12) .. controls (.2,.2) ..  (1)  ;
\draw [->] (12) .. controls (.2,.6) ..  (2)  ;
\node (z2) at (4,.8) [circle,draw,inner sep=2pt,label=above:$z_2$] {};
\node (w) at (2,.6) [circle,fill,inner sep=1pt,label=above:$w_2^\varepsilon$] {};
\draw [->] (w) .. controls (2.5,.6) .. (z2);
\draw [->] (w) .. controls (1.5,.6) .. (12);
\end{tikzpicture}
\caption{Flow lines between critical points and zeros} 
\end{figure}
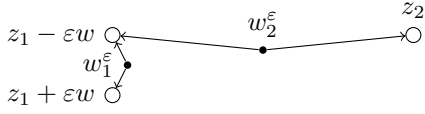
The zeros depend continuously on $\ep$, and in the limit tend to the zeros of 
$$p_0(z)=(z-z_1) ((a_1+a_2)(z-z_2) + a_3 (z-z_1))\, ,$$ that are $w_1^0 = z_1$ and 
$w_2^0$ that is the critical point associated to the configuration $[z_1,z_2]$ with weights $(a_1+a_2,a_3)$.
For $\ep \to 0$ we have that $|h(w_1^\ep)| \to 0$ and $|h(w_2^\ep)| \to |h(w_2^0)| \neq 0$, so the ratio of the critical values tends to 0.
This implies that the label of the unique internal edge $(S,R)$ of the labelled nested tree $\Xi_{a_1,a_2,a_3}[z_1-\ep w,z_1+\ep w ,z_2]$ tends to 0.
The cactus labelling the root vertex $R$ tends to the cactus of $\Xi_{a_1+a_2,a_3}[z_1,z_2]$. On the other hand the cactus labelling $S$ is obtained by studying the flow lines near $z_1$.  The argument of the factor $(z-z_2)^{a_3} \neq 0$ of $h(z)$ is almost constant near $z_1$ whereas the argument of 
$(z-z_1+\ep w)^{a_1} (z-z_1-\ep w)^{a_2}$ varies near $z_1$. The corresponding unbased cactus tends to the
unbased cactus underlying $\Xi_{a_1,a_2}[z_1-\ep w ,z_1+\ep w]= \Xi_{a_1,a_2}[-w,w]$. 
However the base point comes from the first flow line (in cyclic ordering after the flow line 1) hitting the cactus with 2 lobes corresponding to the 
configuration points $z_1-\ep w, z_1+ \ep w$. The tangent direction to this flow line at the intersection point with a circle of centre $z_1$ and radius $\sqrt{\ep}$
(tending to $z_1$ more slowly than the configuration points) 
tends to $\theta:= \theta^1_{a_1+a_2,a_3}[z_1,z_2]$  for $\ep \to 0$. The cactus of $S$ is obtained by 
rotating the cactus $\Xi_{a_1,a_2}[-w,w]$ by $\theta^{-1}$. 
In the general case for any nested tree $\ms$ on $k$ leaves we must show the continuity of $\overline{\Xi}$ when a
configuration $$[z_1,\dots,z_k]=\gamma_{\ms}((x_S)_S,(\ep_{S'})_{S'})  \in FM'_k$$ approaches 
$\beta_\ms((x_S)_S) \in FM(\ms)$, where $x_S \in FM'_ {\hat{S}}$, and the variables $\ep_{S'} >0$ tend to 0.
This is done similarly as above, studying the critical points  and the flow lines of $h$ as $\ep_{S'} \to 0$.

\end{proof}

\begin{Thm} \label{xibar} 
The map $\overline{\Xi}=\overline{\Xi}_{a_1,\dots,a_k}:FM(k) \to \bar{N}_k(\C)$ is a homeomorphism for each choice of parameters $a_1,\dots,a_k >0$.
\end{Thm}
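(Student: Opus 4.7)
The plan is to combine the continuity already established in Lemma \ref{xicont} with a stratum-wise analysis of bijectivity. Since $FM(k)$ is compact (as a closed subspace of a finite product of compact oriented blow-ups) and $\bar{N}_k(\C)$ is Hausdorff, a continuous bijection is automatically a homeomorphism, so the only remaining task is to prove that $\overline{\Xi}$ is bijective.

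Both $FM(k)$ and $\bar{N}_k(\C)$ admit natural stratifications indexed by $N_k$. On the configuration side, Theorem \ref{thm:strata} gives the open strata $\stackrel{\circ}{FM(\ms)} \cong \prod_{S \in \ms} FM'_{\hat{S}}$. On the combinatorial side I would stratify $\bar{N}_k(\C)$ as follows: take a canonical representative $(T,(\sigma_v),(\lambda_e))$ in which no edge is labelled $1$, form $E_0 = \{e \in E(T) : \lambda_e = 0\}$, and observe that the connected components of $T$ obtained by cutting the edges in $E_0$ partition the leaves and assemble canonically into a nested tree $\ms_T \in N_k$; denote the corresponding locally closed subset by $\bar{N}_k(\C)_\ms$. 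Inspection of Definition \ref{def:xi} then shows that $\overline{\Xi}$ sends $\stackrel{\circ}{FM(\ms)}$ into $\bar{N}_k(\C)_\ms$: the subtrees $T_S$ extracted from $\Xi_{a_S}(\alpha_S^{-1}\cdot x_S)$ carry all their internal labels in $(0,1]$, while the new edges created by grafting the $T_S$ together are exactly the edges of $\ms$ and by definition receive label $0$.

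To produce the inverse on a fixed stratum I would invert the recursion of Definition \ref{def:xi} top-down along $\ms$. Starting from the root $R$, set $\alpha_R = 1$, let $y_R := \Xi_{a_R}^{-1}(T_R) \in FM'_{\hat{R}}$, and put $x_R := \alpha_R \cdot y_R$. Inductively, for each child $S'$ of a vertex $S \in \ms$ whose angle $\alpha_S$ and normalized configuration $y_S$ are already known, define
\[
\alpha_{S'} \; := \; \alpha_S \cdot \theta^{\pi_S(S')}_{a_S}(y_S), \qquad y_{S'} \; := \; \Xi_{a_{S'}}^{-1}(T_{S'}), \qquad x_{S'} \; := \; \alpha_{S'} \cdot y_{S'}.
\]
The bijection $\Xi_{a_S}$ of Lemma \ref{lemmuno} makes each step reversible, and matching with the defining recursion of $\alpha_S$ in Definition \ref{def:xi} shows that $\beta_\ms((x_S)_{S \in \ms})$ is a preimage of the chosen point and that it is the unique one. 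Letting $\ms$ range over $N_k$ assembles a two-sided set-theoretic inverse to $\overline{\Xi}$.

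The main obstacle is the bookkeeping of the rotation factors $\alpha_S$: since these are defined by an inductive recursion that intertwines the data at different vertices of $\ms$, one must verify that the top-down inversion outlined above is coherent, that the canonical representatives of $\bar{N}_k(\C)$ really do correspond bijectively to pairs (nested tree $\ms$, configuration on $\prod_S FM'_{\hat{S}}$), and that no edge inside a grafted subtree can acquire label $0$ in the image of $\overline{\Xi}$. Each of these reduces, after passing to canonical representatives, to the bijectivity of the open-part homeomorphisms $\Xi_{a_S}$ of Lemma \ref{lemmuno} together with the continuity and equivariance properties of $\theta^i_a$. Once bijectivity is in hand, continuity of $\overline{\Xi}$ from a compact space to a Hausdorff space promotes it to a homeomorphism.
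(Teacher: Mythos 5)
Your proposal takes essentially the same route as the paper: the paper's proof simply asserts that $\overline{\Xi}$ is one to one, invokes Lemma \ref{xicont} for continuity, and concludes via the compact-domain/Hausdorff-codomain argument. Your stratum-by-stratum inversion of the recursion in Definition \ref{def:xi} is exactly the verification of injectivity (and hence bijectivity, using density of $N_k(\C)$) that the paper leaves implicit, so the argument matches, only with the bookkeeping spelled out.
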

\begin{proof}
The map $\overline{\Xi}$ is one to one, and continuous by Lemma \ref{xicont}. So it is a homeomorphism because its domain is compact and its range is Hausdorff.
\end{proof}

\subsection{Cellular decomposition}
By Theorem \ref{xibar} the Fulton Mac-Pherson space $FM(k)$ inherits a cellular decomposition for each choice of
parameters $a_1,\dots,a_k >0$, since $\bar{N}_k(\C)$ has a regular cellular decomposition.
The cells are labelled by nested trees on $k$ leaves, with each vertex $v$ labelled by a nested tree on $|v|$ leaves,
that in turn has the vertices labelled by cacti cells. 
This double passage motivates the name \em{metatrees} used by Kontsevich and Soibelman  \cite{KS}.
\begin{Thm} \label{condo}
There is a finite regular CW decomposition of $FM(k)$, depending on parameters $(a_1,\dots,a_k) \in (\R^+)^k/\R^+ \cong \stackrel{\circ}{\Delta}_k$.
A cell is determined by 
\bit
\item A nested tree $\ms$ on $k$ leaves. 
\item For each vertex $v$ of $\ms$, a nested tree $\ms_v$ on $|v|$ leaves. 
\item For each vertex $u$ of $\ms_v$ a cell $\sigma_u$ of the cactus complex $\C_{|u|}$
\eit
Such cell has dimension equal to 
$$\sum_{v \in \ms} (|\ms_v|-1 + \sum_{u \in \ms_v} dim(\sigma_u)    )    $$
and its closure is homeomorphic to 
$$\prod_{v \in \ms} ([0,1]^{\ms'_v} \times \prod_{u \in \ms_v} \sigma_u )$$
\end{Thm}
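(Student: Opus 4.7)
The strategy is to transport the desired cell structure from the combinatorial model $\bar{N}_k(\C)$ via the homeomorphism $\overline{\Xi}_{a_1,\dots,a_k}\colon FM(k)\cong\bar{N}_k(\C)$ of Theorem~\ref{xibar}. For each nested tree $\ms\in N_k$, the piece $\prod_{S\in\ms}\C_{|S|}\times [0,1]^{\ms'}$ in the disjoint union defining $\bar{N}_k(\C)$ carries an obvious product regular CW structure: each $\C_{|S|}$ is a regular CW complex as a realization of a multi-semi-simplicial set, and $[0,1]^{\ms'}$ is a cube with its standard CW decomposition. A product cell is determined by a cactus cell $\sigma_S\subset\C_{|S|}$ at each vertex together with, for each internal edge $e\in\ms'$, a status: free in $(0,1)$, fixed at $0$, or fixed at $1$.

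The quotient relation $\sim$ of Definition~\ref{defopen} only identifies faces at $\lambda=1$: when $\lambda_{S_0}=1$, that face of the $\ms$-piece is glued to a subcomplex of the $(\ms-S_0)$-piece via the partial cactus composition $x'_{T_0}=x_{T_0}\circ_j x_{S_0}$ on the parent coordinate, with the identity on the remaining factors. Since the partial compositions on cacti are cellular injective maps (Corollary following Definition~\ref{bigdef}), these identifications glue closed cells homeomorphically, and the quotient inherits a finite regular CW structure. Picking canonical representatives with no coordinate fixed at $1$, each cell is recorded by a triple $(T,E_0,(\sigma_S)_{S\in T})$ with $T\in N_k$, $E_0\subseteq T'$, and $\sigma_S\subset\C_{|S|}$.

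The bijection with metatrees is given by contracting the edges of $T'\setminus E_0$: the quotient is a nested tree $\ms$ on $k$ leaves whose internal edges are in bijection with $E_0$, and the preimage of each vertex $v\in\ms$ is a nested subtree $\ms_v$ on $|v|$ leaves (valence being preserved by contraction, as edges leaving a cluster become leaves of $\ms_v$); the cactus labels of $T$ distribute as labels at the vertices of the $\ms_v$'s, with $\sigma_u\subset\C_{|u|}$. Elementary counting yields the dimension formula
$$|T'|-|E_0|+\sum_{S\in T}\dim\sigma_S \;=\; \sum_{v\in\ms}\Big(|\ms_v|-1+\sum_{u\in\ms_v}\dim\sigma_u\Big),$$
and regrouping free-edge factors and cactus-cell closures by cluster exhibits the closure of the cell as $\prod_{v\in\ms}\bigl([0,1]^{\ms'_v}\times\prod_{u\in\ms_v}\overline{\sigma_u}\bigr)$. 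The $\Sigma_k$-equivariance is automatic, since $\overline{\Xi}$ is $\Sigma_k$-equivariant and the symmetric group acts on metatrees by relabelling leaves.

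The main obstacle is verifying regularity of the quotient CW structure: in particular, that simultaneous $\lambda=1$ collapses at several distinct edges give consistent identifications, and that no two distinct open cells get accidentally merged. Both points reduce to the injectivity of the cellular maps $\theta_\ms$ recorded in Section~\ref{sec:cacti} and to the commutativity of partial compositions at disjoint vertices, which ensures that the iterated $\lambda=1$ gluings are coherent regardless of the order in which they are performed.
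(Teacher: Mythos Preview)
Your argument is essentially the paper's own proof: transport the regular CW structure of $\bar{N}_k(\C)$ through $\overline{\Xi}_a$, index cells by a nested tree $T$ with a partition of its internal edges into those fixed at $0$ (your $E_0$, the paper's $I_0$) and those free in $(0,1)$ (the paper's $I_1$), and recover the metatree by cutting the zero-edges and contracting the free ones. The key technical input---cellular injectivity of the cacti partial compositions to ensure the $\lambda=1$ gluings are regular---is exactly what the paper uses.

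One small correction: your final sentence asserts $\Sigma_k$-equivariance ``since $\overline{\Xi}$ is $\Sigma_k$-equivariant,'' but $\overline{\Xi}_{a_1,\dots,a_k}$ is only $\Sigma_k$-equivariant when $a_1=\dots=a_k$ (the paper records this in the Remark immediately following the theorem). The theorem as stated does not claim equivariance for general weights, so simply drop that clause.
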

\begin{proof}
The space $\bar{N}_k(\C)$ has a regular CW decomposition. A fixed cell $\tau$ is determined by 
\bit
\item A nested tree $\mathcal{T}$ on $k$ leaves 
\item for each vertex $u \in \mathcal{T}$ a cacti cell $\sigma_u$ of $\C_{|u|}$ 
\item A decomposition $I=I_0 \coprod I_1$ of the set of internal edges of $\mathcal{T}$  . 
\eit
The cell contains all labelled nested trees with shape $\mathcal{T}$, with each vertex $u \in \mt$ labelled 
by an element of $\sigma_u \subset \C_{|u|}$, with each internal edge in $I_0$ labelled by 0, and each internal edge in $I_1$ 
labelled by a positive number in $(0,1)$. The closure of this cell $\tau$ is the union of the codimension 1 cells with a similar description that are obtained
\ben  \label{cond}
\item Replacing one cell $\sigma_u \subset C_{|u|}$ by a cell in its boundary $\sigma'_u \subset \partial \sigma_u$ 
\item Replacing an internal edge in $I_1$ going out of $u$ (labelled by $\sigma_u$) and into $u'$ (labelled by $\sigma_{u'}$) by a vertex 
labelled by a cell in the image of a partial composition product $\sigma_u \times \sigma_{u'} \to \C_{|u|+|u'|-1}$.  
\item Moving one internal edge  from $I_1$ to $I_0$ (so that the internal edge gets labelled by 0)
\een
The connection with the description in the statement is the following.   
Intuitively, if we cut all internal edges in $I_0$ (this makes sense in the realization $||\mt||$),
then we get a forest (a finite set of nested trees $\ms_v$). If we collapse each internal edge of the forest (in $I_1$) to a point $v$,
and glue back the internal edges in $I_0$,  then we obtain a nested tree $\ms$. 
More precisely $\ms$ contains all vertices $v$ of $\mt$ that are not sources of edges in $I_1$ (i.e. the root and the sources of edges in $I_0$).
The nest $\ms_v$ contains all vertices of $\mt$ that are connected  to $v$ by a sequence of edges in $I_1$. 
Let us consider the homeomorphism $\overline{\Xi}=\overline{\Xi}_{a_1,\dots,a_k}:FM(k)\cong \bar{N}_k(\C)$ of Theorem \ref{xibar}. We transfer the CW structure of $\bar{N}_k(\C)$ 
to $FM(k)$ via this homeomorphism. 
\end{proof}
\rm

\begin{Exa}
The following picture represents a cell of $\bar{N}_6(\C)$, encoded by a nested tree with one vertex
consisting of the dashed circle, that is labelled by the nested labelled tree inside, and two other vertices that are labelled by
cacti cells ( regarded as labelled corollas). Equivalently the cell is encoded by a nested tree with four vertices labelled by cacti cells, and internal edges of two types:  two edges in $I_0$,  and one edge in $I_1$, represented by the thick segment in the picture.  The cell has dimension $2$, since $3$ cacti cells have dimension $0$, the cacti cell in the middle has dimension $1$, and the thick edge contributes by $1$.

\bigskip

%corretta
\begin{tikzpicture} [scale=1]%cactus  123
\draw  (0,0) to [out=45,in=0] (0,1);
\draw  (0,0) to [out=135,in=180] (0,1);
\draw (0,0) to [out=45,in=90] (1, 0);
\draw  (0,0) to [out=-45,in=-90] (1,0);
\draw  (0,0) to [out=135,in=90] (-1,0);
\draw  (0,0) to [out=225,in=270] (-1,0);
\node (c2) at (0,0.5) [inner sep=20pt] {} ; %2
\node (c6) at  (0.5,0)[inner sep=15pt] {}; %1
\node (c1) at (-0.5,0)[inner sep=15pt] {} ; %3
\fill  (0,-0.05) circle  (.05cm);
\node (126) at (0,0) [inner sep=12pt]{};
\node (2) at (0,2){2}; 
\node (6) at (1,2){6};
\node (1) at (-1,2){1};
\draw (2) -- (c2);
\draw (1) -- (c1);
\draw (6) -- (c6);
\node (root) at (2,-2)[inner sep=20pt]{};

\draw (3.2,1)  circle (.4cm)  ;
\node (c5) at (3.2,1) [inner sep=12pt]{};
\draw   (4,1) circle (.4 cm) ;
\node (c4) at (4,1) [inner sep=10pt]{};
\fill (3.6,0.8) circle (.05 cm);
\node (cmez) at (3.6,1) [inner sep=15pt]{};

%\draw (-1,-1) node{12};
\draw (2.2 ,-.5)  circle (.4cm)  ;
\node (c3) at (2.2,-.5) [inner sep=15pt]{};
\node (345) at (2.6,-.5) [inner sep=15pt]{};
\draw   (3,-.5) circle (.4 cm) ;
\node (c54) at (3,-.5) [inner sep=15pt]{};
\draw (c54) -- (cmez);

\node (3) at (2,2){3};
\node (5) at (3.5,2){5};
\node (4) at  (5,2) {4};
\draw (c3) -- (3) ;
\draw (c5) -- (5);
\draw (c4) -- (4) ;

%\draw (1,0) node{1};
\fill (3.4,-.5) circle (.05 cm);
%\draw (-1,0) node{121};
%cerchione
\draw [dashed] (2.5,-1.4) circle (1.7cm); 

\draw (1.6,-2)  circle (.4cm)  ;
\node (r1) at (1.6,-2) [inner sep=12pt]{};
\draw   (2.4,-2) circle (.4 cm) ;
\node (r2) at (2.4,-2) [inner sep=15pt]{};
\fill (2,-1.8) circle (.05 cm);
\node (cmez) at (3.6,1) [inner sep=15pt]{};

\draw (r1) -- (126);
\draw [very thick] (r2) -- (345);

\end{tikzpicture}

\end{Exa}

\begin{Rem}
If $a_1=\dots=a_k$ then the homeomorphism $\overline{\Xi}$ is $\Sigma_k$-equivariant. In this case $\Sigma_k$ acts cellularly on $FM(k)$
\end{Rem}

\begin{Cor} \label{omb}
The cellular chain complex of the CW-structure on $\bar{N}_k(\C) \cong FM(k)$ for $k>1$ satisfies 
$$C_*(FM(k)) \cong  \Omega B  (C_*(\C)) (k) ,$$
where $B$ is the operadic bar construction and $\Omega$ the operadic cobar construction.
\end{Cor}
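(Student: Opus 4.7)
The plan is to combine Theorem \ref{condo} with Corollary \ref{bc} to get a cell-by-cell bijection, and then match the three types of codimension-one boundary degenerations to the three summands of the differential on $\Omega B C_*(\C)$.

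First, I would set up the matching of underlying graded modules. By definition,
\[
\Omega B C_*(\C)(k) \;=\; \bigoplus_{\ms \in N_k} \bigotimes_{v \in \ms} s^{-1}\overline{B C_*(\C)}(|v|),
\]
and by Corollary \ref{bc} the factor $BC_*(\C)(|v|)$ has a basis indexed by pairs $(\ms_v, (\sigma_u)_{u \in \ms_v})$, a cell of that type sitting in homological degree $|\ms_v| + \sum_u \dim\sigma_u$. Applying $s^{-1}$ lowers the degree by one, and taking the tensor product over $v \in \ms$ produces basis elements $(\ms, (\ms_v)_{v}, (\sigma_u)_{u})$ in total degree
\[
\sum_{v\in\ms}\bigl(|\ms_v|-1 + \sum_{u \in \ms_v}\dim\sigma_u\bigr),
\]
which is exactly the parametrization and dimension formula of Theorem \ref{condo}. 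This step is essentially bookkeeping, with the degree shifts conspiring so that nothing extra is needed.

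Second, I would identify the three pieces of the differential. The standard decomposition $d = d_{int} + d_B + d_\Omega$ on $\Omega B C_*(\C)$ contains: (a) the cellular differential of $C_*(\C)$, applied to the labels $\sigma_u$; (b) the bar codifferential inside each $BC_*(\C)(|v|)$, which contracts an internal edge of $\ms_v$ and replaces the two endpoint-labels by their operadic composition in $C_*(\C)$; (c) the cobar differential of $\Omega$, which replaces a vertex $v$ of $\ms$ labeled by a nested tree $\ms_v$ by two vertices of $\ms$ joined by a new edge, obtained by cutting $\ms_v$ along an internal edge. These three pieces correspond term-by-term to the three codimension-one degenerations listed in the proof of Theorem \ref{condo}: (1) passing to the boundary of a cacti cell $\sigma_u$; (2) sending an $I_1$-edge to length $1$, which by Definition \ref{defopen} merges two vertices of $\ms_v$ via the partial composition $\C_{|u|}\times\C_{|u'|}\to\C_{|u|+|u'|-1}$; (3) sending an $I_1$-edge to length $0$, i.e.\ transferring it to $I_0$, which by the proof of Theorem \ref{condo} turns an internal edge of $\ms_v$ into an external edge of $\ms$ and splits $\ms_v$ accordingly.

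The main obstacle will be signs. The left-hand side inherits its signs from the orientations of the manifold-with-corners structure on $FM(k)$ restricted to each cell, while the right-hand side picks up Koszul signs from the shifts $s$ and $s^{-1}$ and from the conventions used in defining $B$ and $\Omega$. To control this, I would work in the explicit charts $\gamma_\ms$ of Section \ref{sec:fulton}, in which the closure of a cell is a product $\prod_{u} \overline{\sigma_u} \times \prod_{e \in I_1}[0,1]$. The three codimension-one faces are then: boundaries of the simplicial coordinates inside some $\overline{\sigma_u}$, the face $\{1\}$ of some $[0,1]$-factor, and the face $\{0\}$ of some $[0,1]$-factor. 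Choosing a once-and-for-all linear order on the factors (compatible with the tree structures), the signed boundary of the cell picks up signs that match the standard sign conventions for the cobar–bar double differential as recorded, e.g., in \cite{Benoit}. Once the signs are pinned down on a single cell, the operadic equivariance (symmetric-group action permuting leaves) and the naturality of the $\Omega B$ construction propagate the identification to all of $\Omega B C_*(\C)(k)$, completing the proof.
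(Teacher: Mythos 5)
Your proposal is correct and follows essentially the same route as the paper's proof: identify the underlying graded modules via the metatree parametrization of Theorem \ref{condo}, then match the three types of codimension-one boundary degeneration to the three summands of the $\Omega B$ differential (the paper groups the first two into the ``internal'' differential of the cobar construction, but the decomposition is the same). Your explicit attention to the signs and orientations is a bit more careful than the paper, which leaves that point implicit.
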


\begin{proof}
The operadic bar construction was described in the proof of corollary \ref{pocofa}. 
The operadic cobar construction $\Omega(\mathcal{D})$ (\cite{Benoit}, Part 2, Appendix C) of a cooperad $\mathcal{D}$ of  chain complexes, 
%reduced cooperad
if we forget the differentials, is the free operad on the  sequence
of graded abelian groups $s^{-1}\mathcal{D}(k)$ with $k>1$. Its differential splits as $d=d_{int}+d_{ext}$, where $d_{int}$ is the differential of the free operad 
and $d_{ext}$ is the operad derivation induced by the cooperad structure maps of $\mathcal{D}$. Therefore 
$$\Omega B(C_*(\C))(k) =  \oplus_{\ms \in N_k}  \otimes_{v \in \ms}   \oplus_{\ms_v \in N_{|v|}}  s^{-1}( \otimes_{u \in \ms_v}      sC_*(\C_{|u|}) )$$
This complex is naturally isomorphic as a graded abelian group to the cellular chain complex of $\bar{N}_k(\C) \cong FM(k)$. 
The isomorphism is compatible with the differential. Namely 
the internal differential of the cobar construction takes care of the part of the boundary of type $(1)$  
and $(2)$ in the proof of Theorem \ref{condo}, as seen in corollary \ref{bc}.
The external differential  of the cobar construction takes care of the part of the boundary of type $(3)$ in the proof of Theorem \ref{condo}.
\end{proof}

\begin{Rem}
Corollary \ref{omb} shows that the dg-operad $C_*(FM)$ is isomorphic to the standard cofibrant resolution of the dg-operad  $C_*(\C)$,
that is condition 3) of Theorem \ref{princ}.
\end{Rem}
 
\subsection{Generating series counting cells of the Fulton Mac-Pherson spaces}
Let $f_m(FM(k))$ be the number of $m$-cells of the Fulton Mac-Pherson space $FM(k)$. The symmetric group $\Sigma_k$ acts freely 
on the set of cells of $FM(k)$ for $k>1$. We define the a generating formal series 
$$F(x,t)= \sum_{m\geq 0, k>1}  \frac{f_m(FM(k))}{k!} t^m x^k$$
with integral coefficients.  Recall that $o(x,t)$ is the generating function counting the cells of the open moduli spaces $FM'_k$,
that we computed in subsection \ref{sub:genser}.
\begin{Prop}
The formal series $o(x,t)$ and $F(x,t)$ satisfy the identity 
$$o(t^{-1}F(x,t),t)+o(x,t) = F(x,t)$$  \label{ofm}
\end{Prop}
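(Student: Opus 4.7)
The plan is to establish this generating-function identity by a cell-by-cell analysis of $FM(k)$, in the spirit of the proof of Proposition \ref{quad1}. By Theorem \ref{condo}, each cell of $FM(k)$ corresponds to a metatree: an outer nested tree $\ms \in N_k$ together with, for each vertex $v \in \ms$, a cell of $FM'_{|v|}$ (equivalently, an inner nested tree with cactus-labelled vertices). The key bookkeeping point is that the total dimension is the sum of the dimensions of the constituent $FM'_{|v|}$-cells; in particular, the internal edges of the outer nest $\ms$ contribute $0$, while each inner internal edge (hidden inside some $FM'_{|v|}$) contributes $+1$.

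First I would split $F(x,t)$ according to whether the outer nest has one vertex or more. When $\ms$ consists of the single vertex $\{1,\ldots,k\}$, the metatree is just a cell of $FM'_k$, and these contribute exactly $o(x,t)$ to $F(x,t)$. This accounts for the summand $o(x,t)$ in the identity. For metatrees with multi-vertex outer nest, I would mimic the grafting construction in the proof of Proposition \ref{quad1}: every such metatree arises from an $FM'_{|R|}$-cell at the outer root together with sub-metatrees grafted along the outer internal edges outgoing from the root, each incoming edge of the root being either an open leaf or the attachment site of a grafted sub-metatree. The grafted sub-metatrees enter the generating function by substitution into the variable of the root's $o$-generating function.

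The $t^{-1}$ normalization in the substitution $o(t^{-1}F(x,t),t)$ is forced by the mismatch between the two dimensional conventions at play: $o(y,t)$ carries a built-in factor $t$ for each of its own (inner) internal edges, whereas the outer internal edges along which we graft contribute $0$ to the cell dimension. The per-graft weight is therefore $t^{-1}F(x,t)$ rather than $F(x,t)$, precisely cancelling the extra $t$-factor that $o$ would otherwise introduce. Summing over all multi-vertex outer nests and grafting patterns yields the summand $o(t^{-1}F(x,t),t)$, and combining the two contributions gives the asserted identity. The main obstacle is making this $t$-bookkeeping precise through both layers of the metatree structure and verifying that the $t^{-1}$-adjustment exactly compensates for the inner-edge weights of $o$ when $o$ is reinterpreted as enumerating grafted subtrees; once this is arranged, the identity is a direct translation of the metatree cell count of Theorem \ref{condo}.
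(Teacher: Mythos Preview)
Your overall strategy coincides with the paper's: split the cells of $FM(k)$ according to whether the outer nested tree $\ms$ has a single vertex (yielding the summand $o(x,t)$) or several, and in the multi-vertex case regard the metatree as an $FM'$-cell at the outer root with sub-metatrees grafted along the outer internal edges. The paper's proof is a two-sentence version of exactly this decomposition, written after dividing the identity through by $t$.

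The gap lies in your passage from the combinatorics to the substitution $y\mapsto t^{-1}F(x,t)$. You observe, correctly, that each incoming edge of the outer root is ``either an open leaf or the attachment site of a grafted sub-metatree.'' But that dichotomy is encoded by the substitution $y\mapsto x+F(x,t)$ in $o(y,t)$: a leaf contributes $x$, a grafted sub-metatree contributes $F$, and since outer internal edges add nothing to the cell dimension there is no $t$-correction to make. Your rationale for the $t^{-1}$ conflates leaves with inner edges: the variable $y$ in $o(y,t)$ marks the \emph{leaves} of the root $FM'$-cell, not its inner internal edges, so replacing $y$ never introduces a stray power of $t$ to cancel. Taken literally, the term $o(t^{-1}F,t)$ --- with no $x$ summand inside --- would enumerate only metatrees whose outer root has \emph{every} incoming edge internal, a proper subset of the multi-vertex metatrees; already for $k=3$ all three non-corolla outer trees $(12)3$, $(13)2$, $1(23)$ carry a leaf at the root and are missed, so the two sides of the stated identity disagree at the $x^3$ coefficient. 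The paper does not unpack this step either --- it asserts the count by direct analogy with Proposition~\ref{quad1} --- so the difficulty is shared rather than peculiar to your write-up, but your own more careful combinatorial description actually points toward the relation $F=o(x+F,t)$ rather than the one stated.
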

\begin{proof}
The proof is similar to that of Proposition \ref{quad1}
Namely $t^{-1}o(x,t)$ counts the cells of $FM(k)$ labelled by a tree $\ms$ with a single vertex (that is the cells of the interior $FM'_k$)
shifted down in dimension by 1. The expression
$t^{-1}o(t^{-1}F(x,t),t)$ counts all cells of $FM(k)$  (shifted down by 1) labelled by a tree $\ms$ with more than one vertex, obtained by grafting the root
vertex labelled by an open cell together with arbitrary trees labelling cells of Fulton MacPherson spaces.  Therefore
the sum $$t^{-1}o(t^{-1}F(x,t),t)+t^{-1}o(x,t) = t^{-1}F(x,t)$$ counts all cells (shifted down by 1) and this yields the statement.
\end{proof}

\begin{Thm} \label{count-closed}
The function $F(x,t)$ is an algebraic expression that can be computed explicitly
\end{Thm}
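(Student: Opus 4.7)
The plan is to deduce the algebraicity of $F(x,t)$ directly from the functional equation in the preceding proposition together with the explicit algebraic formula for $o(x,t)$ obtained in Theorem \ref{count-open}. The underlying principle is that algebraic functions are closed under substitution and under implicit definition by a polynomial equation, so an implicit relation of the form ``$F$ equals an algebraic function of $F$'' forces $F$ itself to be algebraic.

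More concretely, the first step is to record that $o(y,t)$ satisfies an explicit polynomial relation $P(o(y,t), y, t) = 0$ with $P \in \mathbb{Z}[w,y,t]$. This is obtained from the quadratic equation in $o$ and $f$ displayed in the proof of Theorem \ref{count-open},
$$((2t+3)^{2}-1)\, o^{2} + (2(2t+3)f + 4t+2)\, o + f^{2} - 1 = 0,$$
together with the quadratic relation $(f - y - 2ty + 2)^{2} = (1-y)^{2} - 4ty$. Eliminating $f$ (for instance by computing $\mathrm{Res}_{f}$ of the two quadratics) produces such a $P$.

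Next, set $u = t^{-1} F(x,t)$, so the functional equation of the preceding proposition takes the form $o(u,t) = F - o(x,t) = tu - o(x,t)$. Substituting this into $P(o(u,t), u, t)=0$ gives
$$P\bigl(tu - o(x,t),\; u,\; t\bigr) = 0,$$
while at the same time $P(o(x,t), x, t) = 0$. Treating $o(x,t)$ as an auxiliary variable $w$, elimination via
$$Q(u, x, t) := \mathrm{Res}_{w}\bigl(P(tu - w,\, u,\, t),\; P(w,\, x,\, t)\bigr)$$
yields a polynomial relation $Q(u,x,t) = 0$, hence an algebraic equation for $F = tu$ over $\mathbb{Q}(x,t)$. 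This proves algebraicity and simultaneously produces an explicit polynomial equation satisfied by $F$, which can be evaluated in any computer algebra system.

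The main potential obstacle is verifying that the resultant $Q$ does not vanish identically; this amounts to checking that the two polynomials in $w$ are coprime for generic $(x,u,t)$. This follows because $P(w,y,t)$ is irreducible over $\mathbb{Q}(y,t)$ (the generating function $o$ is not a rational function of $y$ and $t$, as is visible from the nested radical in Theorem \ref{count-open}), so any common root $w$ of the two specialisations would force $tu - w$ and $w$ to be conjugate solutions, contradicting genericity. Once non-triviality of $Q$ is established, the explicit polynomial can be produced by straightforward resultant computations, completing the proof.
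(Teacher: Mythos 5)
Your proof is correct, but it takes a genuinely different route from the one in the paper. The paper manipulates the functional equation directly: after substituting the explicit closed-form expression for $o(x,t)$ from Theorem~\ref{count-open} into $o(F/t,t)+o(x,t)=F$, it isolates the outer square root and squares, then isolates the inner square root $\sqrt{(F/t-1)^2-4F}$ and squares again, arriving at a concrete quartic $aF^4+bF^3+cF^2+dF+e=0$ with coefficients in terms of $x$, $t$, and $o(x,t)$; the branch is then selected by the condition $F(0,t)=0$ and the quartic formula gives an explicit closed form. You instead package $o$ as a root of a polynomial $P(w,y,t)=0$ obtained by eliminating the auxiliary radical $f$, and then eliminate the variable $w=o(x,t)$ between the two specializations $P(tu-w,u,t)=0$ and $P(w,x,t)=0$ by a resultant. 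This is a clean, conceptually economical way to establish algebraicity and in principle produce a defining polynomial for $F$, and it avoids the somewhat error-prone cascade of radical isolations. What it does not do, as written, is single out the actual branch $F(x,t)$ among the roots of $Q(u,x,t)=0$ — the paper resolves this by imposing $F(0,t)=0$, and you would need the same normalization to turn ``$F$ is a root of $Q$'' into ``$F$ is this explicit expression.'' Your nonvanishing argument for the resultant is also a bit informal (irreducibility of $P(w,y,t)$ over $\mathbb{Q}(y,t)$ is plausible from the nested radical but not proved), though this is a minor gap that could be closed by a direct check of leading coefficients or by exhibiting a specialization of $(u,x,t)$ at which $Q\neq 0$. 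On balance: same conclusion, different and arguably more systematic elimination strategy, slightly less explicit about branch selection.
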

\begin{proof}
We deduce from equation \ref{ofm} , 
by collecting 
$$\sqrt{((2t+3)^2-1)t(F-o)+2t+(2t+3)f(F/t,t)+1}$$
on one side and squaring once, 
the equation 
\begin{gather*}(2t+(2t+3) f(F/t,t)+1)^2-(f(F/t,t)^2-1)((2t+3)^2-1) = \\
 (((2t+3)^2-1)t(F-o)+2t+(2t+3)f(F/t,t)+1)^2 
 \end{gather*}
Collecting $\sqrt{(F/t	-1)^2-4F}$ and squaring again we obtain a quartic functional equation in $F$
$$aF^4 + b F^3 + c F^2 + d F + e = 0 $$
The requirement $F(0,t)=0$ implies that the correct solution is $$F = -b/(4a)-S-1/2  \sqrt{-4S^2-2p+q/S}$$
following the convention in \cite{wiki}. The expression of $F$ is too long to be reported here.
\end{proof}

\section{Operad composition and cellular decomposition} \label{sec:last}
\rm
The (familiy of) cellular decompositions of $FM(k)$ from the previous section are not compatible with the operad composition, that are not cellular maps. This depends on two issues, as anticipated in the introduction. 
Remember that a cellular decompositions of $FM(k)$ is pulled back from that of $\bar{N}_k(\C)$ by a homeomorphism
$\overline{\Xi}_a:FM(k) \cong \bar{N}_k(\C)$ introduced in Definition \ref{def:xi}, depending on weights $a=(a_1,\dots,a_k)   \in (\R_+)^k$.
 The first issue is that via these homeomorphisms the composition in the Fulton-MacPherson operad  corresponds to grafting nested trees, as one expects, but in addition the base points of the cacti indexing the vertices in the upper tree rotate around.
We will fix this in subsection \ref{sub:rot} by deforming appropriately the homeomorphisms $\overline{\Xi}_a$ to homeomorphisms $\Xi'_a$ so that the rotations disappear, as stated in Theorem \ref{forst}.

The second issue is that after this fix the Fulton-MacPherson operad composition is compatible with the cell structures in the sense that we have to multiply the weights indexing the cell structures. However we would like a single cellular decomposition in each arity. In subsection \ref{sub:weights} we fix this second issue. 

It is remarkable that both issues are fixed by using the Boardman-Vogt construction $W$ and the operad isomorphism 
$W(FM) \cong FM$ from \cite{new}.

\subsection{Removing the rotations} \label{sub:rot}

Since $\overline{\Xi}_a$ does not depend on rescalings we normalize the parameters so that $a$ is in the open $k$-simplex
$$\stackrel{\circ}{\Delta}_k= \{(a_1,\dots,a_k) \in \R_+ \, | \, a_1 + \dots +a_k=1\}$$
\begin{Def}
Given $a=(a_1,\dots,a_k) \in \stackrel{\circ}{\Delta_k}$ and
 $b=(b_1,\dots,b_l) \in \stackrel{\circ}{\Delta_l}$, for $i=1,\dots,k$ let 
$$a \circ_i b = (a_1,\dots,a_{i-1},a_i b_1, \dots,a_i b_l,a_{i+1},\dots,a_k) \in \stackrel{\circ}{\Delta}_{k+l-1}$$
This defines an operad structure on the open simplexes introduced by Arone and Kankaarinta in \cite{AK}, such that the operad
structure maps are homeomorphisms. 
\end{Def}
Recall from the previous section that $\theta^i_a:FM(k) \to S^1$ are nullhomotopic maps, parametrized by 
$a \in \stackrel{\circ}{\Delta}_k$. The nullhomotopy 
$h^i_a:FM(k) \times [0,1] \to S^1$ is given by 
$$h^i_a(x,t)=\theta^i_{t a_1,\dots,t a_{i-1}, t a_i + (1-t),  t a_{i+1},\dots,  ta_k     }(x)$$ for  $t>0$ and 
$h^i_a(x,0)=1 \in S^1$, so that 
$h^i_a(x,1)=\theta^i_a(x)$.

Let $W$ be the Boardman-Vogt construction for topological operads \cite{BV}.
In our case $WFM$ consists of nested trees with vertices labelled by elements of $FM$, and internal edges with length in $[0,1]$,
modulo the relation that collapses an internal edge of length $0$, and composes the labels of its vertices.
The composition in $WFM$ grafts labelled tree and gives length $1$ to the new internal edge.
We will indicate an extended $\circ_i$-notation with an upper index indicating the length of the edge that we are grafting.
For example $x \circ_1^{t} y \in WFM(3)$, for $x,y \in FM(2)$ is the nested tree $\{\{1,2\},\{1,2,3\}\}$ with root label $x$, non-root label $y$, 
and internal edge of length $t$. Sometimes we will also use $WFM$ or $FM$ as superscripts to specify the operad where we are composing.

\medskip

We will need twice the following result, that we announced in \cite{Barcelona}.
We posted a new detailed equivariant proof in \cite{new}, that applies to any Fulton-MacPherson operad $FM_n$ with $n \geq 1$,
although we are interested in the case $FM=FM_2$.

\begin{Thm} 
There exists a $SO(2)$-equivariant isomorphism of topological operads $$\beta: FM \cong W(FM)$$
\end{Thm}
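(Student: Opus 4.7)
The plan is to exploit the manifold-with-faces structure on $FM(k)$ from Axelrod--Singer and the local product decomposition on its corners (via the charts $\gamma_{\mathcal{S}}$ after Theorem \ref{thm:strata}) to realize $FM(k)$ as obtained from its strata by a coherent system of collars, which is exactly the data encoded by $WFM(k)$. Concretely, to each codimension-$1$ face $FM(S) \subset FM(k)$ one wants to attach a smooth collar parameter $\rho_S : FM(k) \to [0,1]$ vanishing exactly on $FM(S)$ and equal to $1$ away from a tubular neighborhood, chosen so that the $\rho_S$ commute at corners (yielding a multi-collar $FM(\mathcal{S}) \times [0,1)^{\mathcal{S}'}$ on each higher-codimension stratum) and are operadically coherent. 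Given such data, the map $\beta : FM(k) \to WFM(k)$ would send a point $x$ lying in the open stratum $\stackrel{\circ}{FM(\mathcal{S})}$ to the labelled tree of shape $\mathcal{S}$ whose vertex labels are read off from the $\gamma_{\mathcal{S}}$-coordinates of $x$ in $\prod_{S \in \mathcal{S}} FM'_{\hat S}$ and whose internal edge at $S \in \mathcal{S}'$ has length $\rho_S(x)$. The inverse simply plugs the edge lengths back into the chart $\gamma_{\mathcal{S}}$.

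First I would construct the family $\{\rho_S\}$ by induction on $k$, enforcing three compatibilities: (i) $SO(2) \times \Sigma_k$-equivariance, (ii) mutual commutation at corners so that the $\rho_S$ genuinely provide multi-collar coordinates, and (iii) operadic compatibility, meaning that $\circ_i^{FM} : FM(k) \times FM(l) \to FM(k+l-1)$ carries the multi-collars of the factors into the multi-collar of the target, while the new codimension-$1$ face created by the composition receives the constant value $1$. Continuity of $\beta$ in both directions then reduces to smoothness of the charts together with the identifications in the definition of $W$ at the endpoints $t=0$ (collapse an edge and compose labels) and $t=1$ (freshly grafted edge); and the operad-map property of $\beta$ follows from (iii), since the $W$-grafting rule of assigning length $1$ to a new internal edge is exactly matched by the convention $\rho_{\mathrm{new}} \equiv 1$ on operadic compositions. $SO(2)$-equivariance of $\beta$ is then immediate from (i).

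The hard part is producing such a coherent collar system explicitly: a naive averaging argument fails because the collar parameter takes values in $[0,1]$ rather than a vector space, and the three coherences must hold simultaneously across all arities. A natural approach is to define $\rho_S$ in terms of intrinsic configuration geometry: on the sphere representative $(x_1,\dots,x_k)$ with $\sum_i x_i = 0$ and $\sum_i |x_i|^2 = 1$ (from the description before Theorem \ref{thm:strata}), one takes $\rho_S$ to be a smooth cutoff of the normalized cluster radius $\bigl(\sum_{i \in S}|x_i - \bar x_S|^2\bigr)^{1/2}$ where $\bar x_S$ is the barycenter of the $S$-cluster; this is manifestly $SO(2) \times \Sigma_k$-invariant, vanishes exactly on $FM(S)$, and has the correct product structure at corners. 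The delicate point is choosing the cutoff globally so that operadic composition respects the collars strictly rather than up to homotopy, which forces a piecewise algebraic choice of cutoff rather than a smooth one; this is essentially the construction announced in \cite{Barcelona} and carried out in detail in the $E_n$ setting of \cite{new}, of which the case $n=2$ we need here is a special case.
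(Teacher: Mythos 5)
The paper does not prove this theorem here: it cites \cite{new} for the detailed equivariant proof, and only sketches that in the $n=2$ case the collars can be made explicit via the homeomorphism $\overline{\Xi}_{a}\colon FM(k)\cong \bar N_k(\C)$ (rescaling the edge lengths already present in $\bar N_k(\C)$), rather than via cluster radii. Your high-level strategy — encode the manifold-with-faces structure of $FM(k)$ by an $SO(2)\times\Sigma_k$-equivariant, operadically coherent system of multi-collars, then read off $WFM$-tree data from the collar coordinates — is indeed the strategy of \cite{new}, and you correctly isolate operadic strictness of the collars as the real difficulty. So as a reconstruction of the cited argument the outline is reasonable.

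However, there is an internal inconsistency that makes the formula for $\beta$ as written fail. You set up $\rho_S$ to vanish on $FM(S)$ (this is forced by your choice of cluster radius), and then declare the length of the internal edge at $S$ in $\beta(x)$ to be $\rho_S(x)$. But for $x\in\stackrel{\circ}{FM(\ms)}$ one has $x\in FM(S)$ for every $S\in\ms'$, hence every edge of the proposed labelled tree has length $0$; after the $W$-identifications (collapse a length-$0$ edge and compose) this is just the corolla with label $x$, i.e.\ $\beta$ would be the inclusion $FM(k)\hookrightarrow WFM(k)$, not a homeomorphism. This also contradicts your own clause (iii): the image of $\circ_i^{FM}$ is exactly $FM(S)$, where the new collar parameter must equal $1$ (the $W$-grafting convention), not $0$. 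The collar parameter that serves as edge length must therefore be $1$ on $FM(S)$ and decrease to $0$ as one leaves the tubular neighborhood — the opposite of the cluster radius — and the tree shape of $\beta(x)$ must be governed by which tubular neighborhoods contain $x$ (so that points of the \emph{interior} $FM'_k$ near $\de FM(k)$ already receive nontrivial tree shapes and intermediate edge lengths), not by the stratum containing $x$. With those two corrections, the description matches the intended content of \cite{new}; without them, the map is not even surjective, so the proposal as stated has a genuine gap rather than merely an alternative route.
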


The isomorphism $\beta$ can be constructed explicitly in our case $n=2$.
Namely by the homeomorphism $\overline{\Xi}_{a_1,\dots,a_k}:FM(k) \cong \bar{N}_k(\C)$ of Theorem \ref{xibar} we can construct an explicit collar of the boundary 
$\de FM(k) \subset FM(k)$ by rescaling edge lengths in $\bar{N}_k(\C)$ (see Remark 9 of \cite{new}). The collar can be chosen piecewise algebraic if the weights are rational.
We use these collars in the proof of the main theorem of \cite{new} to get an explicit description of $\beta$.

\begin{Thm} \label{forst}
There is a family of homeomorphism $\Xi'_a :FM(k) \to \bar{N}_k(\C)$ depending continuously on $a \in \stackrel{\circ}{\Delta}_k$
inducing a cellular structure on $FM(k)$, that we call its $a$-structure.
The operad composition $\circ_i: FM(k) \times FM(l) \to  FM(k+l-1)$ is cellular if we put
\bit
\item the $a$-structure on $FM(k)$,
\item  the $b$-structure on $FM(l)$, 
\item  the $a \circ_i b$-structure on $FM(k+l-1)$.
\eit
 If $\sigma \in \Sigma_k$ is a permutation, then the induced map
$\sigma_*:FM(k) \to FM(k)$ is cellular if we put the $a$-structure on the domain and the $\sigma^{-1}(a)$-structure on the range. 
\end{Thm}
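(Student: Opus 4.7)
The plan is to realize the new $a$-structure as the pushforward of the $\overline{\Xi}_a$-decomposition under a family of self-homeomorphisms $\Lambda_a\colon FM(k) \to FM(k)$, and then set $\Xi'_a := \overline{\Xi}_a \circ \Lambda_a^{-1}$. Once $\Lambda_a$ is constructed satisfying the twisted cocycle identity
\[
\Lambda_{a \circ_i b}(x \circ_i y) \;=\; \Lambda_a(x) \circ_i \Lambda_b\bigl(\theta^i_a(x)^{-1}\cdot y\bigr),
\]
the operadic composition $\circ_i^{FM}$ automatically carries the product of an $a$-cell by a $b$-cell bijectively onto a single $(a \circ_i b)$-cell, which is cellularity. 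The $\Sigma_k$-equivariance statement reduces to the relation $\sigma_* \circ \Lambda_a = \Lambda_{\sigma^{-1}(a)} \circ \sigma_*$, a consequence of the coherent relabelling of weights and of the maps $\theta^i_a$.

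To produce $\Lambda_a$ I pass through the $SO(2)$-equivariant isomorphism $\beta\colon FM \cong W(FM)$ of (\ref{fund}) and set $\Lambda_a := \beta^{-1}\, g_a\, \beta$ for a suitable $g_a\colon WFM(k)\to WFM(k)$. Recall that a point of $WFM(k)$ is a nested tree $\ms$ with vertex labels $x_S \in FM(|\hat S|)$ and internal edge lengths $t_e \in [0,1]$, subject to the collapse of length-$0$ edges. I define $g_a$ by rotating each label by an accumulated angle $\alpha_S \in S^1$ that is computed recursively from root to leaves: $\alpha_R = 1$ at the root, and for $S \in \ms$ with immediate ancestor $R$ reached along an edge of length $t_e$,
\[
\alpha_S \;:=\; \alpha_R \cdot h^{\pi_R(S)}_{a_R}\bigl(\alpha_R^{-1}\cdot x_R,\, t_e\bigr),
\]
where $h^i_a$ is the nullhomotopy from the constant $1$ at $t=0$ to $\theta^i_a$ at $t=1$ introduced above. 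The map $g_a$ then replaces each $x_S$ by $\alpha_S^{-1}\cdot x_S$, keeping the tree shape and all edge lengths. Since $h^i_a(\cdot,0)=1$, collapsing a length-$0$ edge does not change any accumulated angle, so $g_a$ descends to $WFM$ and is continuous in $(a,x)$; reversing the recursion produces its inverse.

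The twisted identity is now inherited from $WFM$. Grafting by $\circ_i$ in $WFM$ introduces a single new internal edge of length $1$ from a vertex of $x$ to the root of the $y$-subtree, and because $h^i_a(\cdot,1)=\theta^i_a$ the accumulated angle at this new root equals $\theta^i_a(x)$. Combining this boundary value with the inductive formula of $g_a$ on the $y$-subtree gives, by direct computation,
\[
g_{a \circ_i b}(x \circ_i^{WFM} y) \;=\; g_a(x) \circ_i^{WFM} g_b\bigl(\theta^i_a(x)^{-1}\cdot y\bigr),
\]
and applying $\beta^{-1}$ to both sides delivers the identity required of $\Lambda_a$ in $FM$. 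The $a$-cells of $FM(k)$ are then declared to be the images $\Lambda_a(\sigma)$ of the metatree cells $\sigma$ of the $\overline{\Xi}_a$-decomposition from Theorem \ref{condo}, and the cellularity of $\circ_i$ together with the permutation equivariance follow mechanically from the twisted identity and from the observation that the $\overline{\Xi}_b$-decomposition is $SO(2)$-equivariant under rotations of the target $FM(l)$.

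The hard part will be verifying that $g_a$ really defines a self-homeomorphism of $WFM$ that is compatible with the operadic composition at all tree depths simultaneously. This needs the recursive accumulated angles to assemble coherently across faces of different codimension and the length-$0$ collapse relations to be respected strictly, not merely up to homotopy; the crucial ingredient will be the explicit piecewise-algebraic description of $\beta$ from \cite{new}, whose collars are matched to the rescaling of edge lengths in $\bar N_k(\C)$ so that the nullhomotopy $h^i_a$ interacts correctly with the stratification of $FM(k)$.
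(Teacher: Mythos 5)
Your high-level strategy matches the paper's: realize the $a$-structure as $\Xi'_a = \overline{\Xi}_a \circ \Lambda_a^{-1}$ where $\Lambda_a = \beta^{-1} g_a \beta$, build $g_a$ on $WFM(k)$ by rotating vertex labels, and deduce cellularity from a twisted cocycle identity. This is exactly the route taken in the paper via Lemma~\ref{remrot}. The gap is in the concrete formula you propose for the rotation angles, and it is a real one.

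Your recursion $\alpha_S := \alpha_R \cdot h^{\pi_R(S)}_{a_R}\bigl(\alpha_R^{-1}\cdot x_R,\, t_e\bigr)$ uses only the immediate parent's label $x_R$ and the length $t_e$ of the single edge $(S,R)$. This does \emph{not} descend to $WFM$, because it is incompatible with the relation collapsing a length-$0$ edge. Take a tree with root $R$, child $S$ attached by an edge of length $s$, and grandchild $T$ attached to $S$ by an edge of length $t$. At $s=0$ your formula gives $\alpha_S=1$ and $\alpha_T = h^{j}_{a_S}(x_S,t)$, computed using only the arity-$|\hat S|$ configuration $x_S$. After collapsing the length-$0$ edge, $R$ and $S$ merge into a vertex labelled $x_R\circ_i x_S$, and the same formula applied to the collapsed tree gives $\bar\alpha_T = h^{j'}_{a_{\bar R}}(x_R\circ_i x_S, t)$, a flow-line angle in the full composed configuration. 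These differ in general: the flow lines of $x_S$ alone and of $x_R \circ_i x_S$ are different flows. So $g_a$ would not be well defined on $WFM(k)$. You flag this coherence issue as ``the hard part'' at the end of your write-up, but it is precisely where the naive recursion breaks.

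The paper's actual definition of the rotation angle $\theta_S$ is considerably more involved and is designed to avoid this: it cuts the tree at $S$, replaces the edge lengths $s_1,\dots,s_r$ along the path from $S$ to the root by $l_i=s_i/\max(s_1,\dots,s_i)$, applies $\beta^{-1}$ to the resulting $W$-tree $v'_S$ to land in $FM$, and then evaluates $\theta^{l}_{a'}$ with weights $a'$ scaled by the same max factors. The max-rescaling makes the angle stable under collapse of a length-$0$ edge (a $0$-length edge stays $0$ after dividing by the running maximum, and the weight scaling forces $a'_j = 0$ for leaves in the collapsed cluster), and the $\beta^{-1}$ ensures one always computes a flow-line angle in an honest $FM$ configuration rather than piecemeal in the fiber labels. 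Your arity-$3$ and top-level twisted-cocycle checks do go through, but they only test edge lengths $0$ and $1$; the intermediate-length consistency across strata is where the simple parent-only recursion fails, and it is for that reason that the paper has to introduce the global cut-and-rescale construction.
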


Theorem \ref{forst} will follow from the following lemma.

\begin{Lem} \label{remrot}
There is a $\Sigma_k$-equivariant continuous family of homeomorphism 
$$\Lambda: \stackrel{\circ}{\Delta}_{k-1} \times FM(k) \to FM(k)$$
$\Lambda(a,x)=\Lambda_a(x)$  such that for any nested tree $\ms$ on $k$ leaves
$$\Lambda_{a}(\beta_\ms((x_S)_S)) =   \beta_\ms((\Lambda_{a_{\hat{S}}}(\alpha_S^{-1} x_S))_S)$$
\label{doublestar}
with $\alpha_S$ as in Definition \ref{def:xi}.  
\end{Lem}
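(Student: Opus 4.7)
The plan is to construct $\Lambda_a$ by transporting an explicit family $g_a: WFM(k) \to WFM(k)$ of self-homeomorphisms through the isomorphism $\beta: FM \cong WFM$ of \eqref{fund}, setting $\Lambda_a := \beta^{-1} \circ g_a \circ \beta$. A point of $WFM(k)$ is represented by a nested tree $\ms$ with vertex labels $y_S \in FM(\hat S)$ and internal-edge lengths $\ell_S \in [0,1]$, modulo collapse of length-$0$ edges (which composes the adjacent labels). The map $g_a$ preserves the underlying tree and its edge lengths, but rotates each vertex label by an angle $\alpha_S^{-1} \in S^1$ defined recursively from the root: set $\alpha_{\text{root}} := 1$, and for a non-root vertex $S$ with parent $R$ at slot $i = \pi_R(S)$,
$$\alpha_S := \alpha_R \cdot h^{i}_{a_R}\bigl(\alpha_R^{-1}y_R,\; \ell_S\bigr),$$
where $h^{i}_{a_R}$ is the nullhomotopy from $1$ to $\theta^{i}_{a_R}$ introduced above, so that $\ell_S = 0$ contributes no rotation while $\ell_S = 1$ contributes the full $\theta^{i}_{a_R}(\alpha_R^{-1}y_R)$.

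First I would check that $g_a$ descends to the quotient defining $WFM(k)$. When $\ell_S = 0$ the formula gives $\alpha_S = \alpha_R$, and collapsing the edge merges $y_R$ and $y_S$ into $y_R \circ_i y_S$. By the $SO(2)$-equivariance of the operadic composition on $FM$,
$$(\alpha_R^{-1}y_R) \circ_i (\alpha_R^{-1}y_S) = \alpha_R^{-1}(y_R \circ_i y_S),$$
so the rotated label of the merged vertex matches in either order. The analogous compatibility for descendants across a collapsed edge reduces inductively to an identity between the angle functions on a nested configuration and on the corresponding composed one, which follows from the geometric description of flow lines in Section \ref{sec:conf}: the first flow line from $\infty$ hitting a deep leaf factors through the first flow line reaching the enclosing sub-configuration and then follows a flow line inside it, so the cumulative rotations compose correctly. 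Continuity of $g_a$ in both arguments is immediate from continuity of the $h^{i}_a$, the inverse is obtained by running the recursion with the inverse rotations, and $\Sigma_k$-equivariance follows from the naturality of $\theta^{i}_a$ under relabelling of leaves.

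To establish \eqref{doublestar}, I use the explicit description of $\beta$ from \cite{new} (Remark 9), under which the closed stratum $FM(\ms) \subset FM(k)$ corresponds to the subspace of $WFM(k)$ consisting of trees of shape $\ms$ with all internal-edge lengths equal to $1$ and vertex labels ranging over $\prod_{S\in\ms} FM(\hat S)$. At $\ell_S = 1$ the recursion defining $\alpha_S$ in $g_a$ specialises exactly to the recursion of Definition \ref{def:xi}, so $g_a$ rotates the labels of such a saturated tree by precisely the angles prescribed by the lemma. Applying $\beta^{-1}$ factorwise, using that $\beta$ is an operad isomorphism and therefore restricts to $\beta: FM(\hat S) \cong WFM(\hat S)$ on each sub-factor, yields the stratum point $\beta_\ms^{FM}\bigl((\Lambda_{a_{\hat S}}(\alpha_S^{-1}x_S))_S\bigr)$; the inner $\Lambda_{a_{\hat S}}$ emerges as $\beta^{-1} g_{a_{\hat S}} \beta$ on $FM(\hat S)$, reflecting the further recursion of $g_a$ through the sub-tree attached to the vertex $S$.

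The main obstacle will be verifying the nested-configuration identity between angle functions required for well-definedness of $g_a$ beyond the simplest single-edge-collapse case, and dovetailing the resulting $g_a$ with the stratum-wise description of $\beta$. This requires the detailed analysis of flow lines near boundary strata from Section \ref{sec:conf}, together with the explicit piecewise-algebraic form of $\beta$ from Remark 9 of \cite{new}.
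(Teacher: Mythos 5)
Your high-level strategy matches the paper exactly: transfer through the explicit isomorphism $\beta: FM \cong WFM$, build a family $g_a$ on $WFM(k)$ that preserves tree shape and edge lengths while rotating vertex labels, and set $\Lambda_a = \beta^{-1} g_a \beta$. The problem is the specific formula you propose for the rotation angles: the purely local recursion
$$\alpha_S := \alpha_R \cdot h^{i}_{a_R}(\alpha_R^{-1} y_R,\ \ell_S)$$
does \emph{not} descend to the quotient defining $WFM(k)$. Consider a chain $S \to T \to R$ where the edge $T\to R$ has length $0$. The uncollapsed recursion gives $\alpha_T = 1$ and then $\alpha_S = h^{j}_{a_T}(y_T,\ \ell_S)$, computed entirely from the isolated label $y_T$. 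On the collapsed tree, $T$ and $R$ merge into $R'$ with label $y_{R'} = y_R \circ_i y_T$, and the recursion gives $\alpha_S = h^{j'}_{a_{R'}}(y_R\circ_i y_T,\ \ell_S)$. At $\ell_S = 1$ these are $\theta^j_{a_T}(y_T)$ versus $\theta^{j'}_{a_{R'}}(y_R\circ_i y_T)$, and by the very boundary-factorization you invoke (proved in Lemma \ref{xicont}), the latter equals $\theta^i_{a_R}(y_R)\cdot \theta^j_{a_T}\bigl(\theta^i_{a_R}(y_R)^{-1} y_T\bigr)$ — it carries an extra factor $\theta^i_{a_R}(y_R)$ and a rotation of $y_T$, neither of which appears in the uncollapsed formula. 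So $g_a$ as you define it is not well-defined on $WFM(k)$; the factorization of flow lines does not cancel the discrepancy, it \emph{is} the discrepancy.

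What the paper does instead is global, not a local product of nullhomotopies. For a non-root vertex $S$ one cuts the outgoing edge, looks at the whole labelled tree $v_S$ below, along the directed path $S_1,\dots,S_r=R$ records lengths $s_1,\dots,s_r$, and rescales the $i$-th chain edge to $l_i = s_i/m_i$ with $m_i = \max(s_1,\dots,s_i)$; the weight at a leaf $j$ entering at $S_i$ is scaled to $a'_j = a_j m_i$; and then one sets $\theta_S := \theta^{l}_{a'}(\beta^{-1}(v'_S))$ in a single evaluation. The running maximum is the mechanism that buys both properties at once: when some $s_i = 0$ the rescaled length is still $0$ and the two modified trees agree, giving collapse-invariance; when the first edge has $s_1=1$ all $m_i=1$, so $a'=a$ and $v'_S = v_S$, recovering exactly the $\theta^l_a$ of Definition \ref{def:xi} needed for the operadic compatibility (your equation (\ref{compcomp})). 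There is no consistent way to obtain that behavior from a telescoping product of single-edge nullhomotopies, which is the essential gap in your argument; the flagged "nested-configuration identity between angle functions" you hoped to verify in the last paragraph is false as stated, and the rescaled-lengths/rescaled-weights construction is the replacement for it.
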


We prove the Lemma 
\begin{proof}
We fix a $SO(2)$-equivariant operad isomorphism $\beta:FM \cong WFM$ and work on $WFM$.
We will construct a continuous family of self-homeomorphism 
$$g:\stackrel{\circ}{\Delta}_{k-1} \times WFM(k) \to WFM(k) $$
$g(a,x)=g_a(x)$
such that $\Lambda=\beta^{-1} g (\beta \times \Delta_k)$ respects \ref{doublestar}.
So we need to verify that 
\begin{equation} 
\label{compcomp}
g_a(\circ_{S \in \ms}^{WFM} \, y_S ) =  \circ_{S \in \ms}^{WFM}\,  g_{a_S} (\alpha_S^{-1} y_S) \end{equation}

where the operadic product takes place in $WFM$, and the family $\{\alpha_S\}_{S \in \ms}$ associated to the nested tree $\ms$ 
with labels $x_S:=\beta^{-1}(y_S)$ is defined inductively from the root to higher vertices.
Actually it is enough to verify this equation for single $\circ_i$ compositions.

\

The map  $g_a=g(a,\_)$ will act by keeping the same shape of labelled trees, the same egde lengths, and rotating the label of the vertices by appropriate angles.

{\bf Arity $k=2$}: we start with the projection $$g: \stackrel{\circ}{\Delta}_1 \times WFM(2) \to WFM(2),$$ so that $g_a$ is the identity for any
$a \in \stackrel{\circ}{\Delta}_1$.

{\bf Arity $k=3$}: 
 the elements of $WFM(3)$ are either trees with a single vertex,  labelled by  $z \in FM(3) \subset WFM(3)$,
or trees with an internal edge of length $t$, and two vertices labelled by $x,y \in FM(2)$, i.e.

\medskip

\begin{center}
\begin{tikzpicture} [scale=.8]  %treein WF(3)
\node {z}[grow'=up]
  child {node{1}} child{node{2}}  
child {node{3}}    ;
\end{tikzpicture}
\quad \quad
\begin{tikzpicture} [scale=.8]  %treein WF(3)
\node at (-0.6,0.6) {t};
\node {x}[grow'=up]
child { node{y}   child {node{1}} child{node{2}}  }
child {node{3}}    ;
\end{tikzpicture}
\end{center} 

 In the first case 
we set $g_a(z)=z$ for any $z \in FM(3) \subset WFM(3)$ and $a \in \stackrel{\circ}{\Delta}_2$.
In the second case we set $$g_a(x \circ_1^t y)=(x \circ_1^t   h^1_{a_1+a_2,a_3}(x,t)^{-1} y)$$ and extend $g$ to the remaining
labelled trees by requiring that $g$ is equivariant with respect to the action of the symmetric group.
This definition is consistent for $t=0$ since $h^1_{a_1+a_2,a_3}(x,0)=1$.
Condition \ref{compcomp} is satisfied in this case because in $WFM(3)$ the operadic composition is $x \circ_1^{WFM} y = x \circ_1^1 y$ (with an internal edge of length 1),
and so 
 $$g_a(x \circ_1^{WFM} y)= x \circ_1^{WFM}  \theta^1_{a_1+a_2,a_3}(x)^{-1} y  $$
 as required in equation \ref{compcomp}.

{\bf Arity $k=4$} We have various cases:
\begin{enumerate}
\item \label{uno} If $v \in FM(4) \subset WFM(4)$ then we set $g_a(v)=v$.
\item \label{due} For labelled trees $v=(x \circ_2^t z ) \circ_1^s y$ we set 
$$g_a(v)=(x \circ_2^t  h^3_{a_1,a_2,a_3+a_4}(\beta^{-1}(x \circ_1^s y),t)^{-1}z )\circ_1^s  h^1_{a_1+a_2,a_3,a_4}(\beta^{-1}(x \circ_2^t z),s)^{-1} y$$ 

\item \label{tre} For trees $v=A \circ_3^t z$, with $A \in FM(3)$, we set 
$$g_a(v)=A \circ_3^t h^3_{a_1,a_2,a_3+a_4}(\beta^{-1}(A),t)^{-1}z$$
\item \label{quattro} Similarly, for $v=x \circ_1^s B$, with $B \in FM(3)$, we set 
$$g_a(v)=x \circ_1^s h^1_{a_1+a_2+a_3,a_4}(x,s)^{-1} B$$ 
\item \label{cinque} For labelled trees of the form
$v=(x \circ_2^s y ) \circ_2^t z$ with two consecutive internal edges, we have to subdivide 
the square given by $s,t\in [0,1]$ into two triangles defined by $s \leq t$ and $s \geq t$. 
$$g_a(v)=\begin{cases}
 {\rm For } \quad  s \leq t \quad
 (x \circ_2^s h^2(x,s)^{-1}y) \circ_3^t   h^3_{a_1,a_2,a_3+a_4}(\beta^{-1}(x \circ_2^{s/t} y),t)^{-1}   z   \\
{\rm For } \quad s \geq t \quad  
 (x \circ_2^s h^2(x,s)^{-1}y) \circ_3^t   \theta^3_ {sa_1,ta_2,1-s+(s-t)a_2+s(a_3+a_4)}(x \circ_2^0 y)^{-1}  z
\end{cases} $$

\end{enumerate}
The definition is consistent in cases (\ref{uno}) and (\ref{tre}): namely, for $t=0$,
$g_a$ is the identity on $$v=A \circ_3^0 z = A \circ_3^{FM} z \in FM(4)$$ (the operadic composition is in $FM$).
Similarly cases (\ref{uno}) and (\ref{quattro}) are consistent for $s=0$ because $g_a$ is the identity on $$v=x \circ_1^0 B= x \circ_1^{FM} B \in FM(4).$$
For $s=0$ in case (\ref{due}) $$g_a(v)
=(x \circ_1^{FM} y) \circ_3^t  h^3_{a_1,a_2,a_3+a_4}(\beta^{-1}(x \circ_1^{FM} y),t)^{-1} z $$
gives the compatibility with case (\ref{tre}). Symmetrically the compatibility holds when $t=0$.

Let us check the gluing conditions in case (\ref{cinque}).
For $s=0$  we set $A= x \circ_2^0 y$ so that $v=A  \circ_2^t z$.
Since $h^2(x,0)=1$ and $0=s \leq t$,  $g_a(v)= A \circ_3^t h^3(\beta^{-1}(A),t)^{-1}z$ that coincides with the definition in case (\ref{tre}).
For $t=0$, we set $B=y \circ_2^0 z$ so that $v=x \circ_2^s B$.  Since $0=t \leq s$ 
$$g_a(v)=(x \circ_2^s h^2(x,s)^{-1} y) \circ_2^0   \theta^3_{sa_1,0,(1-s)+s(a_2+a_3+a_4)}(x \circ_2^0 y)^{-1}   z $$
Now $$\theta^3_{sa_1,0,(1-s)+s(a_2+a_3+a_4)}(x \circ_2^0 y) = h^2_{a_1,a_2+a_3+a_4}(x,s)$$  and so the definition is compatible with the
$\circ_2$ version of  case (\ref{quattro}) since by $SO(2)$-equivariance of the operad composition in $FM$
$$h^2_{a_1,a_2+a_3+a_4}(x,s)^{-1}(B)=(h^2_{a_1,a_2+a_3+a_4}(x,s)^{-1}   y   )\circ_2^0  (h^2_{a_1,a_2+a_3+a_4}(x,s)^{-1} z)$$

%now composition

Let us consider the compatibility with the composition in $WFM$ (equation \ref{compcomp}). 
We must consider trees with an internal edge of length 1, that are decomposable in $WFM$.
In case (\ref{tre})  with $t=1$ $$g_a(v)=A \circ_3^1 \theta^3(\beta^{-1}(A))^{-1} z$$ and the compatibility holds.
Namely if we indicate the root by $R$ and the non-root vertex by $S$, we have that $\alpha_R=1$, $\alpha_S=\theta^3(\beta^{-1}(A))$ and
  $g_{a_1,a_2,a_3+a_4}(A)=A$. 
In case (\ref{quattro}) with $s=1$ we have that
$$g_a(v)=x \circ_1^1  \theta^1(x)^{-1} B $$ and the equation \ref{compcomp} holds.
Namely $g_{a_2,a_3,a_4}$ is the identity on $(\theta^1(x)^{-1} B)$ and $B$ because both $\theta^1(x)^{-1} B$ and $B$ belong to the $SO(2)$-invariant subspace 
$FM(3) \subset WFM(3)$, that is fixed by $g_{a_2,a_3,a_4}$.

Consider in case (\ref{due}) $s=1$. Then 
$v=(x \circ_2^t z) \circ_1^{WFM} y$. Now 
$$g_a(v)=(x \circ_2^t  h^3(\beta^{-1}(x\circ_1^{WFM}y) ,t)^{-1} z )     \circ_1^{WFM} \theta^{1}(\beta^{-1}(x \circ_2^t z))^{-1} y $$
Since $\beta$ is an operad isomorphism and it is the identity in arity 2, $\beta^{-1}(x \circ_1^{WFM} y)=x \circ_1^{FM} y$. 
Notice that $$h^3_{a_1,a_2,a_3+a_4}(x \circ_1^{FM} y,t )=h^2_{a_1+a_2,a_3+a_4}(x,t),$$ since the flow defining these angles is the same, 
and therefore equation \ref{compcomp} holds. Symmetrically the equation holds when $t=1$ in case (\ref{due}).

Let us check the equation \ref{compcomp} in case (\ref{cinque}).
For $t=1$ $v= (x \circ_2^s y) \circ_3^{WFM} z$. Let us set $A=x \circ_2^s y$.  
Since $s \leq t=1$ we have that 
$$g_a(v)=(x \circ_2^s  h(x,s)^{-1} y) \circ_3^{WFM} \theta^3_{a_1,a_2,a_3+a_4}(\beta^{-1}(A))^{-1} z$$
But $g_{a_1,a_2,a_3+a_4}(x \circ_2^s y)=(x \circ_2^s h^2(x,s)^{-1} y)$, so that the root vertices labels  respect \ref{compcomp}, and the 
$g$-invariant vertex label
$z$ gets rotated by the angle $\theta^3(\beta^{-1}(A))^{-1}$, as required.

For $s=1$, $v=x \circ_2^{WFM} (y \circ_2^t z)$.
Since $t \leq s=1$ we have that 
$$g_a(v)=x \circ_2^{WFM} (  \theta^2(x)^{-1} y     \circ_2^t   \theta^3_{a_1,ta_2, (1-t)a_2+a_3+a_4} ( x \circ_2^0 y)^{-1}   z  )$$  
Now 
\begin{align*}
\theta^3_{a_1,ta_2, (1-t)a_2+a_3+a_4} ( x \circ_2^0 y) &= \theta^2_{a_1,a_2+a_3+a_4}(x) \theta^2_{ta_2, (1-t)a_2+a_3+a_4}(   \theta^2(x)^{-1} y  ) = \\
&=\theta^2_{a_1,a_2+a_3+a_4}(x) h^2_{a_2,a_3+a_4}(  \theta^2(x)^{-1} y, t) 
\end{align*}
Equation \ref{compcomp} clearly holds for the root vertex label $x$ that is $g$-invariant ; it holds for the label of the non root vertex 
$y \circ_2^t z$, since multiplying it by the angle $\theta^2(x)^{-1}$ yields $(\theta^2(x)^{-1} y) \circ_2^t (\theta^2(x)^{-1} z)$,
and the application of $g_{a_2,a_3,a_4}$ to this multiplies the upper label $\theta^2(x)^{-1} z$ exactly by 
$h^2(\theta^2(x)^{-1} y, t)^{-1}$. 

\medskip

{\bf General arity $k$}

$g_a(v)=v$ if $v \in FM(k) \subset WFM(k)$.
Let us consider a generic element $v \in WFM(k)$. This is a tree with internal edge lenghts in $[0,1]$ and vertex labels in $FM(k)$.
Then $g_a(v)$ produces a tree of the same shape, with same internal edge lengths, and rotates the vertex labels by appropriate
angles. Given a vertex $S=S_0$ of $v$, we define the rotation angle $\theta_S$ as follows.  %notation kept coherent
If $S$ is the root then we set $\theta_S=1$.  Otherwise
consider the edge $(S,S_1)$ coming out of the vertex $S$, and remember its length $s_1$. We cut the edge and look at the labelled tree $v_S$ below, that has a leaf $l$ corresponding to the cut edge. There is a unique directed path %also in v_S
going to the root $R$ through vertices $S_1, \dots,S_ r=R$.  Let $s_i$ be the length of  the edge $(S_{i-1},S_i)$ for 
$i=2,\dots,r$. 
We define $m_i =\max(s_1,\dots,s_i)$ for $i=2,\dots,r$  and $l_i=s_i/m_i$.  
Let us modify in $v_S$ the length of the internal edge  $(S_{i-1},S_{i})$ to  $l_{i}$ for $i=2,\dots,r$.  This defines a new labelled tree
$v'_S$. We define $\theta_S:= \theta_{a'}^{l}(\beta^{-1}(v'_S))$, for an appropriate element of the simplex 
$a' \in \Delta_{ \{1,\dots,k\}/S }$ 
to be specified.  
This makes sense because the set of leaves of $v'_S$ (and $v_S$) is the quotient $\{1,\dots,k\}/S$. The leaf $l$ is the class of $S$ in the quotient.  Up to reindexing these labelled trees belong to $WFM(k-|S|+1)$. 

If  $j \in \{1,\dots, k\}$ and    $j \notin S$, follow the directed path from this leaf to the root, and let $S_i$ be the first vertex in $(S_1,\dots,S_r)$ 
encountered along this path. In other words $j \in S_i - S_{i-1}$. 
Then we set $a'_j =a_j m_{i}$.  This forces a unique choice for the remaining leaf 
$a'_l = \sum_{j \notin S}(a_j-a'_j) + \sum_{j \in S} a_j$, and completes the definition. 

\ 
The definition is continuous for labelled trees of a fixed shape by continuity of the maximum.
We check that the definition is compatible with edge collapse, the gluing relation defining $WFM$. 
If the internal edge $(S_{i-1},S_i)$ has length $s_i=0$ we have that $v$ is equivalent  %we do not indicate equivalence classes 
to a labelled tree $\bar{v}$ obtained by collapsing the edge and composing the labels of its ends. 
For $i>1$ both labelled trees $v_S$ and $\bar{v}_S$ have the same set of leaves.  
The new length of the internal edge $(S_{i-1},S_i)$ in $v'_S$ is $s_i/m_i=0$, and all other internal edge lengths 
of $v'_S$ and $\bar{v}'_S$ coincide (since $0$ is the minimum in $[0,1]$), so that the latter are equivalent labelled trees.  
Furthermore the simplicial coordinates  $a'$ and $\bar{a}'$ corresponding to the two trees are equal, and so $\theta_S$ is the same in both cases.
For $i=1$ we have that $\bar{v}_S$ is obtained from $v_S$ by replacing $S_1$ and the tree above it by a single leaf $l_1$. 
However $a'_j=0$ for all leaves $j \in S_1 -S_0$, and $a'_j = \bar{a}'_j$ otherwise,  so that  $\theta^l_{a'}(\beta^{-1}(v'_S)) = \theta^{l_1}_{\bar{a}'}(\beta^{-1}(\bar{v}'_S))$. The collapse of other edges does not have any influence on $\theta_S$. 

%cases i=0 and i>0
Next we need to show that our definition satisfies equation \ref{compcomp}. 
Suppose that the internal edge $(S,S_1)$ has length $s_1=1$ in the labelled tree $v$. Then 
$v_S=v'_S$, $a'_i=a_i$ for $i \notin S$, and $a'_l= \sum_{i \in S} a_i $. Then $\theta_S = \theta^{l}_{a'}(\beta^{-1}(v_S))$.  
 We can write $v=v_S \circ^{WFM}_S \bar{v}$, with $S$ root of the tree $\bar{v}$. 
 Let us set $\bar{a}_i=a_i/a'_l$ for $i \in S$.
 Clearly 
the rotation angle $\theta_T$ for a vertex $T$ of $v_S$ is the same if computed in $v_S$ or in $v$.  
Let us consider a vertex $T$ above $S$ in the tree $\bar{v}$. If we cut in $v$ the internal edge coming out of $T$,
and redefine edge lengths, then we obtain the tree $v'_T = v_S \circ^{WFM}_S \bar{v}'_T$, with $\bar{v}'_T$ obtained 
by the same cutting and relabelling procedure applied to $\bar{v}$.
The simplicial coordinates for the leaves of $v'_T$ are exactly 
$a'  \circ_S  \bar{a}'$, with $\bar{a}'$ simplicial coordinates for $\bar{v}'_T$. 
This implies that the rotation angle for $T$ in $v$ is 
\begin{equation}\theta_T = \label{teta}  \theta_S \cdot \theta^{T}_{\bar{a}'} (  \theta_S^{-1}    \beta^{-1} (           \bar{v}'_T)) \end{equation}
where we indicate $\theta_S=\theta^{l}_{a'}(\beta^{-1}(v_S))$.
Equation \ref{compcomp} (for a single composition)  
specializes in our case to
$$g_a(v)=  g_{a'}(v_S) \circ^{WFM}_S   g_{\bar{a}}(\theta_S^{-1} \bar{v})$$ 
For vertices not above $S$ we have verified that the rotation used to define $g_a(v)$ and $g_{a'}(v_S)$ is the same. 
For a vertex $T$ above $S$ we have that the rotation angle used in $g_a(v)$ is $\theta_T$, and 
the rotation angle of $T$ in $g_{\bar{a}}(\bar{v})$ is  $\theta^{T}_{\bar{a}'} (    \beta^{-1} (           \bar{v}'_T))$.
By the $SO(2)$-equivariance of $\beta$ and equation \ref{teta} we conclude.

 \end{proof}

We prove Theorem \ref{forst}.  

\begin{proof} 
Define the $a$-cellular structure on $FM(k)$ so that  $\Xi'_a:=\overline{\Xi}_a \circ \Lambda^{-1}_a$ is a cellular homeomorphism.  
By the definition \ref{def:xi} of $\overline{\Xi}$ on the boundary of $FM(k)$  and by Lemma \ref{remrot} we conclude.

\end{proof}

\subsection{Removing the weights} \label{sub:weights}
%Last part: interpolate between the weights and extend the cells to WFM
%\begin{Thm} \label{remwei}
%There is an operadic cellular decomposition of $FM$.
%\end{Thm}
%\begin{proof}
In this subsection we get rid of the weights, and complete the proof of Theorem \ref{princ} , showing that there is a cell decomposition of $FM$ such that the operad composition in $FM$ sends product of cells to cells, i.e. the cell decomposition is "operadic", that is statement (2) of Theorem \ref{princ}.
The strategy is to construct an operadic cell decomposition on $W(FM)$, with cells in bijective correspondence with those of $\bar{N}(\C)$, 
and then apply the operad isomorphism $\beta:FM cong W(FM)$ .  The operadic cells of $W(FM)$ will be obtained from the cells of $FM \subset WFM$ 
from the previous subsection by adding a partial collar to their intersection with the boundary $\de FM$.

\begin{figure}
\begin{center}
\begin{tikzpicture}% [scale=.8]
\draw (0,0) -- (0,3);
\draw  (2,0) -- (2,2)  (2,2)--(0,2)  (0,0)--(3,0) ; 
\draw [dotted] (2,2)--(2,3) (2,2)--(3,2) ;
\draw  (0,3)--(3,3)--(3,0) ;
\draw (0,2) -- (2,2) (2,0) -- (2,2) ; 
%\node at (.8,.8) {FM}; 
%\node[font=\huge] at (1.5,1.5) {WFM};
\node at (-0.4,-0.4) {$[P]$};
\node at (.3,.2)  {$(P)$};
\node at (1.7,0.2) {$(Q)$};
\node at (1,0.2) {$(e)$};
\node at (.3,1) {$(g)$};
\node at (1.7,1.8) {$(R)$};
\node at (1,1.8) {$(f)$};
\node at (0.3,1.8) {$(S)$};
\node at (3.4,-.4) {$[Q]$};
\node at (1.7,1) {$(h)$};
\node at (3.4,3.4) {$[R]$};
\node at (-.4,3.4) {$[S]$};
\node at (1.5,-0.4) {$[e]$};
\node at (1.5,3.4) {$[f]$};
\node at (-.4,1.5) {$[g]$};
\node at (3.4,1.5) {$[h]$};
\node at (1,1) {$(\sigma)$};
\node at (2.1,2.1) {$[\sigma]$};

\end{tikzpicture}  
\end{center}
\caption{Cells of $FM$ and their extensions in $WFM$}
\end{figure}
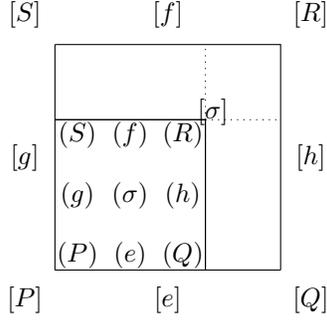

We warm up with the cases in low arity.
\medskip

{\bf Arity 2 } On  $WFM(2)=FM(2)$ we put the $b_2=(1/2,1/2)$-structure.

\medskip

{\bf Arity 3} For $WFM(3)$, consider the  $b_3=(1/3,1/3,1/3)$-decomposition of $FM(3) \subset WFM(3)$.
Any cell $\sigma \subset  \bar{N}_3(\C)$, such that $\de\sigma$ intersects the component of the boundary $(12)3$,
 but 
is not entirely contained in it, has the form $\sigma \cong [0,1] \times \sigma_1 \times \sigma_2$, with $\sigma_1,\sigma_2$ cells of $\C_2$. 
This gives a cell $(\Xi'_{b_3})^{-1}(\sigma) \subset FM(3)$ of the $b_3$-structure  homeomorphic to 
$[0,1] \times \sigma_1 \times \sigma_2$ via $\Xi'_{b_3}$.  
Notice that $$(\Xi'_{b_3})^{-1}(\sigma) \cap \de FM(3) = (\Xi'_{b_3})^{-1}( \{0\} \times \sigma_1 \times \sigma_2) $$
Let us consider $[0,1] \times FM(2) \times FM(2)$ as a subspace of $WFM(3)$ via $(t,x,y) \mapsto x \circ_1^t y$, and  
the embedding $e:[0,1] \times FM(2) \times FM(2) \to [0,1] \times \bar{N}_3(\C)$ 
defined by
$$e(x \circ_1^t y)=(t,\Xi'_{c(t)}(x \circ_1^{FM} y))$$
with $c(t)=(1-t)(1/3,1/3,1/3)+t(1/4,1/4,1/2)$. Notice that $(1/4,1/4,1/2)=b_2 \circ_1 b_2$ in the open simplex
operad. 
The union of the preimages $e^{-1}([0,1] \times \sigma_1 \times \sigma_2)$ and
$(\Xi'_{b_3})^{-1}(\sigma)$ along their common intersection $$(\Xi'_{b_3})^{-1}( \{0\} \times \sigma_1 \times \sigma_2)=e^{-1}(\{0\} \times \sigma_1 \times \sigma_2)$$  
gives us a single cell in $WF(3)$ homeomorphic to $[0,1] \times \sigma_1 \times \sigma_2 $, that we call $\sigma'$.
Finally we replace the cell $$e^{-1}(\{0\} \times \sigma_1 \times \sigma_2)   \subset \de FM(3)$$
 that is in the boundary
of $FM(3)$, by the cell 
$$e^{-1}(\{1\} \times \sigma_1 \times \sigma_2) \in \{1\} \times  FM(2) \times FM(2) \subset \de(WFM)(3)$$
that is exactly 
the composition $\sigma_1 \circ_1 \sigma_2$ in $WFM(3)$. 
This procedures, extended equivariantly via the action of the symmetric group,  accounts for all cells intersecting the boundary or contained in it.
If $\tau \subset FM(3) \subset WFM(3)$ is a cell not intersecting the boundary $\partial FM(3)$, then it is mapped by $\Xi'_{b_3}$ to a cell of $\C_3 \subset \bar{N}_3(\C)$.
We keep such cells.

\medskip
{\bf Arity 4}
Let us consider $FM(4) \subset WFM(4)$. In this case we consider the 

$b_4=(1/4,1/4,1/4,1/4)$-structure on $FM(4)$.
We keep all cells of the $b_4$-structure that do not intersect the boundary. These cells are mapped 
by $\Xi'_{b_4}$ to cells of $\C_4 \subset \bar{N}_4(\C)$

\medskip

For a given nested tree $\mT$ of the boundary with a root of valence 3, an internal edge, and another vertex of valence 2, like for example $T=12(34)$, 
we consider  
$$[0,1] \times FM(3) \times FM(2)  \subset WFM(4)$$ as a subspace
via $(t,x,y) \mapsto x \circ_\mT^t y$.
Consider the embedding
$$e_\mT:[0,1] \times FM(3) \times FM(2) \to [0,1] \times \bar{N}_4(\C)$$
defined by
$e_\mT(t,x,y)= (t, \Xi'_{c(t)}(x \circ_\mT y))$ 
with $c(t)=(1-t)b_4 + t  (b_3 \circ_\mT b_2)  $. 
%notation for tree composition 
Similarly as before, for each cell $\rho \subset \bar{N}_4(\C)$ of the form $[0,1] \times \rho_1 \times \rho_2$ 
with $\rho_1 \subset \C_3$ and $\rho_2 \subset \C_2$ we glue $(\Xi'_{b_4})^{-1}(\rho)$ and $e_\mT^{-1}([0,1] \times \rho_1 \times \rho_2)$ getting
a cell $\rho' \subset WFM(4)$. 

If the tree $\mT$ has a root of valence 2, an internal edge, and another vertex of valence 3, like for example $T=1(234)$, then the procedure is similar using the formula 
$c(t)=(1-t)b_4 + t (b_2 \circ_\mT b_3)$.

\medskip

Consider a cell $\sigma$ of the form  $\sigma \cong [0,1]^2  \times \nu_1 \times \nu_2 \times \nu_3$, with $\nu_1,\nu_2,\nu_3 \subset \C_2$ such
that the associated nested tree $\mT$  
has a root of valence 2, and two internal edges going into the root, like for example $(12)(34)$.
We divide the square $[0,1]^2$ with parameters $s,t\in [0,1]$ into two 2-simplexes $\Delta$ and $\Delta'$ corresponding to 
$s \leq t$ and $t \leq s$. Here $s$ is associated to the edge $e$ and $t$ to the edge $e'$.
We consider $$[0,1]^2 \times FM(2)^3 \subset WFM(4)$$ as the subspace of labelled trees of shape $\mT$. 
Let us define a map
$$e_\mT: [0,1]^2 \times FM(2)^3 \to [0,1]^2 \times \bar{N}_4(\C)$$ given by
$$e_\mT(s,t,x,y,z)=(s,t,\Xi'_{c(s,t)}(\circ_\mT(x,y,z))) $$
with $$c(s,t)=
\begin{cases}
 b_4(1-s)+ (s-t) (b_3 \circ_{\mT/e'} b_2) + t (\circ_\mT(b_2,b_2,b_2))               \quad{\rm if }\quad  t \leq s \\   
 b_4(1-t)+ (t-s)  (b_3 \circ_{\mT/e} b_2)  + s (\circ_{\mT}(b_2,b_2,b_2))              \quad {\rm if} \quad s \leq t 

\end{cases} $$
Then $\sigma'$ is the union of 
\begin{align*}
& \sigma_4= (\Xi'_{b_4})^{-1}(\sigma) , \\
& e_\mT^{-1}([0,1]^2 \times \nu_1 \times \nu_2 \times \nu_3), \\
& e_{\mT/e}^{-1}([0,1]  \times  ([0,1] \times \nu_1 \times \nu_2 \times \nu_3)  )  , \\
&  e_{\mT/e'}^{-1}([0,1] \times ([0,1] \times \nu_1 \times \nu_2  \times \nu_3 )) 
 \end{align*}
If $\mT$ has two consecutive internal edges $e',e$, of respective lengths $t,s$, like for example  $1(2(34))$, then 
the procedure is similar, but the formula for $c(s,t)$ is 
 $$c(s,t)=
\begin{cases}
 b_4(1-s)+ (s-t) (b_2 \circ_{\mT/e'} b_3) + t (\circ_\mT(b_2,b_2,b_2))               \quad{\rm if }\quad  t \leq s \\   
 b_4(1-t)+ (t-s)  (b_3 \circ_{\mT/e} b_2)  + s (\circ_{\mT}(b_2,b_2,b_2))              \quad {\rm if} \quad s \leq t 

\end{cases} $$

The cells contained in the boundary of $FM(4)$ are replaced by cells in the boundary of $WFM(4)$, that 
are operadic compositions of those defined in arity 2 and 3. 

\medskip

{\bf General arity $k$}

Remember from the proof of Theorem \ref{condo} 
that a cell of $\bar{N}_k(\C)$ corresponds to a nested tree $\mathcal{T}$  on $k$ leaves, with each vertex $u$ labelled 
by a cell $\sigma_u$  of the cactus complex $\C_{|u|}$, and a partition of the set of internal edges $I= I_0 \coprod I_1$.
%The internal edges in $I_0$ are labelled by $0$ and those in $I_1$ by $(0,1)$.
This cell is homeomorphic to $$  \prod_{u \in \mathcal{T}} \sigma_u  \times \{0\}^{I_0} \times [0,1]^{I_1}$$
and is denoted by $(\sigma)= (\mT,(\sigma_u)_{u \in \mT}, I_0,I_1)$.
We will describe a cell decomposition of $WFM(k)$ with cells in bijective correspondence with those of $\bar{N}_k(\C)$.
The cell of $WFM(k)$ corresponding to the cell of $\bar{N}_k(\C)$ above is denoted with square brackets by $[\sigma]=[\mT,(\sigma_u)_{u \in \mT}, I_0,I_1]$.

\medskip

Cells of the {\em first type} correspond to corolla trees, that have no internal edges ($I_0=I_1=\emptyset$) with the root vertex labelled by 
a single cell $\sigma \in \C_k$. The corresponding cell of $WFM(k)$ is simply $[\sigma]:= (\Xi'_{b_k})^{-1}(\sigma)$.

\medskip

Cells of the {\em second type} have  $I_0=\emptyset$ but $I_1$ is not empty. 
For each nested tree $\ms$ on $k$ leaves and set of internal edges $\ms'$ there is a subspace of $WFM(k)$ 
consisting of labelled trees of shape $\ms$,  that we denote by $WFM(\ms)$, and is naturally homeomorphic to $$[0,1]^{\ms'} \times \prod_{u \in \ms} FM(|u|)$$
 We will construct an embedding $$e_{\ms}: WFM(\ms) \to [0,1]^{\ms'} \times \bar{N}_k(\C)$$

\medskip
\begin{Def} \label{defc}
We define an operad map $$c:WCom \to  \stackrel{\circ}{\Delta}$$
We visualize $WCom(n)$ as space of non-planar trees with $n$ marked leaves and internal edges of length between $0$ and $1$.
For each $n$ let $P_n \in Com(n) \subset WCom(n)$ be the corolla tree with $n$ leaves, the unique element fixed by the action of the symmetric
group $\Sigma_n$. We define $c(P_2)=b_2$ and extend the operad map by induction. Suppose that $c$ has been defined in arity less than $n$.
Then by operadic extension $c$ is uniquely defined on the space of decomposables $\de WCom(n)$, i.e. the space of trees which have at least an internal edge of length 1. We extend $c$ to the whole space $WCom(n)$  so that $c(P_n)=b_n$, by considering a convex combination:
for $T \in WCom(n)$, let $s$ be the greatest length of its internal edges. Let $T/s \in \de WCom(n)$ be the tree obtained from $T$ by 
substituting each length $l$ by $l/s$. Then $$c(T):=s \cdot c(T/s)+ (1-s)\cdot b_n \in \, \stackrel{\circ}{\Delta}_{n-1}$$

\end{Def}

The unique operad map $e:FM \to Com$ induces an
operad surjective map $$We:WFM \to WCom$$ projecting the stratum $WFM(\ms) \cong [0,1]^{\ms'} \times \prod_{u \in \ms} FM(|u|)$ 
 onto  the corresponding stratum of trees with shape $\ms$, denoted $WCom(\ms)=[0,1]^{\ms'} \subset WCom(n)$.

\medskip

Now given an element in $(t, (x_u)_{u \in \ms}) \in WFM(\ms)$, with $t \in [0,1]^{\ms'}$ and $x_u \in FM(|u|)$,
we define $$e_{\ms}(t, (x_u)_{u \in \ms})=  (t, \Xi'_{c(t)}  (\circ_{\ms}^{FM} (x_u)_{u \in \ms}) )$$

Recall from section \ref{sec:fulton}   that the strata $FM(\ms)$  of $FM(k)$ are indexed by nested trees $\ms$ on $k$ leaves. 
For each $a \in \stackrel{\circ}{\Delta}_{k-1}$ the homeomorphism $\Xi'_a:FM(k) \cong \bar{N}_k(\C)$ restricts to a homeomorphism
 $FM(\ms) \cong \bar{N}(\ms)$, where the latter is the space of labelled trees such that all edges out of vertices in $\ms$ have length $0$.
Therefore $$e_{\ms}: WFM(\ms) \to [0,1]^{\ms'} \times \bar{N}_k(\C)$$ is an embedding with image
$ [0,1]^{\ms'} \times \bar{N}(\ms)$.

The new cell of $WFM(k)$
corresponding to $(\mathcal{T},(\sigma_u)_{u \in \mT}, \emptyset,\mT')$
is
\begin{equation} \label{cupcup} 
[\mathcal{T},(\sigma_u)_{u \in \mT}, \emptyset,\mT']:= \bigcup_{\ms \subseteq \mT}
e_{\ms}^{-1}(\;[0,1]^{\ms'} \times (\mT,  (\sigma_u  )_{u \in \mT}  ,\,  \ms', \,  \mT' \setminus \ms'  )\; )
\end{equation}

\medskip

Cells of the {\em third (and last) type} arise for $I_0$ not empty.  They are defined to be the operadic composition of cells of first and second type, that correspond to the trees obtained by cutting the internal edges in $I_0$.
Intuitively there is a nested tree $\mathcal{A}$ with $\mathcal{A'} \cong I_0$, and a decomposition 
$I_1 = \coprod_{\alpha \in \mathcal{A}} I_1^{\alpha} $, such that the nested tree $\mT$ is obtained by grafting 
nested trees $\mT_{\alpha}$ along  $\mathcal{A}$ (replacing each vertex $\alpha$ by $\mT_{\alpha}$ ).
Then $$[\mT,(\sigma_u)_{u \in \mT}, I_0,I_1] := \circ_{\mathcal{A}}^{WFM} [\mT_{\alpha}, (\sigma_u)_{u \in \mT_{\alpha}},\emptyset,
I_1^{\alpha} ] $$

\
%Proof that this is a cell decomposition, and compatible with the operad multiplication..

%\end{proof}

We need to prove that our new cells give indeed a regular $\Sigma_k$-equivariant CW-decomposition of $WFM(k)$ for each $k$.
We split the proof into a few lemmas.

\begin{Lem} \label{dila}
The subspace $[\sigma]=[\mT, (\sigma_u)_{u \in \mT}, I_0, I_1  ] \subset WFM(k)$ is homeomorphic to the cell
$(\sigma)=(\mT, (\sigma_u)_{u \in \mT}, I_0, I_1) \subset \bar{N}_k(\C)$ via a homeomorphism $\Gamma_\sigma:[\sigma] \to (\sigma)$
\end{Lem}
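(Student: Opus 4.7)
The plan is to treat the three types of cells from the proof of Theorem \ref{remwei} separately, reducing the third type to the second via the operadic structure of $WFM$, and analyzing the second directly.

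\medskip

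For cells of the \emph{first type} (a corolla $\mT$ with no internal edges), by construction $[\sigma] = (\Xi'_{b_k})^{-1}(\sigma)$, and $\Gamma_\sigma := \Xi'_{b_k}|_{[\sigma]}$ is the required homeomorphism since $\Xi'_{b_k}\colon FM(k) \to \bar{N}_k(\C)$ is a homeomorphism by Theorem \ref{forst}.

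\medskip

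For cells of the \emph{second type} ($I_0 = \emptyset$, $I_1 = \mT'$), I will construct $\Gamma_\sigma$ piece by piece over the sub-nested-trees $\ms \subseteq \mT$ indexing the union in (\ref{cupcup}). At one extreme, when $\ms$ consists only of the root, the corresponding summand is $(\Xi'_{b_k})^{-1}(\sigma) \subset FM(k)$ viewed inside $WFM(k)$. At the other extreme $\ms = \mT$, the summand is a $[0,1]^{\mT'}$-collar of the boundary stratum $FM(\mT) \subset \partial FM(k)$, which by Theorem \ref{thm:strata} is $\prod_{S \in \mT} FM(|\hat S|)$. For general $\ms$, using that $\Xi'_{c(t)}$ is a homeomorphism and the stratification $\bar{N}(\ms) \cong \prod_{S \in \ms} \bar{N}_{|\hat S|}(\C)$, the preimage $e_\ms^{-1}([0,1]^{\ms'} \times (\mT, (\sigma_u), \ms', \mT' \setminus \ms'))$ is naturally homeomorphic to $[0,1]^{\ms'} \times \prod_{S \in \ms}\bigl(\prod_u \sigma_u \times [0,1]^{J_S}\bigr)$ for subsets $J_S$ that partition $\mT' \setminus \ms'$. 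Reassembling indices, each $\ms$-summand contributes a sub-cube of $[0,1]^{\mT'} \times \prod_u \sigma_u$, and as $\ms$ varies these sub-cubes cover the whole product, overlapping only on codimension-one faces. Compatibility on such faces, where an edge coordinate reaches $0$ or $1$, relies on the operad-map property of $c\colon WCom \to \stackrel{\circ}{\Delta}$ from Definition \ref{defc}: this ensures that the formulas defining $e_\ms$ and $e_{\ms_0}$ (for $\ms_0$ obtained from $\ms$ by removing a vertex via edge compression) agree on the common interface.

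\medskip

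For cells of the \emph{third type} ($I_0 \neq \emptyset$), $[\sigma]$ is defined as the operadic composition $\circ^{WFM}_{\mathcal{A}}[\mT_\alpha, (\sigma_u)_{u \in \mT_\alpha}, \emptyset, I_1^\alpha]$ of second-type cells. Since operadic composition in $WFM$ grafts with new internal edges of length $1$, and trees with such top-edge lengths form a subspace on which the composition is a topological embedding, $[\sigma]$ is homeomorphic to $\prod_{\alpha \in \mathcal{A}}[\mT_\alpha, \ldots, \emptyset, I_1^\alpha]$. By step two this is in turn $\prod_{\alpha}(\mT_\alpha, \ldots, \emptyset, I_1^\alpha)$, and on the $\bar{N}$ side this product assembles into $(\sigma)$ with the grafting edges $I_0$ carrying coordinate $0$, yielding the homeomorphism $\Gamma_\sigma$.

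\medskip

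The main obstacle will be step two, namely verifying that the pieces indexed by different $\ms \subseteq \mT$ glue compatibly into a single product cell homeomorphic to $[0,1]^{\mT'} \times \prod_u \sigma_u$. This requires careful tracking of the embeddings $e_\ms$ along their common boundaries, with the operadic property of $c$ essential to match the rescaling formulas wherever an edge coordinate reaches $0$ or $1$.
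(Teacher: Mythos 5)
Your proposal follows the paper's proof exactly: type 1 via $\Xi'_{b_k}$, type 2 by decomposing $[\sigma]$ into sub-cube pieces indexed by sub-nested-trees $\ms \subseteq \mT$ and gluing them along faces, and type 3 as an operadic composition of type 1 and 2 cells. The gluing obstacle you flag is resolved in the paper by choosing coordinates on $(\sigma)\cong\prod_u\sigma_u\times[0,1]^{\mT'}$, identifying each $\ms$-summand with $[1,2]^{\ms'}\times\prod_u\sigma_u\times[0,1]^{\mT'\setminus\ms'}$ via $t\mapsto t+1$ on the $\ms'$-coordinates, so the union becomes $\prod_u\sigma_u\times[0,2]^{\mT'}$ and a dilation finishes — your observation that the operad-map property of $c$ makes the pieces match along common faces is exactly what makes this work.
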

\begin{proof}

If $[\sigma]$ is of the first type ($\mT$ is a corolla tree) then $$\Xi'_{b_k} : [\sigma]=(\Xi'_{b_k})^{-1}((\sigma)) \to (\sigma)$$ is clearly a homeomorphism.
If $[\sigma]$ is of the second type, then it is obtained by adding to $(\Xi'_{b_k})^{-1}((\sigma)) \cong (\sigma)$ a partial collar, in a
 certain sense.  We observed that $e_{\ms}:WFM(\ms) \cong [0,1]^{\ms'} \times \bar{N}(\ms)$ is a homeomorphism. 
We choose a homeomorphism 
\begin{equation} \label{iden0}
(\sigma) \cong \prod_{u \in \mT} \sigma_u \times [0,1]^{\mT'}
\end{equation}
similarly as in section \ref{sec:fulton}
except that we replace $t \leftrightarrow 1-t$ in  $[0,1]$ for convenience. This 
 restricts to a homeomorphim
$$(\sigma_{\ms}):=(\mT,(\sigma_u)_{u \in \mT}, \ms' ,  \mT' \setminus \ms' ) \cong \prod_{u \in \mT} \sigma_u \times \{1\}^{\ms'} \times [0,1]^{\mT' \setminus \ms'}$$
By identifying $[0,1]\cong [1,2]$ via $t \mapsto t+1$ for the coordinates labelled by $\ms'$ we have that 
\begin{equation} \label{iden}
e_{\ms}^{-1}([0,1]^{\ms'} \times (\sigma_{\ms})) \cong [1,2]^{\ms'} \times \prod_{u \in \mT} \sigma_u \times [0,1]^{\mT' \setminus \ms'} 
\end{equation}
is homeomorphic to $(\sigma)$. By definition $[\sigma]$ is the union of  
$e_{\ms}^{-1}([0,1]^{\ms'} \times (\sigma_{\ms}))$
over all trees $\ms \subseteq \mT$.
Gluing is compatible with the identifications  (\ref{iden})
and so $$[\sigma] \cong \prod_{u \in \mT} \sigma_u \times [0,2]^{\mT'}$$
In view of (\ref{iden0}), a dilation gives a homeomorphism $\Gamma_{\sigma}:[\sigma] \cong (\sigma)$.

\medskip

A cell $[\sigma]$ of type 3 in $WFM$ is a product of cells of type 1 and 2 ( via the operadic composition maps of $WFM$, that
are embeddings),  and so is the corresponding cell $(\sigma)$ in $\bar{N}(\C)$. Since the lemma holds for cells of type 1 and 2, 
 $\Gamma_\sigma$ is an operadic product of the previous homeomorphisms. This terminates the proof.

\end{proof}
\begin{Lem} \label{unione}
The space $WFM(k)$
is union of closed cells of type 1 and 2 $$[\sigma]=[\mT,(\sigma_u)_{u \in \mT}, \emptyset ,\mT'], \, 
 \mT \in N_k$$

\end{Lem}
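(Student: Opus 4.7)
The plan is to show every $y \in WFM(k)$ lies in a specific closed cell of type 2 (or type 1) by exhibiting that cell from a carefully chosen representative of $y$. The strategy has three steps: (i) pick a ``maximally expanded'' representative of $y$, (ii) track its image under the embedding $e_{\ms}$ from the proof of Theorem \ref{remwei}, (iii) apply the defining formula \eqref{cupcup}.

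First I would argue that $y$ admits a representative as a labeled tree $(t^*, (x^*_u)_{u \in \ms_*})$ with shape $\ms_* \in N_k$, edge lengths $t^* \in [0,1]^{\ms_*'}$ (some entries possibly equal to $0$), and every vertex label $x^*_u$ lying in the open interior $FM'_{|u|}$. Starting from an arbitrary representative, whenever some $x_u$ lies in a boundary stratum $FM(\tilde\mT) \subset FM(|u|)$ one writes it as $x_u = \circ_{\tilde\mT}^{FM}(x_v)$ via the diffeomorphism $\beta_{\tilde\mT}$ of Theorem \ref{thm:strata}, and invokes the Boardman--Vogt equivalence (which is symmetric, hence permits both collapsing and creating length-$0$ edges) to split the vertex $u$ into the subtree of shape $\tilde\mT$ with labels $(x_v)$ connected by length-$0$ edges. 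Iterating yields the desired maximal representative.

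Next, I apply $e_{\ms_*}$ to obtain $e_{\ms_*}(y) = (t^*, z^*)$, where $z^* = \Xi'_{c(t^*)}\bigl(\circ_{\ms_*}^{FM}(x^*_u)\bigr) \in \bar N_k(\C)$. Since each $x^*_u \in FM'_{|u|}$, the iterated operadic composition lies in the open stratum $\stackrel{\circ}{FM(\ms_*)} \subset FM(k)$, and the homeomorphism $\Xi'_{c(t^*)}$ from Theorem \ref{forst} carries this open stratum bijectively onto the subspace of $\bar N_k(\C)$ whose $0$-labeled edges are exactly $\ms_*'$. Hence $z^*$ belongs to a unique open cell of the cellular decomposition of $\bar N_k(\C)$, of the form $(\mT, (\sigma_u)_{u \in \mT}, \ms_*', \mT' \setminus \ms_*')$ for some refinement $\mT \supseteq \ms_*$ and some cactus cells $\sigma_u \subseteq \C_{|u|}$.

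Specializing \eqref{cupcup} with the index $\ms = \ms_*$ yields
$$e_{\ms_*}^{-1}\bigl([0,1]^{\ms_*'} \times (\mT, (\sigma_u)_{u \in \mT}, \ms_*', \mT' \setminus \ms_*')\bigr) \subseteq [\mT, (\sigma_u)_{u \in \mT}, \emptyset, \mT'],$$
and since $(t^*, z^*)$ lies in the argument on the left, $y = e_{\ms_*}^{-1}(t^*, z^*)$ lies in the cell on the right --- a cell of type 2, or of type 1 when $\mT$ is the corolla. The main obstacle is the first step: guaranteeing the existence of the maximally expanded representative requires the BV-equivalence to allow re-expansion of $FM$-compositions into length-$0$ grafts, which follows from the two-sided nature of the equivalence combined with the fact (Theorem \ref{thm:strata}) that every boundary stratum of $FM(|u|)$ lies in the image of iterated operadic composition. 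A secondary check, essentially immediate from Theorem \ref{forst}, is that for the maximal representative the image $z^*$ has no ``extra'' $0$-labeled edges beyond $\ms_*'$, so that it indeed lies in the specified open cell of $\bar N_k(\C)$.
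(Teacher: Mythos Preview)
Your proof is correct and follows essentially the same route as the paper: cover $WFM(k)$ via the embeddings $e_{\ms}$ and formula \eqref{cupcup}. The paper's argument is slightly more direct in that it works stratum by stratum and does not require a maximally expanded representative---any stratum $WFM(\ms)$ containing $y$ suffices, since $e_{\ms}$ has image $[0,1]^{\ms'}\times\bar N(\ms)$ and $\bar N(\ms)$ is already the union of the \emph{closed} cells $(\mT,(\sigma_u),\ms',\mT'\setminus\ms')$ with $\mT\supseteq\ms$, so your step (i) and the secondary check about extra $0$-labelled edges are not needed.
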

\begin{proof}   
We know that the union of the corresponding cells $(\sigma)$ is $\bar{N}_k(\C)$. 
We prove first that $FM(k) \subset WFM(k)$ is contained in the union of the cells $[\sigma]$. 
Considering the corolla tree $\{\{1,\dots,k\}\}=\ms \subseteq \mT$ in (\ref{cupcup}) we have that
$ (\Xi'_{b_k})^{-1}((\sigma)) \subseteq [\sigma]$,
and taking the union over all such cells $(\sigma) \subset \bar{N}_k(\C)$ proves the assertion since 
$FM(k)=(\Xi'_{b_k})^{-1}(\bar{N}_k(\C))$. 
Consider now a general stratum $WFM(\ms) \subset WFM(k)$. 
Recall that $e_{\ms}: WFM(\ms) \to [0,1]^{\ms'} \times \bar{N}_k(\C)$ is an embedding with image
$ [0,1]^{\ms'} \times \bar{N}(\ms)$.
We know that $\bar{N}(\ms)$ is the union of cells $(\sigma_\ms)=(\mT,(\sigma_u)_{u \in \mT}, \ms',\mT' \setminus \ms')$ for trees 
$\mT \supseteq \ms$.  Considering $\ms=\mT$ in  (\ref{cupcup})
$$e_{\ms}^{-1}([0,1]^{\ms'} \times (\sigma_\ms))  \  \subset   [\mT,(\sigma_u)_{u \in \mT},\emptyset , \mT'] $$
and so  $WFM(\ms)=e_{\ms}^{-1}([0,1]^{\ms'} \times \bar{N}(\ms))$ is contained in the union of the cells of first and second type.
The union of all strata $WFM(k)=\bigcup_{\ms \in N_k}WFM(\ms)$ is therefore union of cells of first and second type. 
\end{proof}

\begin{Lem} \label{bundari}
In $WFM(k)$
\ben
\item The boundary of a cell of type 1 is union of cells of type 1
\item The boundary of a cell of type 2 is union of cells of type 1,2,3 
\item The boundary of a cell of type 3 is union of cells of type 3
\item $[\tau] \subseteq [\sigma] $ if and only if $(\tau) \subseteq (\sigma)$ 
\een

\end{Lem}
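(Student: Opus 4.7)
The plan is to establish (4) first and deduce (1)--(3) from it using the description of cell boundaries in $\bar{N}_k(\C)$ given in the proof of Theorem \ref{condo}: the boundary of a cell $(\mT,(\sigma_u),I_0,I_1)$ is obtained by the three moves (i) replace some $\sigma_u$ by a face in $\partial\sigma_u$; (ii) contract an edge of $I_1$ at length $1$, which composes the two cacti cells at its endpoints via $C_*(\C)$; (iii) move an edge from $I_1$ to $I_0$. Direct inspection then gives (1)--(3): for $I_0=I_1=\emptyset$ only (i) applies, preserving type 1; for $I_0=\emptyset\neq I_1$, the three moves yield faces of types 2, 1-or-2, and 3 respectively; for $I_0\neq\emptyset$, all three moves preserve $I_0\neq\emptyset$, yielding type 3. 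A simple induction on $|\mT'|+|I_0|$ propagates these codimension-one statements to the full boundary.

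For (4), I proceed by induction using the homeomorphism $\Gamma_\sigma:[\sigma]\to(\sigma)$ of Lemma \ref{dila}. For type 1 cells, $\Gamma_\sigma$ is the restriction of the global homeomorphism $\Xi'_{b_k}:FM(k)\to\bar{N}_k(\C)$, so face relations among type 1 cells match tautologically; and no $[\tau]$ of type 2 or 3 is contained in $[\sigma]\subset FM(k)$, since such $[\tau]$ meets some stratum $WFM(\ms)$ with $\ms'\neq\emptyset$, disjoint from $FM(k)$.

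For a type 2 cell, the proof of Lemma \ref{dila} presents $[\sigma]$ as $\prod_u\sigma_u\times[0,2]^{\mT'}$, obtained by gluing the portion $\prod_u\sigma_u\times[0,1]^{\mT'}$ from $\ms=\emptyset$ (inside $FM(k)$) to the portion $\prod_u\sigma_u\times[1,2]^{\mT'}$ from $\ms=\mT$ (inside $WFM(\mT)$) along the locus $t=1$. The boundary of $[\sigma]$ decomposes accordingly: faces of $\sigma_u$ match move (i); the endpoint $\{0\}$ of $[0,2]$, lying in the $\ms=\emptyset$ piece, corresponds under the dilation $[0,2]\to[0,1]$ of $\Gamma_\sigma$ to $\bar{N}_k$ edge length $1$, i.e.\ to move (ii), and yields a face of type 1 or type 2; the endpoint $\{2\}$, lying in the $\ms=\mT$ piece, is the $WFM$-composition boundary at length $1$ and corresponds to move (iii), yielding a type 3 face. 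For a type 3 cell $[\sigma]=\circ_{\mathcal{A}}^{WFM}[\sigma_\alpha]$, since the $WFM$ operadic composition along length-$1$ edges is a closed embedding, $\partial[\sigma]$ is the union over $\alpha$ of the compositions with one factor replaced by $\partial[\sigma_\alpha]$; by the inductive hypothesis these pieces are unions of cells $[\tau]$ whose image under $\Gamma_\sigma$ exhausts the faces of $(\sigma)$.

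The main obstacle is the matching in the type 2 case, specifically verifying that the two endpoints of the stretched coordinate $[0,2]$ correspond under $\Gamma_\sigma$ respectively to the cactus composition move and the edge-to-$I_0$ move on the $\bar{N}_k(\C)$ side. This requires tracking the orientation-reversing identification $t\leftrightarrow 1-t$ used in Lemma \ref{dila} together with the piecewise behaviour of the operad map $c$ of Definition \ref{defc} at the gluing point $t=1$ between the $FM$ and $WFM(\mT)$ pieces, and checking that the two portions glue consistently along the locus where a $\bar{N}_k$ edge has length zero and the corresponding $WFM$ edge length collapses.
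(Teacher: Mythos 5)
Your proposal follows essentially the paper's route: analyze the three boundary moves in $\bar{N}_k(\C)$ (face of a cacti cell, composition of cacti cells along an $I_1$-edge, moving an $I_1$-edge to $I_0$), present a type~2 cell $[\sigma]$ via Lemma~\ref{dila} as $\prod_u\sigma_u\times[0,2]^{\mT'}$ by gluing the $\Xi'_{b_k}$-chart on $FM(k)$ to the collar piece at $t_u=1$, and match the endpoints $t_u=0$ and $t_u=2$ with moves (ii) and (iii). The coordinate bookkeeping you flag as the main obstacle is correctly tracked, but it is not where the real difficulty sits. By construction the $t_u=2$ face of $[\sigma]$ is the set
$\{\,x\circ_{\mT}^{WFM}y \mid \Xi'_{c(1)}(x\circ^{FM}_{\mT}y)\in(\tau)\,\}$,
and to identify it with the type~3 cell $[\tau]=[\sigma_1]\circ^{WFM}_{\mT}[\sigma_2]$ you must know
$(\Xi'_{c(1)})^{-1}((\tau))=[\sigma_1]\circ^{FM}_{\mT}[\sigma_2]$,
i.e.\ that the $c(1)=b_m\circ_{\mT}b_n$ cell structure on $FM$ realizes the composite cell as the $\circ^{FM}_{\mT}$-product of a $b_m$-cell and a $b_n$-cell. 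That is exactly Theorem~\ref{forst}, and without it the $t_u=2$ face has no a priori reason to be a union of cells, let alone the single cell $[\tau]$. Your checklist (orientation reversal, piecewise behaviour of $c$, gluing consistency along $\partial FM(k)$) omits this input, and it is the decisive one --- it is the payoff of the whole removing-rotations machinery of Section~\ref{sec:last}. Once Theorem~\ref{forst} is inserted, the rest of your argument closes as intended; the paper makes the same move-by-move analysis, merely in the order (1)--(3) then (4) rather than (4) first.
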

\begin{proof}
The first assertion is clear since a cell of first type has the form $[\sigma]= (\Xi'_{b_k})^{-1}(( \sigma))$ and has subcells
$[\tau]= (\Xi'_{b_k})^{-1}((\tau))$, with $(\tau) \subset (\sigma)$.
For the second assertion, if $[\sigma]$ has type 2, then modulo the homeomorphism of lemma \ref{dila} 
we claim that the part of its boundary corresponding to $t_u=2$, with $u \in \mT'$, 
%by 
%Theorem \ref{forst}
%and by the definition of $c$ %heart of the proof
is the operadic composition in 
$WFM$ of the two cells of second type $[\sigma_1]$ and $[\sigma_2]$  associated to the trees $\mT_1$ and $\mT_2$ obtained by removing from $\mT$ the internal edge $u$,
that is a cell of type 3. For simplicity we consider the case when $\mT$ has a single internal edge $u$. Suppose that
 $\sigma_1 \subset \C_{m}$ decorates the root and $\sigma_2 \subset \C_{n}$ decorates the non-root vertex.   
 The cell $(\tau) \subset (\sigma)$ is obtained  by setting 
the internal edge length equal to 1, following the convention in Lemma \ref{dila}. 
 The cell $[\tau]$ is the operadic 
composition in $WFM$ of the type 1 cells  $[\sigma_1]=(\Xi'_{b_m})^{-1}(\sigma_1)$ and $[\sigma_2]=(\Xi'_{b_n})^{-1}(\sigma_2)$ along
$\mT$. Since we identify $$(x,y,2) =  x \circ_{\mT}^{WFM} y$$ and $$(x,y,1)= x \circ^{FM}_{\mT} y$$
the part of the boundary $\de[\sigma]$ corresponding to $t_u=2$ is
$$\Gamma_\sigma^{-1}((\tau)) = \{ x \circ_{\mT}^{WFM} y \; | \;   \Xi'_{c(1)}(x \circ^{FM}_{\mT} y) \in (\tau) \cong \sigma_1 \times \sigma_2            \}$$
with $c:[0,1] \to \stackrel{\circ}{\Delta}_k$ as in Definition \ref{defc},
 $c(1)=b_m \circ_{\mT} b_n$, and 
 $\Gamma_\sigma:[\sigma] \cong (\sigma)$ as in Lemma \ref{dila}.
 
By Theorem \ref{forst}
$$(\Xi'_{c(1)})^{-1}((\tau)) = (\Xi'_{b_m})^{-1}(\sigma_1) \circ^{FM}_{\mT} (\Xi'_{b_n})^{-1}(\sigma_2) $$
and so 
$$\Gamma_\sigma^{-1}((\tau))= (\Xi'_{b_m})^{-1}(\sigma_1) \circ^{WFM}_{\mT} (\Xi'_{b_n})^{-1}(\sigma_2) = [\tau]$$
The general case is similar, except that one needs to specialize to each intersection $[\sigma_1] \cap WF(\ms_1)$ and
$[\sigma_2] \cap WF(\ms_2)$, with $\ms_1 \subseteq \mT_1$ and $\ms_2 \subseteq \mT_2$  nested trees. 

The part of the boundary of $[\sigma]$ corresponding to $t_u=0$ contains the type 2 (or 1) cells obtained by collapsing the edge $u:v \to w$, and labelling the resulting vertex by a cell contained in $\sigma_v \circ_u \sigma_w$.
The remaining part of the boundary of $[\sigma]$ contains cells on a tree of the same shape as $[\sigma]$, with the same labels but one vertex $v$ labelled by a 
subcell of $\sigma_v$.
For the third assertion, a cell $[\sigma]$ of type 3 is an operadic product along some tree of cells  of type 1 and 2.
The result follows by the first and second assertion, and the fact that the operadic product is an embedding. 
%A cell in its boundary is 
%obtained  of operadic products along the same tree of cells of type 1 and 2 that coincide with the previous, except that some
%$\sigma_w$ is replaced by a cell in its boundary $\de \sigma_w$ and by setting some internal edges leng
For the fourth assertion, observe from the proof of Theorem \ref{condo}
 that a subcell of $(\sigma)$ in $N_k(\C)$ is obtained by the same combinatorics that we considered in the proof of the first three assertions. 
\end{proof}

\begin{Lem} \label{interiore}
The new cells $[\sigma]$ of $WFM(k)$ have disjointed interiors.
\end{Lem}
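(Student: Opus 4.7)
The plan is to proceed by induction on $k$, showing that each point $x \in WFM(k)$ lies in the cellular interior $\mathring{[\sigma]} := [\sigma] \setminus \bigcup_{[\tau] \subsetneq [\sigma]} [\tau]$ of at most one cell. The base case $k=2$ is immediate since $WFM(2) = FM(2)$ carries the regular $b_2$-cellular structure of Theorem \ref{forst}. For the inductive step, fix $x \in WFM(k)$; it lies in a unique open stratum $WFM(\ms)$ with canonical form $x = (t, (x_u)_{u \in \ms})$, $t \in (0,1]^{\ms'}$, $x_u \in FM(|u|)$.

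First I would reduce to the case where no internal edge has length $1$. Setting $\ms'_1 := \{u \in \ms' : t_u = 1\}$ and $\mathcal{A} := \ms/\ms'_1$, one gets a canonical decomposition $x = \circ_{\mathcal{A}}^{WFM}(y_\alpha)_{\alpha \in \mathcal{A}}$ in which each $y_\alpha \in WFM(\ms_\alpha)$ has all internal edges of length strictly less than $1$. Since type-$3$ cells of $WFM(k)$ are defined as operadic $\circ_{\mathcal{A}}^{WFM}$-products of type-$1$/type-$2$ cells in strictly smaller arity, and since the operad composition maps of $WFM$ are embeddings, the inductive hypothesis applied to each $y_\alpha$ implies that $x$ lies in the interior of at most one type-$3$ cell, namely the $\circ_{\mathcal{A}}^{WFM}$-product of the cells whose interiors contain the $y_\alpha$'s.

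This leaves the case $\ms'_1 = \emptyset$. In this case I would define the invariant $\Phi(x) := \Xi'_{c(t)}(\circ_\ms^{FM}(x_u)_{u \in \ms}) \in \bar{N}_k(\C)$, where $c$ is the operad map of Definition \ref{defc}. By Theorem \ref{condo}, $\bar{N}_k(\C)$ is a regular CW complex, so $\Phi(x)$ lies in the interior of a unique cell $(\sigma) \subset \bar{N}_k(\C)$. Inspecting the defining formula (\ref{cupcup}) for type-$2$ cells, the condition $x \in [\sigma']$ is equivalent to $\Phi(x) \in (\sigma'_{\ms})$ for some subface $(\sigma'_{\ms}) \subset (\sigma')$ associated to $\ms$; and $x \in \mathring{[\sigma']}$ precisely when the shape of $\Phi(x)$ in $\bar{N}_k(\C)$ equals the underlying tree of $\sigma'$ and the vertex labels of $\sigma'$ are the open cacti cells containing the corresponding components of $\Phi(x)$. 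Hence $\sigma' = \sigma$ is uniquely determined, completing the inductive step.

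The main obstacle will be making the above characterization of $\mathring{[\sigma]}$ in the type-$2$ case rigorous: one must check that, as $t$ varies in $(0,1)^{\ms'}$, the cell of $\bar{N}_k(\C)$ containing $\Phi(x) = \Xi'_{c(t)}(\circ_\ms^{FM}(x_u))$ in its interior does not switch for generic $(x_u)_u$, so that the data $\sigma$ is indeed an invariant of the cell and not of the auxiliary parameter $t$. This follows from the operadic compatibility of Theorem \ref{forst}: because $\Xi'$ transports the operad structure of the simplex (via $c$) to the operadic cellular structure on $FM$, the shape of $\Phi(x)$ in $\bar{N}_k(\C)$ is determined by the strata containing the individual $x_u$'s within $FM(|u|)$, grafted along $\ms$, independently of the value of $t$ in the open region $(0,1)^{\ms'}$.
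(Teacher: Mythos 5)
Your plan follows essentially the same route as the paper's: disjointness for type-1 and type-2 cells is checked stratum by stratum by transporting through the homeomorphism $e_{\ms}$ and using the regularity of $\bar{N}_k(\C)$, and type-3 cells are reduced to operadic compositions in lower arity. Your induction on $k$ is a tidy reorganization of the paper's treatment of type-3 cells, and it makes the mixed comparison (a type-1/2 interior versus a type-3 interior) explicit via the dichotomy ``some $W$-edge has length $1$'' versus ``all $W$-edges have length $<1$,'' which the paper leaves implicit.

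The ``main obstacle'' you flag at the end is, however, a red herring, and the fix you propose for it does not work. Disjointness of interiors is a pointwise assertion: a fixed $x$ determines the stratum $\ms$ and the length vector $t$, so the cell of the regular complex $\bar{N}_k(\C)$ whose interior contains $\Phi(x)$ is already uniquely determined; nothing needs to be checked as $t$ varies. Whether the cell of $\Phi(x)$ could ``switch'' as $t$ moves is relevant to the coherence of the cube structure on $[\sigma]$, which is exactly the content of Lemma \ref{dila} and rests on $e_\ms$ being a homeomorphism onto $[0,1]^{\ms'}\times\bar{N}(\ms)$, not on any $t$-invariance of the image cell; you should simply cite that lemma and its consequence $\mathring{[\sigma]}\cap WFM(\ms)=e_\ms^{-1}\bigl((0,1)^{\ms'}\times \mathring{(\sigma_\ms)}\bigr)$. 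Moreover, the appeal to Theorem \ref{forst} to dispose of the supposed obstacle is not directly available: that theorem concerns weights of the form $a\circ_i b$, whereas for $t$ in the open cube $(0,1)^{\ms'}$ the interpolating weight $c(t)$ of Definition \ref{defc} is generically \emph{not} an operadic composition of barycenters, so Theorem \ref{forst} gives no information about the $c(t)$-structure there. With the final paragraph replaced by a citation of Lemma \ref{dila}, the argument closes.
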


\begin{proof}
Given two distinct cells of first or second type $[\sigma]$ and $[\bar{\sigma}]$ 
it is sufficient to check that
$$[\sigma] \cap WFM(\ms)=e_{\ms}^{-1}( [0,1]^{\ms'} \times (\sigma_S) )  $$ and
$$[\bar{\sigma}] \cap WFM(\ms)=e_{\ms}^{-1}( [0,1]^{\ms'} \times (\bar{\sigma}_S) ) $$ 
do not intersect outside the boundary of $[\sigma] \cong [0,2]^{\mT'} \times \prod_u \sigma_u$ and
$[\bar{\sigma}] \cong [0,2]^{\bar{\mT'}} \times \prod_{\bar{u}} \bar{\sigma}_{\bar{u}}$, for any 
nested tree $\ms \in N_k$. Two cells of type 3 are operadic product of type 1 and 2 cells, and will intersect in the interior if and only if
they are operadic product along the same tree of cells of type 1 and 2 that intersect in the interior, and this is possible only if the cells coincide.  
\end{proof}

\begin{Thm} \label{ultimo}
The new cells $[\sigma]$ give a regular cell decomposition of $WFM$. 
\end{Thm}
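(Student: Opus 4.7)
The plan is to verify the axioms of a regular CW decomposition by assembling the four preceding lemmas. For each metatree $\sigma = (\mT, (\sigma_u)_{u \in \mT}, I_0, I_1)$ indexing a cell, Lemma \ref{dila} supplies a homeomorphism $\Gamma_\sigma: [\sigma] \to (\sigma)$, where $(\sigma) \subset \bar{N}_k(\C)$ is, by Theorem \ref{condo}, a closed topological ball of dimension $d_\sigma = \sum_{u \in \mT}\dim(\sigma_u) + |I_1|$. Composing the inverse of $\Gamma_\sigma$ with a characteristic map of $(\sigma)$ produces a candidate characteristic map $\chi_\sigma : D^{d_\sigma} \to [\sigma]$.

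The CW axioms are then routine to verify in turn. For the covering axiom, Lemma \ref{unione} gives $WFM(k) = \bigcup_{\sigma \text{ of type 1 or 2}}[\sigma]$, while Lemma \ref{bundari}(2) places every cell of type 3 inside the boundary of some type-2 cell; hence the family of all $[\sigma]$ indexed by metatrees covers $WFM(k)$. Disjointness of open cells is precisely Lemma \ref{interiore}. Closure-finiteness follows from Lemma \ref{bundari}(1--3), which exhibits $\partial[\sigma]$ as a finite union of cells $[\tau]$ of strictly lower dimension; part (4) of the same lemma identifies the incidence relations $[\tau] \subseteq [\sigma]$ with those of $(\tau) \subseteq (\sigma)$ in $\bar{N}_k(\C)$. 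The total number of cells in each arity is finite since metatrees on $k$ labelled leaves form a finite set, so the weak-topology axiom is automatic.

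Regularity then amounts to the observation that $\chi_\sigma$ is a homeomorphism of $D^{d_\sigma}$ onto $[\sigma]$ whose restriction to $S^{d_\sigma-1}$ is a homeomorphism onto $\partial[\sigma]$, since the analogous statement holds in the regular complex $\bar{N}_k(\C)$ and $\Gamma_\sigma$ is a homeomorphism of pairs. Equivariance under $\Sigma_k$ is inherited from the construction: the barycentric weight $b_k$ is $\Sigma_k$-invariant, the operad map $c: WCom \to \iDelta$ of Definition \ref{defc} is $\Sigma_n$-equivariant by the normalization $c(P_n)=b_n$, and the isomorphism $\beta$ used to transfer structure is $SO(2)$-equivariant.

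The main obstacle for this theorem is not in the assembly above but is already absorbed into Lemma \ref{dila}: one must check that the several collar pieces $e_\ms^{-1}([0,1]^{\ms'} \times (\sigma_\ms))$, glued as $\ms$ ranges over subtrees of $\mT$, fit together into a single closed ball via dilation without introducing unintended identifications in the interior. Once that lemma is in hand, the present theorem reduces to a bookkeeping translation of the regular cell structure of $\bar{N}_k(\C)$ through the family of homeomorphisms $\{\Gamma_\sigma\}$.
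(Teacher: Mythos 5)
Your proposal is correct and follows essentially the same route as the paper: the paper's own proof is the one-line citation ``Follows from lemmas \ref{dila}, \ref{unione}, \ref{bundari}, \ref{interiore}'', and your write-up is a careful unwinding of exactly that assembly into the CW axioms (covering from Lemma \ref{unione}, disjointness from Lemma \ref{interiore}, closure-finiteness and incidence from Lemma \ref{bundari}, regularity and characteristic maps from Lemma \ref{dila}).
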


\begin{proof}
Follows from lemmas \ref{dila}, \ref{unione}, \ref{bundari}, \ref{interiore}.
\end{proof}

Our operadic cell decomposition of $WFM$  determines an operadic cell decomposition of $FM$ via the operad isomorphism $\beta: FM \cong WFM$. 
This concludes the proof of Theorem \ref{princ}.


\begin{thebibliography}{99}

\bibitem{Abou} M. Abouzaid, Symplectic cohomology and Viterbo's theorem, Free loop spaces in geometry and topology, 271-485, IRMA Lect. Math. Theor. Phys., 24, Eur. Math. Soc., Z\"urich, 2015
\bibitem{Ahlfors} L. Ahlfors, Complex analysis, McGraw-Hill,1979
\bibitem{AK} G. Arone and M. Kankaanrinta, The sphere operad. Bull. Lond. Math. Soc. 46 (2014), no. 1, 126-132
\bibitem{AxSing} S. Axelrod and I.M. Singer, Chern-Simons perturbation theory. II. J. Differential Geom. 39 (1994), no. 1, 173-213
\bibitem{BPP} P. Banks, E. Panzer and B. Pym, Multiple zeta values in deformation quantization,  Inventiones Math. 222 (2020) n.1, 79-159
\bibitem{Barber}D.A. Barber, A Comparison of Models for the Fulton-Macpherson Operads, Ph.D. Thesis, Sheffield, 2017
\bibitem{Bas} L. Basualdo Bonatto, S. Chettih, A. Linton, S. Raynor, M. Robertson and N. Wahl , An infinity operad of normalized cacti,  https://arxiv.org/abs/2007.01588
\bibitem{BM} C. Berger and I. Moerdijk, The Boardman-Vogt resolution of operads in monoidal model categories. Topology 45 (2006), no. 5, 807-849
\bibitem{BV} M. Boardman and R. Vogt, Homotopy Invariant Algebraic Structures on Topological Spaces, Lect.N.Math. 347, Springer, 1973  
\bibitem{Boe} C.-F. B\"odigheimer, Configuration models for moduli spaces of Riemann surfaces with boundary.  Abh. Math. Sem. Univ. Hamburg 76 (2006), 191-233 
\bibitem{ching} M. Ching, Bar constructions for topological operads and the Goodwillie derivatives of the identity. Geom. Topol. 9 (2005), 833-933
\bibitem{Egger} P. Egger,  A homotopy equivalence between Cacti and Configuration Spaces, Master Thesis, EPFL 2010 
\bibitem{Benoit} B. Fresse, Homotopy of operads and Grothendieck-Teichm\"uller groups. Part 1-2. Mathematical Surveys and Monographs, 217. American Mathematical Society, Providence, RI, 2017
\bibitem{Gardy}  D. Gardy and D. Gouyou-Beauchamps, Enumerating Davenport-Schinzel sequences. RAIRO Inform. Th\'eor. Appl. 26 (1992), no. 5, 387-402
\bibitem{Garner-Ramgoolam} D. Garner and S. Ramgoolam, The geometry of the light-cone cell decomposition of moduli space. J. Math. Phys. 56 (2015), no. 11, 112301, 24 pp
\bibitem{GJ} E. Getzler and J.Jones, Operads, homotopy algebra, and iterated integrals for double loop spaces, arXiv: hep-th/9403055  
\bibitem{GW} S. Giddings and S. Wolpert, A triangulation of moduli space from light-cone string theory. 
Comm. Math. Phys. 109 (1987), no. 2, 177-190 
\bibitem{Kaufmann} R. Kaufmann, On several varieties of cacti and their relations. Algebr. Geom. Topol. 5 (2005), 237-300
\bibitem{Kont} M. Kontsevich, Operads and motives in deformation quantization. Moshe Flato (1937-1998). Lett. Math. Phys. 48 (1999), no. 1, 35-72
\bibitem{KS} M. Kontsevich and Y. Soibelman, Deformations of algebras over operads and the Deligne conjecture.  Conference Moshe Flato 1999, Vol. I (Dijon), 255-307, Math. Phys. Stud., 21, Kluwer Acad. Publ., Dordrecht, 2000
\bibitem{LV} P. Lambrechts and I. Volic, Formality of the little N-disks operad. Mem. Amer. Math. Soc. 230 (2014), no. 1079
\bibitem{MS} J. McClure and J. Smith, A solution of Deligne's Hochschild cohomology conjecture. Recent progress in homotopy theory (Baltimore, MD, 2000), 153-193, Contemp. Math., 293, Amer. Math. Soc., Providence, RI, 2002  
\bibitem{MS-cos}  J. McClure and  J. Smith, Cosimplicial objects and little n-cubes. I. Amer. J. Math. 126 (2004), no. 5, 1109-1153.
\bibitem{MS-sur} J. McClure and  J. Smith, Multivariable cochain operations and little n-cubes. 
J. Amer. Math. Soc. 16 (2003), no. 3, 681-704
\bibitem{Mondello}G. Mondello, Riemann surfaces, ribbon graphs and combinatorial classes, Handbook of Teichm\"uller theory. Vol. II, 151-215, IRMA Lect. Math. Theor. Phys., 13, Eur. Math. Soc., Z\"urich, 2009
\bibitem{Nakamura} S. Nakamura, A calculation of the orbifold Euler number of the moduli space of curves by a new cell decomposition of the Teichm\"uller space, Tokyo J. Math. 23(1), 87-100 (2000)
\bibitem{thesis} P. Salvatore, Configuration operads, minimal models and rational curves, D.Phil. Thesis, Oxford, 1999
\bibitem{Barcelona} P. Salvatore, Configuration spaces with summable labels, in: Cohomological Methods in Homotopy Theory, Prog. in Math. 196 (2001) pp. 375-396, Birkh\"auser, Berlin.
\bibitem{Deligne} P. Salvatore, The topological cyclic Deligne conjecture. Algebr. Geom. Topol. 9 (2009), no. 1, 237-264
\bibitem{new} P. Salvatore, The Fulton MacPherson operad and the W-construction, Homology Homotopy and Applications vol. 23 n.2 (2021)
\bibitem{Sinha} D. Sinha, Manifold-theoretic compactifications of configuration spaces, Selecta Mathematica 10 (2004)
\bibitem{Turchin} V. Turchin, Context-free manifold calculus and the Fulton-MacPherson operad.  Algebr. Geom. Topol. 13 (2013), no. 3, 1243-1271
\bibitem{Verona} A. Verona, Stratified mappings - structure and triangulability, Lecture Notes in Mathematics, Springer, 1984 
\bibitem{Voronov} A. Voronov, Notes on universal algebra. Graphs and patterns in mathematics and theoretical physics, 81-103,  
Proc. Sympos. Pure Math., 73, Amer. Math. Soc., Providence, RI, 2005
\bibitem{Voronov2} A. Voronov, Homotopy Gerstenhaber algebras. Conference Moshe Flato 1999, Vol. II (Dijon), 307-331, 
Math. Phys. Stud., 22, Kluwer Acad. Publ., Dordrecht, 2000
\bibitem{wiki} Wikipedia, Quartic function

\end{thebibliography}
\end{document}